\author{Cihan Bahran}
\affil{Department of Mathematics, Bo\u{g}azi\c{c}i University\\
Bebek, 34342 Istanbul, Turkey\\
cihanbahran@gmail.com}\date{}
\title{Regularity and stable ranges of $\FI$-modules}
\DeclareMathOperator{\conf}{PConf}
\newcommand{\FI}{\mathbf{FI}}
\DeclareMathOperator{\sr}{\mathbf{st-rank}}
\DeclareMathOperator{\FB}{\mathbf{FB}}
\DeclareMathOperator{\CB}{\mathsf{CB}}
\DeclareMathOperator{\catalan}{\mathsf{Catalan}}
\newcounter{dummy}
\newcommand\sitem[1][]{\item[(#1)]\refstepcounter{dummy}\def\@currentlabel{#1}}
\newcommand{\cofi}[1]{\co_{#1}^{\FI}}
\newcommand{\cofip}[1]{\wt{\co}_{#1}^{\FI}}
\newcommand{\cofibol}[1]{\mathbf{H}_{#1}^{\FI}}
\DeclareMathOperator{\ab}{\mathcal{A}}
\DeclareMathOperator{\reg}{reg}
\DeclareMathOperator{\low}{low}
\DeclareMathOperator{\sh}{\FI_{\pmb{\sharp}}}
\DeclareMathOperator{\induce}{\Ind_{\FB}^{\FI}}
\DeclareMathOperator{\mon}{\underline{Mon}}
\DeclareMathOperator{\orb}{\underline{Orb}}
\DeclareMathOperator{\kk}{\mathbb{F}}
\DeclareMathOperator{\thick}{Thick}
\DeclareMathOperator{\pbc}{SPB}
\newcommand{\tgen}{t_{0}}
\newcommand{\trel}{t_{1}}
\newcommand{\shift}[2]{\mathbf{\Sigma}^{#2}   #1 }
\newcommand{\tbol}{\mathbf{t}}
\newcommand{\weak}{\delta}
\newcommand{\mult}{M}
\newcommand{\struct}{A}
\newcommand{\cyc}{\mathbf{X}}
\newcommand{\nvm}{\mathbf{m}}
\newcommand{\deriv}{\mathbf{\Delta}}
\newcommand{\bul}{\bullet}
\newcommand{\hell}{h}
\newcommand{\local}{\hell^{\text{max}}}
\DeclareMathOperator{\poly}{\mathbf{P}}
\newcommand{\locoh}[1]{\co_{\mathfrak{m}}^{#1}}
\DeclareMathOperator{\coco}{co}
\newcommand{\ters}[1]{\coco\mhyphen{#1}}
\DeclareMathOperator{\ccx}{Ch}
\newcommand{\setz}{\mathsf{Set}_{0}}
\newcommand{\manif}{\mathcal{M}}
\newcommand{\cell}[1]{\chain^{\text{\,cell}}_{#1}}
   \def\MR#1{}
\newcommand{\pres}{\text{presented in finite degrees}}
\newcommand{\cikcik}{\text{$\kk$-character polynomial}}
\newcommand{\specht}[2]{\operatorname{S}_{#1}(#2)}
\newcommand{\inspecht}[2]{\operatorname{M}_{#1}(#2)}
\newcommand{\yleq}{\preccurlyeq}
\newcommand{\confix}[2]{
  \ifstrempty{#1}{
    \conf_{\bullet}(#2)
  }
  {
    \conf_{#1}(#2)
  }
}
\newcommand{\inv}[3]{
  \ifstrempty{#2}{
    	\ifstrempty{#1}{
	\operatorname{inv}^{+}_{\bullet}(#3)
	}
    {
    \operatorname{inv}^{\,#1}_{\bullet}(#3)
    }
  }
  {
    \operatorname{inv}^{\,#1}_{#2}(#3)
  }
}
\DeclareMathOperator{\Sym}{Sym}
\newcommand{\coinv}[3]{
  \ifstrempty{#2}{
  	\ifstrempty{#1}{
    \operatorname{coinv}_{\bullet}(#3)
  }
  	{
		\operatorname{coinv}^{\,#1}_{\bullet}(#3)
	}
  }
  {
    \operatorname{coinv}^{\,#1}_{#2}(#3)
  }
}
\def\blfootnote{\gdef\@thefnmark{}\@footnotetext}
\begin{document}
\ytableausetup{centertableaux}
\maketitle

\blfootnote{\textup{2010} \textit{Mathematics Subject Classification}.
Primary 18A25, 05E10; Secondary 55R80, 11F75.} 
\blfootnote{\textit{Key words and phrases}. $\FI$-modules, representation stability, diagonal coinvariant algebras, configuration spaces, congruence subgroups.}
\vspace{-0.8 in}
\begin{onecolabstract} 
We give refined bounds for the regularity of $\FI$-modules and the stable ranges of $\FI$-modules for various forms of their stabilization studied in the representation stability literature. We show that our bounds are sharp in several cases. We apply these to get explicit stable ranges for diagonal coinvariant algebras, and improve those for ordered configuration spaces of manifolds and congruence subgroups of general linear groups.
\end{onecolabstract}
\vspace{0.4 in}

{\tableofcontents}

\section{Introduction} \label{section:general-FI}
The theory of $\FI$-modules has established itself as one of the main tools in studying the stable behavior of representations of symmetric groups since the foundational work of Church--Ellenberg--Farb \cite{cef}. The contributions of this paper are threefold:
\begin{birki}
 \item We obtain two bounds for the (Castelnuovo--Mumford) \textbf{regularity} of an $\FI$-module $V$: 
\begin{itemize}
\item The first bound, \textbf{Theorem \ref{improved-regularity}}, is in terms of the generation and presentation degrees of $V$, constituting a small but final improvement on the bounds of Church--Ellenberg \cite[Theorem A]{ce-homology} and Ramos \cite[Theorem 3.19]{ramos-fig}.
\item The second bound, \textbf{Theorem \ref{regularity-from-FI-hyp}}, is in terms of the local and stable degrees of $V$ (in the sense of Church--Miller--Nagpal--Reinhold \cite{cmnr-range}) which is also often sharp.
\end{itemize}

\item We identify four types of stabilization existing in the literature for $\FI$-modules in Definition \ref{stable-ranges-defn} and we improve the corresponding \textbf{stable ranges}
\begin{itemize}
\item in terms of the local and stable degrees in \textbf{Theorem \ref{main-ranges}},
\item in terms of $\FI$-hyperhomology in \textbf{Theorem \ref{hyper-ranges}}.
\end{itemize}
 
\item We apply these results to get 
\begin{itemize}
 \item explicit stable ranges for the graded pieces of diagonal coinvariant algebras in \textbf{Theorem \ref{main-coinv}},
 \item improvements in the stable ranges for the cohomology of ordered configuration spaces of manifolds in \textbf{Theorem \ref{main-config}},
 \item improvements in the stable ranges for the homology of congruence subgroups of general linear groups in \textbf{Theorem \ref{main-cong}}. 
\end{itemize}
\end{birki}

\paragraph{Notation.} We write $\FI$ for the category of finite sets and injections, and $\FB$ for the category of finite sets and bijections. An $\FI$-module is a functor $V \colon \FI \rarr \lMod{\zz}$ and given a finite set $S$, we write $V_{S}$ for its evaluation and for $n \in \nn$ we set $V_{n} := V_{\{1,\dots,n\}}$. We write $\lMod{\FI}$ for the category of $\FI$-modules. We similarly define $\FI$-spaces, $\FI$-groups, $\FB$-modules etc. We say that an $\FI$-module $V$ is defined over a ring $R$ if it factors through the forgetful functor $\lMod{R} \rarr \lMod{\zz}$. We write $\ters{\FI}$ for the opposite category of $\FI$, so for instance the cohomology groups of a $\ters{\FI}$-space are $\FI$-modules. Given an $\FB$-module or an $\FI$-module $W$, we write 
\begin{align*}
 \deg(W) &:= \min\{d \geq -1 : W_{S} = 0 \text{ for } |S| > d\} 
 \\
 &\in \{-1,0,1,2,3,\dots\} \cup \{\infty\} \, .
\end{align*}
\paragraph{$\FI$-homology.}
Consider the functor $\pi^{*} \colon \lMod{\FB} \rarr \lMod{\FI}$ that extends an $\FB$-module to an $\FI$-module by making non-bijections of finite sets act as zero; $\pi^{*}$ has a left adjoint $\cofi{0} \colon \lMod{\FI} \rarr \lMod{\FB}$\footnote{Some authors prefer working with 
$\cofip{0} := \pi^{*} \circ \cofi{0} : \lMod{\FI} \rarr \lMod{\FI}$ and its derived functors to have $\FI$-homology groups be $\FI$-modules themselves. This does not make much of a difference because as $\pi^{*}$ is exact, we have $\cofip{i} :=\operatorname{L}_{i}(\pi^{*} \circ \cofi{0}) \cong \pi^{*} \circ \cofi{i}$, for instance by \cite[Theorem 1]{adams-rieffel}.} which, as explained in \cite{ce-homology}, has the description 
\begin{align*}
 \cofi{0}(V)_{S} = \coker\! \left(
 \bigoplus_{T \subsetneq S} V_{T} \rarr V_{S}
 \right)
\end{align*}
for every finite set $S$. For each $i \geq 0$, we write $\cofi{i} := \operatorname{L}_{i}\!\cofi{0}$ for its $i$-th left derived functor, and write 
\begin{align*}
 t_{i}(V) := \deg\left( \cofi{i}(V) \right) \, 
\end{align*}
for every $\FI$-module $V$.  We say that $V$ is \textbf{generated in degrees} $\leq g$ if $\tgen(V) \leq g$, this is equivalent to $V$ having no proper $\FI$-submodule $U \leq V$ with $U_{S} = V_{S}$ for $|S| \leq g$. We say that $V$ is \textbf{presented in finite degrees} if $\tgen(V)$ and $\trel(V)$ are both finite. 

\subsection{Bounds for the regularity of $\FI$-modules}
Given an $\FI$-module $V$, we write
\begin{align*}
  \reg(V) 
  &:= \max\{t_{i}(V) - i : i \geq 1\} \\
  &\in\{-2,-1,0,1,2,3,\dots\} \cup\{\infty\}
\end{align*}
and call this quantity the \textbf{regularity} of $V$.

The first result of this paper is a small and final improvement on the known bounds of the regularity of an $\FI$-module $V$ in terms of $\tgen(V)$ and $\trel(V)$. The tools to obtain it were already present in Church--Ellenberg \cite[Proposition 4.3]{ce-homology} and Gan \cite[Lemma 19]{gan-shift-seq}, but neither its explicit statement nor its consequences in applications have been noted before.
 
\begin{thmx} \label{improved-regularity}
For every pair of integers $a,b \geq 0$, we have
\begin{align*}
\max\!\left\{\!
\reg(V) \!: \!\!
\begin{array}{l}
 \text{$V$ is an $\FI$-module with}
 \\
 0\leq \tgen(V) \leq a \, , 0 \leq \trel(V) \leq b 
\end{array}
 \!\!\right\} \,=\, 
\begin{cases}
 a + b  - 1 \, & \text{if\, $a < b$,} \\
 2b - 2 & \text{if\, $a \geq b$.} 
\end{cases}
\end{align*}
\end{thmx}

\begin{rem}
Given an $\FI$-module with $V$ with $0 \leq \tgen(V) \leq a$ and $0 \leq \trel(V) \leq b$, the inequality  
$\reg(V) \leq \min\{a,b\} + b - 1
$ is proved by Ramos in \cite[Theorem 3.19]{ramos-fig}, building on \cite[Theorem A]{ce-homology}. Thus the improvement in Theorem \ref{improved-regularity} is only in the case $a \geq b$ by exactly one degree, and it cannot be improved further.
\end{rem}

Mirroring \cite[Corollary B]{ce-homology} and \cite[Corollary 4]{gan-li-linear}, from Theorem \ref{improved-regularity} we obtain an improved bound on the generation degrees of the homology of a chain complex of $\cofi{0}$-acyclic $\FI$-modules.

\begin{corx} \label{H0-acyclic-complex-range}
 Suppose $\chain_{\star}$ is a chain complex of $\FI$-modules such that for every $k \in \zz$, the $\FI$-module $\chain_{k}$ is $\co_{0}^{\FI}$-acyclic and is generated in degrees $\leq g_{k}$. Assuming $g_{k} \geq 0$, the $\FI$-module  $\co_{k}(\chain_{\star})$ is generated in degrees 
\begin{align*}
 \leq \begin{cases}
 g_{k-1} + g_{k} + 1 
 & \text{if $g_{k} > g_{k-1}$,}
 \\
 2g_{k} & \text{if $g_{k} \leq g_{k-1}$.}
\end{cases}
\end{align*}
\end{corx} 

\paragraph{The shift functor.} Given any $\FI$-object $V \colon \FI \rarr \mathsf{C}$ in a category $\mathsf{C}$, we write $\shift{V}{}$ for the composition 
\begin{align*}
 \FI \xrightarrow{- \sqcup \{*\}} \FI \xrightarrow{V} \mathsf{C} \, .
\end{align*}
Inductively, we also write $\shift{V}{0} := V$ and $\shift{V}{r} := \shift{\!\left(\shift{V}{r-1}\right)}{}$ for each $r \geq 1$. The shift functor has been shown to be a very useful tool in studying $\FI$-modules, starting with the paper of Church--Ellenberg--Farb--Nagpal \cite{cefn} and Nagpal's thesis \cite{nagpal-thesis}. 
\paragraph{Thick subcategories.} Given any class $\mathbf{X}$ of objects in an abelian category $\mathsf{A}$, we write $\thick\gen{\mathbf{X}}$ for the smallest full subcategory of $\mathsf{A}$ that 
\begin{itemize}
 \item contains $\mathbf{X}$ and the zero object, 
 \item is closed under taking kernels, cokernels, extensions, and direct summands.
\end{itemize}

\vspace{0.3cm}
The following hypothesis phrased in terms of the shift functor will see frequent use in this paper:
\vspace{-0.3cm}
\begin{hyp} \label{FI-hyp}
 In the triple $(V,c,g)$, we have an $\FI$-module $V$ and integers $c,g \geq -1$ such that
\begin{birki}
 \item $\shift{V}{c+1}$ is $\cofi{0}$-acyclic, and
 \vspace{0.04in}
 \item $\shift{V}{x} \in \thick\gen{\text{$\cofi{0}$-acyclic $\FI$-modules generated in degrees $\leq g$}}$ for some $x \geq 0$.
\end{birki}
\end{hyp}
In fact $(V,c,g)$ satisfying Hypothesis \ref{FI-hyp} is equivalent to $V$ having local degree $\leq c$ and stable degree $\leq g$ in the sense of \cite{cmnr-range}; see Theorem \ref{pass-to-cmnr}. These invariants and  their propagation through taking kernels, cokernels and extensions via \cite[Propositions 3.2, 3.3]{cmnr-range} permeate this work, whence the choice to collect them in a single hypothesis. We can now state our second bound for the regularity.

\begin{thmx} \label{regularity-from-FI-hyp}
  If the triple $(V,c,g)$ satisfies Hypothesis \ref{FI-hyp}, then
\begin{align*}
 \reg(V) \leq  
 \begin{cases}
 -2 & \text{if $c = -1$,} \\
 c & \text{if $g = -1$ and $c \geq 0$,} \\
 c+1 & \text{if $0 \leq g \leq \ceil{c/2}$ and $c \geq 0$,} \\
 g + \floor{c/2} + 1  & \text{if $g > \ceil{c/2}$ and $c \geq 0$.}
\end{cases}
\end{align*}
Moreover, for every $c,g \geq -1$ there exist $\FI$-modules $\mathbf{I}(g)$, $\mathbf{T}(c)$, $\mathbf{S}(c)$, $\mathbf{V}(g)$ defined over $\qq$ with finite-dimensional evaluations such that 
\begin{birki}
 \item The triple $(\mathbf{I}(g),\, -1,\, g)$ satisfies Hypothesis \ref{FI-hyp} and $\reg(\mathbf{I}(g)) = -2$.
 \vspace{0.1cm}
 \item If $c \geq 0$, the triple $(\mathbf{T}(c),\, c,\, -1)$ satisfies Hypothesis \ref{FI-hyp} and $\reg(\mathbf{T}(c)) = c$.
  \vspace{0.1cm}
 \item If $c \geq 0$, the triple $(\mathbf{S}(c),\, c,\, 0)$ satisfies Hypothesis \ref{FI-hyp} and $\reg(\mathbf{S}(c)) = c+1$.
  \vspace{0.1cm}
 \item If $g \geq 1$, the triple $(\mathbf{V}(g),\, 2g-2,\, g)$ satisfies Hypothesis \ref{FI-hyp} and $\reg(\mathbf{V}(g)) = 2g$.
\end{birki}
\end{thmx}
We prove the bound in Theorem \ref{regularity-from-FI-hyp} by using the relationship of regularity with the \textbf{local cohomology} of $V$, obtained by Li--Ramos \cite[Theorem F]{li-ramos} and Ramos \cite[Corollary 4.15]{ramos-coh}. Considering that this relationship has been upgraded from an inequality to an equality by Nagpal--Sam--Snowden in \cite[Theorem 1.1]{nss-regularity}, it is not surprising that our bound is sharp in many cases.

\subsection{Various stabilizations for $\FI$-modules}
\subsubsection{Notions specific to field coefficients} \label{field-specific}
When an $\FI$-module is defined over a field with finite-dimensional evaluations, there are two notions with which we can formulate a uniform behavior across all symmetric groups which are not available for an arbitrary commutative ring: character polynomials and Specht modules. We briefly recall these.

\paragraph{Character polynomials.}
For every $j \geq 1$, consider the class function 
 \begin{align*}
 \cyc_{j} \colon \bigsqcup_{n = 0}^{\infty} \sym{n} &\rarr \nn \\
 \sigma &\mapsto \left|\{\text{$j$-cycles in the cycle decomposition of $\sigma$}\}\right| \, .
\end{align*}
For any field $\kk$, we call a polynomial $\poly$ with variables in 
\{$\cyc_{j} : j \text{ does not divide } \ch(\kk)\}$
and coefficients in $\qq$ an \textbf{\cikcik}. For any permutation $\sigma$ in a fixed finite symmetric group, $\poly(\sigma)$ is computed via extending $\cyc_{j}(\sigma)$ as defined above, and the degree of $\poly$ is defined via extending $\deg(\cyc_{j}) := j$.


\paragraph{Partitions and Specht modules.} Given a commutative ring $R$ and a weakly decreasing sequence 
$\mu : \mu_{1} \geq \mu_{2} \geq \cdots \geq 0  \cdots$ of integers which is eventually zero, writing $m = |\mu| := \sum \mu_{i}$ so that $\mu$ is a \textbf{partition} of size $m$ (or shortly $\mu \vdash m$), the Specht module $\specht{R}{\mu}$ is a well-defined $R\sym{m}$-module \cite[Section 4]{james-sym-book}. When $R = \kk$ is a field, their isomorphism classes 
\begin{align*}
 \{\left[\specht{\kk}{\mu}\right] : \mu \vdash m\}
\end{align*}
generate the Grothendieck group $K_{0}(\kk\!\sym{m})$ \cite[Corollary 12.2]{james-sym-book}. We say a partition $\lambda$ is $\kk$\textbf{-regular} if either $\ch(\kk) = 0$ or 
\begin{align*}
 \big|\{j : \lambda_{j} = t\}\big| < \ch(\kk) \quad \text{for every $t \geq 1$.}
\end{align*}
Given any partition $\lambda : \lambda_{1} \geq \lambda_{2} \geq \cdots$ and any integer $n \geq|\lambda| + \lambda_{1}$, we write 
\begin{align*}
 \lambda[n] : n-|\lambda| \geq \lambda_{1} \geq \lambda_{2} \geq \cdots \,\,,
\end{align*}
so that $\lambda[n] \vdash n$.

\subsubsection{Stable ranges}
The following definition encodes four types of stabilization for an $\FI$-module with six parameters.

\begin{defn} \label{stable-ranges-defn}
 Let $V$ be an $\FI$-module defined over a commutative ring $R$. Let $\tgen,\trel,\local,\weak$  $\geq -1$, $\struct \geq \max\{0,2\weak-1\}$, and $\mult \geq 0$ be integers. We say that $V$ has \textbf{stable ranges} 
\begin{align*}
 \mathlarger{
 \yleq (\tgen,\,\, \trel,\,\, \struct,\,\, \local,\,\, \weak,\,\, \mult)
 }
\end{align*}
if the following holds:  
\begin{birki}
 \item (\textbf{Inductive description}) Given $n,N \in \nn$, the natural map
\begin{align*}
 \mathlarger{\underset{\substack{S \subseteq \{1,\dots,n\}\vspace{0.011in} \\ |S| \leq N}}{\colim} \!\!V_{S} \rarr V_{n}}
\end{align*}
of $R \sym{n}$-modules is surjective if $N \geq t_{0}$, and injective if $N \geq t_{1}$.
\vspace{0.05in}

\item (\textbf{Additive structure}) There exist $R$-modules $\ab_{0}, \dots, \ab_{\weak}$ such that in the range $n \geq \struct$, there is an isomorphism
\vspace{-0.05in}
\begin{align*}
 \mathlarger{V_{n} \cong \bigoplus_{r=0}^{\weak} \ab_{r}^{\oplus \binom{n}{r} - \binom{n}{r - 1}}} \text{ \footnotemark}
\end{align*}
\footnotetext{We have required $A \geq 2\weak - 1$ to guarantee $\binom{n}{r} - \binom{n}{r-1} \geq 0$ whenever $n \geq A$ and $r \leq \weak$.}of $R$-modules, with the convention that $\binom{a}{b} = 0$ unless $0 \leq b \leq a$.
\vspace{0.05in}
 \item (\textbf{Polynomiality}) If $R = \kk$ is a field and $\dim_{\kk} V_{n} < \infty$ for every $n \geq 0$, then for each $r=0,\dots,\weak$ there is a finite-dimensional $\kk\!\sym{r}$-module $W_{r}$ such that in the range $n \geq \local+1$, the sequence of symmetric group (Brauer) characters
 \vspace{-0.05in}
\begin{align*}
\mathlarger{n \mapsto \mathlarger{\chi}_{V_{n}}}
\end{align*}
is equal to the $\kk$-character polynomial 
\begin{align*}
\mathlarger{
 \sum_{r=0}^{\weak} \sum_{\lambda \,\vdash\, r}
 \chi_{W_{r}}(\lambda)
 \binom{\cyc_{1} - \local - 1}{a_{1}(\lambda)} 
 \prod_{j=2}^{r}\!\! \binom{\cyc_{j}}{a_{j}(\lambda)} \, .
 }
\end{align*}
Here $a_{j}(\lambda)$ denotes the number of parts of size $j$ in $\lambda$, and if $a_{j}(\lambda) > 0$ for some $j$ divisible by $\ch(\kk)$, we write $\chi_{W_{r}}(\lambda) = 0$. 
In particular, there are integers $d_{0}, \dots, d_{\weak} \geq 0$ (namely $d_{r} = \dim_{\kk} W_{r}$) such that in the range $n \geq \local + 1$, we have
\vspace{-0.03in}
\begin{align*}
\mathlarger{
\dim_{\kk} V_{n} = \sum_{r=0}^{\weak} d_{r} \binom{n - \local - 1}{r} \,.
}
\end{align*}
\vspace{0.03 in}
\item (\textbf{Virtual Specht stability}) If $R = \kk$ is a field and $\dim_{\kk} V_{n} < \infty$ for every $n \geq 0$, then there is a uniquely determined function
\begin{align*}
  \nvm \colon 
\{\text{$\kk$-regular partitions of size $\leq \weak$}\} \rarr \zz \, ,
\end{align*}
such that $M \geq \max\{|\lambda| + \lambda_{1} : \nvm(\lambda) \neq 0\}$,\footnote{We require this for $M$ so that the right hand side of the below equation is well-defined. It is guaranteed by $M \geq 2\weak$, which is what we often have in applications.} and for every $n \geq M$
we have 
\begin{align*}
 \mathlarger{[V_{n}] = \sum_{\lambda} \nvm(\lambda) \left[ \specht{\kk}{\lambda[n]} \right]}
\end{align*}
\vspace{-0.05in}
in the Grothendieck group of finite-dimensional $\kk\!\sym{n}$-modules. 
%
\end{birki}
\end{defn}

Definition \ref{stable-ranges-defn} is in fact a collection of general results about $\FI$-modules turned into a definition: 
\begin{itemize}
 \item The inductive description (1) with the colimit was first formulated by Church--Ellenberg--Farb--Nagpal \cite[Theorem C]{cefn}.
 \item The form of the additive structure in (2) is due to Patzt--Wiltshire-Gordon \cite[Theorem A]{patzt-wgordon}.
 \item For finitely generated $\FI$-modules over a field $\kk$, the eventual polynomiality in (3) was obtained first by Church--Ellenberg--Farb \cite[Theorem 3.3.4]{cef} when $\ch \kk = 0$ for ordinary characters, and by Harman \cite[Corollary 3.1]{harman-poly} when $\ch \kk > 0$ for Brauer characters. However, the form of the polynomials presented here is more explicit.
 \item  In characteristic 0, Specht stability in (4) as a concept actually predates $\FI$-modules: it is what Church--Farb dubbed \emph{uniform multiplicity stability} in \cite[Definition 2.7]{church-farb-rep-stab}. That finitely generated $\FI$-modules over a field of characteristic zero exhibit uniform multiplicity stability is due to Church--Ellenberg--Farb \cite[Theorem 1.13]{cef}. Over a field of positive characteristic, virtual Specht stability is due to Harman \cite[Theorem 1.3]{harman-poly}. 
\end{itemize}
All of the remaining main results of this paper, namely Theorems \ref{main-ranges}, \ref{hyper-ranges}, \ref{main-coinv}, \ref{main-config}, \ref{main-cong}, are formulated using Definition \ref{stable-ranges-defn}.

\begin{rem}[Virtual vs genuine Specht stability]
Suppose $V$ is an $\FI$-module defined over a field $\kk$ of characteristic $p > 0$ such that $\dim_{\kk} V_{n} < \infty$ for every $n$, and has stable ranges
\begin{align*}
 \yleq (\tgen,\,\, \trel,\,\, \struct,\,\, \local,\,\, \weak,\,\, \mult) \, .
\end{align*}
Note that if $p > M$, then $\kk\!\sym{\!M}$-modules are semisimple and hence the constants $\nvm(\lambda)$ in Definition \ref{stable-ranges-defn} have to be non-negative. In fact, it follows from a result of Putman \cite[Theorem E]{putman-congruence} that if furthermore $p > K:= 2\max\{\tgen,\trel\}+1$, then $V$ has stable ranges
\begin{align*}
 \yleq (\tgen,\,\, \trel,\,\, \struct,\,\, \local,\,\, \weak,\,\, K) 
\end{align*}
such that the non-negative constants $\nvm(\lambda)$ arise out of a Specht filtration of $V_{n}$ for every $n \geq K$. Together with certain compatibility criteria as $n$ varies, the existence of such filtrations is called \textbf{Specht stability} in \cite[Section 6.3]{putman-congruence} and applies even when $V_{n}$'s are infinite-dimensional. This notion does not appear anywhere else in this paper, though its traces can be seen in Remarks \ref{compare-main}, \ref{compare-hyper}, \ref{compare-coinv}, \ref{compare-config}, \ref{compare-cong}.
\end{rem}

Our next result converts Hypothesis \ref{FI-hyp} into stable ranges.
\begin{thmx} \label{main-ranges}
 If the triple $(V,c,g)$ satisfies Hypothesis \ref{FI-hyp}, then $V$ 
has stable ranges 
\begin{align*}
\begin{cases}
  \yleq(g,\,\, -1,\,\, \max\{0, 2g-1\},\,\, -1,\,\, g,\,\, \max\{0,2g\}) & \text{if $c=-1$,} 
  \\
  \yleq (c,\,\, c+1,\,\, c+1 ,\,\, c,\,\, -1,\,\, c+1) &\text{if $g = -1$ and $c \geq 0$,} \\
  \yleq (c+1,\,\, c+2,\,\, c+1,\,\, c,\,\, g,\,\, c+1) & \text{if $0\leq g \leq \ceil{c/2}$ and $c \geq 0$,} 
  \\
  \yleq(g + \floor{c/2} + 1,\,\, g + \floor{c/2} + 2,\,\,
  2g-1,\,\, c,\,\, g,\,\, 2g) & \text{if $g > \ceil{c/2}$ and $c \geq 0$.}
\end{cases}
\end{align*}
Moreover, for every $c,g \geq -1$ there exist $\FI$-modules $\mathbf{I}(g)$, $\mathbf{T}(c)$, $\mathbf{S}(c)$, $\mathbf{V}(g)$ defined over $\qq$ with finite-dimensional evaluations such that 
\begin{birki}
 \item The triple $\big(\mathbf{I}(g),\,-1,\,g\big)$ satisfies Hypothesis \ref{FI-hyp} and all of the stable ranges
\begin{align*}
 \yleq(g,\,\, -1,\,\, \max\{0, 2g-1\},\,\, -1,\,\, g,\,\, \max\{0,2g\})
\end{align*}
of $\mathbf{I}(g)$ are sharp.
 \vspace{0.1cm}
 \item If $c \geq 0$, the triple $\big(\mathbf{T}(c),\, c,\, -1\big)$ satisfies Hypothesis \ref{FI-hyp} and all of the stable ranges 
\begin{align*}
 \yleq (c,\,\, c+1,\,\, c+1 ,\,\, c,\,\, -1,\,\, c+1) 
\end{align*}
of $\mathbf{T}(c)$ are sharp.
  \vspace{0.1cm}
 \item If $0 \leq g \leq\ceil*{c/2}$ and $c \geq 0$, the triple $\big(\mathbf{S}(c) \oplus \mathbf{I}(g),\, c,\, g\big)$ satisfies Hypothesis \ref{FI-hyp} and all of the stable ranges 
\begin{align*}
 \yleq (c+1,\,\, c+2,\,\, c+1 ,\,\, c,\,\, g,\,\, c+1) 
\end{align*}
of $\mathbf{S}(c) \oplus \mathbf{I}(g)$ are sharp.  
\vspace{0.1cm}
 \item If $g \geq 1$, the triple $\big(\mathbf{V}(g),\,2g-2,\,g\big)$ satisfies Hypothesis \ref{FI-hyp} and all of the stable ranges
\begin{align*}
 \yleq(2g,\,\, 2g+1,\,\,
  2g-1,\,\, 2g-2,\,\, g,\,\, 2g)
\end{align*}
of $\mathbf{V}(g)$ are sharp.
\end{birki}
\end{thmx}
The ingredients of Theorem \ref{main-ranges} are
\begin{itemize}
 \item Theorem \ref{regularity-from-FI-hyp} for the inductive description (more specifically Corollary \ref{t0-t1-ranges}),
 \item an improvement to the stable range of the additive structure in Theorem \ref{thm-additive}, and 
 \item  an explicit form of polynomiality in arbitrary characteristic in Theorem \ref{thm-poly-specht}.
\end{itemize}
The $\FI$-modules $\mathbf{I}(g)$, $\mathbf{T}(c)$, $\mathbf{S}(c)$, $\mathbf{V}(g)$ witnessing the sharpness in Theorem \ref{main-ranges} are the same ones with those in Theorem \ref{regularity-from-FI-hyp}.

\begin{rem} \label{compare-main}
Combining \cite[Proposition 3.1]{cmnr-range}, \cite[Theorem A]{patzt-wgordon}, \cite[Theorem E]{putman-congruence}, \cite[Theorem 1.2]{harman-poly} the best stable ranges established previously in the literature under the assumptions of Theorem \ref{main-ranges} were
\begin{align*}
  \yleq (g+c+1,\, g+2c+2,\, 2g+4c+3,\, c,\, g,\, M) \, ,
\end{align*}
where $M$ was undetermined in general and if $V$ is defined over a field of characteristic $\geq 2g + 4c + 5$ or zero, one could take $M = 2g + 4c + 4$.
\end{rem}

\begin{rem}
 The sharpness of the stable ranges of $\mathbf{V}(g)$ in part (4) of Theorem \ref{main-ranges} is significant for comparison to our later applications. Both in Theorem \ref{main-coinv} for coinvariant algebras and in Theorem \ref{main-config} for configuration spaces, the relevant $\FI$-module $V$ has stable ranges of the form 
\begin{align*}
 \yleq(2g,\,\, 2g+1,\,\,
  2g-1,\,\, 2g-2,\,\, g,\,\, 2g) \,
\end{align*}
for some $g \geq 1$ as a result of the triple $(V,\,2g-2,\,g)$ satisfying Hypothesis \ref{FI-hyp}. So for an improvement to these ranges, one either needs to show that $(V,c,h)$ satisfies Hypothesis \ref{FI-hyp} for some $c < 2g - 2$ or $h<g$, or use extra knowledge about $V$.
\end{rem}

\paragraph{Stable ranges in terms of $\FI$-hyperhomology.} When $\FI$-modules arise as the homology of an $\FI$-chain complex, it has been exhibited by Church--Miller--Nagpal--Reinhold \cite{cmnr-range}, Gan--Li \cite{gan-li-linear}, and Miller--Wilson \cite{miller-wilson-PConf-hyper} that their stable ranges can be improved by working with numerical invariants attached directly to $\FI$-chain complexes. This is essentially because these ``derived stable ranges'' propagate in a more lossless way through filtrations than the stable ranges of $\FI$-modules do through spectral sequences. The recipe is that like any right exact functor between abelian categories with enough projectives, $\cofi{0}$ has left hyper-derived functors \cite[Definition 5.7.4]{weibel-hom-alg}
\begin{align*}
 \cofibol{k} := \mathbb{L}_{k}\!\cofi{0} \colon \ccx_{\geq 0}(\lMod{\FI}) \rarr \lMod{\FB}\, ,
\end{align*}
which we refer to as \textbf{$\FI$-hyperhomology}.
Given an $\FI$-chain complex $\chain_{\star}$ supported on non-negative degrees, we write 
\begin{align*}
 \tbol_{k}(\chain_{\star}) := \deg(\cofibol{k}(\chain_{\star})) \, .
\end{align*}
The next result converts bounds for the $\FI$-hyperhomology of a chain complex to stable ranges for its homology. We obtain it by running the argument of Gan--Li \cite{gan-li-linear} with Theorem \ref{improved-regularity} and Theorem \ref{main-ranges}. 

\begin{thmx} \label{hyper-ranges}
 Let $\chain_{\star}$ be a chain complex of $\FI$-modules supported on non-negative degrees and $k \geq 0$ such that $\tbol_{k}(\chain_{\star}) \leq \theta_{k}$, $\tbol_{k+1}(\chain_{\star}) \leq \theta_{k+1}$ with $\theta_{k} \geq 0$. Then $\co_{k}(\chain_{\star})$ has stable ranges 
\begin{align*} 
\begin{cases}
 \yleq (0,\, -1,\, 0,\, -1,\, 0,\, 0) & \text{if \,$\theta_{k} = \theta_{k+1} = 0$,}
 \vspace{0.1in}
 \\
  \yleq \left(
\begin{array}{r r r}
  2\theta_{k}, &2\theta_{k}+1, &2\theta_{k} - 1,\\
  2\theta_{k} - 2, &\theta_{k}, & 2\theta_{k}\,
\end{array}
 \right)
 & \text{if $\theta_{k} \geq \max\{1,\theta_{k+1}\}$,} 
 \vspace{0.1in} 
 \\
 \yleq \left(
\begin{array}{r r r}
  2\theta_{k}, &2\theta_{k+1}-1, &2\theta_{k+1} - 1,\\
  2\theta_{k+1} - 2, &\theta_{k}, & 2\theta_{k+1} - 1\,
\end{array}
 \right)
 & \text{if $\theta_{k} < \theta_{k+1}$.}
\end{cases}
\end{align*}
If furthermore $(\theta_{m} : m \geq 0)$ is a strictly increasing sequence such that $\theta_{0} \geq 0$ and $\tbol_{m}(\chain_{\star}) \leq \theta_{m}$ for every $m \geq 0$, then $\co_{k}(\chain_{\star})$ has stable ranges 
\begin{align*}
\begin{cases}
 \yleq \left(
\begin{array}{r r r}
  \theta_{1}-1, &\theta_{1}, &2\theta_{1} - 1,\\
  2\theta_{1} - 2, &\theta_{0}, & 2\theta_{1} - 1\,
\end{array}
 \right) & \text{if $k=0$ and $2\theta_{0} + 1 \geq \theta_{1}$,}
 \vspace{0.1in}
 \\
 \yleq \left(
\begin{array}{r r r}
  2\theta_{k}, &2\theta_{k}+1, &2\theta_{k+1} - 1,\\
  2\theta_{k+1} - 2, &\theta_{k}, & 2\theta_{k+1} - 1\,
\end{array}
 \right)
 & \text{if $k \geq 1$ and $2\theta_{k}+1 \geq \theta_{k+1}$,}
\vspace{0.1in}
\\
 \yleq \left(
\begin{array}{r r r}
  2\theta_{k}, &\theta_{k+1}, &2\theta_{k}+\theta_{k+1},\\
  2\theta_{k}+\theta_{k+1}-1, &\theta_{k}, & 2\theta_{k}+\theta_{k+1}\,
\end{array}
 \right) & \text{if $2\theta_{k}+1 < \theta_{k+1}$,}
\end{cases}
\end{align*}
\end{thmx}

\begin{rem} \label{compare-hyper}
Combining \cite[Theorem 5]{gan-li-linear}, \cite[Theorem A]{patzt-wgordon}, \cite[Theorem 1.3]{li-FIG-invariants}, \cite[Remark 9]{gan-li-linear}, \cite[Theorem E]{putman-congruence}, \cite[Theorem 1.2]{harman-poly}, the best stable ranges established previously in the literature under the assumptions of Theorem \ref{hyper-ranges} were
\begin{align*}
 \begin{cases}
  \yleq \left(
  \begin{array}{r r r}
   2\theta_{k} + 1, &2\theta_{k}+2, &4\theta_{k}+3,\\
   4\theta_{k} + 2, &\theta_{k}, &M_{k}\,
  \end{array} 
 \right) & \text{if $\theta_{k} \geq \theta_{k+1}$,} 
 \vspace{0.1in} 
 \\
 \yleq \left(
\begin{array}{r r r}
  2\theta_{k} + 1, &2\theta_{k+1} + 2, &4\theta_{k+1} + 3,\\
  2\theta_{k} + 2\theta_{k+1} + 2, &\theta_{k}, & M_{k}\,
\end{array}
 \right)  & \text{if $\theta_{k} < \theta_{k+1}$,}
\end{cases}
\end{align*}
where $M_{k}$ was undetermined in general and if $\chain_{\star}$ is defined over a field of characteristic $\geq 4\max\{\theta_{k},\theta_{k+1}\} + 6$ or zero, then one could take $M_{k} = 4\max\{\theta_{k},\theta_{k+1}\} + 5$.
\end{rem}

\subsection{Applications}

\subsubsection{Diagonal coinvariant algebras}
Given a commutative ring $R$ and an $R$-module $L$, we can form the symmetric algebra $\Sym(L)$. In the case $L = E ^{\oplus S}$ where $E$ is a free $R$-module with a finite basis $\B$ and $S$ is a finite set, the algebra $\Sym(E^{\oplus S})$ can be identified with the polynomial algebra $R[\B \times S]$ and equipped with an $\nn^{\B}$-grading via declaring 
\begin{align*}
\deg \bigg(
 \prod_{(x,s) \in \B \times S} (x,s)^{\alpha(x,s)}
 \bigg) := \J \in \nn^{\B} 
\end{align*}
for each monomial if and only if \ds{\sum_{s \in S} \alpha(x,s) = \J(x)} for every $x \in \B$. We call such $\J$ a \textbf{multi-degree}. For example when $\B = \{x,y\}$, the set of monomials with multi-degree \ds{(2,1) := 
\left(
\begin{array}{l}
 x \mapsto 2 \\
 y \mapsto 1
\end{array}
\right)
} is
$ \{x_{i}^{2}y_{j} :  i,j \in S \}
 \cup
 \{x_{i}x_{j}y_{k} : i,j,k \in S ,\, i \neq j \}
$, where we have written $x_{s} := (x,s)$ and $y_{s} := (y,s)$ shortly. 

The symmetric group $\sym{S}$ permutes the set of variables $\B \times S$ via the second coordinate, and the induced $\sym{S}$-action on $R[\B \times S]$ preserves the $\nn^{\B}$-grading. Given a multi-degree $\J \colon \B \rarr \nn$, we write $\Sym^{\J}(E^{\oplus S})$ for the $\J$-graded piece and write 
\begin{align*}
 \inv{\J}{S}{E} := \left(\Sym^{\J}(E^{\oplus S})\right)^{\sym{S}}
\end{align*}
for its $\sym{S}$-invariants. The \textbf{total degree} of $\J$ is $|\J| := \sum_{x \in \B} \J(x) \in \nn$. In the above example, $\inv{(2,1)}{S}{E}$ is  freely generated by the orbit sums
\begin{align*}
 \sum_{i \in S} x_{i}^{2}y_{i},\, \sum_{\substack{i,j \in S \\ i \neq j}}x_{i}^{2}y_{j},\, 
 \sum_{\substack{i,j \in S \\ i \neq j}}x_{i}x_{j}y_{i},\, 
  \sum_{\substack{i,j,k \in S \text{ distinct}}}x_{i}x_{j}y_{k}
\end{align*}
as an $R$-module. The quotient of the $R$-algebra 
\ds{
 \Sym(E^{\oplus S}) = \bigoplus_{\J \in \nn^{\B}} \Sym^{\J}(E^{\oplus S})
}
by the ideal generated by the $R$-submodule 
\begin{align*}
  \bigoplus_{0 \neq \J \in \nn^{\B}} \inv{\J}{S}{E}
\end{align*}
of positive degree $\sym{S}$-invariants (often called the \emph{Hilbert ideal} in the invariant theory literature) is the \textbf{diagonal coinvariant algebra}, which we denote by $\coinv{}{\!S}{E}$. 
\begin{rem}[The total dimension]
  When $R=\kk$ is a field, it follows from classical invariant theory  \cite[Corollary 2.1.6]{neusel-smith} that $\coinv{}{\!n}{E}$ is a finite-dimensional $\kk$-algebra. The exact dimension as $n$ varies seems to be known (even conjecturally) in only the following cases: 
\begin{birki}
 \item We have $\dim_{\cc}\!\left(\coinv{}{\!n}{\cc}\right) = n!\,$. Much more is known in this univariate case, which we revisit in Example \ref{coinv-chev}.
 \item The identity $\dim_{\cc}\!\left(\coinv{}{\!n}{\cc^{2}}\right) = (n+1)^{n-1}$ stood as a conjecture for about 15 years, and was finally established through algebraic geometry by Haiman \cite{haiman-vanishing}.
 \item It is conjectured \cite[(2)]{bergeron-ratelle}, \cite[Fact 2.8.1]{haiman-conjectures}, \cite[(2.13)]{bergeron-GL} that  
\begin{align*}
  \dim_{\cc}\!\left(\coinv{}{\!n}{\cc^{3}}\right) = 2^{n}(n+1)^{n-2} \, .
\end{align*}
\end{birki}
\end{rem}

The exponential (instead of polynomial) growth prohibits $\FI$-modules to be directly useful with the whole algebra $\coinv{}{n}{E}$, so we consider its graded pieces. The algebra $\coinv{}{\!S}{E}$ inherits the $\nn^{\B}$-grading from $\Sym(E^{\oplus S})$ so that we can write 
\begin{align*}
 \coinv{}{\!S}{E} = \bigoplus_{\J \in \nn^{\B}} \coinv{\J}{S}{E} \, .
\end{align*}
As explained in \cite[Section 5]{cef} and Section \ref{section-coinv} here, the assignment $S \mapsto \coinv{}{S}{E}$ defines an $\nn^{\B}$-graded $\ters{\FI}$-algebra $\coinv{}{}{E}$. In particular for every $\J \in \nn^{\B}$, we have an $\FI$-module 
\begin{align*}
 \coinv{\J}{}{E}^{\vee} := \Hom_{R}(\coinv{\J}{}{E},R)
\end{align*}
defined over $R$.

Our approach to diagonal coinvariant algebras is arguably more ``hands on'' than the other two applications and does not use Theorem \ref{hyper-ranges}. The $\ters{\FI}$-module $\coinv{\J}{}{E}$ is, essentially by definition, the cokernel of a map between $\ters{\FI}$-modules expressed in terms of $\inv{\J}{}{E}$ and $\Sym^{\J}(E^{\oplus \bul})$ (see (\ref{coinv-seq}) in Section \ref{section-coinv}). We work out the structure of the latter two to get stable ranges for $\coinv{\J}{}{E}^{\vee}$.

\begin{thmx} \label{main-coinv}
 Let $R$ be a commutative ring, $E$ a free $R$-module with a finite basis $\B$, and  $\J \colon \B \rarr \nn$ be a multi-degree whose total degree is $|\J| \geq 1$. Then $\coinv{\J}{}{E}^{\vee}$ has stable ranges
\begin{align*}
\yleq \left(
\begin{array}{r r r}
  2|\J|, &2|\J| + 1, &2|\J|-1,
  \vspace{0.04in}
  \\
  2|\J|-2, &|\J|, &2|\J|\,
\end{array}
 \right) \, .
\end{align*}
\end{thmx}

\begin{rem} \label{compare-coinv}
Combining \cite[Theorem 1.13]{cefn}, \cite[Theorem A]{patzt-wgordon}, \cite[Proposition 3.1, part (2)]{cmnr-range}, \cite[Theorem 1.11]{cef}, \cite[Theorem E]{putman-congruence}, \cite[Theorem 1.2]{harman-poly}, the best stable ranges established previously in the literature under the assumptions of Theorem \ref{main-coinv} were
\begin{align*}
 \yleq (N,\,\, N,\,\, 
2N-1,\,\, 
 2N-1,\,\, 
 |\J|,\,\, M)
\end{align*}
where $N, M$ were undetermined in general, and if $R$ is a field of characteristic $\geq 2N+2$ or zero, one could take $M = 2N+1$ with $N$ still undetermined. Moreover one needed to assume $R$ is Noetherian in \cite{cefn}, which is not assumed in Theorem \ref{main-coinv}.
\end{rem}

\begin{ex} \label{coinv-chev}
 When $\kk = \cc$ and $E = \cc$ (so $|\B| = 1$), if we forget the grading, $\coinv{}{\!n}{\cc}$ affords the regular $\sym{n}$-representation \cite[Theorem (B)]{chevalley-coinv}; also see \cite[Theorem 7.2.1]{neusel-smith}. The graded pieces are also completely understood as $\sym{n}$-representations: for each $j \in \nn$ we have 
\begin{align*}
\mathlarger{
 \coinv{j}{n}{\cc} \cong \bigoplus_{\mu \,\vdash\, n} \specht{\cc}{\mu}^{\oplus u^{j}(\mu)} \, ,
 }
\end{align*}
where $u^{j}(\mu)$ is the number of standard tableaux of shape $\mu$ and major index $j$ \cite[Theorem 8.8]{reut-book}. Let us compare this with what Theorem \ref{main-coinv} says about the $\FI$-module $\coinv{j}{}{\kk}^{\vee}$: it has stable ranges 
\begin{align*}
 \yleq (2j,\,\, 2j+1,\,\, 2j-1,\,\, 2j-2,\,\, j,\,\, 2j) \, .
\end{align*}
By the Specht stability part, we have a function 
$\nvm^{j} \colon \{\text{partitions of size $\leq j$}\} \rarr \zz$ such that for every $n \geq 2j$ we have 
\begin{align*}
 [\coinv{j}{n}{\cc}^{\vee}] = \sum_{\lambda} \nvm^{j}(\lambda)[\specht{\cc}{\lambda[n]}]
\end{align*}
in the Grothendieck group of finite-dimensional $\cc\sym{n}$-modules. Since complex $\sym{n}$-representations are self-dual and semisimple with irreducible Specht modules, we conclude that 
$\nvm^{j}(\lambda) = u^{j}(\lambda[n])$ for every $n \geq 2j$.\footnote{This gives a somewhat perverse but valid way of showing that the sequence $n \mapsto u^{j}(\lambda[n])$ is eventually constant after fixing $j$ and $\lambda$. Of course this can be done by elementary means directly from the tableau definition, and was done in \cite[proof of Theorem 7.1, page 302]{church-farb-rep-stab} to establish  representation stability in the graded pieces of the univariate coinvariant algebra before the $\FI$-module technology.} Now we claim that for the single block partition $(j)$, we have $\nvm^{j}(j) = u^{j}(j[2j]) = u^{j}(j,j) > 0$: indeed we may assume $j \geq 1$ and check that the standard tableau
\begin{align*}
\begin{ytableau}
   1 & 2 & \cdots & j \\
   \mathsmaller{j+1} & \mathsmaller{j+2}  & \cdots & 2j 
\end{ytableau}
\end{align*}
has shape $(j,j)$ and descent set $\{j\}$, hence major index $j$. Thus 
\begin{align*}
 \max\{|\lambda| + \lambda_{1} : \nvm^{j}(\lambda) \neq 0\} \,=\, 2j
\end{align*}
and the last two coordinates $(j, 2j)$ of the stable ranges are sharp here.
On the other hand, the purported range $n \geq 2j-1$ for the polynomial regime is not sharp. To see this, note that $\cc^{\oplus n}$ is a reflection representation of $\sym{n}$, so by \cite[Corollary 3.31]{lehrer-taylor-book} we have
\begin{align*}
 \Sym(\cc^{\oplus n}) \cong \coinv{}{\!n}{\cc} \otimes_{\cc} \inv{}{\!n}{\cc} 
\end{align*}
as $\nn$-graded $\cc\sym{n}$-modules. Writing 
\begin{itemize}
 \item $\mathbf{\Omega}$ for the ring of $\cc$-character polynomials,
 \item $\mathbf{S}^{(j)} \in \mathbf{\Omega}$ that computes the characters of the sequence $n \mapsto \Sym^{j}(\cc^{\oplus n})$,
 \item $\mathbf{C}^{(j)} \in \mathbf{\Omega}$ that (eventually) computes the characters of the sequence $n \mapsto \coinv{j}{n}{\cc}$,
 \item $p(j)$ for the number of partitions of size $j$,
 \item $p_{\leq n}(j)$ for the number of partitions of size $j$ into $\leq n$ parts,
\end{itemize}
we have
$\dim_{\cc}(\inv{j}{n}{\cc}) = p_{\leq n}(j)
$ for every $j,n$, which in turn equals $p(j)$ when $n \geq j$, sharply. Because 
\begin{align*}
 \sum_{j=0}^{\infty}\mathbf{S}^{(j)}t^{j} &= \prod_{i=1}^{\infty}(1-t^{i})^{-\cyc_{i}} \in \mathbf{\Omega}[[t]] \quad \text{by \cite[Theorem 2.6]{narayanan-char-poly}, and}
 \\
 \sum_{j=0}^{\infty}p(j)t^{j} &= \prod_{i=1}^{\infty} (1-t^{i})^{-1} \in \nn[[t]] \quad \text{by \cite[(3.50)]{wilf-gfology},}
\end{align*}
we conclude from the above $\nn$-graded isomorphism that
\begin{align*}
 \sum_{j=0}^{\infty}\mathbf{C}^{(j)}t^{j} = \prod_{i=1}^{\infty}(1-t^{i})^{1-\cyc_{i}} 
 &= \prod_{i=1}^{\infty} \sum_{a=0}^{\infty}\binom{1-\cyc_{i}}{a}(-t^{i})^{a}
 \\
 &= \prod_{i=1}^{\infty} \sum_{a=0}^{\infty}\binom{\cyc_{i}+a-2}{a}t^{ai}
 \in \mathbf{\Omega}[[t]] \, .
\end{align*}
For example
\begin{align*}
  \mathbf{C}^{(4)} &= (\cyc_{4}-1) + (\cyc_{3}-1)(\cyc_{1}-1) + \binom{\cyc_{2}}{2} + (\cyc_{2}-1)\binom{\cyc_{1}}{2}
  + \binom{\cyc_{1}+2}{4}
  \\
  &=
 \cyc_{4} + \cyc_{3}\cyc_{1} + \binom{\cyc_{2}}{2}
 + \cyc_{2}\binom{\cyc_{1}}{2} + \binom{\cyc_{1}}{4}
 - \cyc_{3} + 2\binom{\cyc_{1}}{3} - \cyc_{1} \, .
\end{align*}
Here the complex characters of the sequence $n \mapsto \coinv{j}{n}{\cc}$ are given by $\mathbf{C}^{(j)}$ in the range $n \geq j$, sharply. Writing $\operatorname{U}(n)$ for the $n \times n$ unitary group and $\operatorname{T}(n) \leq \operatorname{U}(n)$ for the subgroup of diagonal matrices, the complex cohomology algebra of the complete flag manifold 
\begin{align*}
 \GL_{n}(\cc)/\text{(Borel subgroup)} \cong \operatorname{U}(n)/\operatorname{T}(n)
\end{align*}
is precisely $\coinv{}{\!n}{\cc}$ with the $\nn$-degrees doubled\footnote{Originally a result of Borel \cite[Proposition 26.1]{borel-fibres}, this can alternatively be deduced by applying \cite[Lemma 5.13]{dwyer-wilkerson-elementary} to the fibration sequence $\operatorname{U}(n)/\operatorname{T}(n) \rarr \operatorname{BT}(n) \rarr \operatorname{BU}(n)$.}. Thus the product formula for the generating function of the $\mathbf{C}^{(j)}$'s also follows from Chen--Specter's unpublished work \cite{chen-specter}, which is where I first saw the formula. Chen--Specter's method is to use the Grothendieck--Lefschetz fixed point formula to transform the cohomology computation into counting maximal tori over finite fields. The type-B,C analog was worked out by Fulman--Jim{\'e}nez Rolland--Wilson in \cite[Theorem 1.9]{fulman-rolland-wilson}.
\end{ex}



\subsubsection{Ordered configuration spaces} 
For any topological space $X$, the assignment 
\begin{align*}
 n &\mapsto \confix{n}{X} := \{(x_{1}, \dots, x_{n}) \in X^{n} : x_{i} \neq x_{j}\}
\end{align*}
defines a $\ters{\FI}$-space for which we write $\confix{}{X}$. Hence for each $k \geq 0$ and abelian group $\ab$, taking the $k$-th cohomology with coefficients in $\ab$ defines an $\FI$-module $\co^{k}(\confix{}{X};\ab)$. 

By feeding Miller--Wilson's method \cite{miller-wilson-PConf-hyper} of using $\FI$-hyperhomology for configuration spaces into Theorem \ref{hyper-ranges}, we obtain the next result.

\begin{thmx} \label{main-config}
Let $\manif$ be a $u$-connected topological $d$-manifold with $0 \leq u \leq d-2$, and $k \geq d-1$ be a cohomological degree. Write
\begin{align*}
 k = q_{k}(d-1) + r_{k},\, \quad 0 \leq r_{k} \leq d-2
\end{align*}
 via Euclidean division so that $q_{k} = \floor*{\frac{k}{d-1}}$, and set  
\begin{align*}
\weak_{k} :=
\begin{dcases*}
 \floor*{\frac{k}{u+1}} & \text{if $u+1 < d/2$,} \\
  2q_{k} + 1  & \text{if $d/2 \leq u+1 \leq r_{k}$,} \\
  2q_{k}  & \text{if $u+1 \geq \max\{d/2,\,r_{k}+1\}$.}
\end{dcases*}
\end{align*}
In case $d=2$ and $\manif \neq \mathbb{S}^{2}-C$ for some closed subset $C \subseteq \mathbb{S}^{2}$, we reset $\weak_{k} := 2k-1$. Then for every abelian group $\ab$, the $\FI$-module $\co^{k}(\confix{}{\manif};\ab)$ has stable ranges 
\begin{align*}
\yleq \left(
\begin{array}{r r r}
  2\weak_{k}, &2\weak_{k} + 1, &2\weak_{k} - 1,
  \vspace{0.04in}
  \\
  2\weak_{k} - 2, &\weak_{k}, &2\weak_{k}\,
\end{array}
 \right) \, .
\end{align*}
\end{thmx}

\paragraph{The case $\pmb{u=0}$.}
To apply Theorem \ref{main-config}  to any connected $d$-manifold $\manif$ with $d \geq 2$, we can take $u=0$ and for each $k \geq d-1$ get
\begin{align*}
\weak_{k} = 
\begin{dcases*}
  k & \text{if $d \geq 3$,} \\
  2k-1  & \text{if $d=2$ and $\manif \neq \mathbb{S}^{2} - C$,} \\
  2k & \text{otherwise.}
\end{dcases*}
\end{align*}
Here the improvement in the second case stems from an observation of Miller--Wilson \cite[Corollary 3.36]{miller-wilson-PConf-hyper}. The dichotomy between the $d \geq 3$ and $d=2$ cases will be familiar to the readers who have seen representation stability ranges for ordered configuration spaces in the literature such as \cite[Theorem 1]{church-config}, \cite[Theorems 1.8, 1.9]{cef}, \cite[Theorem 4.3]{cmnr-range}, \cite[Theorems 3.12, 3.27]{miller-wilson-secondary-stab}, \cite[Theorem 1.1]{miller-wilson-PConf-hyper}.

\begin{rem} \label{compare-config}
Combining \cite[Theorem 1]{church-config}, \cite[Theorem 1.8]{cef},  \cite[Corollary 3.3]{casto-fig}, \cite[Theorem 1.1]{miller-wilson-PConf-hyper}, \cite[Theorem A]{patzt-wgordon}, \cite[Theorem 4.3]{cmnr-range}, \cite[Theorem E]{putman-congruence}, \cite[Theorem 1.2]{harman-poly}, \cite{bahran-linear}, the best stable ranges established previously in the literature under the assumptions of Theorem \ref{main-config} with $u=0$ were
\begin{itemize}
\item the following if $\ab = \kk$ is a field of characteristic zero:
 \begin{align*}
\begin{dcases*}
  \yleq \left(
\begin{array}{r r r}
  2k+1, &2k+2, &4k+3,\\
  2k-2, &k, &2k\,
\end{array}
\right)
& \text{
  if $d \geq 3$, $\manif$ is orientable, and $k \leq 2d-3$,
}
\\
  \yleq \left(
\begin{array}{r r r}
  2k+1, &2k+2, &4k+3,\\
  2k-1, &k, &2k\,
\end{array}
\right)
& \text{
  if $d \geq 3$ and 
\begin{tabular}{l}
 $k \geq 2d-2$ or
 \\
 $\manif$ is non-orientable,
\end{tabular}
}
\\
  \yleq \left(
\begin{array}{r r r}
  4k+1, &4k+2, &8k+3,\\
  8k-6, &2k, &4k\,
\end{array}
\right)
& \text{
  if $d = 2$ and $\manif$ is orientable,
}
\\
  \yleq \left(
\begin{array}{r r r}
  4k+1, &4k+2, &8k+3,\\
  8k-2, &2k, &4k\,
\end{array}
\right)
& \text{
  if $d = 2$ and $\manif$ is non-orientable,
}
\end{dcases*}
\end{align*}

\item the following if $\ab = \kk$ is a field of large enough characteristic:
 \begin{align*}
\begin{dcases*}
  \yleq \left(
\begin{array}{r r r}
  2k+1, &2k+2, &4k+3,\\
  2k + 2\floor*{\frac{k}{d-1}} - 4, &k, &4k+5\,
\end{array}
\right)
& \text{
\begin{tabular}{l}
  if $d \geq 3$, $\manif$ is orientable,
  \\
  and $\ch(\kk) \geq 4k+6$,
\end{tabular}
}
 \\
  \yleq \left(
\begin{array}{r r r}
  2k+1, &2k+2, &4k+3,\\
  4k-2, &k, &4k+5\,
\end{array}
\right)
& \text{
\begin{tabular}{l}
  if $d \geq 3$, $\manif$ is non-orientable,
  \\
  and $\ch(\kk) \geq 4k+6$,
\end{tabular}
}
 \\
  \yleq \left(
\begin{array}{r r r}
  4k+1, &4k+2, &8k+3,\\
  8k-6, &2k, &8k+5\,
\end{array}
\right)
& \text{
\begin{tabular}{l}
  if $d = 2$, $\manif$ is orientable,
  \\
  and $\ch(\kk) \geq 8k+6$,
\end{tabular}
}
 \\
  \yleq \left(
\begin{array}{r r r}
  4k+1, &4k+2, &8k+3,\\
  8k-2, &2k, &8k+5\,
\end{array}
\right)
& \text{
\begin{tabular}{l}
  if $d = 2$, $\manif$ is non-orientable,
  \\
  and $\ch(\kk) \geq 8k+6$,
\end{tabular}
}
\end{dcases*}
\end{align*}

\item the following for general $\ab$:
\begin{align*}
\begin{dcases*}
  \yleq \left(
\begin{array}{r r r}
  2k+1, &2k+2, &4k+3,\\
  2k + 2\floor*{\frac{k}{d-1}} - 4, &k, &M_{k}\,
\end{array}
\right)
& \text{if $d \geq 3$ and $\manif$ is orientable,
}
 \\
  \yleq \left(
\begin{array}{r r r}
  2k+1, &2k+2, &4k+3,\\
  4k-2, &k, &M'_{k}\,
\end{array}
\right)
& \text{if $d \geq 3$ and $\manif$ is non-orientable,
}
 \\
  \yleq \left(
\begin{array}{r r r}
  4k+1, &4k+2, &8k+3,\\
  8k-6, &2k, &N_{k}\,
\end{array}
\right)
& \text{if $d = 2$ and $\manif$ is orientable,
}   
 \\
  \yleq \left(
\begin{array}{r r r}
  4k+1, &4k+2, &8k+3,\\
  8k-2, &2k, &N'_{k}\,
\end{array}
\right)
& \text{if $d = 2$ and $\manif$ is non-orientable,
}  
 \\ 
\end{dcases*}
\end{align*}
where $M_{k},M'_{k},N_{k},N'_{k}$ were undetermined.
\end{itemize}
\end{rem}

\paragraph{Increasing the connectivity.}
For a $u$-connected $d$-manifold $\manif$ with $u \geq 1$ and $d \geq 3$, improved stable ranges for the homological stability of \textbf{unordered} configuration spaces of $\manif$ are known \cite[Proposition 4.1]{church-config}, but to my knowledge similar improvements have not been written down for the representation stability of ordered configuration spaces of $\manif$. Applying Theorem \ref{main-config} to such $\manif$ breaks down to three mutually exclusive cases: 
\begin{birki}
 \item $u + 1 < d/2$ and $\co_{u+1}(\manif;\zz) \neq 0$: in this case $u \leq d-2$ as well and given $k \geq d-1$, Theorem \ref{main-config} applies with $\weak_{k} = \floor*{\frac{k}{u+1}}$.  
 \item $d=2u+2$ and $\co_{u+1}(\manif;\zz) \neq 0$: such $\manif$ is often called \emph{highly connected}\footnote{There is a significant body of work for characterizing these manifolds in the smooth category that at least goes back to Wall \cite{wall-highly-conn}, yet has recent contributions \cite{burklund-senger}.}. Given $k \geq 2u+1$, writing
\begin{align*}
 k = q_{k}(2u+1) + r_{k},\, \quad 0 \leq r_{k} \leq 2u
\end{align*}
 via Euclidean division so that $q_{k} = \floor*{\frac{k}{2u+1}}$, Theorem \ref{main-config} applies with
\begin{align*}
\weak_{k} =
\begin{dcases*}
  2q_{k}  & \text{if $0 \leq r_{k} \leq u$,}
  \\
  2q_{k} + 1  & \text{if $u < r_{k} \leq 2u$.} 
\end{dcases*}
\end{align*}

\item $u + 1> d/2$: in this case Poincar{\'e} duality pushes the connectivity to the top and forces that $\manif$ is either
\begin{itemize}
 \item a homotopy $d$-sphere (hence $\manif = \mathbb{S}^{d}$ by the Poincar{\'e} conjecture!), or
 \item contractible, in which case an inspection of the spectral sequence in \cite[Theorem 1]{totaro-config} such as \cite[Section 3.2]{church-config} bears that 
\begin{align*}
  \co^{k}(\confix{}{\manif};\ab) \cong \co^{k}(\confix{}{\rr^{d}};\ab) 
\end{align*}
for any coefficients $\ab$.
\end{itemize}
In any case, here $\manif$ is $(d-2)$-connected so given $k \geq d-1$, Theorem \ref{main-config} applies with $\weak_{k} = 2\floor*{\frac{k}{d-1}}$.
\end{birki}
We obtain these improved ranges in the presence of higher connectivity by using Totaro's \cite{totaro-config} and Church--Ellenberg--Farb's \cite{cef} description of the Leray spectral sequence of the inclusion $\confix{}{\manif} \emb \manif^{\bul}$ in Theorem \ref{non-compact-generation}.



\begin{ex}
 For every $d \geq 2$ and $i \geq 1$, taking $\manif := \rr^{d}$, $u := d-2$, and $k:=i(d-1)$ yields $\weak_{i(d-1)} = 2i$ in Theorem \ref{main-config}. The implied polynomiality of degree $\leq 2i$ here is sharp: writing $D(r,\ell)$ for the number of \textbf{derangements} in $\sym{r}$ with $\ell$ cycles, it follows from the explicit computations of Hersh--Reiner \cite[(27), Remark 2.9, Corollary 2.10]{hersh-reiner} that for every $n \geq 0$ we have
\begin{align*}
 \dim_{\qq} \co^{i(d-1)}(\confix{n}{\rr^{d}};\qq) = 
 \sum_{r=i+1}^{2i} D(r,r-i) \binom{n}{r} \, .
\end{align*}
Moreover by \cite[Theorem 1.1, Corollary 2.10]{hersh-reiner}, it follows that $\co^{i(d-1)}(\confix{}{\rr^{d}};\qq)$ has stable ranges
\begin{align*}
\begin{cases}
 \yleq \left(
\begin{array}{r r r}
  2i, &-1, &4i-1,\\
  -1, &2i, &3i\,
\end{array}
\right)
& \text{if $d$ is odd,}
\vspace{0.2cm}
\\
 \yleq \left(
\begin{array}{r r r}
  2i, &-1, &4i-1,\\
  -1, &2i, &3i+1\,
\end{array}
\right)
& \text{if $d$ is even,}
\end{cases}
\end{align*}
and all of them are sharp.
\end{ex}

\begin{ex} \label{ex-sphere}
 For $d \geq 2$, Theorem \ref{main-config} yields that $\co^{d-1}(\confix{}{\mathbb{S}^{d}};\ab)$ has stable ranges
\begin{align*}
 \yleq (4,\,\, 5,\,\, 3,\,\,
 2,\,\, 2,\,\, 4) \, .
\end{align*}
On the other hand, the explicit computations of Feichtner--Ziegler \cite[Theorem 2.4, Theorem 5.4]{fe-zi-sphere} imply that when $d$ is even,
\begin{align*}
 \dim_{\qq}\co^{d-1}(\confix{n}{\mathbb{S}^{d}};\qq) 
 &= \binom{n-1}{2} - 1 = \binom{n-3}{2} + 2(n-3)
\end{align*}
in the range $n \geq 3$. Hence the (2,\,2) part of the stable ranges given by Theorem \ref{main-config} is sharp. In fact \textbf{all} of the stable ranges are sharp here (the $\FI$-module is the $\mathbf{V}(2)$, which is defined in Section \ref{section-witness}, of Theorem \ref{main-ranges}); see Example \ref{ex-sphere-deep}.
\end{ex}

\begin{rem}[Low degrees] \label{low-degrees}
 The main reason for the exclusion of the degrees $k \leq d-2$ in Theorem \ref{main-config} is that they are more easily handled via traditional means: for a connected manifold $\manif$,  the inclusion map 
$\confix{}{\manif} \emb \manif^{\bul}$ of $\ters{\FI}$-spaces is $(d-1)$-connected \cite[Theorem 3.2]{ggg-homotopy-fiber}, and hence by the relative Hurewicz and the universal coefficient theorems (together with their naturality), in degrees $k \leq d-2$ there is an isomorphism 
\begin{align*}
 \co^{k}(\confix{}{\manif};\ab) \cong \co^{k}(\manif^{\bul};\ab)
\end{align*}
of $\FI$-modules with any coefficients $\ab$.
\end{rem}

\begin{rem}[Extra structure]
 A consequence of the isomorphism in Remark \ref{low-degrees} is that when $k \leq d-2$, the $\FI$-module $\co^{k}(\confix{}{\manif};\ab)$ extends to an $\sh$-module, where $\sh$ denotes the category of partial bijections. More interestingly, when $\manif$ is non-compact an $\sh$-extension can be made in \textbf{all} degrees as in \cite[Section 6.4]{cef} and \cite[Section 3.1]{miller-wilson-secondary-stab}. In these cases, using the notation of Theorem \ref{main-config}, the stable ranges of $\co^{k}(\confix{}{\manif};\ab)$ can be improved to 
\begin{align*}
\yleq \left(
\begin{array}{r r r}
  \weak_{k}, &-1, &\max\{0,2\weak_{k} - 1\},
  \vspace{0.04in}
  \\
  -1, &\weak_{k}, &2\weak_{k}\,
\end{array}
 \right) \,
\end{align*} 
and the stabilization of the $\sym{n}$-representations is more rigid. In fact the generation in degrees $\leq \weak_{k}$ for non-compact manifolds (see Theorem \ref{reduce-PConf-to-punctured} and Theorem \ref{non-compact-generation}) is used as an input for Theorem \ref{main-config} through the \textbf{puncture resolution} \cite[Section 3]{miller-wilson-PConf-hyper}. I intend to treat the existence and consequences of such extra structures on $\FI$-modules in more detail in future work. 
\end{rem}


\subsubsection{Congruence subgroups}
For every ring $R$, the assignment $n \mapsto \GL_{n}(R)$ defines an $\FI$-group, for which we write $\GL_{\bul}(R)$. If $I$ is an ideal of $R$, as the  kernel of the mod-$I$ reduction we get a smaller $\FI$-group 
\begin{align*}
 \GL_{\bul}(R,I) := \ker \big(\!\GL_{\bul}(R) \rarr \GL_{\bul}(R/I) \big) \, ,
\end{align*}
called the $I$\textbf{-congruence subgroup} of $\GL_{\bul}(R)$.
For each $k \geq 0$ and abelian group $\ab$, taking the $k$-th homology with coefficients in $\ab$ defines an $\FI$-module $\co_{k}(\GL_{\bul}(R,I);\ab)$.

\paragraph{Stable rank of a ring.} Let $R$ be a nonzero unital (associative) ring. A column vector $\mathbf{v} \in \mat_{m \times 1}(R)$ of size $m$ is \textbf{unimodular} if there is a row vector $\mathbf{u} \in \mat_{1 \times m}(R)$ such that $\mathbf{u}\mathbf{v} = 1$. Writing $\mathbf{I}_{r} \in \mat_{r \times r}(R)$ for the identity matrix of size $r$, we say
a column vector $\mathbf{v}$ of size $m$ is \textbf{reducible} if there exists  $\mathbf{A} \in \mat_{(m-1) \times m}(R)$ with block form  $\mathbf{A} = [\mathbf{I}_{m-1} \,|\, \mathbf{x}]$ such that the column vector $\mathbf{A} \mathbf{v}$ (of size $m-1$) is unimodular. We write $\sr(R) \leq s$ if every unimodular column vector of size $> s$ is reducible. 

\begin{rem}
  If $R$ is a finite algebra over a commutative Noetherian ring of Krull dimension $d$, then $\sr(R) \leq d+1$ \cite[Theorem 11.1]{bass-k-theory}. This is a sharp bound for every $d \geq 0$: declaring $\sr(R) = s$ when $\sr(R) \leq s$ and $\sr(R) \nleq s-1$, we have $\sr(\rr[x_{1}, \dots, x_{d}]) = d+1$ by \cite[Theorem 8]{vaserstein-st-rank-top}. A witness to this sharpness is the unimodular column vector $\mathbf{v}_{d+1} := \left[x_{1},\, \cdots,\, x_{d},\, x_{1}^{2} + \cdots + x_{d}^{2} - 1\right]^{T}$
of size $d+1$ which is not reducible \cite[proof of Theorem 8]{vaserstein-st-rank-top}. As a slightly different example, fix $d \geq 1$ and consider the ring $\zz[x_{1},\dots,x_{d-1}]$: its Krull dimension is $d$, so $\sr \leq d+1$, the unimodular $\mathbf{v}_{d}$ is not reducible here either \cite[Corollary 19.1]{vas-sus}, so $\sr \nleq d-1$. The precise value turns out to be
\begin{align*}
 \sr(\zz[x_{1},\dots,x_{d-1}]) = 
\begin{cases}
 d+1 & \text{if $d=1,2$,}
 \\
 d & \text{if $d \geq 3$,}
\end{cases}
\end{align*}
by \cite{guyot-ZX} and \cite[Theorem 17.2]{vas-sus}.
\end{rem}

The degrees of $\FI$-hyperhomology groups of the chain complex $\chain_{\star}(\GL_{\bul}(R,I); \ab)$ have already been bounded in \cite[Proposition 5.4]{cmnr-range} in terms of the stable rank of $R$. Feeding this into Theorem \ref{hyper-ranges}, together with a refinement of Djament \cite[Th\'{e}or\`{e}me 2]{djament-congruence-stab} and an optimization in low degrees, we get our last application.

\begin{thmx} \label{main-cong} Let $I$ be a proper ideal in a ring $R$ with $\sr(R) \leq s$. Then for every homological degree $k \geq 1$ and abelian group $\ab$, the $\FI$-module $\co_{k}(\GL_{\bul}(R,I);\ab)$ has stable ranges
\begin{align*} 
\begin{cases}
\yleq \left(
\begin{array}{r r r}
  s+1, &s+3, &2s+4,\\
  2s+3, &2, &2s + 4\,
\end{array}
 \right) & \text{if $k=1$,}
 \vspace{0.25cm}
 \\
 \yleq \left(
\begin{array}{r r r}
  2s + 5, &2s + 6, &2s + 9,\\
  2s+8, &4, &2s + 9\,
\end{array}
 \right) & \text{if $k = 2$,}
 \vspace{0.25cm}
 \\
 \yleq \left(
\begin{array}{r r r}
  4k + 2s - 2, &4k + 2s-1, &4k + 2s + 1,\\
  4k + 2s, &2k, &4k + 2s + 1\,
\end{array}
 \right) & \text{if $k \geq 3$.}
\end{cases}
\end{align*}
\end{thmx} 

\begin{rem} \label{compare-cong}
  Combining \cite[Theorem 11]{gan-li-linear}, \cite[Theorem D']{ce-homology}, \cite[Theorem A]{patzt-wgordon}, \cite[Theorem 1.3]{li-FIG-invariants}, \cite[Remark 9]{gan-li-linear}, \cite[Th\'{e}or\`{e}me 2]{djament-congruence-stab}, \cite[Theorem E]{putman-congruence}, \cite[Theorem 1.2]{harman-poly}, the best stable ranges established previously in the literature under the assumptions of Theorem \ref{main-cong} were
\begin{align*} 
\begin{cases}
\yleq \left(
\begin{array}{r r r}
  s+1, &s+3, &2s+5,\\
  2s+3, &2, &M_{1}\,
\end{array}
 \right) & \text{if $k=1$,}
\vspace{0.25cm}
 \\
 \yleq \left(
\begin{array}{r r r}
  2s + 5, &2s + 6, &4s + 11,\\
  4s+10, &4, &M_{2}\,
\end{array}
 \right) & \text{if $k = 2$,}
 \vspace{0.25cm}
\\
\yleq \left(
\begin{array}{r r r}
  4k+2s-1, &4k+2s+4, &8k+4s+7,\\
  8k+4s+2, &2k, & M_{k}\,
\end{array}
 \right) & \text{if $k \geq 3$,} 
\end{cases}
\end{align*}
where 
\begin{align*}
 M_{k} = 
\begin{cases}
 2s + 7 & \text{if
\begin{tabular}{l}
 $k=1$ and $\ab = \kk$ is a field with \\
 $\ch(\kk) = 0$ or $\ch(\kk) \geq 2s+8$,
\end{tabular}
 }
 \\
 4s + 13 & \text{if
\begin{tabular}{l}
 $k=2$ and $\ab = \kk$ is a field with \\
 $\ch(\kk) = 0$ or $\ch(\kk) \geq 4s+14$,
\end{tabular}
 }
 \\
 8k + 4s + 9 & \text{if
\begin{tabular}{l}
 $k \geq 3$ and $\ab = \kk$ is a field with \\
 $\ch(\kk) = 0$ or $\ch(\kk) \geq 8k+4s+10$,
\end{tabular}
 }
 \\
 \text{undetermined} & \text{otherwise.} 
\end{cases}
\end{align*}
\end{rem}

\begin{ex} Given a prime $p$ and $\ell \geq 2$, the computation
 \begin{align*}
 \dim_{\kk_{p}} \co_{k}(
 \GL_{n}(\zz/p^{\ell},p);\kk_{p}
 ) = \binom{n^{2}+k-1}{k}
\end{align*}
(see \cite[the proof of Theorem D]{cmnr-range}) shows that the $2k$ in Theorem \ref{main-cong} cannot be improved. In fact the 2k is sharp for $\co_{k}(\GL_{\bul}(R,I);\zz)$ whenever $I \neq I^{2}$ by \cite[Th\'{e}or\`{e}me 2]{djament-congruence-stab} (see Proposition \ref{relate-to-faible} and Theorem \ref{djament-2k-result}).
\end{ex}

\section{$\FI$-modules} 
\subsection{Preliminaries}

\paragraph{Derivative and local cohomology functors.} The functor $- \sqcup \{*\} \colon \FI \rarr \FI$
receives a natural transformation from $\id_{\FI}$. Hence due directly to its definition, the shift functor
\begin{align*}
 \shift{}{} \colon \lMod{\FI} \rarr \lMod{\FI}
\end{align*}
receives a natural transformation from the identity functor $\id_{\lMod{\FI}}$, whose cokernel
\begin{align*}
 \deriv := \coker \left( \id_{\lMod{\FI}} \rarr \shift{}{} \right)
\end{align*}
is called the \textbf{derivative functor}.

An $\FI$-module $V$ is called \textbf{torsion} if for every finite set $S$ and $x \in V_{S}$, there exists an injection $\alpha \colon S \emb T$ such that $V_{\alpha}(x) = 0 \in V_{T}$. There is a left exact functor 
\begin{align*}
 \locoh{0} \colon \lMod{\FI} \rarr \lMod{\FI}
\end{align*}
which assigns an $\FI$-module its largest torsion $\FI$-submodule; see \cite[Section 5.1, Definition 5.11]{li-ramos}. For each $j \geq 0$, we write $\locoh{j} := \operatorname{R}^{j}\!\locoh{0}$ for its $j$-th right derived functor, and write 
\begin{align*}
 h^{j}(V) := \deg(\locoh{j}(V))
\end{align*}
for every $\FI$-module $V$.

\begin{rem}[Degree conventions]
 Our convention is that (see the introduction) the zero $\FB$-module has $\deg(0) = -1$ as in \cite{cmnr-range} while it is perhaps more common that it is taken to be $-\infty$ as in \cite{li-ramos} and \cite{nss-regularity}. 
\end{rem}

\begin{lem} \label{torsion-zero-colim}
 For an $\FI$-module $V$, the following are equivalent:
\begin{birki}
 \item $V$ is torsion.
 \item Considering $\nn = \{0,1,\dots\}$ with the usual ordering as a category and the natural embedding $\iota\colon \nn \rarr \FI$ via $\iota(n) := \{1,\dots,n\}$,
we have 
\begin{align*}
 \colim(V \circ \iota) = \colim_{n \in \nn} V_{n} = 0 \, .
\end{align*}
\end{birki}
\end{lem}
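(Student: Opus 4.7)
The plan is to directly unwind both conditions to the same concrete statement: every element of $V$ is killed by some morphism of $\FI$. The colimit $\colim_{n \in \nn}(V \circ \iota)$ is a filtered colimit of abelian groups along the chain $V_{0} \to V_{1} \to V_{2} \to \cdots$ induced by the standard inclusions $\iota_{n,m} \colon \{1,\dots,n\} \emb \{1,\dots,m\}$ for $n \leq m$. Such a colimit vanishes if and only if for every $n$ and every $x \in V_{n}$ there exists $m \geq n$ with $V_{\iota_{n,m}}(x) = 0$. I would first record this characterization, since it is what makes the two conditions comparable.

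For the implication $(1) \Rightarrow (2)$, I start with $x \in V_{n}$ and use torsion to produce some injection $\alpha \colon \{1,\dots,n\} \emb T$ with $V_{\alpha}(x) = 0$. Choosing any bijection $\gamma \colon T \xrightarrow{\cong} \{1,\dots,|T|\}$, the composite $\gamma \circ \alpha$ is an injection into $\{1,\dots,|T|\}$ and admits a factorization $\sigma \circ \iota_{n,|T|}$ for some $\sigma \in \sym{|T|}$; since $V_{\sigma}$ and $V_{\gamma}$ are invertible, we conclude $V_{\iota_{n,|T|}}(x) = 0$, showing that $x$ dies in the colimit.

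For $(2) \Rightarrow (1)$, given any $x \in V_{S}$, I pick a bijection $\beta \colon S \xrightarrow{\cong} \{1,\dots,|S|\}$ and apply the colimit-vanishing criterion to $V_{\beta}(x)$ to obtain some $m \geq |S|$ with $V_{\iota_{|S|,m}\circ \beta}(x) = 0$; the injection $\iota_{|S|,m} \circ \beta \colon S \emb \{1,\dots,m\}$ is the desired witness of torsion.

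The only minor obstacle is the mismatch between the ``arbitrary injection'' in the definition of torsion and the ``standard inclusion'' in the colimit diagram, which is resolved by the factorization trick in the first direction; both implications are then essentially formal.
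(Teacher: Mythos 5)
Your proof is correct and takes essentially the same approach as the paper, which simply invokes the standard description of a directed colimit and leaves the details to the reader; you have carefully filled those in, in particular the bijection-transfer argument that reconciles ``arbitrary injection into an arbitrary finite set'' with ``standard inclusion $\{1,\dots,n\} \subseteq \{1,\dots,m\}$.''
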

\begin{proof}
 Noting that $\nn$ is a directed set, the usual construction of a directed colimit shows that the vanishing of the colimit is equivalent to $V$ being torsion.
\end{proof}

\paragraph{Local and stable degrees.} For every $\FI$-module $V$, we write
\begin{align*}
 \local(V) &:= \max\{h^{j}(V) : j \geq 0\} \, ,
 \\&\in \{-1,0,1,\dots\} \cup \{\infty\} \, ,
\end{align*}
called the \textbf{local degree} of $V$, and 
\begin{align*}
 \weak(V) &:= \min\{r \geq -1 : \deriv^{r+1}(V) \text{ is torsion}\}
 \\&\in \{-1,0,1,\dots\} \cup \{\infty\} \, ,
\end{align*}
called the \textbf{stable degree} of $V$.

\begin{prop}\label{relate-to-faible}
 Given an $\FI$-module $V$ and $g \geq -1$, the following are equivalent: 
\begin{birki}
 \item $\weak(V) \leq g$.
 \item In the sense of \emph{\cite[D\'efinition 2.22]{djament-vespa-weakly}}, $V$ is weakly polynomial of degree $\leq g$.
\end{birki}
\end{prop}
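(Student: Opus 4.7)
The plan is to match the Djament--Vespa recursive definition against the formula $\weak(V) \leq g \iff \deriv^{g+1}V$ torsion by induction on $g \geq -1$.

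For the base case $g = -1$: Djament--Vespa declare a functor weakly polynomial of degree $\leq -1$ precisely when it vanishes in the stable category $\lMod{\FI}/(\text{torsion})$, i.e.\ precisely when it is torsion; this matches $\weak(V) \leq -1$, which says $\deriv^{0}V = V$ is torsion.

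For the inductive step I first verify that the derivative functor $\deriv$ preserves the class of torsion $\FI$-modules. This follows from the surjection $\shift{V}{} \twoheadrightarrow \deriv V$ together with the fact that the shift of a torsion module is torsion: given $x \in \shift{V}{}_{S} = V_{S \sqcup \{*\}}$ and an injection $\beta\colon S \sqcup \{*\} \emb U$ killing $x$ in $V$, the bijection $(U \setminus \{\beta(*)\}) \sqcup \{*\} \cong U$ extending $\beta|_{S}$ promotes $\beta|_{S}\colon S \emb U \setminus \{\beta(*)\}$ to an injection killing $x$ at the level of $\shift{V}{}$. Consequently $\deriv$ descends to the stable category. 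Djament--Vespa's inductive clause --- $V$ is weakly polynomial of degree $\leq g$ iff $\deriv V$ is weakly polynomial of degree $\leq g-1$ in the stable category --- then becomes, by the inductive hypothesis applied to $\deriv V$, the statement $\weak(\deriv V) \leq g - 1$; equivalently $\deriv^{g+1}V = \deriv^{g}(\deriv V)$ is torsion, i.e.\ $\weak(V) \leq g$.

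The only real subtlety is that the phrase ``in the stable category'' in Djament--Vespa's definition must be genuinely harmless: if $V \to V'$ has torsion kernel and cokernel, then $\deriv V$ and $\deriv V'$ should differ only by torsion. This is verified by applying $\deriv$ to the two short exact sequences produced by factoring $V \to V'$ through its image and invoking that $\deriv$ preserves torsion together with a routine diagram chase. With that in place the induction closes, and the main obstacle --- reconciling the ambient category conventions with the ``divide by torsion'' convention of Djament--Vespa --- is cleanly dispatched.
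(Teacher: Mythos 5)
Your summary of Djament--Vespa's recursive clause --- ``$V$ is weakly polynomial of degree $\leq g$ iff $\deriv V$ is weakly polynomial of degree $\leq g-1$'' --- is not their D\'efinition 2.22. The actual definition requires the cokernel $\delta_a(V)$ to lie in $\mathcal{P}ol_{g-1}$ for \emph{every} object $a$ of $\FI$, not merely for $a = \{*\}$; in the paper's notation this is the condition that $Q_a(V) := \coker(V \rarr \shift{V}{a})$ be weakly polynomial of degree $\leq g-1$ for all $a \in \nn$, not just $Q_1(V) = \deriv(V)$. This leaves a genuine gap in your forward implication: from $\weak(V) \leq g$ and the induction hypothesis you obtain control of $\deriv(V)$ only, and it then takes an inner induction on $a$ --- using the exact sequences $\deriv(V) \rarr Q_{a+1}(V) \rarr \shift{Q_a(V)}{} \rarr 0$ and $Q_a(V) \rarr \shift{Q_a(V)}{} \rarr \deriv(Q_a(V)) \rarr 0$, together with the thickness of $\mathcal{P}ol_{g-1}$ and its stability under the shift (Djament--Vespa Propositions 2.19, 2.25) --- to establish that \emph{all} $Q_a(V)$ are weakly polynomial of degree $\leq g-1$; only then can one invoke the recursive clause of D\'efinition 2.22. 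Your proposal omits this propagation step entirely.

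The backward direction is essentially fine, since it only extracts the $a=1$ consequence from the definition, and your remarks on $\deriv$ preserving torsion and on the harmlessness of passing to the stable category are correct as far as they go, but neither is where the real difficulty lies.
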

\begin{proof}
 We employ induction on $g$. For the base case $g= - 1$, (1) means $V$ is torsion from the definition of the stable degree $\weak(V)$; and (2) means  $\pi_{\FI}(V) = 0$ inside the category $\mathbf{St}(\FI,\lMod{\zz})$ defined via \cite[D\'efinition 2.16]{djament-vespa-weakly} the quotient construction 
\begin{align*}
 \pi_{\FI} \colon \lMod{\FI} \rarr \lMod{\FI} / \mathcal{S}n(\FI,\lMod{\zz}) =: \mathbf{St}(\FI, \lMod{\zz}) \, .
\end{align*}
Here $\mathcal{S}n(\FI,\lMod{\zz})$ is the thick subcategory of \emph{stably null} $\FI$-modules \cite[D\'efinition 2.10]{djament-vespa-weakly}, which are precisely the $\FI$-modules that satisfy the condition (2) in Lemma \ref{torsion-zero-colim} by \cite[Proposition 5.7]{djament-vespa-weakly}. Finally, the equivalence 
\begin{align*}
 \pi_{\FI}(V) = 0 \in \lMod{\FI} / \mathcal{S}n(\FI,\lMod{\zz})
 \Leftrightarrow V \in \mathcal{S}n(\FI,\lMod{\zz})
\end{align*}
follows from a general result about quotient categories in the abelian setting, namely we apply \cite[Lemme 2]{gabriel-abelian} to $\id_{V}$.

Next, we fix $g \geq 0$ and assume that the equivalence between (1) and (2) holds for $g-1$.  Assuming $\weak(V) \leq g$, by the definition of stable degree $\weak(V)$ we have $\weak(\deriv(V)) \leq g-1$ and hence our induction hypothesis gives that $\deriv(V)$ is weakly polynomial of degree $\leq g-1$. For every $a \in \nn$, let us write 
\begin{align*}
 Q_{a}(V) := \coker (V \rarr \shift{V}{a}) \, ,
\end{align*}
which is denoted by $\weak_{\{1,\dots,a\}}(V)$ in \cite[page 10]{djament-vespa-weakly}. We claim that for every $a \in \nn$, the $\FI$-module $Q_{a}(V)$ is also weakly polynomial of degree $\leq g-1$, that is, 
\begin{align*}
 \pi_{\FI}(Q_{a}(V)) \in \mathcal{P}ol_{g-1}(\FI,\lMod{\zz})
\end{align*}
in the sense of \cite[Definition 2.22]{djament-vespa-weakly}. This follows by induction on $a$, noting that $\pi_{\FI}$ is exact, $Q_{0}(V) = 0$, $Q_{1}(V) = \deriv(V)$ (we have already shown $\pi_{\FI}(\deriv(V)) \in \mathcal{P}ol_{g-1}(\FI,\lMod{\zz})$ at this point) and the exact sequences 
\begin{align*}
 &\deriv(V) \rarr Q_{a+1}(V) \rarr \shift{Q_{a}(V)}{} \rarr 0 \quad 
 \text{(by \cite[Proposition 2.4, part (7)]{djament-vespa-weakly})}
 \\
 &Q_{a}(V) \rarr \shift{Q_{a}(V)}{} \rarr \deriv(Q_{a}(V)) \rarr 0 
 \quad \text{(by definition of $\deriv$)}
\end{align*}
of $\FI$-modules, because $\mathcal{P}ol_{g-1}(\FI,\lMod{\zz})$ is a thick subcategory of $\mathbf{St}(\FI, \lMod{\zz})$ by \cite[Proposition 2.25]{djament-vespa-weakly}. It now follows from the recursive part of \cite[Definition 2.22]{djament-vespa-weakly} and \cite[Proposition 2.19, part (1)]{djament-vespa-weakly} that $\pi_{\FI}(V) \in \mathcal{P}ol_{g}(\FI,\lMod{\zz})$, in other words $V$ is weakly polynomial of degree $\leq g$.

For the converse, assume $V$ is weakly polynomial of degree $\leq g$. Then $\pi_{\FI}(V) \in \mathcal{P}ol_{g}(\FI,\lMod{\zz})$, and hence in particular by the recursive part of \cite[Definition 2.22]{djament-vespa-weakly} and \cite[Proposition 2.19, part (1)]{djament-vespa-weakly}, we have
\begin{align*}
 Q_{1}(\pi_{\FI}(V)) = \pi_{\FI}(Q_{1}(V)) = \pi_{\FI}(\deriv(V)) \in \mathcal{P}ol_{g-1}(\FI,\lMod{\zz}) \, ,
\end{align*}
in other words $\deriv(V)$ is weakly polynomial of degree $\leq g-1$. Our induction hypothesis now applies to yield $\weak(\deriv(V)) \leq g-1$, and hence $\weak(V) \leq g$ by definition.
\end{proof}

We now recall several characterizations of $\cofi{0}$-acyclic modules generated in finite degrees. Here $\induce$ is the left adjoint of the restriction $\Res_{\FB}^{\FI} \colon \lMod{\FI} \rarr \lMod{\FB}$.

\begin{thm}[{\cite[Theorem A]{ramos-fig}, \cite[Proposition 14]{gan-shift-seq}, \cite[Corollary 2.13]{cmnr-range}}] \label{characterize-H0-acyclic}
Let $V$ be an $\FI$-module generated in finite degrees. Then the following are equivalent: 
\begin{enumerate}[(1)]
 \item[$(1)$] There is a finite filtration
\begin{align*}
 0 = V^{(-1)} \leq V^{(0)} \leq \cdots \leq V^{(r)} = V
\end{align*}
of $\FI$-submodules such that for each $0 \leq i \leq r$ we have $V^{(i)}/V^{(i-1)} \cong \induce(W^{(i)})$ for some $\FB$-module $W^{(i)}$ with $\deg(W^{(i)}) < \infty$.
 \vspace{0,08in}
 \item[$(1')$] In the sense of \emph{\cite[Section 2.1]{cmnr-range}}, $V$ is semi-induced.
 \item[$(1'')$] In the sense of \emph{\cite[Section 2.1]{ramos-coh},\cite[page 166]{ramos-fig}}, $V$ is $\sharp$-filtered.
 \vspace{0.08in}
 \item[$(2)$] $V$ is $\cofi{0}$-acyclic.
 \vspace{0.08in}
 \item[$(2')$] $t_{i}(V) = -1$ for every $i \geq 1$.  
 \vspace{0.08in} 
 \item [$(3)$]$\cofi{i}(V) = 0$ for some $i \geq 1$.
 \vspace{0.08in} 
 \item[$(3')$] $t_{i}(V) = -1$ for some $i \geq 1$.
 \vspace{0.08in}
 \item[$(4)$] $\locoh{j}(V) = 0$ for every $j \geq 0$.
 \vspace{0.08in} 
 \item[$(4')$] $\local(V) = -1$.
\end{enumerate}
\end{thm}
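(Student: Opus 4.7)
My plan is to show that the nine conditions split into a few definitional equivalences together with a small number of substantive implications, all available in the cited references.

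First, the intra-group equivalences (1)$\Leftrightarrow$(1')$\Leftrightarrow$(1''), (2)$\Leftrightarrow$(2'), (3)$\Leftrightarrow$(3'), and (4)$\Leftrightarrow$(4') are immediate: \emph{semi-induced} and \emph{$\sharp$-filtered} are simply alternative names for the filtration property in (1), while the primed conditions merely restate vanishing of $\cofi{i}(V)$ or $\locoh{j}(V)$ in terms of degrees, using the convention $\deg(0) = -1$ fixed earlier in the paper. So it suffices to establish the main cycle (1)$\Rightarrow$(2)$\Rightarrow$(3)$\Rightarrow$(1) together with one of the equivalences (2)$\Leftrightarrow$(4).

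For (1)$\Rightarrow$(2), I would first check that each $\induce(W)$ with $\deg(W)<\infty$ is $\cofi{0}$-acyclic: since $\induce$ is left adjoint to the exact restriction functor $\lMod{\FI}\rarr\lMod{\FB}$, it preserves projectives, so after picking a projective resolution of $W$ in $\lMod{\FB}$ its image under $\induce$ computes $\cofi{i}(\induce(W))$ and agrees with $W$ in degree zero, forcing $\cofi{i}(\induce(W))=0$ for $i\geq 1$. The long exact sequence for $\cofi{\bullet}$ then propagates acyclicity up the finite filtration in (1). The step (2)$\Rightarrow$(3) is trivial, and I would simply cite Ramos \cite[Theorem A]{ramos-fig} and Gan \cite[Proposition 14]{gan-shift-seq} for the substantive direction (3)$\Rightarrow$(1). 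The strategy there is to fix a surjection $\induce(\cofi{0}(V))\twoheadrightarrow V$ (which exists by finite generation), set $K$ to be its kernel, and use the long exact sequence together with the vanishing hypothesis $\cofi{i}(V)=0$ to show that $K$ is also $\cofi{0}$-acyclic and generated in strictly smaller degree, allowing an induction on generation degree to produce a semi-induced filtration. I expect this induction to be the main obstacle in a self-contained write-up, since it requires careful bookkeeping of generation degrees through the shift functor and uses the finite generation hypothesis essentially.

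Finally, for the equivalence (2)$\Leftrightarrow$(4), I would cite Church--Miller--Nagpal--Reinhold \cite[Corollary 2.13]{cmnr-range}, which shows directly that among $\FI$-modules generated in finite degrees, the semi-induced ones are precisely those for which $\locoh{j}$ vanishes for every $j\geq 0$; combined with (1)$\Leftrightarrow$(2) this inserts (4) and (4') into the equivalence class, completing the proof.
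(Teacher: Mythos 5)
Your proposal is correct and matches the paper's treatment: the paper states this theorem purely as a recollection from the cited references (Ramos, Gan, Church--Miller--Nagpal--Reinhold) without giving its own proof, and your decomposition into definitional equivalences plus a citation of those same references for the substantive implications (3)$\Rightarrow$(1) and (2)$\Leftrightarrow$(4) is exactly the intended reading. One small point worth tightening in your (1)$\Rightarrow$(2) sketch: ``$\induce$ preserves projectives'' by itself does not tell you that $\induce(Q_\star)$ is a resolution of $\induce(W)$ nor that applying $\cofi{0}$ to it recovers $Q_\star$; you need to also invoke that $\induce$ is exact (so $\induce(Q_\star)\to\induce(W)$ stays acyclic) and the natural isomorphism $\cofi{0}\circ\induce\cong\id_{\lMod{\FB}}$, at which point $\cofi{i}(\induce(W))\cong\co_i(Q_\star)$ vanishes for $i\geq 1$ as you claim.
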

%

\begin{prop}[{\cite[page 880]{ramos-coh}}] \label{pres-vs-coh}
 Let $V$ be an $\FI$-module. The following are equivalent: 
\begin{birki}
 \item In the sense of \emph{\cite{cmnr-range}} (and this paper), $V$ is presented in finite degrees.
 \item In the sense of \emph{\cite[page 879]{ramos-coh}}, $V$ is degree-wise coherent.
\end{birki}
\end{prop}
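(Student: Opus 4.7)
The plan is to recognize this as a definition-alignment: both conditions characterize the class of $\FI$-modules $V$ admitting a two-term partial free resolution $F_1 \to F_0 \to V \to 0$ with $F_0, F_1$ free $\FI$-modules of finite degree. Condition (1) is literally this equivalence by construction of $\cofi{i}$ as left derived functors of $\cofi{0}$: finiteness of $\tgen(V), \trel(V)$ is tantamount to the existence of such a presentation, with $F_i$ free on generators in degree $\leq t_i(V)$.

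For condition (2), I would unpack Ramos's degree-wise coherence as articulated on the cited page: $V$ is generated in finite degrees and, for every map from a free $\FI$-module of finite degree into $V$, the kernel is also generated in finite degrees. The direction $(2) \Rightarrow (1)$ is then immediate: choose a free cover $F_0 \twoheadrightarrow V$ in degrees $\leq \tgen(V) < \infty$; coherence yields $\ker(F_0 \twoheadrightarrow V)$ generated in finite degrees, hence admits a free cover $F_1$ of finite degree, producing the desired presentation and $\trel(V) \leq \deg F_1 < \infty$.

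For the converse $(1) \Rightarrow (2)$, given a presentation $F_1 \xrightarrow{d} F_0 \to V \to 0$ and an arbitrary map $\phi \colon G \to V$ from $G$ generated in finite degrees, I would replace $G$ by its free cover (reducing to $G$ itself free of finite degree), lift $\phi$ through projectivity to $\tilde\phi \colon G \to F_0$, and identify $\ker \phi$ as the projection to $G$ of the pullback
\begin{align*}
P := \{(g, f) \in G \oplus F_1 : \tilde\phi(g) = d(f)\} \subseteq G \oplus F_1 \,.
\end{align*}
Bounding $t_0(\ker \phi)$ then reduces to bounding $t_0(P)$, which I would obtain via a shift-functor argument using Theorem~\ref{characterize-H0-acyclic}: after a uniform shift, the free $\FI$-modules $F_0, F_1, G$ become $\cofi{0}$-acyclic with bounded-degree generators, and the finite generation of the shifted $P$ (and hence of $P$ itself) follows from extension-closure of the $\cofi{0}$-acyclic class together with the long exact sequence of $\cofi{i}$ applied to $0 \to P \to G \oplus F_1 \to F_0$.

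The main technical obstacle is precisely this last shift-functor control of $t_0(P)$, which is the content Ramos carries out on page 880 of \cite{ramos-coh}.
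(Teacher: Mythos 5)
The paper gives no independent proof of this proposition: the bracketed citation \cite[page 880]{ramos-coh} in the proposition header is its entire justification, and no proof environment follows. So there is nothing in the paper to compare against beyond a reference. Your sketch therefore does more than the paper does, and it is a reasonable reconstruction of what the source proves. Direction $(2) \Rightarrow (1)$ is the short definition-chase and you handle it correctly. Direction $(1) \Rightarrow (2)$ is the genuine content, and your reduction via the pullback $P$ correctly isolates the heart of the matter as controlling the generation degree of a kernel of a map between free $\FI$-modules of bounded degree. That control is exactly the coherence/thickness theorem that elsewhere in this paper appears as \cite[Theorem 2.3, Proposition 2.9]{cmnr-range} (thickness of the class of $\FI$-modules presented in finite degrees), and you explicitly flag it as the step you are deferring to Ramos rather than proving. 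Your proposal is thus incomplete in precisely the way the paper's own treatment is, not in a way that signals a misunderstanding.

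One technical caveat on the final step: $0 \to P \to G \oplus F_1 \to F_0$ is a left-exact sequence, not a short exact sequence, so ``extension-closure'' of the $\cofi{0}$-acyclic (or semi-induced) class is not by itself the right invocation; one has to split into $0 \to P \to G \oplus F_1 \to \operatorname{im} \to 0$ and $0 \to \operatorname{im} \to F_0 \to \operatorname{coker} \to 0$ and use closure under kernels and extensions both. This imprecision sits entirely inside the part you are already deferring to the source, so it does not affect the overall correctness of your outline.
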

%
 
\begin{thm} \label{pass-to-cmnr}
 Let $V$ be an $\FI$-module and $c,g \geq -1$. Then the following are equivalent:
\begin{birki}
 \item The triple $(V,c,g)$ satisfies Hypothesis \ref{FI-hyp}.
 \item In the sense of \emph{\cite{cmnr-range}} (and this paper), $V$ is presented in finite degrees such that $\local(V) \leq c$ and $\weak(V) \leq g$.
\end{birki}
\end{thm}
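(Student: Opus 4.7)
The plan is to transport between the shift-based conditions of Hypothesis \ref{FI-hyp} and the intrinsic invariants $\local(V)$, $\weak(V)$ via Theorem \ref{characterize-H0-acyclic}, combined with the behavior of these invariants under the shift functor and under the operations defining $\thick\gen{\cdots}$. The necessary ingredients --- shift-invariance of $\weak$, decrement-by-one of each $h^{j}$ under a single shift, and propagation of both invariants through kernels, cokernels, extensions, and direct summands --- are assembled in \cite[Propositions 3.2 and 3.3]{cmnr-range} and will be used throughout. The other workhorse is the computation on semi-induced modules: $\weak(\induce(W)) = \deg(W) = t_{0}(\induce(W))$, which via \cite[Proposition 3.3]{cmnr-range} and the defining filtration of a semi-induced $\FI$-module extends to $\weak(U) = t_{0}(U)$ for any $\cofi{0}$-acyclic $U$.

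For $(2) \Rightarrow (1)$, since $V$ is presented (in particular generated) in finite degrees, so is $\shift{V}{c+1}$. Iterating the identity $h^{j}(\shift{W}{}) = \max\{h^{j}(W)-1,\,-1\}$ once per shift yields $\local(\shift{V}{c+1}) = -1$, and Theorem \ref{characterize-H0-acyclic} (the equivalence of $(4')$ and $(2)$) then gives that $\shift{V}{c+1}$ is $\cofi{0}$-acyclic, which is part (1) of the hypothesis. For part (2) I take $x := c+1$. Being $\cofi{0}$-acyclic, $\shift{V}{c+1}$ is semi-induced, so by the workhorse above $t_{0}(\shift{V}{c+1}) = \weak(\shift{V}{c+1})$, and shift-invariance of $\weak$ forces this to equal $\weak(V) \leq g$. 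Hence $\shift{V}{c+1}$ is a $\cofi{0}$-acyclic $\FI$-module generated in degrees $\leq g$, lying tautologically in $\thick\gen{\cdots}$.

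For $(1) \Rightarrow (2)$, part (1) of the hypothesis combined with Theorem \ref{characterize-H0-acyclic} yields $\local(\shift{V}{c+1}) = -1$, and the shift formula backs out $\local(V) \leq c$. For the stable degree, each generator of $\thick\gen{\cdots}$ --- a $\cofi{0}$-acyclic $\FI$-module generated in degrees $\leq g$ --- has $\weak \leq g$ by the workhorse, and \cite[Proposition 3.3]{cmnr-range} shows this bound is preserved by the four thick-subcategory operations. Hence $\weak(\shift{V}{x}) \leq g$, and shift-invariance of $\weak$ delivers $\weak(V) \leq g$.

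The main obstacle is to check that $V$ itself is \emph{presented} (not merely generated) in finite degrees. Part (1) of the hypothesis already gives that $\shift{V}{c+1}$ is $\cofi{0}$-acyclic, hence presented in finite degrees. To bring this back to $V$, the plan is to exploit the natural comparison $V \to \shift{V}{c+1}$: its kernel and cokernel are iterated extensions of the torsion $\FI$-modules $\locoh{j}(V)$, each of degree $\leq \local(V) \leq c$, so a diagram chase along the resulting short exact sequences bounds $t_{0}(V)$ and $t_{1}(V)$ as finite functions of $t_{0}(\shift{V}{c+1})$, $t_{1}(\shift{V}{c+1})$, and $c$. This bookkeeping step, running parallel to arguments in \cite{cmnr-range} and \cite{nss-regularity}, is where I expect the most care to be needed.
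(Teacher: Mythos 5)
Your proposal for $(2)\Rightarrow(1)$ essentially matches the paper (the paper invokes \cite[Corollary 2.13]{cmnr-range} and \cite[Proposition 2.9]{cmnr-range} directly rather than re-deriving the local degree shift formula, but the content is the same). The problem is in $(1)\Rightarrow(2)$, and it is the step you yourself flag as ``the main obstacle.''

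The circularity is this. You want to conclude $\local(V)\leq c$ from $\local(\shift{V}{c+1})=-1$ via the shift formula $h^{j}(\shift{W}{})=\max\{h^{j}(W)-1,-1\}$, and you want to control the kernel and cokernel of $V\to\shift{V}{c+1}$ by degree bounds on $\locoh{j}(V)$ coming from $\local(V)\leq c$. But both of those facts --- the shift identity for $h^{j}$ and the structure theorem expressing the $\locoh{j}(V)$ as cohomology of a finite complex of bounded degree --- are proved in \cite{cmnr-range} (and used in this paper) only for $V$ \emph{presented in finite degrees}, which is exactly what you are trying to establish. Likewise, in the step ``Theorem \ref{characterize-H0-acyclic} yields $\local(\shift{V}{c+1})=-1$'' you apply a theorem whose standing hypothesis is ``generated in finite degrees'' to $\shift{V}{c+1}$; but part (1) of Hypothesis \ref{FI-hyp} gives only $\cofi{0}$-acyclicity, not generation in finite degrees, and part (2) gives finiteness control over $\shift{V}{x}$ for \emph{some} $x$ that is not a priori $\leq c+1$. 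The assertion that the kernel and cokernel of $V\to\shift{V}{c+1}$ are ``iterated extensions of the $\locoh{j}(V)$'' is also not something the cited literature hands you for free; the correct statement in that direction (Theorem \ref{structure-complex}) already presupposes Hypothesis \ref{FI-hyp}.

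The device the paper actually uses to break this loop is Gan's exact sequence \cite[Theorem 1]{gan-shift-seq},
\[
\cofi{1}(\shift{V}{}) \rarr \shift{\cofi{1}(V)}{} \rarr \cofi{0}(V) \rarr \cofi{0}(\shift{V}{}) \rarr \shift{\cofi{0}(V)}{} \rarr 0,
\]
which holds for an arbitrary $\FI$-module $V$ with no finiteness hypothesis. Reading off degree bounds from it shows directly that finiteness of $\tgen(\shift{V}{})$ and $\trel(\shift{V}{})$ forces finiteness of $\tgen(V)$ and $\trel(V)$, so one can induct from the $x$ of part (2) of the hypothesis (where the thick-subcategory containment $\subseteq\{\text{presented in finite degrees, }\weak\leq g\}$ gives the base case) down to $V$. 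Only \emph{after} that is $V$ known to be presented in finite degrees, at which point \cite[Corollary 2.13]{cmnr-range} converts part (1) of the hypothesis into $\local(V)\leq c$ legitimately. Without this ingredient, your proposal does not close.
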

\begin{proof}
(1) $\imp$ (2): Assume (1) holds. First note that by \cite[Theorem A]{ramos-coh}, Proposition \ref{pres-vs-coh}, and \cite[Proposition 3.1]{cmnr-range}, we have
\begin{align*}
 &\{\text{$\cofi{0}$-acyclic $\FI$-modules generated in degrees $\leq g$}\} 
 \\
 \subseteq &\{\text{torsion-free $\FI$-modules generated in degrees $\leq g$}\} \\ \subseteq &\{\text{$\FI$-modules $X$ \pres\ with $\weak(X) \leq g$}\}
\end{align*}
and the latter forms a thick subcategory of $\lMod{\FI}$ by \cite[Proposition 2.9, part (5)]{cmnr-range}. Thus by part (2) of Hypothesis \ref{FI-hyp}, the $\FI$-module $\shift{V}{x}$ is presented in finite degrees with $\weak(\shift{V}{x}) \leq g$. To show that $V$ itself is presented in finite degrees, we may assume $x=1$ by induction. By \cite[Theorem 1]{gan-shift-seq}, there is an exact sequence 
\begin{align*}
 \cofi{1}(\shift{V}{}) \rarr \shift{\!\cofi{1}(V)}{} \rarr
 \cofi{0}(V) \rarr \cofi{0}(\shift{V}{}) \rarr \shift{\!\cofi{0}(V)}{} \rarr 0
\end{align*}
of $\FB$-modules, which implies that 
\begin{align*}
 \trel(V) - 1 \leq \max\{\trel(\shift{V}{}),\tgen(V))\} \quad \text{and} \quad \tgen(\shift{V}{}) \geq \tgen(V) - 1\, .
\end{align*}
As $\tgen(\shift{V}{}), \trel(\shift{V}{})$ are finite, so are $\tgen(V), \trel(V)$. Having shown $V$ is presented in finite degrees, we deduce $\weak(V) = \weak(\shift{V}{}) \leq g$ by \cite[Corollary 2.9, part (2)]{cmnr-range}; and part (1) of Hypothesis \ref{FI-hyp} with Theorem \ref{characterize-H0-acyclic} yields 
$
  \local(V) \leq c
$
 by \cite[Corollary 2.13]{cmnr-range}. 

(2) $\imp$ (1): Here \cite[Corollary 2.13]{cmnr-range} immediately yields part (1) of Hypothesis \ref{FI-hyp}, that is, $\shift{V}{c+1}$ is $\cofi{0}$-acyclic. Also by \cite[Proposition 2.9]{cmnr-range}, $\shift{V}{c+1}$ is generated in degrees 
$
\leq \weak(\shift{V}{c+1}) = \weak(V) \leq g
$,
verifying part (2) of Hypothesis \ref{FI-hyp}. 
\end{proof}

\begin{defn}
 For an $\FI$-module $V$ and $N \in \nn$, we write $V_{\gen{\leq N}}$ for the smallest $\FI$-submodule $U$ of $V$ such that $U_{S} = V_{S}$ for $|S| \leq N$.
\end{defn} 
 
\begin{prop}[{\cite[Proposition 4.3]{ce-homology}, \cite[Lemma 19]{gan-shift-seq}}] \label{separate-t1-gen}
 Suppose $V$ is an $\FI$-module presented in finite degrees. Then given $N \geq \trel(V)-1$, the quotient $\FI$-module $Q$ defined by the short exact sequence 
\begin{align*}
 0 \rarr V_{\gen{\leq N}} \rarr V \rarr Q \rarr 0
\end{align*}
is $\cofi{0}$-acyclic.
\end{prop}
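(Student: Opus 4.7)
The plan is to realize $Q$ as a quotient of one free $\FI$-module by another, from which $\cofi{0}$-acyclicity is immediate. Set $W := \cofi{0}(V)$ and choose a surjection $\pi \colon P := \induce(W) \twoheadrightarrow V$ whose kernel $K$ is generated in degrees $\leq \trel(V) \leq N+1$ (using that for this minimal cover $\cofi{1}(V) \cong \cofi{0}(K)$). Writing $W_{\leq N}$ and $W_{>N}$ for the truncations of $W$, exactness of $\induce$ identifies $P_{\gen{\leq N}} \cong \induce(W_{\leq N})$ and $P/P_{\gen{\leq N}} \cong \induce(W_{>N})$; both are free.

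Next I would apply the snake lemma to the commutative diagram with exact rows $0 \to P_{\gen{\leq N}} \to P \to \induce(W_{>N}) \to 0$ and $0 \to V_{\gen{\leq N}} \to V \to Q \to 0$, with vertical maps induced by $\pi$ (each of which is surjective, the leftmost because $V_{\gen{\leq N}} = \pi(P_{\gen{\leq N}})$). This identifies $K_{3} := \ker\bigl(\induce(W_{>N}) \twoheadrightarrow Q\bigr)$ as a quotient of $K$, so $K_{3}$ is generated in degrees $\leq N+1$. Moreover $K_{3} = 0$ in degrees $\leq N$ because $P_{\gen{\leq N}} = P$ in those degrees; hence $K_{3}$ is generated in the single degree $N+1$.

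The crux is to check that $K_{3}$ is itself free, namely $K_{3} \cong \induce\bigl((K_{3})_{N+1}\bigr)$. Since the structure maps of $\induce(W_{>N})$ preserve the $W_{T}$-summand decomposition under inclusions of finite sets, the $\FI$-submodule generated by $(K_{3})_{N+1} \subseteq \induce(W_{>N})_{N+1} = W_{N+1}$ sits inside $\induce(W_{N+1}) \subseteq \induce(W_{>N})$, and exactness of $\induce$ identifies this submodule with $\induce\bigl((K_{3})_{N+1}\bigr)$.

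Feeding the short exact sequence $0 \to K_{3} \to \induce(W_{>N}) \to Q \to 0$ into the long exact sequence in $\FI$-homology and using that the two outer terms are $\cofi{0}$-acyclic yields $\cofi{i}(Q) = 0$ for $i \geq 2$ and $\cofi{1}(Q) = \ker\bigl((K_{3})_{N+1} \hookrightarrow W_{N+1}\bigr) = 0$, so Theorem \ref{characterize-H0-acyclic} delivers the conclusion. The main obstacle I anticipate is the identification $K_{3} \cong \induce\bigl((K_{3})_{N+1}\bigr)$: although plausible, it rests on keeping careful track of how the summand decomposition of $\induce(W_{>N})$ behaves under the $\FI$-structure maps.
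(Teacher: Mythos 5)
Your proposal takes a genuinely different route from the paper, but it contains a gap that the paper's proof avoids.

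The paper argues entirely on the level of $\FI$-homology supports: since $Q$ vanishes in degrees $\leq N$, Gan's result (\cite[Lemma 5]{gan-shift-seq}) forces $\cofi{1}(Q)$ to be supported in degrees $\geq N+2$, while $\cofi{1}(V)$ and $\cofi{0}(V_{\gen{\leq N}})$ are supported in degrees $\leq N+1$ and $\leq N$ respectively. Feeding the exact sequence $\cofi{1}(V) \rarr \cofi{1}(Q) \rarr \cofi{0}(V_{\gen{\leq N}})$ through this disjoint-support observation kills $\cofi{1}(Q)$ in one stroke; there is no need to choose any presentation of $V$.

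Your argument instead tries to exhibit $Q$ explicitly as a quotient of induced modules. The step you flagged as the crux, namely $K_{3} \cong \induce((K_{3})_{N+1})$, is in fact unproblematic once $K_{3}$ is known to be generated in degree $N+1$: the $\FI$-submodule of $\induce(W_{>N})$ generated by a $\sym{N+1}$-stable subspace of the degree-$(N+1)$ piece really is $\induce$ of that subspace, precisely because the transition maps of an induced module permute the $W_{T}$-summands. The genuine gap is upstream, in the sentence ``choose a surjection $\pi \colon \induce(W) \twoheadrightarrow V$ with $W = \cofi{0}(V)$ whose kernel $K$ is generated in degrees $\leq \trel(V)$.'' This requires the cover to be minimal, i.e.\ $\cofi{0}(\pi)$ an isomorphism, which in turn requires the quotient maps $\Res(V)_{n} \rarr \cofi{0}(V)_{n}$ to split $\sym{n}$-equivariantly. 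Over $\zz$ (the ambient ring throughout the paper) this can fail: for example, take $V$ with $V_{1} = \zz$, $V_{2} = \zz\sym{2}$, transition $1 \mapsto 1+t$, and $V_{n} = 0$ for $n \geq 3$; then $\cofi{0}(V)_{2} \cong \zz_{\mathrm{sgn}}$ and the surjection $\zz\sym{2} \rarr \zz_{\mathrm{sgn}}$ does not split, so $\induce(\cofi{0}(V))$ does not even surject onto $V$. Without a minimal cover, the best general bound is $\tgen(K) \leq \max\{\tgen(V), \trel(V)\}$, and since the proposition does not assume $\tgen(V) \leq N+1$, your argument does not go through. Over a field of characteristic zero your approach works and yields slightly more (an explicit two-term free presentation of $Q$), but the paper's support argument is what delivers the statement over $\zz$.
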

\begin{proof}
To prove $Q$ is $\cofi{0}$-acyclic, it suffices to show that $\cofi{1}(Q) = 0$ by Theorem \ref{characterize-H0-acyclic}. Since the $\FI$-modules $V_{\gen{\leq N}}$ and $V$ acts identically on sets of size $\leq N$, the quotient $Q$ is supported in degrees $\geq\!N+1$, or in the sense of \cite[Section 3.1]{gan-shift-seq} we have $\low(Q) \geq N+1$. Hence by \cite[Lemma 5]{gan-shift-seq}, $\cofi{1}(Q)$ is supported in degrees $\geq \low(Q) + 1 \geq N+2$.

On the other hand, $\cofi{1}(V)$ is supported in degrees $\leq \trel(V) \leq N+1$ and $\cofi{0}(V_{\gen{\leq N}})$ is supported in degrees $\leq N$. Therefore the domain and codomain of each map in the exact sequence 
\begin{align*}
 \cofi{1}(V) \rarr \cofi{1}(Q) \rarr \cofi{0}(V_{\gen{\leq N}}) 
\end{align*}
of $\FB$-modules are supported in disjoint degrees, hence is identically zero. This forces $\cofi{1}(Q) = 0$.
\end{proof}

\begin{cor} \label{t1-not-0}
 Let $V$ be an $\FI$-module presented in finite degrees. Then $\trel(V) \neq 0$. If furthermore $V$ is not $\cofi{0}$-acyclic, then $\trel(V) \geq 1$.
\end{cor}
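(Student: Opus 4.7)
The plan is to deduce both assertions from Proposition \ref{separate-t1-gen} together with the equivalence of $\cofi{0}$-acyclicity with $\trel = -1$ provided by Theorem \ref{characterize-H0-acyclic} (specifically, (2) $\Leftrightarrow$ (2$'$)). The key observation is that the hypothesis $N \geq \trel(V) - 1$ in Proposition \ref{separate-t1-gen} can be satisfied by the boundary value $N = -1$ whenever $\trel(V) \leq 0$.

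First, suppose for contradiction that $\trel(V) = 0$. Then $N := -1$ satisfies $N \geq \trel(V) - 1$, so Proposition \ref{separate-t1-gen} applies with this choice. By definition, $V_{\gen{\leq -1}}$ is the smallest $\FI$-submodule $U \leq V$ satisfying $U_{S} = V_{S}$ on all finite sets $S$ with $|S| \leq -1$; since this latter condition is vacuous, the zero submodule qualifies, hence $V_{\gen{\leq -1}} = 0$. The short exact sequence from Proposition \ref{separate-t1-gen} therefore collapses to $0 \rarr 0 \rarr V \rarr V \rarr 0$, and the conclusion of that proposition reads: $V$ itself is $\cofi{0}$-acyclic. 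Theorem \ref{characterize-H0-acyclic} then forces $\trel(V) = t_{1}(V) = -1$, contradicting $\trel(V) = 0$. Hence $\trel(V) \neq 0$.

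For the second assertion, assume $V$ is not $\cofi{0}$-acyclic. Then by the equivalence (2) $\Leftrightarrow$ (2$'$) of Theorem \ref{characterize-H0-acyclic}, we cannot have $t_{1}(V) = -1$, i.e., $\trel(V) \neq -1$. Combined with the first part, $\trel(V) \notin \{-1, 0\}$, so $\trel(V) \geq 1$.

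I do not anticipate a serious obstacle here: once one recognizes that $N = -1$ is a legal choice in Proposition \ref{separate-t1-gen} when $\trel(V) \leq 0$, and that $V_{\gen{\leq -1}} = 0$ by a vacuous-condition argument, both claims fall out mechanically from the characterization theorem. The only subtlety is getting the degree bookkeeping right at the $-1$ boundary, which is handled by the paper's explicit convention (stated in the notation paragraph) that $\deg(0) = -1$ for $\FB$-modules.
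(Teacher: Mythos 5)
Your proof is correct and follows essentially the same route as the paper: take $N=-1$ in Proposition~\ref{separate-t1-gen}, observe $V_{\gen{\leq -1}} = 0$, and read off $\cofi{0}$-acyclicity of $V$ from the conclusion; then pass through Theorem~\ref{characterize-H0-acyclic}. One small citation slip in the second assertion: the equivalence $(2) \Leftrightarrow (2')$ alone does not let you conclude $t_1(V)\neq -1$ from ``$V$ not $\cofi{0}$-acyclic,'' because not-$(2')$ only gives $t_i(V)\neq -1$ for \emph{some} $i\geq 1$. What you want is the contrapositive of $t_1(V)=-1 \Rightarrow V$ is $\cofi{0}$-acyclic, i.e.\ $(3')\Rightarrow(2)$ (or $(3)\Rightarrow(2)$). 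Since Theorem~\ref{characterize-H0-acyclic} gives all of $(2),(2'),(3),(3')$ as equivalent, the substance is fine; just cite $(3')$ or $(3)$ instead of $(2')$.
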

\begin{proof}
 If $\trel(V)$ were $0$, we could take $N = -1$ in Proposition \ref{separate-t1-gen} and conclude $V$ is $\cofi{0}$-acyclic, necessitating $\trel(V) = -1$. Moreover, $\trel(V) = -1$ implies $V$ is $\cofi{0}$-acyclic modules by Theorem \ref{characterize-H0-acyclic}.
\end{proof}

\begin{cor} \label{t0-vs-t1}
 For every $\FI$-module $V$ presented in finite degrees, we have 
\begin{align*}
 \tgen(V) \leq \max\{\trel(V)-1,\weak(V)\} \, ,
\end{align*}
and more specifically $\tgen(V) \geq \trel(V) \,\imp\, \tgen(V) = \weak(V)$.
\end{cor}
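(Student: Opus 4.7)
The strategy is to use Proposition \ref{separate-t1-gen} to split off the highest generators of $V$. Taking $N := \trel(V) - 1$, that proposition produces a short exact sequence
\[
 0 \rarr V_{\gen{\leq N}} \rarr V \rarr Q \rarr 0
\]
in which $V_{\gen{\leq N}}$ is generated in degrees $\leq N$ by definition, and $Q$ is $\cofi{0}$-acyclic. (The edge case $\trel(V) = -1$ falls under Theorem \ref{characterize-H0-acyclic}: $V$ is itself $\cofi{0}$-acyclic, so one may take $V_{\gen{\leq N}} = 0$ and $Q = V$.)

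Applying the right-exact functor $\cofi{0}$ and passing to degrees of $\FB$-modules yields
\[
 \tgen(V) \,\leq\, \max\{\tgen(V_{\gen{\leq N}}),\, \tgen(Q)\} \,\leq\, \max\{N,\, \tgen(Q)\} \, .
\]
I would next pin down $\tgen(Q)$ via two observations. First, since $Q$ is $\cofi{0}$-acyclic, Theorem \ref{characterize-H0-acyclic} equips it with a finite filtration whose subquotients are of the form $\induce(W)$; for each such induced module $\tgen(\induce(W)) = \deg(W) = \weak(\induce(W))$, and this equality propagates up the filtration thanks to the additivity of both degrees under extensions established in \cite[Proposition 2.9]{cmnr-range}, giving $\tgen(Q) = \weak(Q)$. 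Second, because the shift functor $\shift{}{}$ is exact (pointwise evaluation makes precomposition with an endofunctor of $\FI$ exact) and $\deriv$ is the cokernel of $\id \rarr \shift{}{}$, the derivative $\deriv$ is right-exact, as is therefore each iterate $\deriv^{r+1}$; applying this to the surjection $V \twoheadrightarrow Q$ with $r = \weak(V)$ shows that $\deriv^{\weak(V)+1}(Q)$ is a quotient of the torsion module $\deriv^{\weak(V)+1}(V)$, hence is itself torsion, so $\weak(Q) \leq \weak(V)$.

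Chaining the above, $\tgen(V) \leq \max\{\trel(V) - 1,\, \weak(V)\}$, proving the first claim. For the ``more specifically'' part, one has the universal bound $\weak(V) \leq \tgen(V)$; if $\tgen(V) \geq \trel(V)$ then $\tgen(V) > N$, so the just-proved maximum must be realized by $\weak(V)$, forcing $\tgen(V) \leq \weak(V)$ and hence equality.

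I do not anticipate a serious obstacle: the only non-formal input is the identity $\tgen(Q) = \weak(Q)$ for $\cofi{0}$-acyclic $Q$, which is a direct unpacking of Theorem \ref{characterize-H0-acyclic} together with routine bookkeeping for $\induce$-modules.
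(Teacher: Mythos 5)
Your proof is correct and follows essentially the same route as the paper: both start by applying Proposition~\ref{separate-t1-gen} with $N=\trel(V)-1$ and read the inequality off the resulting short exact sequence, and both deduce the "more specifically" clause from the universal bound $\weak(V)\leq\tgen(V)$. The only difference is presentational: where the paper simply cites \cite[Proposition~2.9, parts (1), (4), (6)]{cmnr-range} for the facts that $\tgen(Q)=\weak(Q)$ when $Q$ is $\cofi{0}$-acyclic and that $\weak$ is non-increasing under quotients, you re-derive the first via the semi-induced filtration of Theorem~\ref{characterize-H0-acyclic} and the second via right-exactness of $\deriv^{r+1}$, which is a reasonable unpacking. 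One small imprecision: $\tgen$ is only subadditive ($\leq\max$) under extensions in general; equality along the filtration holds here either because all subquotients are $\cofi{0}$-acyclic (so the $\cofi{0}$-sequence stays short exact) or, more simply, by combining subadditivity with the universal bound $\weak\leq\tgen$ that you already invoke — worth stating explicitly rather than calling it "additivity of both degrees."
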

\begin{proof}
 We take $N:= \trel(V)-1$ and apply Proposition \ref{separate-t1-gen} to get $Q$ as stated. Because $Q$ is a quotient of an $\FI$-module generated in finite degrees and $\trel(Q) = -1$, it is presented in finite degrees. Now from the exact sequence we can read off 
\begin{align*}
 \tgen(V) \leq \max\left\{\tgen\!\left(V_{\gen{\leq N}}\right),\, \tgen(Q) \right\} \leq \max\{N,\, \weak(Q)\} \leq \max\{N,\,\weak(V)\} \, ,
\end{align*}
using \cite[Proposition 2.9, parts (1) and (6)]{cmnr-range}. The last claim follows from \cite[Proposition 2.9, part (4)]{cmnr-range}.
\end{proof}



We conclude this section by recalling the structure theorem for the local cohomology functors of $\FI$-modules.
\begin{thm}[{\cite[Theorem 2.10]{cmnr-range} and its proof}] \label{structure-complex}
 Let $V$ be an $\FI$-module such that the triple $(V,c,g)$ satisfies Hypothesis \ref{FI-hyp}. Then there is a complex 
\begin{align*}
I^{\star} \colon 0 \rarr I^{0} \rarr I^{1} \rarr \cdots \rarr I^{g+1} \rarr 0
\end{align*}
of $\FI$-modules such that 
\begin{itemize}
 \item $I^{0} = V$,
 \item For every $1\leq j \leq g+1$, the triple $\left(I^{j},\,\,-1,\,\,g-j+1\right)$ satisfies Hypothesis \ref{FI-hyp},
 \item $I^{\star}$ is exact in degrees $\geq c + 1$,
 \item $\co^{j}(I^{\star}) \cong \locoh{j}(V)$ for every $j \geq 0$, 
 \item $\deg (\co^{j} (I^{\star})) = h^{j}(V) \leq 2g - 2j + 2$ whenever $2 \leq j \leq g+1$.
\end{itemize}
Moreover if $V$ lies in a full subcategory $\mathcal{X}$ of $\lMod{\FI}$ that is closed under taking cokernels and shifts, then each $I^{j}$ can be chosen in $\mathcal{X}$.
\end{thm}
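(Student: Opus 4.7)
The plan is induction on the stable degree $g \geq -1$, with the canonical map $V \rarr \shift{V}{c+1}$ as the workhorse. For the base case $g = -1$, Hypothesis \ref{FI-hyp}(ii) forces $\shift{V}{x} = 0$ for some $x$ (the thick subcategory generated by $\cofi{0}$-acyclics of degree $\leq -1$ is trivial); combined with $\local(V) \leq c$ via Theorem \ref{pass-to-cmnr}, this means $V$ is torsion with $\deg V \leq c$. I take $I^\star$ to consist of $I^0 = V$ alone; this meets every requirement provided higher local cohomology vanishes on torsion $\FI$-modules of finite degree, which follows by injective-resolving inside the torsion subcategory of $\lMod{\FI}$.

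For the inductive step with $g \geq 0$, the plan is to set $I^1 := \shift{V}{c+1}$ with $d^0 \colon V \rarr I^1$ the canonical map. By Hypothesis \ref{FI-hyp}(i) together with shift-invariance of the stable degree \cite[Proposition 2.9]{cmnr-range}, the triple $(I^1, -1, g)$ already satisfies Hypothesis \ref{FI-hyp}. Writing $V' := V/\locoh{0}(V)$, I identify $\ker(d^0) = \locoh{0}(V)$: the torsion subobject has degree $\leq c$, so $\shift{\locoh{0}(V)}{c+1} = 0$, while $V'$ is torsion-free hence injects into its shift. Setting $Q := \coker(d^0)$ yields the four-term exact sequence
\begin{align*}
 0 \rarr \locoh{0}(V) \rarr V \rarr \shift{V}{c+1} \rarr Q \rarr 0 \, .
\end{align*}
The crux is to verify that $(Q, c, g-1)$ satisfies Hypothesis \ref{FI-hyp}. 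For $\weak(Q) \leq g - 1$, I filter $Q$ by the images of $\shift{V'}{k}$ in $\shift{V'}{c+1}$, with graded pieces $\shift{(\deriv V')}{k}$ for $0 \leq k \leq c$, and apply $\weak(\deriv V') = g - 1$. For $\local(Q) \leq c$, I run the long exact sequences in $\locoh{j}$ on the two halves of the four-term sequence, using $\locoh{j}(\shift{V}{c+1}) = 0$ for all $j$ (Theorem \ref{characterize-H0-acyclic}) together with vanishing of higher $\locoh{j}$ on torsion of finite degree.

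Applying the inductive hypothesis to $Q$ yields a complex $J^\star \colon 0 \rarr J^0 = Q \rarr J^1 \rarr \cdots \rarr J^g \rarr 0$ with the asserted properties for $Q$. I splice it onto $I^1$ via $I^1 \twoheadrightarrow Q \xrightarrow{d^0_J} J^1$, setting $I^{j+1} := J^j$ for $j \geq 1$. The cohomology identifications follow by diagram chase: $\co^0(I^\star) = \locoh{0}(V)$ from $\ker(d^0)$; $\co^1(I^\star) \cong \locoh{0}(Q) \cong \locoh{1}(V)$ from the local-cohomology LES; and $\co^j(I^\star) \cong \co^{j-1}(J^\star) \cong \locoh{j-1}(Q) \cong \locoh{j}(V)$ for $j \geq 2$. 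Exactness in degrees $\geq c+1$ is then automatic, because each $\co^j(I^\star) = \locoh{j}(V)$ has degree $\leq \local(V) \leq c$. The bound $h^j(V) \leq 2g - 2j + 2$ for $2 \leq j \leq g+1$ propagates inductively via $h^j(V) = h^{j-1}(Q)$; the boundary case $j = 2$, which falls outside the inductive bound on $Q$, is to be handled by a separate general inequality of the form $h^1(W) \leq 2\weak(W)$ for finitely presented $W$. The ``moreover'' clause about $\mathcal{X}$ is automatic since every newly built object is either a shift or a cokernel of existing ones.

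The main obstacle will be the sharp control of $\weak$ and $\local$ through the four-term sequence for $Q$; in particular, establishing vanishing of higher local cohomology on torsion $\FI$-modules and pinning down the boundary bound $h^1(W) \leq 2\weak(W)$ both require careful appeal to the inheritance results in \cite[Propositions 2.9, 3.1]{cmnr-range} and to the structure theory of the torsion subcategory of $\lMod{\FI}$.
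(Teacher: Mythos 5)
Your inductive scaffolding — taking $I^1 := \shift{V}{c+1}$, identifying $\ker(d^0) = \locoh{0}(V)$ via the torsion-free quotient $V'$, verifying that $(Q,c,g-1)$ satisfies Hypothesis \ref{FI-hyp} by the filtration with graded pieces $\shift{\deriv V'}{k}$ and the two local-cohomology long exact sequences, and then splicing the complex for $Q$ onto $V \rarr \shift{V}{c+1}$ — is exactly the CMNR construction, and every part of it except the last bullet checks out.

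The gap is the boundary case $j=2$ of the degree bound. You rightly observe that the inductive hypothesis on $Q$ only controls $h^{j'}(Q)$ for $j' \geq 2$, so it covers $h^j(V) = h^{j-1}(Q)$ only for $j \geq 3$, and you propose to close $j=2$ with a general inequality $h^1(W) \leq 2\weak(W)$. That inequality is false. The paper's own $\mathbf{S}(c)$ is a counterexample: it is torsion-free with $\weak(\mathbf{S}(c)) = 0$, and the short exact sequence $0 \rarr \mathbf{S}(c) \rarr \shift{\mathbf{S}(c)}{c+1} \rarr Q \rarr 0$ with $\shift{\mathbf{S}(c)}{c+1}$ the constant module $\qq$ gives $\locoh{1}(\mathbf{S}(c)) \cong \locoh{0}(Q) = Q$, a torsion module of degree $c$; thus $h^1(\mathbf{S}(c)) = c > 0 = 2\weak(\mathbf{S}(c))$ for every $c \geq 1$. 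This is consistent with the remark after Theorem \ref{thm-additive}, where $h^0, h^1$ are bounded via $\reg(V) \leq 2\trel(V) - 2$ — they genuinely depend on the presentation degree, not just the stable degree, and no inequality of the form $h^1(W) \leq f(\weak(W))$ can hold. The bound $h^2(V) \leq 2g-2$ in \cite[Theorem 2.10]{cmnr-range} comes instead from a direct degree analysis of the map $I^1 \rarr I^2$ (and its kernel/image) using the \emph{generation} degrees $\tgen(I^j) \leq g-j+1$ of the $\cofi{0}$-acyclic terms, in the spirit of Corollary \ref{H0-acyclic-complex-range}; the stable degree alone is not enough. A secondary, smaller issue: the base-case assertion that $\locoh{j}$ vanishes for $j \geq 1$ on torsion modules of finite degree is true, but "injective-resolving inside the torsion subcategory" requires the nontrivial fact that such injectives remain $\locoh{0}$-acyclic in $\lMod{\FI}$, which deserves a citation or argument rather than an aside.
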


\subsection{Bounding the regularity}
In this section we prove Theorem \ref{improved-regularity}, Corollary \ref{H0-acyclic-complex-range}, and the regularity bound of Theorem \ref{regularity-from-FI-hyp} as Theorem \ref{regularity-chad-bound}.

\begin{proof}[Proof of \textbf{\emph{Theorem \ref{improved-regularity}}}]

Let $V$ be an $\FI$-module with $0 \leq \tgen(V) \leq a$ and $0 \leq \trel(V) \leq b$. By Proposition \ref{separate-t1-gen}, writing $V' := V_{\gen{\leq\, b-1}}$ we get  
\begin{align*}
 \tgen(V') &\leq \min\{a,b-1\}\,, \\
 t_{i}(V') &= t_{i}(V) \text{ for every $i \geq 1$}.
\end{align*}
In particular $\reg(V') = \reg(V)$, and by \cite[Theorem A]{ce-homology}
\begin{align*}
 \reg(V') &\leq \min\{a,b-1\} + b - 1 
=
\begin{cases}
 a + b  - 1 \, & \text{if\, $a < b$,} \\
 2b - 2 & \text{if\, $a \geq b$.} 
\end{cases}
\end{align*}
This proves the ``$\leq$'' part of the equality. 

For the ``$\geq$'' part, we shall construct an $\FI$-module that realizes the bound. First note that for every pair of integers $0 \leq g < r$, by \cite[Sharpness of Theorem E]{ce-homology} there is a finitely generated $\FI$-module $U^{g,r}$ such that
\begin{align*}
 \tgen(U^{g,r}) = g \, , \quad
 \trel(U^{g,r}) = r \, , \quad
 \deg(U^{g,r}) = g + r - 1 \, .
\end{align*}
By \cite[Lemma 4.2]{nss-regularity} we have $\reg(U^{g,r}) = g + r -1$. Next, for every integer $d \geq 0$ fix a nonzero $\FB$-module $W^{d}$ supported only in the degree $d$. Now given $a,b \geq 0$, setting
\begin{align*}
 U := 
\begin{cases}
 U^{a,b} & \text{if $a < b$,} \\
 U^{b-1,b} \oplus \induce(W^{a}) & \text{if $a \geq b$,}
\end{cases}
\end{align*}
we see that $\tgen(U) = a$, $\trel(U) = b$, and 
\begin{align*}
 \reg(U) =
 \begin{cases}
 a+b-1 & \text{if $a < b$,} 
 \\
 2b-2 & \text{if $a \geq b$,}
\end{cases}
\end{align*}
as desired.
\end{proof}

\begin{proof}[Proof of \textbf{\emph{Corollary \ref{H0-acyclic-complex-range}}}]
 Writing $\partial_{k} \colon \chain_{k} \rarr \chain_{k-1}$ for the boundary map, it suffices to show that $Z_{k} := \ker \partial_{k}$ is generated in degrees
\begin{align*}
 \leq \begin{cases}
 g_{k-1} + g_{k} + 1 
 & \text{if $g_{k} > g_{k-1}$,}
 \\
 2g_{k} & \text{if $g_{k} \leq g_{k-1}$.}
\end{cases} 
\end{align*} 
The proof is a version of the argument in \cite[Corollary 4]{gan-li-linear}. Applying $\co_{0}^{\FI}$ to the short exact sequence 
\begin{align*}
 0 \rarr Z_{k} \rarr \chain_{k} \rarr \chain_{k}/Z_{k} \rarr 0
\end{align*}
of $\FI$-modules gives rise to a long exact sequence in terms of the derived functors $\co_{i}^{\FI}$. Since $\chain_{k}$ is $\co^{\FI}_{0}$-acyclic, we read off an exact sequence 
\begin{align*}
 0 \rarr \co_{1}^{\FI}(\chain_{k}/Z_{k}) \rarr \co_{0}^{\FI}(Z_{k}) \rarr \co_{0}^{\FI}(\chain_{k}) \rarr \co_{0}^{\FI}(\chain_{k}/Z_{k}) \rarr 0
\end{align*}
and isomorphisms $\co_{i+1}^{\FI}(\chain_{k}/Z_{k}) \cong \co_{i}^{\FI}(Z_{k})$ for all $i \geq 1$. In particular, we have 
\begin{align*}
 \tgen(Z_{k}) \leq \max\{\trel(\chain_{k}/Z_{k}), \tgen(\chain_{k})\} \leq \max\{\trel(\chain_{k}/Z_{k}), g_{k}\} \, ,
\end{align*}
so it suffices to establish the stated bounds for $\trel(\chain_{k}/Z_{k})$.
To that end, write $E_{k-1} := \coker \partial_{k}$ and consider the exact sequence 
\begin{align*}
 0 \rarr \chain_{k}/Z_{k} \rarr \chain_{k-1} \rarr E_{k-1} \rarr 0 \, 
\end{align*}
of $\FI$-modules. This time we read off a pertinent isomorphism $\cofi{1}(\chain_{k}/Z_{k}) \cong \cofi{2}(E_{k-1})$ so that $\trel(\chain_{k}/Z_{k}) = t_{2}(E_{k-1})$. From the lower degrees in the long exact sequence, we can also deduce $\tgen(E_{k-1}) \leq \tgen(\chain_{k-1}) \leq g_{k-1}$ and $\trel(E_{k-1}) \leq \tgen(\chain_{k}/Z_{k}) \leq g_{k}$. Using what we have established so far and Theorem \ref{improved-regularity}, we get
\begin{align*}
 \trel(\chain_{k}/Z_{k}) = t_{2}(E_{k-1}) &\leq \reg(E_{k-1}) + 2 
 \\
 &\leq 
\begin{cases}
 g_{k-1} + g_{k} + 1 
 & \text{if $g_{k-1} < g_{k}$,}
 \\
 2g_{k} & \text{if $g_{k-1} \geq g_{k}$.}
\end{cases}
\end{align*}
\end{proof}

\begin{thm} \label{regularity-chad-bound}
  If the triple $(V,c,g)$ satisfies Hypothesis \ref{FI-hyp}, then
\begin{align*}
 \reg(V) \leq  
 \begin{cases}
 -2 & \text{if $c = -1$,} \\
 c & \text{if $g = -1$ and $c \geq 0$,} \\
 c+1 & \text{if $0 \leq g \leq \ceil{c/2}$ and $c \geq 0$,} \\
 g + \floor{c/2} + 1  & \text{if $g > \ceil{c/2}$ and $c \geq 0$.}
\end{cases}
\end{align*}
\end{thm}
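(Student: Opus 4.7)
The plan is to deduce the bound from the local-cohomology regularity inequality of Li--Ramos and Ramos cited in the introduction (essentially $\reg(V) \leq \max\{h^i(V) + i\}$ for $V$ presented in finite degrees), combined with the explicit control on the $h^i(V)$ furnished by Theorem \ref{structure-complex}.

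First I would dispatch the case $c=-1$: Hypothesis \ref{FI-hyp}(a) with $x=0$ makes $V$ itself $\cofi{0}$-acyclic, so by Theorem \ref{characterize-H0-acyclic} we have $t_i(V) = -1$ for every $i \geq 1$ and $\reg(V)=-2$ directly. For $c \geq 0$, Theorem \ref{pass-to-cmnr} tells us $V$ is presented in finite degrees with $\local(V) \leq c$ and $\weak(V) \leq g$, so the Li--Ramos inequality applies. Theorem \ref{structure-complex} supplies $h^i(V) \leq \local(V) \leq c$ for all $i \geq 0$, the refined bound $h^i(V) \leq 2g-2i+2$ for $2 \leq i \leq g+1$, and $\locoh{i}(V) = 0$ for $i \geq g+2$. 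Plugging these in yields
\begin{align*}
\reg(V) \;\leq\; \max\!\bigl\{c,\; c+1,\; \max_{2 \leq i \leq g+1}\{\min\{c,\,2g-2i+2\} + i\}\bigr\},
\end{align*}
where the first two entries handle $i=0$ and $i=1$; the degenerate case $g=-1$ is automatic, since then only $i=0$ survives and the bound reduces to $c$.

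The last step is a one-line integer optimization. Since $c+i$ is increasing and $2g-i+2$ is decreasing in $i$, the inner minimum is maximized near $i = g-c/2+1$; a quick case analysis on the parity of $c$ shows that when this crossover lies in $[2,g+1]$ (equivalently $g > \ceil{c/2}$) the peak integer value is $g + \floor{c/2} + 1$, while otherwise the inner maximum is bounded by $c+1$. Hence $\reg(V) \leq \max\{c+1,\; g+\floor{c/2}+1\}$, and the equivalence $c+1 \geq g+\floor{c/2}+1 \iff \ceil{c/2} \geq g$ splits this into exactly the two regimes of the theorem. The main obstacle I anticipate is a technical one: verifying that the Li--Ramos/Ramos local-cohomology inequality is applied in the precise degree conventions used here ($\deg(0)=-1$, and $\reg$ defined starting from $i=1$) so that the ``$-\infty$ vs $-1$ vs $-2$'' boundary cases mesh uniformly across the four regimes.
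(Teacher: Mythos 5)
Your proposal matches the paper's argument essentially step for step: invoke Theorem \ref{pass-to-cmnr} to pass to the local/stable degree picture, apply the Ramos local-cohomology regularity bound $\reg(V) \leq \max\{h^j(V)+j\}$, feed in the $h^j(V)$ estimates from Theorem \ref{structure-complex}, and carry out the same integer optimization over the crossover of $c+j$ and $2g-j+2$. The only place to tighten is the $g=-1$ case — your displayed formula by itself would still output $c+1$ — so you should record explicitly (as the paper does) that when $g=-1$ the complex $I^\star$ of Theorem \ref{structure-complex} has length zero, forcing $\locoh{j}(V)=0$ for all $j \geq 1$ and hence $\reg(V) \leq h^0(V) \leq c$.
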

\begin{proof}
 First, note that for every $j$ we have $h^{j}(V) \leq c$ by the definition of $\local$ and Theorem \ref{pass-to-cmnr}. Now using \cite[Corollary 4.15]{ramos-coh} and Theorem \ref{structure-complex}, we get
\begin{align*}
 \reg(V) &\leq \max\{h^{j}(V) + j : 0 \leq j \leq g+1\} \\
 &\leq \max\bigg( \{h^{0}(V), h^{1}(V) + 1\} \cup \big\{ \min\{c+j, 2g-j+2\} : 2\leq j \leq g+1 \big\} \bigg) 
\end{align*}
Now since $c+j \leq 2g-j+2$ if and only if $j \leq g - c/2 + 1$,
we have
\begin{align*}
 &\,\,\,\,\,\,\,\,\big\{\min\{c+j, 2g-j+2\} : 2\leq j \leq g + 1\big\} \\
 &= 
 \big\{c+j : 2\leq j \leq g - \ceil{c/2} + 1\big\} \cup
    \big\{2g-j+2: g - \ceil{c/2} + 2 \leq j \leq g+1\big\} \\
 &= 
\begin{cases}
 \empt & \text{if $g \leq 0$ and $c \leq 0$,} 
 \vspace{0.03in}
 \\
 \pmb{\big[}c+2 \,,\, g + \floor{c/2} + 1\pmb{\big]} & \text{if $g \geq 1$ and $c \leq 0$,} 
 \vspace{0.03in}
 \\
 \pmb{\big[}g + 1\,,\, g + \ceil{c/2} \pmb{\big]} & \text{if $\max\{1,g\} \leq \ceil{c/2}$,} 
 \vspace{0.03in} 
 \\
 \pmb{\big[}c+2\,,\, g + \floor{c/2} + 1\pmb{\big]} \cup \pmb{\big[}g + 1\,,\, g + \ceil{c/2} \pmb{\big]} & \text{if $g > \ceil{c/2} \geq 1$,}
\end{cases}
\end{align*}
where we have used the interval notation $\pmb{\big[}a,b\pmb{\big]}$ for the set of integers $m$ with $a \leq m \leq b$. Thus
\begin{align*}
 \max\!\big\{\!\min\{c+j, 2g-j+2\} : 2\leq j \leq g + 1\big\} = 
 \begin{cases}
 -\infty & \text{if $g \leq 0$ and $c \leq 0$,} \\
 g + \ceil{c/2} & \text{if $\max\{1,g\} \leq \ceil{c/2}$,} \\
 g + \floor{c/2} + 1  & \text{if $g > \ceil{c/2}$.}
\end{cases}
\end{align*}

Recall that $\max\{h^{0}(V), h^{1}(V) + 1\} \leq c+1$. Applying $\max\{c+1,-\}$ to each row above yields 
\begin{align*}
 \reg(V) \leq  
 \begin{cases}
 c+1 & \text{if $g \leq \ceil{c/2}$,} \\
 g + \floor{c/2} + 1  & \text{if $g > \ceil{c/2}.$}
\end{cases}
\end{align*}
We finally cover the edge cases separately. If $c=-1$, then $V$ is $\cofi{0}$-acyclic and $\reg(V) = -2$. If $g=-1$, then $\weak(V) = -1$ so $V$ is torsion and $\locoh{0}(V) = V$ and $\locoh{j}(V) = 0$ for $j \geq 1$, hence $\reg(V) \leq h^{0}(V) \leq c$ by \cite[Corollary 4.15]{ramos-coh}.
\end{proof}

\begin{cor} \label{t0-t1-ranges}
 Let $V$ be an $\FI$-module such that the triple $(V,c,g)$ satisfies Hypothesis \ref{FI-hyp}. Then
\begin{align*}
 \tgen(V)\! \leq 
 \begin{cases}
 g & \!\text{if $c=-1$,} \\
 c & \!\text{if $g=-1$ and $c \geq 0$,} \\
 c+1 & \!\text{if $0 \leq g \leq \ceil{c/2}$ and $c \geq 0$,} \\
 g + \floor{c/2} + 1  & \!\text{if $g > \ceil{c/2}$ and $c \geq 0$,}
\end{cases} 
\end{align*}
and
\begin{align*}
 \trel(V)\! \leq  
 \begin{cases}
 -1 & \!\text{if $c = -1$,} \\
 c+1 & \!\text{if $g = -1$ and $c \geq 0$,} \\
 c+2 & \!\text{if $0 \leq g \leq \ceil{c/2}$ and $c \geq 0$,} \\
 g + \floor{c/2} + 2  & \!\text{if $g > \ceil{c/2}$ and $c \geq 0$.}
\end{cases}
\end{align*}
\end{cor}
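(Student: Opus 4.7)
The plan is to read both inequalities straight off of Theorem \ref{regularity-chad-bound} together with the general inequality $\tgen(V) \leq \max\{\trel(V)-1,\weak(V)\}$ from Corollary \ref{t0-vs-t1}. The key observation is that the bound for $\trel(V)$ is exactly one more than the bound for $\reg(V)$ in every case, because by definition $\reg(V) \geq t_{1}(V) - 1 = \trel(V) - 1$. So I would first handle $\trel(V)$ by writing $\trel(V) \leq \reg(V) + 1$ and then casing on $(c,g)$ to substitute each of the four bounds provided by Theorem \ref{regularity-chad-bound}. The case $c=-1$ is the boundary case: here $V$ is $\cofi{0}$-acyclic by part (i) of Hypothesis \ref{FI-hyp} with $c+1 = 0$, and then Theorem \ref{characterize-H0-acyclic} forces $\trel(V) = -1$ directly, which matches the claim.

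For the $\tgen(V)$ bounds, I would pass through Theorem \ref{pass-to-cmnr} to get $\weak(V) \leq g$, then apply Corollary \ref{t0-vs-t1} and plug in the $\trel(V)$ bound established in the previous step. Concretely, $\tgen(V) \leq \max\{\trel(V)-1,\,\weak(V)\} \leq \max\{\trel(V)-1,\,g\}$, and then a small check shows that the right-hand side matches the claimed bound in each regime: when $c \geq 0$ and $g \leq \ceil{c/2}$, we have $g \leq c+1$ so $\max\{c+1,g\} = c+1$; when $g > \ceil{c/2}$ and $c \geq 0$, the term $g+\floor{c/2}+1$ dominates $g$; and when $g = -1$ we get the clean $\max\{c,-1\}=c$. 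For the edge $c=-1$, using $\trel(V)=-1$ gives $\tgen(V) \leq \max\{-2,g\} = g$ (noting $g \geq -1$).

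I do not anticipate a real obstacle: everything is a bookkeeping exercise once Theorem \ref{regularity-chad-bound} is in hand. The only place where one must be a little careful is checking that in the case $g \geq -1$ and $c = -1$, the formula $\max\{\trel(V)-1,\,\weak(V)\}$ with $\trel(V)=-1$ correctly produces $g$ rather than $-1$; this is automatic because $g \geq -1$ and the maximum is taken with $\weak(V)$ which may equal $g$. Apart from that, the four cases in the $\trel$ and $\tgen$ statements line up one-to-one with the four cases of Theorem \ref{regularity-chad-bound}, so the proof is essentially a table of substitutions.
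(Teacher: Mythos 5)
Your proof is correct and matches the paper's own argument exactly: the $\trel$ bounds come from $\trel(V)-1 \leq \reg(V)$ together with Theorem \ref{regularity-chad-bound}, and the $\tgen$ bounds follow from Corollary \ref{t0-vs-t1} and $\weak(V) \leq g$ (via Theorem \ref{pass-to-cmnr}). The alternative handling of the $c=-1$ case via direct $\cofi{0}$-acyclicity is a harmless variant; the paper simply reads it off from $\reg(V) \leq -2$.
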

\begin{proof}
As $\trel(V) - 1 \leq \reg(V)$ by the definition of regularity, the bounds for $\trel(V)$ follow immediately from Theorem \ref{regularity-chad-bound}. And the bounds for $\tgen(V)$ follow from Corollary \ref{t0-vs-t1} since $\weak(V) \leq g$ by Theorem \ref{pass-to-cmnr}.  
\end{proof}

\subsection{The witnesses $\mathbf{I}(g)$, $\mathbf{T}(c)$, $\mathbf{S}(c)$, $\mathbf{V}(g)$} \label{section-witness}
In this section we construct the $\FI$-modules that witness the sharpness statements in Theorem \ref{regularity-from-FI-hyp} and Theorem \ref{main-ranges}.

\paragraph{The $\FI$-module $\mathbf{T}(c)$.} Given $c \geq -1$, we define 
\begin{align*}
 \mathbf{T}(c)_{A} := 
\begin{cases}
 \qq & \text{if $|A| = c$,} \\
 0 & \text{otherwise,}
\end{cases}
\end{align*}
and if $\iota \colon A \emb B$ is an injection of finite sets, we set
\begin{align*}
 \mathbf{T}(c)_{\iota} := 
\begin{cases}
 \id_{\qq} & \text{if $|A| = |B| = c$,} \\
 0 & \text{otherwise.}
\end{cases}
\end{align*}


\paragraph{The $\FI$-module $\mathbf{I}(g)$.} 
 Given $g \geq -1$, we set $\mathbf{I}(g) := \induce(\Res_{\FB}^{\FI}(\mathbf{T}(g)))$.
\begin{prop}\label{prop-Ig}
 For every $g \geq -1$, the triple $(\mathbf{I}(g),\, -1,\, g)$ satisfies Hypothesis \ref{FI-hyp} and $t_{i}(\mathbf{I}(g)) = -1$ for every $i \geq 1$.
\end{prop}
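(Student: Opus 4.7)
The plan is a short verification built on Theorem \ref{characterize-H0-acyclic}. By construction, $\mathbf{I}(g) = \induce(W)$ where $W$ is the $\FB$-module with $W_{S} = \qq$ for $|S| = g$ and $W_{S} = 0$ otherwise (so $W = 0$ in the edge case $g = -1$). In particular $\deg(W) = g < \infty$, and the trivial one-step filtration $0 \leq \mathbf{I}(g) = \mathbf{I}(g)$ exhibits $\mathbf{I}(g)$ as an object satisfying condition (1) of Theorem \ref{characterize-H0-acyclic}.

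First I would invoke Theorem \ref{characterize-H0-acyclic} to conclude simultaneously that $\mathbf{I}(g)$ is $\cofi{0}$-acyclic and that $t_{i}(\mathbf{I}(g)) = -1$ for every $i \geq 1$ (this is condition (2') of that theorem, equivalent to (1)). This already settles the second assertion of the proposition.

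Next I would verify Hypothesis \ref{FI-hyp} for $(\mathbf{I}(g), -1, g)$. Part (1) of the hypothesis requires $\shift{\mathbf{I}(g)}{0} = \mathbf{I}(g)$ to be $\cofi{0}$-acyclic, which is exactly what we just established. For part (2), note that $\induce$ sends an $\FB$-module supported in degree $g$ to an $\FI$-module generated in degrees $\leq g$, so $\tgen(\mathbf{I}(g)) \leq g$. Therefore $\mathbf{I}(g)$ itself is a $\cofi{0}$-acyclic $\FI$-module generated in degrees $\leq g$, and we may take $x = 0$ in part (2) of Hypothesis \ref{FI-hyp} so that $\shift{\mathbf{I}(g)}{0} = \mathbf{I}(g)$ trivially lies in $\thick\gen{\text{$\cofi{0}$-acyclic $\FI$-modules generated in degrees $\leq g$}}$.

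There is no real obstacle here; the proposition is a sanity check identifying $\mathbf{I}(g)$ as the prototypical $\cofi{0}$-acyclic module of generation degree $\leq g$. The edge case $g = -1$ produces $\mathbf{I}(-1) = 0$, for which every claim holds vacuously.
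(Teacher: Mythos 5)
Your proof is correct and matches the paper's approach exactly: the paper dispatches this proposition with ``immediate by Theorem \ref{characterize-H0-acyclic},'' and your write-up simply spells out the same reasoning, recognizing $\mathbf{I}(g) = \induce(W)$ as already satisfying condition (1) of that theorem via the trivial filtration and then reading off conditions (2) and (2$'$). The observation that $\tgen(\mathbf{I}(g)) \leq g$ (so that $x=0$ works in part (2) of Hypothesis \ref{FI-hyp}) is the only other ingredient, and you have it.
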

\begin{proof}
 This is immediate by Theorem \ref{characterize-H0-acyclic}.
\end{proof}

We shall find the following useful to pin down the $t_{i}$ values of the other witnessing $\FI$-modules.

\begin{thm}[{\cite[Theorem 21, Corollary 22]{gan-shift-seq}}] \label{gan-chain}
 Let $V$ be an $\FI$-module which is not $\cofi{0}$-acyclic. Then there is a chain
\begin{align*}
 0 \leq t_{1}(V) - 1 \leq t_{2}(V) - 2 \leq \cdots \leq t_{i}(V) - i \leq \cdots
\end{align*}
which stabilizes at $\reg(V)$.
\end{thm}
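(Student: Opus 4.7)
The plan is to leverage Gan's shift exact sequence \cite[Theorem 1]{gan-shift-seq}, namely
\[
 \cofi{1}(\shift{V}{}) \to \shift{\cofi{1}(V)}{} \to \cofi{0}(V) \to \cofi{0}(\shift{V}{}) \to \shift{\cofi{0}(V)}{} \to 0 \, ,
\]
together with its higher-index analogues, and the key degree-shift identity $\deg(\shift{W}{}) = \deg(W) - 1$ for any nonzero $\FB$-module $W$. This identity trades a loss of one in degree on the $\FB$-side for a one-step change on the $\FI$-homology side, which is the structural reason the chain exists at all.

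First I would establish the base inequality $t_1(V) - 1 \geq 0$. Since $V$ is not $\cofi{0}$-acyclic, Theorem \ref{characterize-H0-acyclic} gives $\cofi{i}(V) \neq 0$ for every $i \geq 1$, so each $t_i(V) \geq 0$. To upgrade $t_1(V) \geq 0$ to $t_1(V) \geq 1$ I would assume $t_1(V) = 0$ for contradiction: then $\shift{\cofi{1}(V)}{}$ is supported in degree $-1$ (i.e., vanishes), and the shift exact sequence at $i=0$ degenerates to an injection $\cofi{0}(V) \hookrightarrow \cofi{0}(\shift{V}{})$. Iterating this with $\shift{V}{r}$ in place of $V$ and using that non-acyclicity is preserved under shifting forces $t_0(\shift{V}{r}) \geq t_0(V)$ for all $r$, contradicting eventual behavior guaranteed by the non-vanishing of $\cofi{1}(\shift{V}{r})$.

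Next, for the monotonicity $t_i(V) - i \leq t_{i+1}(V) - (i+1)$ with $i \geq 1$, I would read the segment
\[
 \cofi{i+1}(\shift{V}{}) \to \shift{\cofi{i+1}(V)}{} \to \cofi{i}(V) \to \cofi{i}(\shift{V}{})
\]
in top degree $t_i(V)$. A simultaneous induction on $i$ (carrying along control of $t_j(\shift{V}{})$ for $j \leq i$) shows that $\cofi{i}(\shift{V}{})$ is supported in degrees strictly below $t_i(V)$, so the nonzero class of $\cofi{i}(V)$ in degree $t_i(V)$ must lift to a class in $\shift{\cofi{i+1}(V)}{}_{t_i(V)} = \cofi{i+1}(V)_{t_i(V)+1}$, whence $t_{i+1}(V) \geq t_i(V)+1$, equivalently $t_{i+1}(V) - (i+1) \geq t_i(V) - i$.

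Finally, for stabilization, observe that by definition $\reg(V) = \sup_{i \geq 1}\{t_i(V) - i\}$. Since the chain is non-decreasing, it reaches its supremum: when $\reg(V) < \infty$ this is automatic for a bounded monotone integer sequence, and when $\reg(V) = \infty$ the chain is eventually (or always) infinite. The hard part is the monotonicity step, since the shift exact sequence couples $\cofi{i}(V)$ to both $\cofi{\bullet}(\shift{V}{})$ (which is not known a priori) and $\shift{\cofi{\bullet}(V)}{}$; breaking this coupling requires a careful double induction on $i$ and on the profile $(t_0(V), t_1(V), \dots)$, which is the content of Gan's \cite[Theorem 21]{gan-shift-seq}.
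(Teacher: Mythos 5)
This theorem is cited in the paper directly from Gan's work (as \cite[Theorem 21, Corollary 22]{gan-shift-seq}) and is not reproved there, so there is no in-paper argument to compare against; I will assess your attempt on its own terms.

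There are two genuine gaps. First, the base case argument is wrong: the phrase ``using that non-acyclicity is preserved under shifting'' invokes a false claim. For an $\FI$-module $V$ presented in finite degrees, $\shift{V}{r}$ \emph{does} become $\cofi{0}$-acyclic once $r$ is large enough (this is exactly part (1) of Hypothesis \ref{FI-hyp}, coming from Nagpal's theorem and recorded in Theorem \ref{pass-to-cmnr}), so the contradiction you aim for never materializes. The iteration also silently assumes $t_1(\shift{V}{r}) = 0$ for all $r$, which does not follow from $t_1(V) = 0$. The inequality $t_1(V) \geq 1$ has a much shorter proof: $\cofi{1}(V)_{\emptyset} = 0$ for every $\FI$-module (the Koszul complex computing $\FI$-homology over the empty set is concentrated in homological degree zero), so $t_1(V) \neq 0$; combined with $\cofi{1}(V) \neq 0$ (from Theorem \ref{characterize-H0-acyclic}) this gives $t_1(V) \geq 1$. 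This is also the content of the paper's Corollary \ref{t1-not-0}, proved via Proposition \ref{separate-t1-gen}.

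Second, the monotonicity step $t_i(V) - i \leq t_{i+1}(V) - (i+1)$ is not actually proved: you correctly identify that one needs control of $\cofi{i}(\shift{V}{})$ in top degree $t_i(V)$, but then you defer this to ``Gan's \cite[Theorem 21]{gan-shift-seq}'', which is precisely the statement under consideration. Citing the result inside its own proof is circular, so as written the argument establishes only the easy stabilization claim for a bounded monotone sequence and reproduces neither the base inequality nor the monotone chain.
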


\begin{prop} \label{prop-Tc}
 For every $c \geq 0$, the triple $(\mathbf{T}(c),\, c,\, -1)$ satisfies Hypothesis \ref{FI-hyp} and $t_{i}(\mathbf{T}(c)) = i+c$ for every $i \geq 0$.
\end{prop}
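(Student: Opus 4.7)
The plan is fourfold: verify Hypothesis \ref{FI-hyp} directly, read off $t_{0}(\mathbf{T}(c)) = c$ from the definition, extract the upper bound $t_{i}(\mathbf{T}(c)) \leq i + c$ from Theorem \ref{regularity-chad-bound}, and then force the matching lower bound $t_{1}(\mathbf{T}(c)) \geq c + 1$ via an explicit short exact sequence. Once $t_{1}(\mathbf{T}(c)) = c + 1$ is known, Theorem \ref{gan-chain} will propagate this sharp equality to all $i \geq 1$. The hard part is the lower bound on $t_{1}$, which requires exhibiting an explicit nonvanishing cycle in $\cofi{1}(\mathbf{T}(c))$; the rest follows formally from results already established in this section.

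By construction $(\shift{\mathbf{T}(c)}{c+1})_{S} = \mathbf{T}(c)_{S \sqcup \{*\}^{c+1}} = 0$ for every finite $S$, so $\shift{\mathbf{T}(c)}{c+1}$ is the zero module---trivially $\cofi{0}$-acyclic, and lying in $\thick\gen{\text{$\cofi{0}$-acyclic $\FI$-modules generated in degrees $\leq -1$}} = \{0\}$---verifying Hypothesis \ref{FI-hyp} with $x = c + 1$. Since every non-bijection acts as zero on $\mathbf{T}(c)$, no element of $\mathbf{T}(c)_{S}$ arises from a proper subset, so $\cofi{0}(\mathbf{T}(c))$ coincides with $\Res_{\FB}^{\FI}\mathbf{T}(c)$ as an $\FB$-module and $t_{0}(\mathbf{T}(c)) = c$. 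Theorem \ref{regularity-chad-bound} applied to $(\mathbf{T}(c), c, -1)$ gives $\reg(\mathbf{T}(c)) \leq c$, hence $t_{i}(\mathbf{T}(c)) \leq i + c$ for every $i \geq 1$.

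For the lower bound, the approach is to resolve $\mathbf{T}(c)$ by the short exact sequence
\begin{align*}
 0 \rarr K \rarr \induce\!\big(\Res_{\FB}^{\FI}\mathbf{T}(c)\big) \xrightarrow{\varepsilon} \mathbf{T}(c) \rarr 0
\end{align*}
where $\varepsilon$ is the counit of the $(\induce,\Res_{\FB}^{\FI})$-adjunction. Since $\Res_{\FB}^{\FI}\mathbf{T}(c)$ is the trivial $\sym{c}$-representation concentrated in $\FB$-degree $c$, the counit $\varepsilon$ is an isomorphism $\qq \xrightarrow{\sim} \qq$ in $\FI$-degree $c$ and the zero map in all other $\FI$-degrees; consequently $K$ vanishes in degrees $\leq c$, while $K_{\{1,\dots,c+1\}}$ coincides with $\induce\!\big(\Res_{\FB}^{\FI}\mathbf{T}(c)\big)_{\{1,\dots,c+1\}}$ and is nonzero with $\qq$-basis the $c$-subsets of $\{1,\dots,c+1\}$. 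Since $\induce\!\big(\Res_{\FB}^{\FI}\mathbf{T}(c)\big)$ is $\cofi{0}$-acyclic by Theorem \ref{characterize-H0-acyclic}, the long exact sequence of left derived functors of $\cofi{0}$ gives $\cofi{1}(\mathbf{T}(c))_{\{1,\dots,c+1\}} = \cofi{0}(K)_{\{1,\dots,c+1\}}$, because $\cofi{0}(\induce(\Res_{\FB}^{\FI}\mathbf{T}(c)))$ is concentrated in degree $c$ and so vanishes at $c+1$. As $K$ vanishes on all proper subsets of $\{1,\dots,c+1\}$, nothing maps in from below, so $\cofi{0}(K)_{\{1,\dots,c+1\}} = K_{\{1,\dots,c+1\}} \neq 0$, forcing $t_{1}(\mathbf{T}(c)) \geq c + 1$ and hence $t_{1}(\mathbf{T}(c)) = c + 1$.

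Since $t_{1}(\mathbf{T}(c)) \neq -1$, the module $\mathbf{T}(c)$ is not $\cofi{0}$-acyclic, so Theorem \ref{gan-chain} provides a non-decreasing chain $0 \leq t_{1}(\mathbf{T}(c)) - 1 \leq t_{2}(\mathbf{T}(c)) - 2 \leq \cdots$ bounded above by $\reg(\mathbf{T}(c)) \leq c$. The first term $t_{1}(\mathbf{T}(c)) - 1 = c$ already saturates this bound, so the chain is constant at $c$, yielding $t_{i}(\mathbf{T}(c)) = i + c$ for every $i \geq 1$. Together with $t_{0}(\mathbf{T}(c)) = c$, this establishes the claim for all $i \geq 0$.
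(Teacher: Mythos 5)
Your proof is correct, and it takes a genuinely different route to the key equality $t_{1}(\mathbf{T}(c)) = c+1$ than the paper does. The paper's argument is: $\mathbf{T}(c)$ is finitely generated torsion of degree $c$, so $\reg(\mathbf{T}(c)) = c$ exactly by an external result \cite[Lemma 4.2]{nss-regularity}; then since $\weak(\mathbf{T}(c)) = -1$ and $\tgen(\mathbf{T}(c)) = c$, Corollary \ref{t0-vs-t1} forces $c \leq \trel(\mathbf{T}(c)) - 1$, and combined with $\trel - 1 \leq \reg = c$ this pins down $\trel(\mathbf{T}(c)) = c+1$. You instead get the upper bound $\reg(\mathbf{T}(c)) \leq c$ from the paper's own Theorem \ref{regularity-chad-bound} (bypassing the nss-regularity citation), and you obtain the lower bound $t_{1}(\mathbf{T}(c)) \geq c+1$ by an explicit homological computation: resolving $\mathbf{T}(c)$ by the counit $\induce(\Res_{\FB}^{\FI}\mathbf{T}(c)) \twoheadrightarrow \mathbf{T}(c)$, identifying the kernel $K$ as supported in degrees $> c$, and reading off $\cofi{1}(\mathbf{T}(c))_{c+1} \cong \cofi{0}(K)_{c+1} \cong K_{c+1} \neq 0$ from the long exact sequence. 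Both proofs then close with Theorem \ref{gan-chain} in the same way. Your approach is more hands-on and self-contained (it produces an explicit nonzero class in $\cofi{1}$ rather than deducing nonvanishing abstractly from Corollary \ref{t0-vs-t1}), while the paper's is more compressed by leaning on the sharp equality for torsion modules. One small observation worth keeping: your direct verification of Hypothesis \ref{FI-hyp} via $\shift{\mathbf{T}(c)}{c+1} = 0$ and the fact that a module generated in degrees $\leq -1$ must be zero is cleaner than the paper's implicit route through $\weak$, $\local$, and Theorem \ref{pass-to-cmnr}.
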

\begin{proof}
 The $\FI$-module $\mathbf{T}(c)$ is finitely generated torsion of degree $c$, hence $\reg(\mathbf{T}(c)) = c$ by \cite[Lemma 4.2]{nss-regularity}. We also see that $\weak(\mathbf{T}(c)) = -1$ and $\tgen(\mathbf{T}(c)) = c$. By Corollary \ref{t0-vs-t1}, we get
\begin{align*}
 c = \tgen(\mathbf{T}(c)) \leq \max\{-1,\trel(\mathbf{T}(c)) - 1\} = \trel(\mathbf{T}(c)) - 1 \leq c
\end{align*}
and hence $\trel(\mathbf{T}(c)) - 1 = c$ and we conclude by Theorem \ref{gan-chain}.
\end{proof}

\paragraph{The $\FI$-module $\mathbf{S}(c)$.} Given $c \geq -1$, we define 
\begin{align*}
 \mathbf{S}(c)_{A} := 
\begin{cases}
 0 & \text{if $|A| \leq c$,} \\
 \qq & \text{otherwise,}
\end{cases}
\end{align*}
and if $\iota \colon A \emb B$ is an injection of finite sets, we set
\begin{align*}
 \mathbf{S}(c)_{\iota} := 
\begin{cases}
 0 & \text{if $|A| \leq c$,} \\
 \id_{\qq} & \text{otherwise.}
\end{cases}
\end{align*}

\begin{prop} \label{prop-Sc}
 For every $c \geq 0$, the triple $(\mathbf{S}(c),\, c,\, 0)$ satisfies Hypothesis \ref{FI-hyp} and $t_{i}(\mathbf{S}(c)) = i+c+1$ for every $i \geq 0$.
\end{prop}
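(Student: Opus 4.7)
The plan is to first verify Hypothesis \ref{FI-hyp} by computing iterated shifts, then compute $t_0(\mathbf{S}(c))$ directly from the coinvariants formula, and finally couple the regularity bound of Theorem \ref{regularity-chad-bound} with Corollary \ref{t0-vs-t1} and Theorem \ref{gan-chain} to pin down all $t_i$'s.

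From the definitions of the shift and of $\mathbf{S}(c)$, an immediate inspection gives $\shift{\mathbf{S}(c)}{} \cong \mathbf{S}(c-1)$, and hence $\shift{\mathbf{S}(c)}{c+1} \cong \mathbf{S}(-1) \cong \induce(\mathbf{e})$, where $\mathbf{e}$ is the $\FB$-module equal to $\qq$ in degree zero. This single $\FI$-module is $\cofi{0}$-acyclic and generated in degree $0$, verifying both clauses of Hypothesis \ref{FI-hyp} for $(\mathbf{S}(c),\, c,\, 0)$ with $x = c + 1$. A direct computation of the coinvariants formula for $\cofi{0}$ then shows that $\cofi{0}(\mathbf{S}(c))_A$ vanishes for $|A| \leq c$ trivially and for $|A| \geq c + 2$ because any $(c+1)$-subset $T \subsetneq A$ has $\mathbf{S}(c)_T \cong \mathbf{S}(c)_A$ via inclusion, while it equals $\qq$ for $|A| = c + 1$; hence $t_0(\mathbf{S}(c)) = c + 1$.

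Since $g = 0 \leq \ceil{c/2}$, Theorem \ref{regularity-chad-bound} yields $\reg(\mathbf{S}(c)) \leq c + 1$, and in particular $t_1(\mathbf{S}(c)) \leq c + 2$. For the matching lower bound, Theorem \ref{pass-to-cmnr} guarantees that $\mathbf{S}(c)$ is presented in finite degrees with $\weak(\mathbf{S}(c)) \leq 0$, so Corollary \ref{t0-vs-t1} applies: the inequality $\tgen \leq \max\{\trel - 1,\weak\}$ combined with $\tgen(\mathbf{S}(c)) = c + 1 > 0 \geq \weak(\mathbf{S}(c))$ forces $\trel(\mathbf{S}(c)) - 1 \geq c + 1$. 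Thus $t_1(\mathbf{S}(c)) = c + 2$, which in particular shows $\mathbf{S}(c)$ is not $\cofi{0}$-acyclic.

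Finally, Theorem \ref{gan-chain} provides that $(t_i(\mathbf{S}(c)) - i)_{i \geq 1}$ is non-decreasing and stabilizes at $\reg(\mathbf{S}(c))$; since its first term $t_1(\mathbf{S}(c)) - 1 = c + 1$ already meets the upper bound $\reg(\mathbf{S}(c)) \leq c + 1$, the sequence is constant at $c + 1$, giving $t_i(\mathbf{S}(c)) = i + c + 1$ for every $i \geq 1$, which together with $t_0(\mathbf{S}(c)) = c + 1$ completes the proof. The key step is the coupling of the regularity upper bound with Corollary \ref{t0-vs-t1} to pin down $t_1$ exactly: neither inequality is tight on its own, but together they force equality, and the remaining values then fall out of the Gan chain without any explicit resolution computation.
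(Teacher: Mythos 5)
Your proof is correct and follows essentially the same route as the paper: identify $\shift{\mathbf{S}(c)}{c+1}$ with the constant module to verify Hypothesis \ref{FI-hyp}, derive the regularity bound $\leq c+1$ from Theorem \ref{regularity-chad-bound}, squeeze $\trel - 1$ between the lower bound from Corollary \ref{t0-vs-t1} (using $\tgen = c+1 > \weak$) and the upper bound from regularity, and then invoke Theorem \ref{gan-chain}. The only cosmetic divergences are that you compute $\cofi{0}(\mathbf{S}(c))$ explicitly where the paper declares $\tgen(\mathbf{S}(c)) = c+1$ "evident", and that you invoke $\weak(\mathbf{S}(c)) \leq 0$ via Theorem \ref{pass-to-cmnr} while the paper computes $\weak(\mathbf{S}(c)) = 0$ directly from $\deriv(\mathbf{S}(c))$ being torsion; neither affects the argument.
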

\begin{proof}
 It is evident that $\tgen(\mathbf{S}(c)) = c+1$, $\shift{\mathbf{S}(c)}{} = \mathbf{S}(c-1)$ and $\deriv(\mathbf{S}(c))$ is torsion of degree $c$. Hence by definition, $\weak(\mathbf{S}(c)) = 0$ and since $\shift{\mathbf{S}(c)}{c+1} = \mathbf{S}(-1)$ is the constant $\FI$-module at $\qq$, the first claim follows by Theorem \ref{pass-to-cmnr}. Hence $\reg(\mathbf{S}(c)) \leq c+1$ by Theorem \ref{regularity-chad-bound}. Moreover by Corollary \ref{t0-vs-t1}, we have 
\begin{align*}
 c+1 = \tgen(\mathbf{S}(c)) \leq \max\{\trel(\mathbf{S}(c))-1,0\} = \trel(\mathbf{S}(c))-1 \leq c+1
\end{align*}
because $\mathbf{S}(c)$ is not $\cofi{0}$-acyclic. Thus $\trel(\mathbf{S}(c)) - 1 = c+1$ and we conclude by Theorem \ref{gan-chain}.
\end{proof}

\paragraph{The $\FI$-module $\mathbf{V}(g)$.} If $g=-1,0$, we set $\mathbf{V}(g)$ to be 0. For $g \geq 1$, following \cite[Proposition 3.4.1]{cef} with $\lambda := (g)$, we define $\mathbf{V}(g)$ to be the $\FI$-submodule of $\mathbf{I}(g)$ generated by the unique copy of the Specht module $\specht{\qq}{g,g} \leq \mathbf{I}(g)_{2g}$ in degree $2g$. It satisfies 
\begin{align*}
 \mathbf{V}(g)_{n} \cong 
\begin{cases}
 \specht{\qq}{n-g,g} & \text{if $n \geq 2g$,} 
 \\
 0 & \text{otherwise.}
\end{cases}
\end{align*}
\begin{prop} \label{prop-Vg}
 For every $g \geq 1$, the triple $(\mathbf{V}(g),\, 2g-2,\, g)$ satisfies Hypothesis \ref{FI-hyp} and $t_{i}(\mathbf{V}(g)) = i+2g$ for every $i \geq 0$.
\end{prop}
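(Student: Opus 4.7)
The plan is to first verify Hypothesis \ref{FI-hyp} for $(\mathbf{V}(g), 2g-2, g)$, and then use the results of this subsection to pin down the $t_{i}(\mathbf{V}(g))$.

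For the stable degree, iterating the isomorphism $\shift{\mathbf{I}(k)}{} \cong \mathbf{I}(k) \oplus \mathbf{I}(k-1)$ (a consequence of Pascal's identity applied to $\dim \mathbf{I}(k)_{n} = \binom{n}{k}$) yields $\deriv^{g+1}(\mathbf{I}(g)) = 0$ and hence $\weak(\mathbf{I}(g)) = g$; monotonicity of $\weak$ under $\FI$-submodules (\cite[Proposition 2.9]{cmnr-range}) then gives $\weak(\mathbf{V}(g)) \leq g$. For the local degree, torsion-freeness of $\mathbf{V}(g) \subseteq \mathbf{I}(g)$ gives $h^{0}(\mathbf{V}(g)) = -1$; and the short exact sequence
\begin{align*}
 0 \rarr \mathbf{V}(g) \rarr \mathbf{I}(g) \rarr Q \rarr 0,
\end{align*}
combined with $\locoh{j}(\mathbf{I}(g)) = 0$ for all $j \geq 0$ (Proposition \ref{prop-Ig} with Theorem \ref{characterize-H0-acyclic}), gives isomorphisms $\locoh{j+1}(\mathbf{V}(g)) \cong \locoh{j}(Q)$ for every $j \geq 0$. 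It therefore suffices to show $\local(Q) \leq 2g-2$, which I would prove by induction on $g$. The base case $g=1$ is a direct check: $Q \cong \mathbf{S}(0)$, and $\local(\mathbf{S}(0)) \leq 0$ by Proposition \ref{prop-Sc}. For the inductive step, the Pieri decomposition $\mathbf{I}(g)_{n} = \bigoplus_{k=0}^{g} S_{(n-k,k)}$ (valid for $n \geq 2g$) lifts in characteristic zero to a finite filtration $0 \subseteq F_{0} \subseteq \cdots \subseteq F_{g-1} = Q$ whose successive quotients agree for $n \geq 2g-1$ with $\mathbf{V}(0), \mathbf{V}(1), \dots, \mathbf{V}(g-1)$, the discrepancies being torsion of degree $\leq 2g-2$. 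The inductive hypothesis gives $\local(\mathbf{V}(k)) \leq 2k-2 \leq 2g-4$ for $k < g$, and closure of the condition $\local(\cdot) \leq 2g-2$ under extensions yields $\local(Q) \leq 2g-2$.

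With Hypothesis \ref{FI-hyp} now established (via Theorem \ref{pass-to-cmnr}), Theorem \ref{regularity-chad-bound} gives $\reg(\mathbf{V}(g)) \leq g + (g-1) + 1 = 2g$, and Corollary \ref{t0-t1-ranges} gives $\tgen(\mathbf{V}(g)) \leq 2g$ together with $\trel(\mathbf{V}(g)) \leq 2g+1$. The equality $\tgen(\mathbf{V}(g)) = 2g$ is immediate from the definition. To see $\trel(\mathbf{V}(g)) \geq 2g+1$, I would exhibit an explicit minimal relation: in the presentation $\induce(S_{(g,g)}) \twoheadrightarrow \mathbf{V}(g)$ induced by the generator, the Pieri expansion of $\induce(S_{(g,g)})_{2g+1}$ decomposes as $S_{(g+1,g)} \oplus S_{(g,g,1)}$, and the $S_{(g,g,1)}$-summand is killed by the surjection onto $\mathbf{V}(g)_{2g+1} = S_{(g+1,g)}$. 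As no relations exist in degrees $< 2g+1$, this $S_{(g,g,1)}$-isotypic component is a minimal relation, so $t_{1}(\mathbf{V}(g)) = 2g+1$. Consequently $t_{1}(\mathbf{V}(g)) - 1 = 2g = \reg(\mathbf{V}(g))$, and Theorem \ref{gan-chain} then forces the weakly increasing sequence $(t_{i}(\mathbf{V}(g)) - i)_{i \geq 1}$, which starts at $2g$ and is bounded above by $2g$, to be constantly $2g$. Combined with $t_{0}(\mathbf{V}(g)) = 2g$, this yields $t_{i}(\mathbf{V}(g)) = i + 2g$ for every $i \geq 0$.

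The main obstacle is the inductive control of $\local(Q)$. The category of $\FI$-modules over $\qq$ is not semisimple---$\mathbf{V}(0) = \mathbf{I}(0)$ already admits proper submodules of the form $\mathbf{S}(c)$---so the Pieri decomposition at each $n$ does not literally lift to an $\FI$-module direct sum $\bigoplus_{k=0}^{g} \mathbf{V}(k)$. Specifying the correct filtration of $Q$ and verifying that the low-degree discrepancies between its subquotients and the $\mathbf{V}(k)$'s contribute only torsion of degree $\leq 2g-2$ is the delicate step.
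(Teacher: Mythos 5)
Your reduction to bounding $\local(Q)$ for $Q := \mathbf{I}(g)/\mathbf{V}(g)$ is correctly set up, and your second-half argument---pinning down $\trel(\mathbf{V}(g)) = 2g+1$ by exhibiting the $\specht{\qq}{g,g,1}$ minimal relation in $\ker\bigl(\induce(\specht{\qq}{g,g}) \twoheadrightarrow \mathbf{V}(g)\bigr)$ at degree $2g+1$ and then running Theorem \ref{gan-chain}---is a legitimate alternative to the paper's route. The paper instead computes the dimension polynomial $\frac{n-(2g-1)}{g}\binom{n}{g-1}$, observes it is negative at $n = 2g-2$, invokes \cite[Proposition 2.14]{cmnr-range} to force $\local(\mathbf{V}(g)) \geq 2g-2$, and then uses the Nagpal--Sam--Snowden equality $\reg(V) = \max\{h^{j}(V) + j : \locoh{j}(V) \neq 0\}$ from \cite[Theorem 1.1]{nss-regularity} to get $\reg(\mathbf{V}(g)) = 2g$; your explicit-relation route is arguably more concrete.

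The genuine gap is in the middle: your inductive filtration argument for $\local(Q) \leq 2g-2$ is not carried out, and you flag this yourself. The existence of a filtration $0 \subseteq F_0 \subseteq \cdots \subseteq F_{g-1} = Q$ whose subquotients agree with $\mathbf{V}(0),\dots,\mathbf{V}(g-1)$ away from torsion discrepancies of controlled degree is asserted, not proven; the base case $g=1$ does not supply it, and the Pieri decomposition at each level $n$ does not by itself determine the $\FI$-module structure of $Q$. The paper avoids this entirely by proving that $Q = \mathbf{U}(g)$ is \emph{torsion-free}. The mechanism: if $\specht{\qq}{\mu}$ were a nonzero torsion class in $Q_n$, then $\specht{\qq}{\mu}$ appears in $\mathbf{I}(g)_n$ but not $\mathbf{V}(g)_n$, yet is carried into $\mathbf{V}(g)_{n+1} = \specht{\qq}{n+1-g,g}$ by the transition map; Frobenius reciprocity and Pieri force $\mu = (n+1-g, g-1)$, and then the monotonicity theorem \cite[Theorem 2.8]{church-config} applied to $\{\mathbf{I}(g)_n\}$ (we are in its monotonicity range since $n \geq 2g-1 \geq g$) yields a contradiction. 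Once $h^{0}(Q) = -1$, the long exact sequence gives $h^{1}(\mathbf{V}(g)) = -1$, and Theorem \ref{structure-complex} bounds $h^{j}(\mathbf{V}(g)) \leq 2g-2j+2 \leq 2g-2$ for $j \geq 2$, with no filtration of $Q$ required. Note also that your route is more than is needed even in principle: since $h^{0}(\mathbf{V}(g)) = -1$ and Theorem \ref{structure-complex} disposes of every $j \geq 2$, the only nontrivial bound to supply is $h^{0}(Q) \leq 2g-2$, not the full $\local(Q) \leq 2g-2$; but even that weaker statement seems to require the monotonicity argument or a comparable input.
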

\begin{proof}
 Writing $\mathbf{U}(g) := \mathbf{I}(g) / \mathbf{V}(g)$, we claim that $\mathbf{U}(g)$ is torsion-free: for the sake of contradiction suppose $n:=h^{0}(\mathbf{U}(g)) \geq 0$. Then there is a partition $\mu \vdash n$ such that the Specht module $\specht{\qq}{\mu}$ 
\begin{itemize}
 \item appears in $\mathbf{I}(g)_{n} \cong \Ind_{\sym{g} \times \sym{n-g}}^{\sym{n}} \qq$ but not in $\mathbf{V}(g)_{n}$ (because the multiplicity of $\specht{\qq}{n-g,g}$ in $\mathbf{I}(g)_{n}$ is 1 by Pieri's rule), and 
 \item is sent inside $\mathbf{V}(g)_{n+1}$ under the transition map $\mathbf{I}(g)_{n} \emb \mathbf{I}(g)_{n+1}$.
\end{itemize}
The first criterion forces $\mu$ to have $<g$ boxes below the first row, as observed in \cite[proof of Proposition 3.4.1]{cef}. There is a commutative diagram 
\begin{align*}
 \xymatrix{
 \specht{\qq}{\mu} \ar[r] \ar[d] & \Res_{\sym{n}}^{\sym{n+1}}\mathbf{V}(g)_{n+1} \ar[d]
 \\
 \mathbf{I}(g)_{n} \ar[r] & \Res_{\sym{n}}^{\sym{n+1}}\mathbf{I}(g)_{n+1}
 }
\end{align*}
with injective $\sym{n}$-equivariant maps, in particular the top right does not vanish: so $n+1 \geq 2g$, we have $\mathbf{V}(g)_{n+1} \cong \specht{\qq}{n+1-g,g}$ and hence $\mu = (n+1-g,g-1)$. By Frobenius reciprocity, we get a commutative diagram
\begin{align*}
 \xymatrix{
 \Ind_{\sym{n}}^{\sym{n+1}}\specht{\qq}{n+1-g,g-1} \ar[r] \ar[d] 
 & \specht{\qq}{n+1-g,g} \ar[d]
 \\
 \Ind_{\sym{n}}^{\sym{n+1}}\mathbf{I}(g)_{n} \ar[r] & \mathbf{I}(g)_{n+1}
 }
\end{align*}
of $\qq\sym{n+1}$-modules. On the other hand, our $n \geq 2g-1 \geq g$ is in the \textbf{monotonicity} range \cite[Definition 1.2]{church-config} of the sequence $\{\mathbf{I}(g)_{n}\}$ by \cite[Theorem 2.8]{church-config}, which  implies that the image of the top map of the above diagram should contain a copy of $\specht{\qq}{n+2-g,g-1}$, a contradiction. Now applying $\locoh{0}$ to the short exact sequence 
\begin{align*}
 0 \rarr \mathbf{V}(g) \rarr \mathbf{I}(g) \rarr \mathbf{U}(g) \rarr 0 \, ,
\end{align*}
the associated long exact sequence gives $\locoh{0}(\mathbf{V}(g)) = 0$ and 
\begin{align*}
 \locoh{1}(\mathbf{V}(g)) = \locoh{0}(\mathbf{U}(g)) = 0 \, .
\end{align*}
Because $\weak(\mathbf{V}(g)) \leq \weak(\mathbf{I}(g)) = g$ by \cite[Proposition 2.9]{cmnr-range}, we get 
\begin{align*}
 h^{max}(\mathbf{V}(g)) = \max\{h^{j}(\mathbf{V}(g)): j \geq 2\} \leq 2g-2
\end{align*}
by Theorem \ref{structure-complex}. Consequently the triple $(\mathbf{V}(g),\,2g-2,\,g)$ satisfies Hypothesis \ref{FI-hyp} by Theorem \ref{pass-to-cmnr}.

On the other hand, by the hook length formula \cite[Theorem 20.1]{james-sym-book}, for $n \geq 2g$ we have 
\begin{align*}
 \dim_{\qq} \mathbf{V}(g)_{n} 
 &= \dim_{\qq} \specht{\qq}{n-g,g}
 \\
 &= \frac{n!}{g!(n-2g)! \cdot \prod_{i=0}^{g-1}(n-g+1-i)}
 = \frac{\prod_{j=0}^{2g-1}(n-j)}{g! \cdot \prod_{j=g-1}^{2g-2}(n-j)}
 \\
 &= \frac{n-(2g-1)}{g!}\prod_{j=0}^{g-2}(n-j) = \frac{n-(2g-1)}{g} \binom{n}{g-1}\, ,
\end{align*}
but this polynomial evaluates to a negative number when evaluated at $n = 2g-2$. Thus \cite[Proposition 2.14]{cmnr-range} forces $h^{max}(\mathbf{V}(g)) \geq 2g - 2$, and hence 
\begin{align*}
 \local(\mathbf{V}(g)) = 2g - 2 \, .
\end{align*}
Invoking Theorem \ref{structure-complex} again and using \cite[Theorem 1.1]{nss-regularity}, we get 
\begin{align*}
 \reg(\mathbf{V}(g)) &= \max\{h^{j}(\mathbf{V}(g)) + j : \locoh{j}(\mathbf{V}(g)) \neq 0\}
 \\
  &= h^{2}(\mathbf{V}(g)) + 2 
  \\
  &= \local(\mathbf{V}(g)) + 2 = 2g \, .
\end{align*}
Finally, by Corollary \ref{t0-vs-t1} we have 
\begin{align*}
 2g = \tgen(\mathbf{V}(g)) \leq \max\{\trel(\mathbf{V}(g))-1,g\} = \trel(\mathbf{V}(g))-1 \leq \reg(\mathbf{V}(g)) = 2g
\end{align*}
and hence $\trel(\mathbf{V}(g)) - 1 = 2g$ and by Theorem \ref{gan-chain} $t_{i}(\mathbf{V}(g)) - i = 2g$ for every $i \geq 2$ as well.
\end{proof}


\begin{proof}[Proof of \textbf{\emph{Theorem \ref{regularity-from-FI-hyp}}}]
Combine Theorem \ref{regularity-chad-bound} and Propositions \ref{prop-Ig},  \ref{prop-Tc}, \ref{prop-Sc}, \ref{prop-Vg}.
\end{proof}

\subsection{Complexes of $\FI$-modules and $\FI$-hyperhomology}
The main result of this section is Theorem \ref{hyper-to-usual}. We closely follow the treatment of Gan--Li \cite{gan-li-linear}.

\begin{prop} \label{gecerken}
 Suppose $\chain_{\star}$ is a chain complex of $\FI$-modules such that for every $k \in \zz$, the $\FI$-module $\chain_{k}$ is $\co_{0}^{\FI}$-acyclic and is generated in degrees $\leq g_{k}$. Then the $\FI$-module $\co_{k}(\chain_{\star})$ is presented in finite degrees, and assuming $g_{k} \geq 0$, it satisfies $\weak(\co_{k}(\chain_{\star})) \leq g_{k}$ and
\begin{align*}
 h^{j}(\co_{k}(\chain_{\star})) &\leq 
\begin{cases}
 \max\{-1,\,2g_{k+1}-2\} & \text{if $j=0$,} \\
 \max\{-1,\,2g_{k+1}-4\} & \text{if $j=1$,} \\ 
 \max\{-1,\,2g_{k}-2j+2\} & \text{if $j \geq 2$.}
\end{cases}
\end{align*}
\end{prop}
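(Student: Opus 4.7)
The plan is to extract the three assertions from the subquotient structure induced by the differential, combined with repeated application of Theorem \ref{structure-complex}. Write $\partial_{k} \colon \chain_{k} \to \chain_{k-1}$, $Z_{k} := \ker \partial_{k}$, and $B_{k} := \operatorname{im} \partial_{k+1}$, so that $H_{k} := \co_{k}(\chain_{\star}) = Z_{k}/B_{k}$ and $\chain_{k}/Z_{k} \cong B_{k-1}$. Because $\chain_{k}$ is $\cofi{0}$-acyclic and finitely generated, it is PFD with $\weak(\chain_{k}) \leq g_{k}$ by Theorem \ref{characterize-H0-acyclic}; since PFD $\FI$-modules are closed under kernels, cokernels, and extensions \cite[Proposition 2.9]{cmnr-range}, each of $Z_{k}, B_{k}, H_{k}, Z_{k+1}$ is PFD. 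Exactness of $\deriv$ together with the fact that torsion $\FI$-modules form a Serre subcategory shows that $\weak$ does not increase under subquotients, so viewing $H_{k}$ as a subquotient of $\chain_{k}$ via $B_{k} \subseteq Z_{k} \subseteq \chain_{k}$ yields $\weak(H_{k}) \leq g_{k}$; likewise $\weak(Z_{k+1}) \leq g_{k+1}$.

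For $j \geq 2$ I would apply Theorem \ref{structure-complex} to $H_{k}$: by Theorem \ref{pass-to-cmnr} the triple $(H_{k}, \local(H_{k}), g_{k})$ satisfies Hypothesis \ref{FI-hyp}, and the associated structure complex has length $g_{k}+1$, giving $h^{j}(H_{k}) \leq \max\{-1, 2g_{k} - 2j + 2\}$, exactly the claimed bound in this range.

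For $j = 0, 1$ I would read the bound off the long exact sequence of $0 \to B_{k} \to Z_{k} \to H_{k} \to 0$, exploiting the vanishing of $\locoh{*}(\chain_{\ell})$ for all $\ell$ that comes from Theorem \ref{characterize-H0-acyclic}. The long exact sequence from $0 \to Z_{k} \to \chain_{k} \to B_{k-1} \to 0$ yields $\locoh{0}(Z_{k}) = 0$ and $\locoh{j+1}(Z_{k}) \cong \locoh{j}(B_{k-1})$ for $j \geq 0$; since $B_{k-1}$ embeds into the torsion-free $\chain_{k-1}$ it is itself torsion-free, so $\locoh{1}(Z_{k}) = \locoh{0}(B_{k-1}) = 0$. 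Hence the $H_{k}$ sequence produces injections $\locoh{0}(H_{k}) \hookrightarrow \locoh{1}(B_{k})$ and $\locoh{1}(H_{k}) \hookrightarrow \locoh{2}(B_{k})$. Analogously, $0 \to Z_{k+1} \to \chain_{k+1} \to B_{k} \to 0$ gives $\locoh{j}(B_{k}) \cong \locoh{j+1}(Z_{k+1})$, and Theorem \ref{structure-complex} applied to $Z_{k+1}$ bounds $\deg \locoh{2}(Z_{k+1}) \leq 2g_{k+1}-2$ and $\deg \locoh{3}(Z_{k+1}) \leq 2g_{k+1}-4$. Taken with $\max\{-1,\cdot\}$, these match the stated bounds on $h^{0}(H_{k})$ and $h^{1}(H_{k})$.

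The main subtlety is the asymmetric appearance of $g_{k+1}$ in the $j=0,1$ bounds: because $Z_{k}$ turns out to be torsion-free in low degrees, the low local cohomology of $H_{k}$ is fed entirely by $B_{k}$, and the vanishing of $\locoh{*}(\chain_{k+1})$ shifts the picture one degree up into the local cohomology of $Z_{k+1}$, which Theorem \ref{structure-complex} then controls in terms of $g_{k+1}$ rather than $g_{k}$. Once this indexing juggling is set up correctly, no delicate numerical estimates remain.
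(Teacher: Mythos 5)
Your proof is correct and follows essentially the same strategy as the paper's: both arguments propagate local cohomology through the short exact sequences linking $H_{k}$, $Z_{k}$, $Z_{k+1}$ and the $\chain_{\ell}$, exploit the vanishing $\locoh{j}(\chain_{\ell}) = 0$ from $\cofi{0}$-acyclicity, and bound the surviving terms via Theorem \ref{structure-complex} applied to $Z_{k+1}$ with $\weak(Z_{k+1}) \leq g_{k+1}$. The paper compresses your explicit long-exact-sequence chase by citing \cite[proof of Proposition 3.3]{cmnr-range}; the only small imprecision in your write-up is the phrase ``exactness of $\deriv$'' (the functor $\deriv$ is only right exact on $\lMod{\FI}$, with torsion kernel obstruction), but the subquotient monotonicity of $\weak$ you want is still correct since that obstruction dies in the Serre quotient by torsion modules.
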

\begin{proof}
The $\FI$-module
\begin{align*}
 \co_{k}(\chain_{\star}) = \coker (\chain_{k+1} \rarr \ker (\chain_{k} \rarr \chain_{k-1}))
\end{align*}
is presented in finite degrees by \cite[Theorem 2.3, part (1)]{cmnr-range}. The first claim follows from \cite[Proposition 3.1, part (1)]{cmnr-range} and \cite[Proposition 3.3, part (1) and (2)]{cmnr-range}. Next, as $\weak(\chain_{k}) \leq g_{k}$ by Theorem \ref{t0-vs-t1} and 
\begin{align*}
 \ker (\chain_{k+1} \rarr \ker (\chain_{k} \rarr \chain_{k-1})) = 
 \ker (\chain_{k+1} \rarr \chain_{k}) \, ,
\end{align*}
by Theorem \ref{structure-complex} and \cite[proof of Proposition 3.3]{cmnr-range}, we have 
\begin{align*}
  h^{j}(\co_{k}(\chain_{\star})) 
  &\leq \max\{h^{j}(\ker (\chain_{k} \rarr \chain_{k-1})),\,
 h^{j+1}(\chain_{k+1}),\, 
 h^{j+2}(\ker (\chain_{k+1} \rarr \chain_{k}))\}
 \\
 &\leq 
\begin{cases}
 \max\{h^{0}(\chain_{k}),\,h^{1}(\chain_{k+1}),\,2g_{k+1}-2\}& \text{if $j = 0$,}
 \\
 \max\{h^{1}(\chain_{k}),\,h^{0}(\chain_{k-1}),\,
 h^{2}(\chain_{k+1}),\,2g_{k+1}-4\} 
 & \text{if $j=1$,}
 \\
 \{-1,\,2g_{k} - 2j + 2\} & \text{if $j \geq 2$.}
\end{cases}
\end{align*}
We are done because $\local(\chain_{k}) = -1$ for every $k \in \zz$ by Theorem \ref{characterize-H0-acyclic}. 
\end{proof}

\begin{thm} \label{hyper-to-usual}
 Let $\chain_{\star}$ be a chain complex of $\FI$-modules supported on non-negative degrees and $k \geq 0$ be a homological degree such that $\tbol_{k}(\chain_{\star}) , \tbol_{k+1}(\chain_{\star}) < \infty$. The $\FI$-module $\co_{k}(\chain_{\star})$ is identically zero if $\tbol_{k}(\chain_{\star}) = -1$, and otherwise setting 
\begin{align*}
 N_{k} := \max\{\tbol_{k}(\chain_{\star}), \tbol_{k+1}(\chain_{\star})\} \, ,
\end{align*}
 it satisfies the following:
\begin{birki}
 \item $\weak(\co_{k}(\chain_{\star})) \leq \tbol_{k}(\chain_{\star})$.
 \vspace{0.1cm}
 \item $\tgen(\co_{k}(\chain_{\star})) \leq 2\tbol_{k}(\chain_{\star})$.
 \vspace{0.1cm}
 \item $
  h^{j}(\co_{k}(\chain_{\star})) \leq 
\begin{cases}
 \max\{-1,\,2N_{k}-2\} & \text{if $j=0$,} \\
 \max\{-1,\,2N_{k}-4\} & \text{if $j=1$,} \\ 
 \max\{-1,\,2\weak(\co_{k}(\chain_{\star}))-2j+2\} & \text{if $j \geq 2$.}
\end{cases}
$
\vspace{0.1cm}
 \item $\local(\co_{k}(\chain_{\star})) \leq \max\{-1,\,2N_{k}-2\}$.
\vspace{0.1cm}
 \item $\reg(\co_{k}(\chain_{\star})) \leq 
\begin{cases}
 2N_{k}-2 & \text{if $\tbol_{k}(\chain_{\star}) = 0$,}
 \\
 \max\{2\weak(\co_{k}(\chain_{\star})),\,2N_{k}-2\}
 & \text{otherwise.}
\end{cases}
 $
\end{birki}
\begin{proof}
Let $P_{\star}$ be the total complex of a Cartan--Eilenberg resolution of $\chain_{\star}$. In particular $P_{\star}$ is a chain complex of projective $\FI$-modules with $\co_{k}(P_{\star}) = \co_{k}(\chain_{\star})$. We first note that projective $\FI$-modules are induced, that is, for each $j \geq 0$ we have $P_{j} = \induce(W^{j})$ for some $\FB$-module $W^{j}$; and hence 
\begin{align*}
 (P_{j})_{\gen{\leq N}} \cong \induce\! \left( \bigoplus_{n \leq N} W^{j}_{n}\, \right)
\end{align*}
is $\cofi{0}$-acyclic for any $N$, being a summand of the projective $\FI$-module $P_{j}$. It is shown in \cite[Lemma 7]{gan-li-linear} that the $k$-th homology of the subcomplex 
\begin{align*}
 \mathlarger{
 Q_{\star} :=
 (P_{\star})_{\gen{\leq{\tbol_{k}(\chain_{\star})}}} \subseteq P_{\star}
 }
\end{align*}
surjects on $\co_{k}(P_{\star})$. Thus we may assume $\tbol_{k}(\chain_{\star}) \geq 0$ in the rest of the proof. By Corollary \ref{H0-acyclic-complex-range} applied to the subcomplex $Q_{\star}$ we get 
\begin{align*}
 \tgen(\co_{k}(P_{\star})) \leq \tgen(\co_{k}(Q_{\star})) \leq 2\tbol_{k}(\chain_{\star}) \, ,
\end{align*}
because each term of $Q_{\star}$ is \emph{a fortiori} generated in degrees $\leq \tbol_{k}(\chain_{\star})$, establishing (2). (1) follows from \cite[Theorem 5.1, part (1)]{cmnr-range} or \cite[Remark 9]{gan-li-linear}.

It is shown in the proof of \cite[Lemma 8]{gan-li-linear} that the natural map
\begin{align*}
 \mathlarger{
 \co_{k} \left( (P_{\star})_{\gen{\leq N_{k}}} \right) \rarr \co_{k}(P_{\star}) = \co_{k}(\chain_{\star}) \, ,
 }
\end{align*}
is an isomorphism; hence Proposition \ref{gecerken} applies to the complex $(P_{\star})_{\gen{\leq N_{k}}}$ with $g_{k} = g_{k+1} = N_{k}$ and we get the bounds for $j=0,1$ in (3). The bounds for $j \geq 2$ follow from Theorem \ref{structure-complex}. (4) follows immediately from (1) and (3). For (5) we consider two cases: 
\begin{itemize}
 \item $\tbol_{k}(\chain_{\star}) = 0$: here by \cite[Corollary 4.15]{ramos-coh}, (1) and (3), we have
\begin{align*}
 \reg(\co_{k}(\chain_{\star})) 
 &\leq \max\{h^{0}(\co_{k}(\chain_{\star})),\, h^{1}(\co_{k}(\chain_{\star})) + 1\}
 \\
 &\leq \max\{-1,\,2N_{k}-2\} 
 = \max\{-1,\,2\tbol_{k+1}(\chain_{\star})-2\}
 \\
 &\leq 
\begin{cases}
  -1 & \text{if $2\tbol_{k+1}(\chain_{\star})-2 \leq -1$,}
  \\
  2\tbol_{k+1}(\chain_{\star})-2 & \text{otherwise.}
\end{cases}
\end{align*} 
But the inequality $2\tbol_{k+1}(\chain_{\star})-2 \leq -1$ is equivalent to $2\tbol_{k+1}(\chain_{\star})-2 \leq -2$, and the inequality $\reg(\co_{k}(\chain_{\star})) \leq -1$ implies $\trel(\co_{k}(\chain_{\star})) \leq 0$, which forces that $\co_{k}(\chain_{\star})$ is $\cofi{0}$-acyclic by Corollary \ref{t1-not-0} and hence $\reg(\co_{k}(\chain_{\star})) = -2$. Thus 
\begin{align*}
 \reg(\co_{k}(\chain_{\star})) 
  &\leq 
\begin{cases}
  -2 & \text{if $2\tbol_{k+1}(\chain_{\star})-2 \leq -2$,}
  \\
  2\tbol_{k+1}(\chain_{\star})-2 & \text{otherwise,}
\end{cases}
\\
 &\leq \max\{-2,\,2\tbol_{k+1}(\chain_{\star})-2\} = 2N_{k}-2 \, .
\end{align*}
 \item $\tbol_{k}(\chain_{\star}) \geq 1$: here by \cite[Corollary 4.15]{ramos-coh}, (1) and (3), we have
\begin{align*}
 \reg(\co_{k}(\chain_{\star})) 
 &\leq \max\{h^{j}(\co_{k}(\chain_{\star})) + j : 0 \leq j \leq \weak(\co_{k}(\chain_{\star})) + 1\}
 \\
 &\leq \max\{2N_{k}-2,\,2\weak(\co_{k}(\chain_{\star}))\} \, .
\end{align*}
\end{itemize}
\end{proof}
  
\end{thm}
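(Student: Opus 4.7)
The plan is to reduce the hyperhomology statement to a statement about an actual complex of $\cofi{0}$-acyclic modules, and then apply the improved bounds developed earlier in the paper (especially Corollary \ref{H0-acyclic-complex-range} and Proposition \ref{gecerken}). This essentially follows the Gan--Li strategy \cite{gan-li-linear}, but profits from the sharper regularity bound of Theorem \ref{improved-regularity}.

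First, I would replace $\chain_{\star}$ with the total complex $P_{\star}$ of a Cartan--Eilenberg resolution, so $P_{\star}$ is a complex of projective (hence induced, hence $\cofi{0}$-acyclic) $\FI$-modules whose homology coincides with that of $\chain_{\star}$, and whose generation degree in degree $j$ is exactly $\tbol_{j}(\chain_{\star})$. Since every projective $\FI$-module splits as $\induce(W)$ for an $\FB$-module $W$, the truncation $(P_{\star})_{\gen{\leq N}}$ is still a subcomplex of $\cofi{0}$-acyclic $\FI$-modules (being summands of the projectives). This is the key technical setup.

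Next, the Gan--Li observation \cite[Lemmas 7 and 8]{gan-li-linear} gives two things: the natural map from the $k$-th homology of $(P_{\star})_{\gen{\leq \tbol_{k}(\chain_{\star})}}$ onto $\co_{k}(\chain_{\star})$ is surjective, and the larger truncation $(P_{\star})_{\gen{\leq N_{k}}}$ computes $\co_{k}(\chain_{\star})$ on the nose. Feeding the smaller truncation into Corollary \ref{H0-acyclic-complex-range} with $g_{k-1} = g_{k} = \tbol_{k}(\chain_{\star})$ immediately yields $\tgen(\co_{k}(\chain_{\star})) \leq 2\tbol_{k}(\chain_{\star})$, which is part (2). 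Part (1) is the standard weak-degree inequality from \cite[Theorem 5.1]{cmnr-range} (or \cite[Remark 9]{gan-li-linear}), independent of the present improvement.

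For part (3), I would apply Proposition \ref{gecerken} to the larger truncation $(P_{\star})_{\gen{\leq N_{k}}}$ (with $g_{k} = g_{k+1} = N_{k}$) for the $j = 0, 1$ rows, and use Theorem \ref{structure-complex} for $j \geq 2$ via part (1). Part (4) is read off immediately from part (3). Finally, for part (5), I would use \cite[Corollary 4.15]{ramos-coh} which bounds $\reg$ by $\max\{h^{j} + j\}$ and combine with (1), (3); the only subtlety is the edge case $\tbol_{k}(\chain_{\star}) = 0$, where $\weak(\co_{k}(\chain_{\star})) \leq 0$ collapses the maximum to $j \in \{0,1\}$ and I need to upgrade a naive $-1$ to $-2$ using Corollary \ref{t1-not-0} (an $\FI$-module presented in finite degrees with $\trel \leq 0$ is forced to be $\cofi{0}$-acyclic).

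The main obstacle I anticipate is keeping track of the edge cases separating $\tbol_{k}(\chain_{\star}) = -1$ (forcing vanishing of $\co_{k}(\chain_{\star})$), $\tbol_{k}(\chain_{\star}) = 0$ (which needs the Corollary \ref{t1-not-0} sharpening to achieve the clean bound $2N_{k} - 2$ rather than $\max\{-1, 2N_{k}-2\}$), and $\tbol_{k}(\chain_{\star}) \geq 1$ (where the $j \geq 2$ regime kicks in). Apart from this bookkeeping, every step is an immediate application of an earlier result; the new content is precisely the improvement gained by routing the argument through Corollary \ref{H0-acyclic-complex-range} (which rests on Theorem \ref{improved-regularity}) rather than through the older Church--Ellenberg bound.
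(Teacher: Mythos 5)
Your proposal matches the paper's proof essentially line for line: Cartan--Eilenberg resolution, Gan--Li Lemmas 7 and 8 to pass to the two truncations, Corollary \ref{H0-acyclic-complex-range} on the smaller truncation for part (2), Proposition \ref{gecerken} on the larger one for the $j=0,1$ rows of (3), Theorem \ref{structure-complex} for $j\geq 2$, and the Corollary \ref{t1-not-0} sharpening for the $\tbol_{k}(\chain_{\star})=0$ edge case in (5). The approach, the lemmas cited, and the edge-case bookkeeping are all the same as in the paper.
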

%
%
\section{Stable ranges}
In this section we recall/establish bounds for the several ways $\FI$-modules exhibit stable behavior and prove Theorem \ref{main-ranges} and Theorem \ref{hyper-ranges}.

\subsection{Inductive description}
Corollary \ref{cor-inductive} of this section together with Corollary \ref{t0-t1-ranges} forms the inductive description part of Theorem \ref{main-ranges}.

\begin{thm}[{\cite[Theorem C]{ce-homology},\cite[Proof of Theorem 2.6]{gan-li-central}}] \label{induct-eqv}
 Let $V$ be an $\FI$-module defined over a commutative ring $R$, and $N \geq -1$. The following are equivalent: 
\begin{birki}
 \item $\max\{\tgen(V), \trel(V)\} \leq N$.
 \item For every $n \geq 0$, the natural map
$\mathlarger{\underset{\substack{S \subseteq \{1,\dots,n\}
 \vspace{0.011in} \\ |S| \leq N}}{\colim} \!\!V_{S} \rarr V_{n}}$
of $R \sym{n}$-modules is an isomorphism.
\item Writing $\FI_{\leq N}$ for the full subcategory of $\FI$ whose objects are sets of size $\leq N$ and $\Ind_{\FI_{\leq N}}^{\FI}$ for the left adjoint of the restriction 
$\Res_{\FI_{\leq N}}^{\FI} \colon \lMod{\FI} \rarr \lMod{\FI_{\leq N}}$, the counit map $\Ind_{\FI_{\leq N}}^{\FI}\!\Res_{\FI_{\leq N}}^{\FI} V \rarr V$ is an isomorphism.
\end{birki}
\end{thm}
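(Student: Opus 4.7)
The plan is to establish $(2) \Leftrightarrow (3)$ by identifying the colimit with the pointwise formula for a left Kan extension, and then $(1) \Leftrightarrow (3)$ via a five-lemma argument applied to a free presentation of $V$.

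For $(2) \Leftrightarrow (3)$, write $\iota \colon \FI_{\leq N} \hookrightarrow \FI$ for the inclusion. The pointwise formula for the left Kan extension expresses $\left(\Ind_{\FI_{\leq N}}^{\FI} W\right)_n$ as a colimit over the comma category of pairs $(S, f \colon S \hookrightarrow \{1,\dots,n\})$ with $|S| \leq N$. Sending a subset $T \subseteq \{1,\dots,n\}$ of size $\leq N$ to the object $(T, T \hookrightarrow \{1,\dots,n\})$ defines a cofinal functor from the poset of such subsets to this comma category: the slice under any $(S, f)$ has initial object $(f(S), f(S) \hookrightarrow \{1,\dots,n\})$. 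Hence the two colimits agree, and taking $W = \Res V$ identifies the map in $(2)$ with the $n$-th component of the counit in $(3)$.

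The heart of $(1) \Leftrightarrow (3)$ is the following free-module lemma: for any $\FB$-module $W$ supported in degrees $\leq N$, the counit $\Ind_{\FI_{\leq N}}^{\FI} \Res \induce(W) \to \induce(W)$ is an isomorphism. This holds by inspecting the pointwise colimit formulas: at a finite set $S$, both sides assemble from summands $W_T$ indexed by injections $T \hookrightarrow S$ with $|T| \leq N$, and the counit is the identification of the two bookkeepings. Given this lemma, $(1) \Rightarrow (3)$ is a five-lemma argument: $\tgen(V), \trel(V) \leq N$ yield a free presentation $\induce(W^{1}) \to \induce(W^{0}) \to V \to 0$ with each $W^{i}$ supported in degrees $\leq N$; applying the right-exact functor $\Ind_{\FI_{\leq N}}^{\FI} \Res$ and comparing with the identity functor via the counit, the free-module lemma makes the two left vertical maps isomorphisms, forcing the rightmost vertical to be an isomorphism by the five lemma for right-exact sequences. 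Conversely for $(3) \Rightarrow (1)$, any free resolution $R_{1} \to R_{0} \to \Res V \to 0$ in $\lMod{\FI_{\leq N}}$ yields, upon application of the right-exact $\Ind_{\FI_{\leq N}}^{\FI}$, an exact sequence $\Ind_{\FI_{\leq N}}^{\FI} R_{1} \to \Ind_{\FI_{\leq N}}^{\FI} R_{0} \to V \to 0$ whose first two terms are free $\FI$-modules generated in degrees $\leq N$ by transitivity of left Kan extensions along $\FB_{\leq N} \hookrightarrow \FI_{\leq N} \hookrightarrow \FI$. Hence $\tgen(V), \trel(V) \leq N$.

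The main technicality is the free-module lemma — verifying that $\Ind_{\FI_{\leq N}}^{\FI} \Res$ acts as the identity on free $\FI$-modules generated in degrees $\leq N$. This is a formal but somewhat fiddly manipulation of pointwise colimit formulas, and once established, the rest of the proof is standard abelian-categorical machinery. A minor bookkeeping issue is checking that over a general commutative ring $R$ (rather than $\zz$) all the adjoints and Kan extensions behave the same way, which they do because $\Ind$ is defined by extension of scalars and tensor products respect pointwise colimits.
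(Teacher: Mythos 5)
The paper does not prove this theorem itself; it simply cites \cite[Theorem C]{ce-homology} and \cite[Proof of Theorem 2.6]{gan-li-central}. Your reconstruction is correct and follows the standard route one would extract from those references: the cofinality argument cleanly identifies (2) with (3), and the five-lemma reduction via the induced-module lemma (which, as you note in passing, is most cleanly seen through transitivity of left Kan extensions along $\FB_{\leq N}\hookrightarrow \FI_{\leq N}\hookrightarrow\FI$) handles $(1)\Leftrightarrow(3)$. One small terminological nit: your ``free-module lemma'' applies to $\induce(W)$ for \emph{any} $\FB$-module $W$ supported in degrees $\leq N$, not only projective (free) ones; this is harmless for your argument since you may in any case choose the $W^{i}$ to be free, but phrasing it for general induced modules makes the lemma directly applicable without that choice.
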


\begin{cor} \label{cor-inductive}
 Let $V$ be an $\FI$-module defined over a commutative ring $R$. Given $n,N \in \nn$, the natural map
\begin{align*}
 \mathlarger{\underset{\substack{S \subseteq \{1,\dots,n\}\vspace{0.011in} \\ |S| \leq N}}{\colim} \!\!V_{S} \rarr V_{n}}
\end{align*}
of $R \sym{n}$-modules is surjective if $N \geq \tgen(V)$, and injective if $N \geq \trel(V)$. 
\end{cor}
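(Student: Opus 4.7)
The plan is to identify the displayed colimit with the evaluation at $n$ of the counit of the adjunction $\Ind_{\FI_{\leq N}}^{\FI} \dashv \Res_{\FI_{\leq N}}^{\FI}$, and then to invoke Theorem \ref{induct-eqv}. Setting $W := \Ind_{\FI_{\leq N}}^{\FI} \Res_{\FI_{\leq N}}^{\FI} V$, the pointwise formula for the left Kan extension identifies $W_{n}$ with the colimit in the statement of the corollary. Moreover, because for $k \leq N$ the indexing poset of that colimit has $[k]$ as a terminal object, the counit $\phi \colon W \rarr V$ restricts to the identity in degrees $\leq N$, so the image of $\phi$ is exactly the sub-$\FI$-module $V_{\gen{\leq N}}$. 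This already handles surjectivity: if $N \geq \tgen(V)$, then $V_{\gen{\leq N}} = V$, so $\phi$ and a fortiori $\phi_{n}$ are surjective.

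For the injectivity claim, assume $N \geq \trel(V)$ and set $V' := V_{\gen{\leq N}}$, so that $\phi$ factors as $W \twoheadrightarrow V' \emb V$. Because $V$ and $V'$ agree in degrees $\leq N$, we have $\Res_{\FI_{\leq N}}^{\FI} V' = \Res_{\FI_{\leq N}}^{\FI} V$, so $W$ is simultaneously the counit object for $V'$; it therefore suffices by Theorem \ref{induct-eqv} to verify $\max\{\tgen(V'), \trel(V')\} \leq N$. The inequality $\tgen(V') \leq N$ is built into the definition of $V'$. For $\trel(V') \leq N$, I would apply Proposition \ref{separate-t1-gen} with the given $N \geq \trel(V) - 1$: it tells us that the quotient $Q := V/V'$ is $\cofi{0}$-acyclic, and then the long exact sequence in $\cofi{\star}$ attached to $0 \rarr V' \rarr V \rarr Q \rarr 0$ collapses to $\cofi{1}(V') \cong \cofi{1}(V)$, giving $\trel(V') \leq \trel(V) \leq N$ as needed.

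The crux is thus the verification $\trel(V') \leq N$, which Proposition \ref{separate-t1-gen} carries out. A small technicality is that that proposition is stated under the hypothesis that $V$ is presented in finite degrees; when $\tgen(V) = \infty$ one can still derive $\cofi{1}(Q) = 0$ by the same degree-support inequality from \cite[Lemma 5]{gan-shift-seq} that appears in the proof of Proposition \ref{separate-t1-gen}, and then reduce the full $\cofi{0}$-acyclicity of $Q$ to the finitely generated case by writing $V$ as a filtered colimit of its finitely generated sub-$\FI$-modules and using that filtered colimits preserve injections. Every $\FI$-module appearing in the subsequent applications satisfies Hypothesis \ref{FI-hyp} and is therefore automatically presented in finite degrees by Theorem \ref{pass-to-cmnr}, so the simple argument above is what is actually needed.
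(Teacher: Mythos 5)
Your argument takes essentially the same route as the paper's proof: factor the colimit map through $V_{\gen{\leq N}}$, deduce $\trel\!\left(V_{\gen{\leq N}}\right) \leq N$ from Proposition \ref{separate-t1-gen} and the associated long exact sequence, and conclude via Theorem \ref{induct-eqv}. Your explicit use of the pointwise left Kan extension and the counit of $\Ind_{\FI_{\leq N}}^{\FI} \dashv \Res_{\FI_{\leq N}}^{\FI}$ is precisely part (3) of Theorem \ref{induct-eqv}, which the paper accesses implicitly through its part (2); the identification of the comma-category colimit with the displayed poset colimit is correct.

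The finite-presentation technicality you raise in the last paragraph is a genuine issue that the paper's proof glides over: Proposition \ref{separate-t1-gen} is stated for $V$ presented in finite degrees, while $\trel(V) \leq N < \infty$ alone does not force $\tgen(V) < \infty$ (e.g.\ $\induce(W)$ with $W$ an $\FB$-module of unbounded degree). However, your proposed filtered-colimit patch does not quite close it as written. If $V = \colim_{\alpha} V^{\alpha}$ over finitely generated sub-$\FI$-modules, there is no control of $\trel(V^{\alpha})$ in terms of $\trel(V)$: for instance $\mathbf{S}(c) \leq \mathbf{S}(-1)$ has $\trel(\mathbf{S}(c)) = c+2$ while $\trel(\mathbf{S}(-1)) = -1$. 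So Proposition \ref{separate-t1-gen} need not apply to each $V^{\alpha}$, and the quotients $V^{\alpha}/(V^{\alpha})_{\gen{\leq N}}$ need not be $\cofi{0}$-acyclic, which undercuts the reduction. Deriving $\cofi{1}(Q) = 0$ from the degree-support estimate is fine with no finiteness hypothesis, but upgrading that to $\cofi{0}$-acyclicity of $Q$ — or just to $\trel\!\left(V_{\gen{\leq N}}\right) \leq N$, which is all you need — still leans on Theorem \ref{characterize-H0-acyclic}, which as stated requires $Q$ generated in finite degrees. Your closing remark is the sound resolution: every $\FI$-module in the paper's applications satisfies Hypothesis \ref{FI-hyp}, hence is presented in finite degrees by Theorem \ref{pass-to-cmnr}, so the regime where Proposition \ref{separate-t1-gen} applies verbatim is the only one that matters.
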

\begin{proof}
%
That $N \geq \tgen(V)$ implies surjectivity follows immediately from the definition of $\cofi{0}$. If $N \geq \trel(V)$, the long exact homology sequence associated to the short exact sequence in Proposition \ref{separate-t1-gen} gives
\begin{align*}
 \tgen(V_{\gen{\leq N}}) \leq N \quad \text{and} \quad \trel(V_{\gen{\leq N}}) = \trel(V) \leq N \, .
\end{align*}
Considering the factorization
\begin{align*}
\underset{\substack{S \subseteq \{1,\dots,n\}\vspace{0.011in} \\ |S| \leq N}}{\colim} \!\!V_{S} \,\,= 
\underset{\substack{S \subseteq \{1,\dots,n\}\vspace{0.011in} \\ |S| \leq N}}{\colim} \!(V_{\gen{\leq N}})_{S} \rarr (V_{\gen{\leq N}})_{n} \emb V_{n}\, ,
\end{align*}
the map associated to $V$ is injective if and only if the one for $V_{\gen{\leq N}}$ is. Indeed the latter is an isomorphism by Theorem \ref{induct-eqv}.
\end{proof}

\subsection{Additive structure}
In this section we prove Theorem \ref{thm-additive}, which forms the additive structure part of Theorem \ref{main-ranges}. We also observe that our stable range here is at least as good as that of Patzt--Wiltshire-Gordon \cite{patzt-wgordon}.

\begin{thm} \label{thm-additive}
 Let $V$ be an $\FI$-module defined over a commutative ring $R$ such that the triple $(V,c,g)$ satisfies Hypothesis \ref{FI-hyp}. Then there exist $R$-modules $\ab_{0}, \dots, \ab_{g}$ such that in the range $n \geq \max\{c+1,2g-1\}$, there is an isomorphism
 \vspace{-0.05in}
\begin{align*}
 \mathlarger{V_{n} \cong \bigoplus_{r=0}^{g} \ab_{r}^{\oplus \binom{n}{r} - \binom{n}{r - 1}}}
\end{align*}
of $R$-modules, with the convention that $\binom{a}{b} = 0$ unless $0 \leq b \leq a$.
\vspace{0.05in}
\end{thm}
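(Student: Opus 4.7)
My strategy is to reduce the additive analysis of $V$ to that of the semi-induced pieces in its canonical resolution, then extract the decomposition in the stable range by tracking the $\FI\sharp$-like structure each piece carries. By Theorem~\ref{pass-to-cmnr}, the hypothesis translates to $V$ being presented in finite degrees with $\local(V) \leq c$ and $\weak(V) \leq g$. Applying Theorem~\ref{structure-complex}, I obtain a complex
\begin{align*}
I^{\star} \colon 0 \to V = I^{0} \to I^{1} \to \cdots \to I^{g+1} \to 0
\end{align*}
in which each $I^{j}$ for $j \geq 1$ is semi-induced with $\weak(I^{j}) \leq g - j + 1$, and which is exact in degrees $\geq c+1$. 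Thus for $n \geq c+1$, the sequence $0 \to V_{n} \to (I^{1})_{n} \to (I^{2})_{n} \to \cdots$ is exact as a sequence of $R$-modules.

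Next I would establish the additive decomposition for each semi-induced piece $I^{j}$. By Theorem~\ref{characterize-H0-acyclic}, $I^{j}$ is $\sharp$-filtered with quotients of the form $\induce(X)$, and a direct calculation yields
\begin{align*}
 \induce(X)_{n} \,=\, \bigoplus_{e} X_{e}^{\oplus \binom{n}{e}} \,=\, \bigoplus_{s} \Bigl(\bigoplus_{e \geq s} X_{e}\Bigr)^{\oplus \binom{n}{s} - \binom{n}{s-1}}
\end{align*}
using the telescoping identity $\binom{n}{e} = \sum_{s=0}^{e}\bigl(\binom{n}{s} - \binom{n}{s-1}\bigr)$. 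The essential claim at this stage is that the $R$-module extensions produced by the $\sharp$-filtration of $I^{j}$ split in the stable range $n \geq \max\{0,\, 2(g-j+1) - 1\}$, yielding $R$-modules $B_{r}^{(j)}$ with $(I^{j})_{n} \cong \bigoplus_{r} (B_{r}^{(j)})^{\oplus \binom{n}{r} - \binom{n}{r-1}}$ there. I would then identify $V_{n}$ as the kernel of $(I^{1})_{n} \to (I^{2})_{n}$ and define $\ab_{r}$ as an $R$-module assembled from the $B_{r}^{(j)}$ via the resolution, so that $V_{n}$ inherits a compatible decomposition. The combined stable range is $\max\{c+1,\, 2g-1\}$: the $c+1$ ensures exactness of $I^{\star}$, and the $2g-1$ is where the semi-induced decomposition at $j=1$ takes hold (since $\weak(I^{1}) \leq g$ dominates).

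The principal difficulty is this final descent through the kernel: kernels of $R$-module maps between modules of the form $\bigoplus_{r} A_{r}^{\oplus \binom{n}{r} - \binom{n}{r-1}}$ need not themselves have this form, and even the preliminary claim that the $\sharp$-filtration of $I^{j}$ splits additively over $R$ in the stable range is nontrivial. Overcoming these requires essential use of the $\FI$-module structure of the differentials in $I^{\star}$ together with the rigidity of $\sharp$-filtrations under shifts and the vanishing bounds $h^{j}(V) \leq 2g - 2j + 2$ provided by Theorem~\ref{structure-complex}. I expect this to be the technical heart of the argument.
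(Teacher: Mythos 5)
Your setup (Theorem~\ref{pass-to-cmnr} to pass to local and stable degrees, then Theorem~\ref{structure-complex} for the resolution by semi-induced modules) matches the paper. However, the ``descent through the kernel'' step you flag as the technical heart is precisely where your argument has no viable path forward, and the difficulty you name is genuine: a kernel of an $R$-module map between two modules of the shape $\bigoplus_r A_r^{\oplus \binom{n}{r} - \binom{n}{r-1}}$ has no reason to be of that shape, and the Patzt--Wiltshire-Gordon decompositions of $(I^1)_n$ and $(I^2)_n$ are not compatible with the differential of $I^\star$ in any way that would help. Appealing to ``rigidity of $\sharp$-filtrations'' and the bounds $h^j(V) \leq 2g-2j+2$ does not resolve this; the bounds control the cokernel of $V \to I^1$, not the interaction between the chosen direct-sum decompositions and the map $I^1 \to I^2$.

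The missing idea is to avoid kernel computations on decomposed modules entirely. The paper instead works with the Patzt--Wiltshire-Gordon bimodule $\zz\!\CB$: the functor $\zz\!\CB \otimes_{\FI_{\leq g}} -$ is \emph{exact} (because each $\zz\!\CB(-,n)$ is a direct sum of the flat modules $\Xi(\ell)$), and the comparison $\chi\colon \zz\!\Hom_{\FI}(-,-) \to \zz\!\CB$ is an isomorphism in degrees $\geq 2g-1$. Applying this exact functor to the left-exact sequence $0 \to V \to I \to J$ from Theorem~\ref{structure-complex}, one obtains another sequence that is exact in degrees $\geq c+1$ and, via $\chi$ and the counits of $\Ind_{\FI_{\leq g}}^{\FI} \dashv \Res_{\FI_{\leq g}}^{\FI}$ (the latter being isomorphisms for $I$, $J$ since both are presented in degrees $\leq g$), agrees with the original in degrees $\geq 2g-1$. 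This identifies $V_n$ directly with $\zz\!\CB(-,n) \otimes_{\FI_{\leq g}} V$, and the desired decomposition then drops out of the known splitting of $\zz\!\CB(-,n)$ into $\Xi(\ell)^{\oplus \binom{n}{\ell}-\binom{n}{\ell-1}}$'s, with $\ab_r = \Xi(r) \otimes_{\FI} V$. In short: the decomposition is imposed from the bimodule side once and for all, so no kernel of decomposed objects ever needs to be analyzed. Without this idea, your outline does not close.
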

\begin{proof}
 For simplicity we only treat the case $R = \zz$. The general case is entirely analogous. We will establish the isomorphism in the range 
\begin{align*}
 n \geq \max\{h^{0}(V) + 1,\, h^{1}(V) + 1,\, 2g-1\}\,,
\end{align*}
which implies the desired range by Theorem \ref{pass-to-cmnr}. First of all if $g = -1$, then $V$ is torsion with $\deg(V) = c$, hence in the range $n \geq c+1$ we have $V_{n} = 0$, agreeing with the empty direct sum. From now on we assume $g \geq 0$.

There is a functor $\CB \colon \FI^{\opp} \times \FI \rarr \mathsf{Set}$ defined in
\cite[Definition 4.1]{patzt-wgordon} with the following properties:
\begin{birki}
 \item There is a map $\chi \colon \zz\!\Hom_{\FI}(-,-) \rarr \zz\!\CB$ of $\FI$-bimodules such that 
\begin{align*}
 \chi_{k,n} \colon \zz\!\Hom_{\FI}(k,n) \rarr \zz\!\CB(k,n)
\end{align*}
is an isomorphism when $n \geq 2k-1$ \cite[Proposition 4.7, Corollary 4.11]{patzt-wgordon}. 

\item The functor $\zz\!\CB \otimes_{\FI} \,\,- \colon \lMod{\FI} \rarr \lMod{\FI}$ is exact. This is because the $\FI$-module structure on the tensor product is defined pointwise from that of $\zz\!\CB$, and for every $n$ the $\ters{\FI}$-module $\zz\!\CB(-,n)$ is a direct sum of $\Xi(\ell)$'s defined in \cite[Definition 1.1]{patzt-wgordon} (see \cite[proof of Lemma 4.2]{patzt-wgordon}), each of which is flat by \cite[Proposition 3.24]{patzt-wgordon}. 

\item The specific decomposition of $\zz\!\CB(-,n)$ mentioned above is 
\begin{align*}
 \zz\!\CB(-,n) 
 &\cong \bigoplus_{\ell \in \nn} \zz\!\CB_{\ell}(-,n)
 \cong \bigoplus_{\ell \in \nn} \bigoplus_{c \in \catalan(\ell,n)}
 \!\!\!\zz\!\CB_{\ell}^{c}(-,n)
 \\
 &\cong \bigoplus_{\ell \in \nn} \bigoplus_{c \in \catalan(\ell,n)}
 \!\!\!\Xi(\ell) 
 \cong \bigoplus_{\ell \leq n/2} \Xi(\ell)^{\oplus \catalan(\ell,n)}
 \\
 &\cong \bigoplus_{\ell \leq n/2} \Xi(\ell)^{\oplus \binom{n}{\ell} - \binom{n}{\ell-1}}
\end{align*}
by \cite[Definition 1.1]{patzt-wgordon} and \cite[proof of Proposition 4.4]{patzt-wgordon}.
\end{birki}

It can be inspected from the definition of tensor products of functors in \cite[Definition 2.1]{patzt-wgordon} that for any $\ters{\FI}$-module $Y$, the functor 
\begin{align*}
 Y \underset{\FI_{\leq g}}{\otimes} - : \lMod{\FI_{\leq g}} \rarr \lMod{\zz}
\end{align*}
is isomorphic to $Y \otimes_{\FI} -$ precomposed with the inclusion $\lMod{\FI_{\leq g}} \emb \lMod{\FI}$. Thus the exactness in (2) implies that 
\begin{align*}
 \zz\!\CB \underset{\FI_{\leq g}}{\otimes} - \colon \lMod{\FI_{\leq g}} \rarr \lMod{\FI}
\end{align*}
is also exact. For any $\FI$-module $U$, let us write
\begin{align*}
 \chi^{U} := \chi \underset{\FI_{\leq g}}{\otimes} \id_{U} 
 \colon 
 \underbrace{
 \zz\!\Hom_{\FI}(-,-) \underset{\FI_{\leq g}}{\otimes} \Res_{\FI_{\leq g}}^{\FI} U}_{\mathlarger{
 \cong\, \Ind_{\FI_{\leq g}}^{\FI}\!\Res_{\FI_{\leq g}}^{\FI} U
 }} 
 \rarr 
 \zz\!\CB \underset{\FI_{\leq g}}{\otimes} \Res_{\FI_{\leq g}}^{\FI} U 
\end{align*}
for the map $\chi$ induces, which is natural in $U$. We also write 
\begin{align*}
 \eps^{U} \colon \Ind_{\FI_{\leq g}}^{\FI}\!\Res_{\FI_{\leq g}}^{\FI} U \rarr U
\end{align*}
for the counit of the adjunction $\Ind_{\FI_{\leq g}}^{\FI} \dashv \Res_{\FI_{\leq g}}^{\FI}$.

By Theorem \ref{structure-complex}, there is a complex $0 \rarr V \rarr I \xrightarrow{\alpha} J$ which is exact in degrees 
\begin{align*}
 \geq \max\{h^{0}(V) + 1, h^{1}(V) + 1\} =: d \, 
\end{align*}
such that $(I,\, -1,\, g)$ and $(J,\, -1,\, g-1)$ satisfy Hypothesis \ref{FI-hyp}. By Theorem \ref{pass-to-cmnr} and Corollary \ref{t0-vs-t1}, both $I$ and $J$ are presented in degrees $\leq g$. We shall consider the following commutative diagram
\begin{align*}
\xymatrixcolsep{2pc}\xymatrixrowsep{3pc}
 \xymatrix{
 0 \ar[r] & \zz\!\CB \underset{\FI_{\leq g}}{\otimes} \Res_{\FI_{\leq g}}^{\FI} V
 \ar[r]  
 & \zz\!\CB \underset{\FI_{\leq g}}{\otimes} \Res_{\FI_{\leq g}}^{\FI} I 
 \ar[r]^-{\overline{\alpha}}  
 & \zz\!\CB \underset{\FI_{\leq g}}{\otimes} \Res_{\FI_{\leq g}}^{\FI} J 
 \\
 0 \ar[r] & \Ind_{\FI_{\leq g}}^{\FI}\!\Res_{\FI_{\leq g}}^{\FI} V 
 \ar[r] \ar[d]^-{\eps^{V}} \ar[u]_-{\chi^{V}}
 & \Ind_{\FI_{\leq g}}^{\FI}\!\Res_{\FI_{\leq g}}^{\FI} I 
 \ar[r] \ar[d]^-{\eps^{I}} \ar[u]_-{\chi^{I}}
 & \Ind_{\FI_{\leq g}}^{\FI}\!\Res_{\FI_{\leq g}}^{\FI} J
 \ar[d]^-{\eps^{J}} \ar[u]_-{\chi^{J}}
 \\
 0 \ar[r] & V \ar[r] & I \ar[r]^{\alpha} & J  
 }
\end{align*}
of $\FI$-modules. By \cite[proof of Theorem A in Section 4.3]{patzt-wgordon}, $\chi^{I}$ and $\chi^{J}$ are isomorphisms in degrees $\geq 2g-1$. By Theorem \ref{induct-eqv}, $\eps^{I}$ and $\eps^{J}$ are isomorphisms of $\FI$-modules. The top row is exact in degrees $\geq d$ as well because it is the image of the bottom row under an exact functor. Therefore for $n \geq \max\{d,2g-1\}$, using (3) we have 
\begin{align*}
 V_{n} 
 &\cong (\ker \alpha)_{n} \cong (\ker \ov{\alpha})_{n} 
 \cong \left( 
 \zz\!\CB \underset{\FI_{\leq g}}{\otimes} \Res_{\FI_{\leq g}}^{\FI} V
 \right)_{\!n} 
 \vspace{1in}
 \cong \zz\!\CB(-,n) \underset{\FI_{\leq g}}{\otimes} \Res_{\FI_{\leq g}}^{\FI} V
 \\
 &\cong \left(
 \bigoplus_{\ell \leq n/2} \Xi(\ell)^{\oplus \binom{n}{\ell} - \binom{n}{\ell-1}}
 \right)
 \underset{\FI_{\leq g}}{\otimes} \Res_{\FI_{\leq g}}^{\FI} V
 \cong \left(
 \bigoplus_{\ell = 0}^{g} \Xi(\ell)^{\oplus \binom{n}{\ell} - \binom{n}{\ell-1}}
 \right)
 \otimes_{\FI} V
 \\
 &\cong \bigoplus_{\ell=0}^{g} \left(
 \Xi(\ell) \otimes_{\FI} V
 \right)^{\oplus \binom{n}{\ell} - \binom{n}{\ell-1}} \,\,\, ,
\end{align*}
as desired.
\end{proof}

\begin{rem}
 We have established in Theorem \ref{thm-additive} that for an $\FI$-module $V$ presented in finite degrees, the additive structure isomorphism holds in the range 
\begin{align*}
 n \geq \max\{\local(V) + 1, 2\weak(V)-1\} \, .
\end{align*}
 Let us observe that this range is at least as good as the range 
\begin{align*}
 n \geq 2\max\{\tgen(V), \trel(V)\} - 1
\end{align*}
 established in \cite[Theorem A]{patzt-wgordon}: indeed the ranges agree when $V$ is $\cofi{0}$-acyclic by Theorem \ref{characterize-H0-acyclic} and Corollary \ref{t0-vs-t1}. And when $V$ is not $\cofi{0}$-acyclic, using Theorem \ref{pass-to-cmnr},
\begin{align*}
 \max\{h^{j}(V) : j \geq 2\} &\leq 2\weak(V) - 2  \text{ by Theorem \ref{structure-complex},}
 \\
 \weak(V) &\leq \tgen(V) \, \text{ by  \cite[Proposition 2.9, part (4)]{cmnr-range},}
 \\
 \max\{h^{0}(V), h^{1}(V)\} \leq \reg(V) &\leq 2\trel(V)-2 \, \text{ 
\begin{tabular}{l}
 by \cite[Theorem 1.1]{nss-regularity} \\
 and Theorem \ref{improved-regularity}.
\end{tabular}
 } 
\end{align*}
Thus we always have
\begin{align*}
 \max\{\local(V)+1,\, 2\weak(V)-1\} \leq \max\{2\trel(V)-1,\, 2\tgen(V)-1\} \, .
\end{align*}
In fact with a bit more work, it can be checked that this inequality is strict unless $\tgen(V) \geq \trel(V)$ or $\reg(V) = h^{0}(V) = 2\tgen(V) = 2\trel(V) - 2$.
\end{rem}

%
%

\subsection{Polynomiality and Specht stability}
In this section we prove Theorem \ref{thm-poly-specht}, which forms the polynomiality and Specht stability parts of Theorem \ref{main-ranges} for $\FI$-modules defined over a field $\kk$. 

\begin{thm} \label{thm-poly-specht}
  Let $V$ be an $\FI$-module defined over a field $\kk$ with $\dim_{\kk}V_{n} < \infty$ for every $n$. If the triple $(V,c,g)$ satisfies Hypothesis \ref{FI-hyp}, then the following hold:
\begin{birki}
 \item For each $r=0,\dots,g$ there is a finite-dimensional $\kk\!\sym{r}$-module $W_{r}$ such that in the range $n \geq c+1$, the sequence of symmetric group (Brauer) characters
 \vspace{-0.05in}
\begin{align*}
\mathlarger{n \mapsto \mathlarger{\chi}_{V_{n}}}
\end{align*}
is equal to the $\kk$-character polynomial 
\begin{align*}
\mathlarger{
 \sum_{r=0}^{g} \sum_{\lambda \,\vdash\, r}
 \chi_{W_{r}}(\lambda)
 \binom{\cyc_{1} - c - 1}{a_{1}(\lambda)} 
 \prod_{j=2}^{r}\!\! \binom{\cyc_{j}}{a_{j}(\lambda)} \, .
 }
\end{align*}
Here $a_{j}(\lambda)$ denotes the number of parts of size $j$ that $\lambda$ has, and if $a_{j}(\lambda) > 0$ for some $j$ divisible by $\ch(\kk)$ we write $\chi_{W_{r}}(\lambda) = 0$.
In particular, there exist integers $d_{0}, \dots, d_{g} \geq 0$ (namely $d_{r} = \dim_{\kk} W_{r}$) such that in the range $n \geq c + 1$, we have
\vspace{-0.03in}
\begin{align*}
\mathlarger{
\dim_{\kk} V_{n} = \sum_{r=0}^{g} d_{r} \binom{n - c - 1}{r} \,.
}
\end{align*}
\vspace{0.03 in}
\item There is a uniquely determined function
\begin{align*}
  \nvm \colon 
\{\text{$\kk$-regular partitions of size $\leq g$}\} \rarr \zz \, ,
\end{align*}
such that for every $n \geq \max\{c+1,2g\}$,
we have 
\begin{align*}
 \mathlarger{[V_{n}] = \sum_{\lambda} \nvm(\lambda) \left[ \specht{\kk}{\lambda[n]} \right]}
\end{align*}

\vspace{-0.05in}
in the Grothendieck group $K_{0}(\kk\! \sym{n})$ of finite-dimensional $\kk\!\sym{n}$-modules.
\end{birki}
\end{thm}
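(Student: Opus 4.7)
The overall strategy is to leverage the complex $I^{\star}$ from Theorem \ref{structure-complex} to reduce the character computation to the semi-induced case, for which Frobenius reciprocity supplies an explicit character polynomial. Applying that theorem to $V$ produces a complex $0 \rarr V \rarr I^{1} \rarr \cdots \rarr I^{g+1} \rarr 0$ of $\FI$-modules, exact in degrees $\geq c+1$, in which each $I^{j}$ is $\cofi{0}$-acyclic with $\weak(I^{j}) \leq g-j+1$. By Theorem \ref{characterize-H0-acyclic}, each $I^{j}$ admits a finite filtration with subquotients of the form $\induce(W^{(i)})$, and a classical Frobenius reciprocity computation gives the (Brauer) character of $\induce(W)_{n} = \Ind_{\sym{r}\times\sym{n-r}}^{\sym{n}}(W_{r} \boxtimes \mathbf{1})$ as $\sum_{\lambda \vdash r}\chi_{W_{r}}(\lambda) \prod_{j}\binom{\cyc_{j}}{a_{j}(\lambda)}$.

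For part (1), to realize the specific shifted form I would apply this analysis to $U := \shift{V}{c+1}$, which by Hypothesis \ref{FI-hyp} is itself $\cofi{0}$-acyclic with $\weak(U) \leq g$. Collecting the filtration pieces by $\FB$-degree produces $\kk\sym{r}$-modules $W_{r}$ for $0 \leq r \leq g$ whose combined induction formula computes the character of $U_{m}$ as the unshifted character polynomial. Given $\sigma' \in \sym{n}$ with $n \geq c+1$ that fixes a distinguished $(c+1)$-subset and restricts to $\sigma$ on the complement, the identification $V_{n}|_{\sym{n-c-1}} \cong U_{n-c-1}$ together with the cycle-count shift $\cyc_{1} \mapsto \cyc_{1} - (c+1)$ (with $\cyc_{j}$ unchanged for $j \geq 2$) yields the claimed formula on all permutations with at least $c+1$ fixed points. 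To extend it to all of $\sym{n}$, I would invoke the alternating sum $\chi_{V_{n}} = \sum_{j \geq 1}(-1)^{j-1}\chi_{I^{j}_{n}}$ from the exact complex: this writes $\chi_{V_{n}}$ as a genuine polynomial in the $\cyc_{j}$'s of degree $\leq g$, and two such polynomials agreeing on all cycle-type vectors with $\cyc_{1} \geq c+1$ must coincide, since one can hold the higher cycle counts fixed and vary $n$ to produce infinitely many values of $\cyc_{1}$ where they agree.

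For part (2), I would match character polynomials against those of Specht modules. The (Brauer) character of $\specht{\kk}{\lambda[n]}$ on $p'$-classes agrees via Brauer lifting with the ordinary character of $\specht{\qq}{\lambda[n]}$, and for $n \geq |\lambda|+\lambda_{1}$ this character is given by an explicit polynomial $Q_{\lambda}$ of degree $|\lambda|$ obtainable from iterated Pieri. The set $\{Q_{\lambda} : \lambda \text{ is } \kk\text{-regular of size} \leq g\}$ is $\qq$-linearly independent in the space of $\kk$-character polynomials of degree $\leq g$, so the polynomial from part (1) admits a unique expansion $\sum_{\lambda}\nvm(\lambda) Q_{\lambda}$. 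For $n \geq \max\{c+1,2g\}$, the classes $[V_{n}]$ and $\sum_{\lambda}\nvm(\lambda)[\specht{\kk}{\lambda[n]}]$ then have equal Brauer characters on $p'$-classes; since the Brauer character map is injective on $K_{0}(\kk\sym{n})$, the two classes coincide, and integrality of the $\nvm(\lambda)$ follows from the unitriangular expansion of $\kk$-regular Specht classes in the basis of simples. The main obstacle I anticipate is the polynomial-extension step in (1): bridging ``equal on permutations with at least $c+1$ fixed points'' to ``equal on all of $\sym{n}$'' requires a double variation in $n$ and $\cyc_{1}$, since cycle counts are constrained by $\sum j\cyc_{j} = n$, ruling out a direct Zariski-density argument.
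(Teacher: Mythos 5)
Your argument for part (1) mirrors the paper's proof: both pass through the complex $I^{\star}$ of Theorem \ref{structure-complex} to express $\chi_{V_{n}}$ (for $n \geq c+1$) as an alternating sum of characters of $\cofi{0}$-acyclic modules, reduce via the filtration of Theorem \ref{characterize-H0-acyclic} to the (Brauer) character of an induced module $\induce(W)$, pass to $\shift{V}{c+1}$ to produce the shifted binomial, and finally match two polynomials of degree $\leq g$. The ``polynomial-extension'' issue you flag at the end is real but is resolved exactly as you propose, and exactly as the paper does: letting $n$ range over all integers $\geq c+1$ and $\sigma$ over all $p$-regular elements of $\sym{n-c-1}$, the tuples $(a_{1}(\sigma)+c+1,\, a_{2}(\sigma), \dots, a_{g}(\sigma))$ sweep out a Zariski-dense subset of the affine space with coordinates $\{\cyc_{j} : \ch(\kk) \nmid j,\, j \leq g\}$, forcing the two polynomials to coincide; there is no genuine obstruction here.

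For part (2) your route is genuinely different from the paper's. The paper proves (2) independently of (1): it reduces at the Grothendieck-group level to the case $V = \inspecht{\kk}{\lambda}$ and applies Pieri's rule (Hemmer's lemma, valid in $K_{0}$ in any characteristic), then assembles $\nvm$ from the alternating sum over $I^{\star}$. You instead bootstrap from the character polynomial produced in part (1), expand it in the Specht character polynomials $Q_{\lambda}$, and transfer back to $K_{0}(\kk\sym{n})$ via injectivity of the Brauer character map, with integrality of the coefficients forced by the unitriangular relation between $\kk$-regular Specht classes and simples. This is a clean alternative, but two points deserve to be made explicit. First, for the expansion to \emph{exist} you need the $Q_{\lambda}$ (with $\lambda$ ranging over $\kk$-regular partitions of size $\leq g$) to span the space of $\kk$-character polynomials of degree $\leq g$, not merely to be linearly independent; this holds because Glaisher's bijection makes the number of $\kk$-regular partitions of each size equal to the number of partitions into parts not divisible by $\ch(\kk)$, which is exactly the number of monomials of that degree in the allowed variables $\cyc_{j}$. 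Second, the integrality argument via unitriangularity requires $n \geq 2g+1$ so that every $\lambda[n]$ with $\lambda$ $\kk$-regular of size $\leq g$ is itself $\kk$-regular and hence part of a $\zz$-basis of $K_{0}(\kk\sym{n})$; since the coefficients $\nvm(\lambda)$ are determined by the polynomial identity and do not depend on $n$, establishing integrality at a single large $n$ suffices for the stated range $n \geq \max\{c+1,2g\}$.
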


\begin{proof}
We may choose the complex $I^{\star}$ in Theorem \ref{structure-complex} so that
\begin{align*}
 \dim I^{j}_{n} < \infty
\end{align*}
for every $1 \leq j \leq g+1$ and $n \geq 0$. As $I^{\star}$ is exact in degrees $\geq c+1$, we have
\begin{align*}
 \mathlarger{[V_{n}] = \sum_{j=1}^{g+1} (-1)^{j-1} [I^{j}_{n}] \in K_{0}(\kk\! \sym{n})}
\end{align*}
whenever $n \geq c+1$, and moreover each $I^{j}$ for $j \geq 1$ is $\cofi{0}$-acyclic. Because the statement (2) is determined at the Grothendieck group level, it is enough to prove it for $I^{j}$'s. To that end we may assume $c=-1$. Moreover by the filtration in Theorem \ref{characterize-H0-acyclic} we may also assume $V = \induce(W)$ for some $\FB$-module $W$. In fact we may assume $W$ is entirely concentrated on degree 
\begin{align*}
 d := \deg(W) = \weak(V) \leq g \, .
\end{align*}
We do so, and regard $W$ as a representation of the symmetric group $\sym{d}$. Recall that for every partition $\lambda$ of $d$, the Specht module $\specht{R}{\lambda}$ is defined over any ring $R$, so that $R \otimes \specht{\zz}{\lambda} = \specht{R}{\lambda}$. As $\kk$ is a field, the Specht modules span the Grothendieck group $K_{0}(\kk\!\sym{d})$ \cite[Corollary 12.2]{james-sym-book}, thus
\begin{align*}
 [W]  = \sum_{\lambda \vdash d} c_{\lambda} [\specht{\kk}{\lambda}] \in K_{0}(\kk\!\sym{d}) \, 
\end{align*}
for some $c_{\lambda} \in \zz$.  Writing
\begin{align*}
 \inspecht{R}{\lambda} := \induce(\specht{R}{\lambda}) 
 \,
\end{align*}
for every ring $R$ and partition $\lambda \vdash d$, because the functor $\induce$ is exact (see the identification \cite[Definition 2.2.2]{cef}), to prove (2) we may reduce to the case $V = \inspecht{\kk}{\lambda}$ for a fixed partition $\lambda \vdash d$. If $\kk$ has characteristic zero, this follows from  using the identification
\begin{align*}
 \inspecht{\kk}{\lambda}_{n} &\cong \Ind_{\sym{d} \times \sym{n-d}}^{\sym{n}} 
 \big(
 \specht{\kk}{\lambda} \boxtimes \kk 
 \big)
 \\
 &= \Ind_{\sym{d} \times \sym{n-d}}^{\sym{n}}
 \big(
 \specht{\kk}{\lambda} \boxtimes \specht{\kk}{n - |\lambda|} 
 \big)
\end{align*}
and applying \cite[Lemma 2.3]{hemmer-pieri} when $d \leq n-d$, or equivalently $n \geq 2d$. In positive characteristic by \cite[Corollary 14]{james-sym-book}, on the Grothendieck group level, Pieri's rule applies to the Specht modules the same way it does in characteristic zero; consequently we can again allude to \cite[Lemma 2.3]{hemmer-pieri}. The uniqueness of $\nvm$ follows from the following facts:
\begin{itemize}
 \item The set $\{[\specht{\kk}{\mu}] : \mu \vdash n \text{ is $\kk$-regular}\}$, being in a unitriangular correspondence with the set of simple modules, forms a basis of $K_{0}(\kk\! \sym{n})$.
 \item Once $n \geq 2g+1$, a partition $\lambda$ of size $\leq g$ being $\kk$-regular forces $\lambda[n]$ to be $\kk$-regular as well.
\end{itemize}

To prove (1), first of all we note that by Theorem \ref{pass-to-cmnr} and \cite[Proposition 2.9]{cmnr-range}, the $\FI$-module $\shift{V}{c+1}$ is $\cofi{0}$-acyclic and generated in degrees $\leq g$. Thus by Theorem \ref{characterize-H0-acyclic} there is a finite filtration
\begin{align*}
 0 = X^{(-1)} \leq X^{(0)} \leq \cdots \leq X^{(s)} = \shift{V}{c+1}
\end{align*}
of $\FI$-submodules such that for each $0 \leq i \leq s$ we have $X^{(i)}/X^{(i-1)} \cong \induce(L^{(i)})$ for some $\FB$-module $L^{(i)}$ with $\deg(L^{(i)}) \leq g$. As a result, for the character sequences we have (see \cite[Proposition 10.1.3 part (5)]{webb-rep-book} for Brauer characters)
\begin{align*}
\mathlarger{
 \chi_{\shift{V}{c+1}}
 } 
 &= 
 \mathlarger{
 \sum_{i=0}^{s} \chi_{\induce(L^{(i)})}
 }
 = 
 \mathlarger{
 \sum_{i=0}^{s} \sum_{r=0}^{g} \chi_{\induce\left(L^{(i)}_{r}\right)}
 }
 \\
 &=
 \mathlarger{
 \sum_{r=0}^{g} \sum_{i=0}^{s} \chi_{\induce\left(L^{(i)}_{r}\right)}
 }
 =
 \mathlarger{
 \sum_{r=0}^{g} \chi_{\induce\left(\bigoplus_{i=0}^{s}L^{(i)}_{r}\right)}
 } \, .
\end{align*}
Now for each $r=0,\dots,g$, consider the $\kk\!\sym{r}$-module $W_{r} := \bigoplus_{i=0}^{s}L^{(i)}_{r}$, which we shall also consider as an $\FB$-module concentrated in degree $r$. Now 
\begin{align*}
\mathlarger{
 \chi_{\shift{V}{c+1}} = \sum_{r=0}^{g} 
 \chi_{\induce\left(W_{r}\right)} \, .
 }
\end{align*}
 If $\ch(\kk) = 0$, \cite[end of the proof of Theorem 4.1.7]{cef} shows that for every $n \in \nn$ and $\sigma \in \sym{n}$ we have
\begin{align*}
 \mathlarger{
 \chi_{\induce(W_{r})_{n}}(\sigma) = 
 \sum_{\lambda \vdash r} \chi_{W_{r}}(\lambda)
 \cdot
 \prod_{j=1}^{r}\!\! \binom{\cyc_{j}}{a_{j}(\lambda)}
 (\sigma) \, .
 }
\end{align*}
If $\ch(\kk) = p > 0$, we shall follow the same argument with a bit more care. First we set the stage for Brauer characters: we can find \cite[Theorem 29.1]{matsumura} a characteristic zero field $\bb{K}$ with a discrete valuation whose valuation ring $\oo$, say with uniformizing parameter $\pi$, that has $\oo / \pi \oo = \kk$ as its residue field. In other words, $(\bb{K}, \oo, \kk)$ is a $p$-modular system \cite[Section 9.4]{webb-rep-book}. It is in fact a splitting $p$-modular system simply because every field is a splitting field for symmetric groups. Let us fix $r$ and write $W := W_{r}$ as an $\FB$-module concentrated in degree $r$. Then given a $p$-regular element $\sigma \in \sym{n}$ (that is, $p$ does not divide the order $|\sigma|$), we have
\begin{align*}
 \mathlarger{
 \chi_{\induce(W)_{n}}(\sigma)
 } 
 &= 
 \mathlarger{
 \left(
 \Ind_{\sym{r} \times \sym{n-r}}^{\sym{n}}
 \left(
 \chi_{W} \boxtimes \mathbbm{1}
 \right)
 \right)
 (\sigma)
 }
 =\!\!\!\!\!
 \sum_{\substack{\tau \,\in\, [\sym{n}/\sym{r} \times \sym{n-r}] \\ \tau^{-1}\sigma\tau \,\in\, \sym{r} \times \sym{n-r}}} (\chi_{W} \boxtimes \mathbbm{1})(\tau^{-1}\sigma\tau)
 \\
 &=
 \sum_{\substack{T \subseteq \{1,\dots,n\},\,|T|=r \\ \sigma(T)=T}} \chi_{W_{T}}(\sigma|_{T})
 =
 \sum_{\lambda \vdash r} \sum_{\substack{T \subseteq \{1,\dots,n\},\,|T|=r \\ \sigma(T)=T \\ \sigma|_{T} \text{ has cycle type $\lambda$}}} \chi_{W}(\lambda) \, ,
\end{align*}
which can be written as $\sum_{\lambda \vdash r} \mathbf{P}_{\lambda}(\sigma)\chi_{W}(\lambda)$, where
\begin{align*}
 \mathbf{P}_{\lambda}(\sigma) &:= \left|\left\{
 T \subseteq \{1,\dots, n\} : |T| = r,\,\sigma(T) = T,\,\text{ $\sigma|_{T}$ has cycle type $\lambda$}
 \right\}\right| \\
 &= \prod_{j=1}^{r}\binom{\cyc_{j}(\sigma)}{a_{j}(\lambda)} 
 = \prod_{j=1}^{r}\binom{\cyc_{j}}{a_{j}(\lambda)} (\sigma) \, .
\end{align*}
Here there is no harm in taking $\chi_{W}(\lambda) = 0$ if $\lambda$ has a part divisible by $p$ because in that case $\mathbf{P}_{\lambda}(\sigma) = 0$ as $\sigma$ is $p$-regular. 

Regardless of the characteristic of $\kk$, we have established that there exists an $\kk\!\sym{r}$-module $W_{r}$ for each $r=0, \dots, g$ such that the sequence of symmetric group (Brauer) characters of the representations
\begin{align*}
 \mathlarger{
 n \mapsto (\shift{V}{c+1})_{n} = \Res^{\sym{n+c+1}}_{\sym{n}}V_{n+c+1}
 }
\end{align*}
is equal to the $\kk$-character polynomial
\begin{align*}
 \mathlarger{ 
 \mathbf{P}(\cyc_{1},\dots,\cyc_{g}) :=
 \sum_{r=0}^{g}\sum_{\lambda \vdash r} \chi_{W_{r}}(\lambda)
 \cdot
 \prod_{j=1}^{r}\!\! \binom{\cyc_{j}}{a_{j}(\lambda)}
 } \, .
\end{align*}
The equation established in the very beginning of this proof yields
\begin{align*}
 \mathlarger{
 \chi_{V_{n}} = \sum_{j=1}^{g+1} (-1)^{j-1} \chi_{I^{j}_{n}}
 }
\end{align*}
in the range $n \geq c+1$. Here for each $j=1,\dots,g+1$ the $\FI$-module $I^{j}$ is $\cofi{0}$-acyclic and is generated in degrees $\leq g - j + 1$. Thus there exists an $\kk$-character polynomial $\mathbf{F}(\cyc_{1}, \dots, \cyc_{g})$ that is equal to the sequence of symmetric group characters 
\begin{align*}
 n \mapsto \chi_{V_{n}}
\end{align*}
in the range $n \geq c+1$. We will be done once we show 
\begin{align*}
 \mathbf{F}(\cyc_{1}, \cyc_{2}, \dots, \cyc_{g}) = \mathbf{P}(\cyc_{1}-c-1,\cyc_{2},\dots, \cyc_{g}) \, . \tag{$\star$}
\end{align*}
To that end, fix $n \geq c+1$ and let $\sigma \in \sym{n-c-1}$ whose order is not divisible by $\ch(\kk)$. Suppose $\sigma$ has $a_{j}(\sigma)$ many $j$-cycles in its cycle decomposition as an element of $\sym{n-c-1}$. Now if we consider $\sigma$ as an element of $\sym{n}$, its number of $1$-cycles becomes $a_{1}(\sigma) + c + 1$, while for $j \geq 2$ its number of $j$-cycles is still $a_{j}(\sigma)$. Therefore on one hand we have
\begin{align*}
 \chi_{V_{n}}(\sigma) 
 &= \mathbf{F}(a_{1}(\sigma) + c + 1, a_{2}(\sigma), \dots, a_{g}(\sigma)) \, ,
\end{align*}
 and on the other hand we have
\begin{align*}
 \chi_{V_{n}}(\sigma) 
 &= \left( \Res_{\sym{n-c-1}}^{\sym{n}} \chi_{V_{n}} \right)(\sigma) = \mathbf{P}(a_{1}(\sigma), a_{2}(\sigma), \dots, a_{g}(\sigma)) \, .
\end{align*}
The equality $(\star)$ follows because both sides are polynomials in $\qq[\cyc_{1}, \dots, \cyc_{g}]$ that evaluate to the same value on infinitely many $g$-tuples.

Finally for the claim about the dimension sequence, note that the identity element $\id_{n} \in \sym{n}$ has n many $1$-cycles and no $j$-cycles for $j \geq 2$, and hence for $n \geq c+1$ we have
\begin{align*}
 \dim_{\kk} V_{n} &= \chi_{V_{n}}(\id_{n}) = \mathbf{F}(n, 0, \dots, 0)
 = \mathbf{P}(n-c-1,0,\dots, 0)
 \\
 &= \sum_{r=0}^{g} \sum_{\lambda \,\vdash\, r}
 \chi_{W_{r}}(\lambda)
 \cdot
 \binom{n - c - 1}{a_{1}(\lambda)} 
 \prod_{j=2}^{r}\!\! \binom{0}{a_{j}(\lambda)}
 \\
 &= \sum_{r=0}^{g} 
 \chi_{W_{r}}(\id_{r})
 \cdot
 \binom{n - c - 1}{r} 
 = \sum_{r=0}^{g} 
 \dim_{\kk} (W_{r})
 \cdot
 \binom{n - c - 1}{r} \, .
\end{align*} 
\end{proof}

%
%

%
%
\subsection{Proofs of Theorems \ref{main-ranges},\ref{hyper-ranges}} \label{de-proof}
We are now ready to bring the threads together from the preceding sections to prove Theorem \ref{main-ranges} and Theorem \ref{hyper-ranges}.

\begin{proof}[Proof of \textbf{\emph{Theorem \ref{main-ranges}}}]
 To get the main statement, combine Corollary \ref{t0-t1-ranges}, Corollary \ref{cor-inductive}, Theorem \ref{thm-additive}, and Theorem \ref{thm-poly-specht}. 
 
For the sharpness of the inductive description in (1)-(4), we allude to Theorem \ref{induct-eqv}, Corollary \ref{cor-inductive}, and note the following:
\begin{itemize}
 \item $\tgen(\mathbf{I}(g)) = g$ and $\trel(\mathbf{I}(g)) = -1$ for every $g \geq -1$ by Proposition \ref{prop-Ig},
 
 \item $\tgen(\mathbf{T}(c)) = c$ and $\trel(\mathbf{T}(c)) = c+1$ for every $c \geq 0$ by Proposition \ref{prop-Tc},
 
 \item $\tgen(\mathbf{S}(c) \oplus \mathbf{I}(g)) = \max\{c+1,g\} = c+1$ and  $\trel(\mathbf{S}(c) \oplus \mathbf{I}(g)) = c+2$ whenever $0 \leq g \leq \ceil{c/2}$ and $c \geq 0$ by Proposition \ref{prop-Sc} and Proposition \ref{prop-Ig},
 
 \item $\tgen(\mathbf{V}(g)) = 2g$ and $\trel(\mathbf{V}(g)) = 2g+1$ for every $g \geq 1$ by Proposition \ref{prop-Vg},

\end{itemize}
In addition, because the dimensions of the witnessing modules are given by 
\begin{align*}
 \dim_{\qq} \mathbf{I}(g)_{n} &= \binom{n}{g} \text{ for every $n \geq 0$,}
\end{align*}
\begin{align*}
 \dim_{\qq} \mathbf{T}(c)_{n} &= 
\begin{cases}
 0 & \text{if $n \neq c$,}
 \\
 1 & \text{if $n = c$,}
\end{cases}
\end{align*}
\begin{align*}
 \dim_{\qq} (\mathbf{S}(c) \oplus \mathbf{I}(g))_{n} &=
\begin{cases}
   \binom{n}{g} & \text{if $n \leq c$,}
   \\
   1 + \binom{n}{g} & \text{if $n \geq c+1$,}
\end{cases}
\end{align*}
\begin{align*}
 \dim_{\qq} \mathbf{V}(g)_{n} &= 
\begin{cases}
  0 & \text{if $n \leq 2g-2$,}
  \\
  \frac{n-(2g-1)}{g} \binom{n}{g-1} & \text{if $n \geq 2g-1$,}
\end{cases}
\end{align*}
the sharpness of the ranges encoding both the additive structure and the polynomiality in (1)-(4) follow. 

Given $g \geq 0$, the Specht stability range $n \geq 2g$ is sharp for $\mathbf{I}(g)$ by \cite[Lemma 2.2]{hersh-reiner} and for $\mathbf{V}(g)$ as  observed in the proof of Proposition \ref{prop-Vg}. Given $c \geq 0$, the sharpness of the Specht stability range $n \geq c+1$ for $\mathbf{T}(c)$ and $\mathbf{S}(c)$ is immediate from their definition. Thus given $0 \leq g \leq \ceil{c/2}$ and $c \geq 0$, the sharp Specht stability range for $\mathbf{S}(c) \oplus \mathbf{I}(g)$ is $n \geq \max\{2g,c+1\} = c+1$.
\end{proof}

\begin{proof}[Proof of \textbf{\emph{Theorem \ref{hyper-ranges}}}]
 By Theorem \ref{hyper-to-usual}, the $\FI$-module $\co_{k}(\chain_{\star})$ is presented in finite degrees and satisfies the following: 
\begin{itemize}
 \item $\tgen(\co_{k}(\chain_{\star})) \leq 2\theta_{k}$.
 \item By Theorem \ref{pass-to-cmnr}, the triple
\begin{align*}
\begin{cases}
 \big( 
 \co_{k}(\chain_{\star}),\, -1,\, 0
 \big)
 & \text{if $\theta_{k} = \theta_{k+1} = 0$,}
 \\
 \big( 
 \co_{k}(\chain_{\star}),\, 2\theta_{k} - 2,\, \theta_{k}
 \big)
 & \text{if $\theta_{k} \geq \max\{1,\theta_{k+1}\}$,}
 \\
 \big( 
 \co_{k}(\chain_{\star}),\, 2\theta_{k+1} - 2,\, \theta_{k}
 \big)
 & \text{if $\theta_{k} < \theta_{k+1}$,}
\end{cases}
\end{align*}
satisfies Hypothesis \ref{FI-hyp}.
\item $\trel(\co_{k}(\chain_{\star})) - 1 \leq \reg(\co_{k}(\chain_{\star})) \leq \begin{cases}
 -2
 & \text{if $\theta_{k} = \theta_{k+1} = 0$,}
 \\
 2\theta_{k} & \text{if $\theta_{k} \geq \max\{1,\theta_{k+1}\}$,}
 \\
 2\theta_{k+1}-2
 & \text{if $\theta_{k} < \theta_{k+1}$,}
\end{cases}
$
\end{itemize}
Now invoke Corollary \ref{cor-inductive} and Theorem \ref{main-ranges} for the main conclusion. 

Next, assume $(\theta_{m} : m \geq 0)$ is a strictly increasing sequence such that $\theta_{0} \geq 0$ and $\tbol_{m}(\chain_{\star}) \leq \theta_{m}$ for every $m \geq 0$. As alluded to in \cite[proof of Lemma 5.3]{cmnr-range}, by the hyperhomology spectral sequence $E^{2}_{p,q} = \cofi{p}(\co_{q}(\chain_{\star})) \imp \cofibol{p+q}(\chain_{\star})$, we have 
\begin{align*}
 \trel(\co_{k}(\chain_{\star})) &\leq
 \max\big( 
 \{\tbol_{k+1}(\chain_{\star})\} \cup \{t_{p}(\co_{q}(\chain_{\star})) : p+q = k+2,\,0\leq q<k\}
 \big)
 \\
 &\leq 
 \max\big( 
 \{\theta_{k+1}\} \cup \{p + 2\theta_{q+1}-2 : p+q = k+2,\,0\leq q<k\}
 \big)
 \\
 &\leq 
 \max\big( 
 \{\theta_{k+1}\} \cup \{k + 2\theta_{q+1} - q : 0\leq q<k\}
 \big) 
 \\
 &\leq 
\begin{cases}
  \theta_{1} & \text{if $k=0$,}
  \\
  \max\{\theta_{k+1},\, 2\theta_{k} + 1\} & \text{if $k \geq 1$,}
\end{cases}
\end{align*}
noting that the sequence $(2\theta_{m} - m : m \geq 0)$ is weakly increasing everywhere. We now observe the following:
\begin{itemize}
\item If $2\theta_{k}+1 \geq \theta_{k+1}$, then
\begin{align*}
 \trel(\co_{k}(\chain_{\star})) \leq 
\begin{cases}
 \theta_{1} & \text{if $k=0$,}
 \\
 2\theta_{k}+1 & \text{if $k \geq 1$,}
\end{cases}
\end{align*}
and invoking Corollary \ref{t0-vs-t1} in case $k=0$, we have
\begin{align*}
  \tgen(\co_{k}(\chain_{\star})) \leq  
\begin{cases}
  \theta_{1}-1 & \text{if $k=0$,}
  \\
  2\theta_{k} & \text{if $k \geq 1$.}
\end{cases}
\end{align*}
We have already established under these conditions that the triple 
\begin{align*}
  \big( 
 \co_{k}(\chain_{\star}),\, 2\theta_{k+1} - 2,\, \theta_{k}
 \big)
\end{align*}
satisfies Hypothesis \ref{FI-hyp}.

 \item If $2\theta_{k}+1 < \theta_{k+1}$, then $\tgen(\co_{k}(\chain_{\star})) \leq 2\theta_{k}$ and $\trel(\co_{k}(\chain_{\star})) \leq \theta_{k+1}$, so 
\begin{align*}
  \local(\co_{k}(\chain_{\star})) \leq 2\theta_{k} + \theta_{k+1}-1 < 2\theta_{k+1}-2
\end{align*}
 by \cite[Proposition 3.1]{cmnr-range}, and hence the triple 
\begin{align*}
 \big(
 \co_{k}(\chain_{\star}),\,2\theta_{k}+\theta_{k+1}-1,\,\theta_{k}
 \big)
\end{align*}
satisfies Hypothesis \ref{FI-hyp} by Theorem \ref{pass-to-cmnr}.
\end{itemize}
We may now invoke Corollary \ref{cor-inductive} and Theorem \ref{main-ranges}.
\end{proof}

\section{Applications}
\subsection{Diagonal coinvariant algebras} \label{section-coinv}
In this section we shall prove Theorem \ref{main-coinv}. We begin by explaining the $\FI$ structure on the coinvariant algebras. Given an injection $\iota \colon S \emb T$ of finite sets, we consider the $R$-algebra morphism defined by 
\begin{align*}
 R[\B \times \iota] \colon R[\B \times T] &\rarr R[\B \times S]
 \\
 (x,t) &\mapsto 
\begin{cases}
 (x,s) & \text{if $\iota(s) = t$,}
 \\
 0 & \text{otherwise,}
\end{cases}
\end{align*}
on the variables. These transition maps define the $\ters{\FI}$-algebra $R[\B \times \bul] = \Sym(E^{\oplus \bul})$. Noting the effect on the monomials 
\begin{align*}
 &\,\,R[\B \times \iota] \bigg(
 \prod_{(x,t) \in \B \times T} (x,t)^{\alpha(x,t)}
 \bigg) 
 =
\begin{dcases*}
 0 & \text{if
\begin{tabular}{l}
 $\alpha(x,t) > 0$ for some\\
 $t \in T - \iota(S)$ and $x \in \B$,
\end{tabular}
  }
 \\
 \prod_{(x,s) \in \B \times S} (x,s)^{\alpha(x,\iota(s))} & \text{otherwise,}
\end{dcases*}
\end{align*}
we see that
\begin{align*}
 \Sym(E^{\oplus \bul}) = \bigoplus_{\J \in \nn^{\B}} \Sym^{\J}(E^{\oplus\bul})
\end{align*}
is in fact an $\nn^{\B}$-graded $\ters{\FI}$-algebra. A straightforward computation shows that under the assignment 
\begin{align*}
 S \mapsto \inv{\J}{S}{E} := \{ f \in \Sym^{\J\!}(E^{\oplus S}) : \sigma f = f \text{ for each }\sigma \in \sym{S}\} \, ,
\end{align*}
 the non-constant invariants form a homogeneous (with respect to the $\nn^{\B}$-grading) $\ters{\FI}$-submodule 
\begin{align*}
 \bigoplus_{0 \neq \J \in \nn^{\B}} \inv{\J}{\bul}{E} =: \inv{}{}{E} \leq \Sym(E^{\oplus \bul}) \, .
\end{align*}
Thus $\inv{}{}{E}$ generates a homogeneous $\ters{\FI}$-ideal inside $\Sym(E^{\oplus \bul})$, and the resulting quotient $\coinv{}{}{E}$ becomes an $\nn^{\B}$-graded $\ters{\FI}$-algebra. Note that the degree $\J$-component of the $\ters{\FI}$-ideal generated by $\inv{}{}{E}$ inside $\Sym(E^{\oplus \bul})$ is the image of the sum of the multiplication maps
\begin{align*}
 \bigoplus_{0 \neq \I \leq \J} \Sym^{\J-\I}(E^{\oplus \bul}) \otimes_{R} \inv{\I}{}{E}
 \, \rarr \Sym^{\J}(E^{\oplus \bul}) \, .
\end{align*}
Consequently, the degree $\J$-component of $\coinv{}{}{E}$ is given by an exact sequence
\begin{equation} \label{coinv-seq}
 \bigoplus_{0 \neq \I \leq \J} \Sym^{\J-\I}(E^{\oplus \bul}) \otimes_{R} \inv{\I}{}{E}
 \, \rarr \Sym^{\J}(E^{\oplus \bul}) \rarr \coinv{\J}{}{E} \rarr 0 \tag{$\spadesuit$} 
\end{equation}
of $\ters{\FI}$-modules. We pin down the relevant invariants of the first two terms appearing in (\ref{coinv-seq}) in Corollary \ref{about-sym-check} and Proposition \ref{about-tens-check} to bound those for $\coinv{\J}{}{E}$ in Theorem \ref{about-coinv-check}.

\begin{defn}
 Writing $\setz$ for the category of pointed sets where the distinguished point is denoted 0, the functor 
\begin{align*}
 F_{R} \colon \setz \rarr \lMod{R}
\end{align*}
sends a pointed set $S \sqcup \{0\}$ to the free $R$-module with basis $S$ and sends a pointed map to the unique $R$-module homomorphism extending it, considering $0$ as the additive zero. It is the left adjoint of the forgetful functor that assigns an $R$-module its underlying set together with its additive zero.
\end{defn}

\begin{lem} \label{sh-on-monomial}
 Let $R$ be a commutative ring, $E$ a free $R$-module with a finite basis $\B$, and $\J \colon \B \rarr \nn$ be a multi-degree. Then the assignment 
\begin{align*}
 S \mapsto \mon^{\J}_{S} := \left\{
 \prod_{(x,s) \in \B \times S} (x,s)^{\alpha(x,s)} :
 \sum_{s \in S} \alpha(x,s) = \J(x) \text{ for each } x \in \B
 \right\} \sqcup \{0\}
\end{align*}
defines a functor $\mon^{\J}_{\bul} \colon \sh \rarr \mathsf{Set}_{0}$ such that the diagram
\begin{align*}
 \xymatrixcolsep{3pc}
 \xymatrixrowsep{3pc}
 \xymatrix{
 \FI^{\opp} \ar@{_{(}->}_-{\nu}[d] \ar[drr]^-{\,\,\,\,\,\,\Sym^{\J}(E^{\oplus\bul})}\\
 \sh \ar[r]_-{\mon^{\J}_{\bul}} & \setz \ar[r]_-{F_{R}} & \lMod{R}
 }
\end{align*}
commutes, where the embedding $\nu \colon \FI^{\opp} \emb \sh$ is the one in \emph{\cite[Remark 4.1.3]{cef}}.
\end{lem}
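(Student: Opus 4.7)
The plan is to define $\mon^{\J}_{\bul}$ on morphisms of $\sh$, verify functoriality, and then check that the outer triangle literally matches the formula already computed for the $\ters{\FI}$-structure on $\Sym^{\J}(E^{\oplus\bul})$.

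First I would define the action on morphisms. A morphism $\phi \colon S \rightharpoonup S'$ in $\sh$ is a bijection $\phi \colon D_{\phi} \xrightarrow{\cong} I_{\phi}$ between subsets $D_{\phi} \subseteq S$ and $I_{\phi} \subseteq S'$. To each monomial $m = \prod_{(x,s) \in \B \times S} (x,s)^{\alpha(x,s)} \in \mon^{\J}_{S}$, associate its \emph{support} $\sigma(m) := \{s \in S : \alpha(x,s) > 0 \text{ for some } x \in \B\}$. I would then declare
\[
 \mon^{\J}_{\phi}(m) :=
\begin{cases}
 \prod_{(x,s) \in \B \times D_{\phi}} (x,\phi(s))^{\alpha(x,s)} & \text{if } \sigma(m) \subseteq D_{\phi}, \\
 0 & \text{otherwise,}
\end{cases}
\]
and send $0 \mapsto 0$. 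The first case still lies in $\mon^{\J}_{S'}$ because $\phi$ is bijective on $D_{\phi}$, so the multi-degree is preserved.

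Next I would verify functoriality. Identities preserve supports and act as identities, so $\mon^{\J}_{\id_{S}}$ is the identity. For composition, given $\phi \colon S \rightharpoonup S'$ and $\psi \colon S' \rightharpoonup S''$, the composite in $\sh$ has domain $D_{\psi\phi} = \phi^{-1}(I_{\phi} \cap D_{\psi})$. A monomial $m$ is killed by $\mon^{\J}_{\psi\phi}$ iff $\sigma(m) \not\subseteq D_{\psi\phi}$, which, unpacked, is equivalent to $\sigma(m) \not\subseteq D_{\phi}$ or $\phi(\sigma(m)) \not\subseteq D_{\psi}$; these are precisely the conditions for $\mon^{\J}_{\psi}\!\circ\mon^{\J}_{\phi}$ to kill $m$. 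When neither condition holds, both composites relabel variables along $\psi\circ\phi$, so the two maps agree.

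For the commuting triangle, recall that $\nu$ sends an injection $\iota \colon S \emb T$ (regarded as a morphism $T \to S$ in $\FI^{\opp}$) to the partial bijection $T \rightharpoonup S$ with domain $\iota(S) \subseteq T$ and image $S$, acting as $\iota(s) \mapsto s$. Applying the definition above, $\mon^{\J}_{\bul}(\nu(\iota))$ sends $\prod (x,t)^{\alpha(x,t)}$ to $0$ when some $t \notin \iota(S)$ has $\alpha(x,t) > 0$ and otherwise to $\prod (x,\iota^{-1}(t))^{\alpha(x,t)} = \prod (x,s)^{\alpha(x,\iota(s))}$. Composing with $F_{R}$ gives the $R$-linear extension, which I would compare termwise against the formula for $R[\B\times\iota]$ computed at the start of Section~\ref{section-coinv}. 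The two maps agree on monomials of multi-degree $\J$, hence on all of $\Sym^{\J}(E^{\oplus T})$ by $R$-linearity, which is precisely the commutativity of the triangle.

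The routine but error-prone step is keeping the direction of $\nu$ straight: since $\phi \mapsto \mon^{\J}_{\phi}$ is covariant on $\sh$ but $\Sym^{\J}(E^{\oplus\bul})$ is contravariant on $\FI$, the only nontrivial obstacle is to confirm that $\nu$ inverts directions exactly once, which it does by construction. No deeper input is needed.
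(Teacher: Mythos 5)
Your proposal is correct and follows essentially the same route as the paper: define the action on $\sh$-morphisms via the monomial's support (the paper uses the triple notation $(C,D,\phi)$ where you use $D_\phi, I_\phi$, but the formula and the functoriality check by tracking where a monomial is killed are the same), then unwind $\nu(\iota)$ and compare termwise against $R[\B\times\iota]$. The only cosmetic difference is that you name the support $\sigma(m)$ explicitly, which makes the composition law a bit cleaner to state.
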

\begin{proof}
 Let $(C,D,\phi) \colon S \rarr T$ be a morphism in $\sh$, that is,
\begin{align*}
 C \subseteq S\,, \quad D \subseteq T\,, \quad \phi \colon C \rarr D \text{ is a bijection.}
\end{align*}
To define the transition maps of our (to be) functor $\mon^{\J}_{\bul}$, we write
\begin{align*}
 (C,D,\phi)_{*} \colon \mon^{\J}_{S} 
 &\rarr \mon^{\J}_{T} 
 \\
 \prod_{(x,s) \in \B \times S} (x,s)^{\alpha(x,s)}
 &\mapsto
\begin{dcases*}
 0 & \text{if 
\begin{tabular}{l}
 $\alpha(x,s) > 0$ for some \\
 $s \in S-C$ and $x \in \B$,
\end{tabular}
 }
 \\
 \prod_{(x,t) \in \B \times T} (x,t)^{\alpha\left(x,\,\phi^{-1}(t)\right)}
 & 
 \text{otherwise.}
\end{dcases*}
 \\
 0 &\mapsto 0 \, .
\end{align*}
Clearly this sends the identity morphisms of $\sh$ to identity maps on the corresponding pointed sets. Next, let $(C,D,\phi) \colon S \rarr T$ and $(K,L,\psi) \colon T \rarr U$ be composable morphisms in $\sh$, that is,
\begin{align*}
 C \subseteq S\,, \quad D \subseteq T\,, \quad \phi \colon C \rarr D \text{ is a bijection.}
 \\
 K \subseteq T\,, \quad L \subseteq U\,, \quad \psi \colon K \rarr L \text{ is a bijection.}
\end{align*}
 We see that the composite $(K,L,\psi)_{*} \circ (C,D,\phi)_{*}$ is given by
\begin{align*}
  \mon^{\J}_{S}  
 &\rarr \mon^{\J}_{U} 
 \\
 \prod_{(x,s) \in \B \times S} (x,s)^{\alpha(x,s)}
 &\mapsto
\begin{dcases*}
 0 & \text{if 
\begin{tabular}{l}
 $\alpha(x,s) > 0$ for some \\
 $s \in S-C$ and $x \in \B$, or \\
 $\alpha(x,\,\phi^{-1}(t)) > 0$ for some\\
 $t \in D - K$ and $x \in \B$.
\end{tabular}
 }
 \\
 \prod_{(x,u) \in \B \times U} (x,u)^{\alpha\left(x,\, \phi^{-1}(\psi^{-1}(u))\right)}
 & 
 \text{otherwise.}
\end{dcases*}
\end{align*}
On the other hand, inside $\sh$ we have by \cite[Definition 4.1.1]{cef}
\begin{align*}
 (K,L,\psi) \circ (C,D,\phi) 
 &= (\phi^{-1}(D \cap K),\,\psi(D \cap K),\, \psi \circ \phi) \, .
\end{align*}
Therefore $\left( (K,L,\psi) \circ (C,D,\phi)  \right)_{*}$ is given by
\begin{align*}
 \mon^{\J}_{S} 
 &\rarr \mon^{\J}_{U} 
 \\
 \prod_{(x,s) \in \B \times S} (x,s)^{\alpha(x,s)}
 &\mapsto
\begin{dcases*}
 0 & \text{if 
\begin{tabular}{l}
 $\alpha(x,s) > 0$ for some\\
 $s \in S-\phi^{-1}(D \cap K)$ \\
 and $x \in \B$,
\end{tabular}
 }
 \\
 \prod_{(x,u) \in \B \times U} (x,u)^{\alpha\left(x,\,(\psi \circ \phi)^{-1}(u)\right)}
 & 
 \text{otherwise.}
\end{dcases*}
\end{align*}
We observe that $s \in S-\phi^{-1}(D \cap K)$ if and only if either $s \in S - C$, or $s \in C$ and $\phi(s) \in D-K$. In other words, 
\begin{align*}
 S-\phi^{-1}(D \cap K) &= (S-C) \, \cup \, \phi^{-1}(D-K) \, \text{, so}
 \\
 \B \times \left( S-\phi^{-1}(D \cap K) \right) 
 &=
 \B \times (S-C) \,\, \cup \,\, \B \times \phi^{-1}(D-K) \, ,
\end{align*}
verifying $(K,L,\psi)_{*} \circ (C,D,\phi)_{*} = \left( (K,L,\psi) \circ (C,D,\phi)  \right)_{*}$.

Noting that $\nu$ sends a morphism $\iota \colon T \hookleftarrow S$ in $\FI^{\opp}$ to $(\iota(S),S,\iota) \colon T \rarr S$ in $\sh$, we inspect that 
\begin{align*}
 (\iota(S),S,\iota)_{*} = R[\B \times \iota]
\end{align*}
as desired.
\end{proof}

\begin{cor} \label{about-sym-check}
 Let $R$ be a commutative ring, $E$ a free $R$-module with a finite basis $\B$, and $\J \colon \B \rarr \nn$ be a multi-degree. Then there exists a pointed $\FB$-set 
\begin{align*}
 Y^{\J}_{\bul} \colon \FB \rarr \setz
\end{align*}
such that 
\begin{birki}
 \item $Y^{\J}_{S} = \{0\}$ if and only if $|S| > |\J|$.
 \vspace{0.08in}
 \item The $\ters{\FI}$-module $\Sym^{\J}(E^{\oplus \bul})$ extends to an $\sh$-module which is
\begin{itemize}
 \item isomorphic to $\induce(F_{R}(Y^{\J}_{\bul}))$, and
 \item generated in degrees $\leq |\J|$.
\end{itemize}
\end{birki}
\end{cor}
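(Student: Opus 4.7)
The plan is to identify the pointed $\sh$-set $\mon^{\J}_{\bullet}$ with the canonical $\sh$-extension of an induced $\FI$-object, by decomposing each monomial according to its support on the second coordinate.

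First I would define, for each finite set $S$,
\[
Y^{\J}_{S} \;:=\; \Bigl\{\,\prod_{(x,s) \in \B \times S}(x,s)^{\alpha(x,s)} \in \mon^{\J}_{S} \,:\, \forall s \in S,\; \exists x \in \B \text{ with } \alpha(x,s) > 0\,\Bigr\} \;\sqcup\; \{0\}.
\]
Bijections $S \to T$ in $\FB$ relabel the second coordinates of the variables while preserving both the multi-degree and the surjectivity of the support, so $Y^{\J}_{\bullet} \colon \FB \to \setz$ is a pointed $\FB$-set. For part (1), every non-basepoint element of $\mon^{\J}_{S}$ has exactly $|\J|$ variable occurrences counted with multiplicity, since $\sum_{(x,s)}\alpha(x,s) = \sum_{x}\J(x) = |\J|$; surjectivity onto $S$ then requires at least $|S|$ distinct second coordinates to appear, so $|S| > |\J|$ forces $Y^{\J}_{S} = \{0\}$. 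Conversely, when $|S| \leq |\J|$ one constructs a surjective monomial by hand (put one variable at each $s \in S$ and distribute the remaining $|\J| - |S|$ occurrences to meet the multi-degrees $\J(x)$).

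For part (2), every non-basepoint monomial $m \in \mon^{\J}_{S}$ has a unique full support $T(m) \subseteq S$, which gives a decomposition of pointed sets
\[
\mon^{\J}_{S} \;\cong\; \bigsqcup_{T \subseteq S} Y^{\J}_{T} \,,
\]
with basepoints identified. Applying $F_{R}$ and invoking the identification $\Sym^{\J}(E^{\oplus S}) \cong F_{R}(\mon^{\J}_{S})$ from Lemma \ref{sh-on-monomial} yields
\[
\Sym^{\J}(E^{\oplus S}) \;\cong\; \bigoplus_{T \subseteq S} F_{R}(Y^{\J}_{T}) \;=\; \induce\bigl(F_{R}(Y^{\J}_{\bullet})\bigr)_{S} \,.
\]

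The main step, and the one that will require the most care, is verifying that this isomorphism is $\sh$-equivariant, so that the canonical $\sh$-extension of $\induce(F_{R}(Y^{\J}_{\bullet}))$ (given by the standard subset-decomposition recipe: $(C,D,\phi)$ kills summands indexed by $T \not\subseteq C$ and carries the $T \subseteq C$ summand to the $\phi(T)$ summand via the $\FB$-action of $\phi|_{T}$) matches the $\sh$-structure on $\mon^{\J}_{\bullet}$ from Lemma \ref{sh-on-monomial}. Reading the action of a morphism $(C,D,\phi) \colon S \to U$ off of Lemma \ref{sh-on-monomial} shows exactly this: a monomial with full support $T$ is annihilated precisely when $T \not\subseteq C$, and otherwise is sent into $Y^{\J}_{\phi(T)} \subseteq \mon^{\J}_{U}$ via relabeling of second coordinates by $\phi|_{T}$. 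Generation of the $\FI$-module $\induce(F_{R}(Y^{\J}_{\bullet}))$ in degrees $\leq |\J|$ is then immediate from (1), since $F_{R}(Y^{\J}_{\bullet})$ is an $\FB$-module supported in those degrees.
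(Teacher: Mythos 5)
Your proof is correct and lands on exactly the same $\FB$-set $Y^{\J}_{\bul}$ as the paper, namely the full-support monomials of multi-degree $\J$, but your route to part~(2) is more elementary and self-contained. The paper defines $Y^{\J}_{\bul}$ abstractly as $\cofi{0}(\mon^{\J}_{\bul})$ for a version of $\cofi{0}$ adapted to pointed $\FI$-sets, notes that $\cofi{0}$ commutes with the free-module functor $F_R$ so that $\cofi{0}(\Sym^{\J}(E^{\oplus\bul})) \cong F_R(Y^{\J}_{\bul})$, and then hands off the rest to the general structure theorems for $\sh$-modules \cite[Theorems~4.1.5 and~4.1.7]{cef}, which supply both the induced presentation and the generation bound in one stroke. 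You instead exhibit the pointed-set decomposition $\mon^{\J}_S \cong \bigsqcup_{T\subseteq S} Y^{\J}_T$ indexed by full support, apply $F_R$ to recognize $\induce(F_R(Y^{\J}_{\bul}))_S$, and verify $\sh$-equivariance directly from the transition formula in Lemma~\ref{sh-on-monomial}. This is precisely what the proof of \cite[Theorem~4.1.5]{cef} does in general, so you are unpacking the black box the paper cites: the paper's argument is shorter, while yours makes the combinatorics visible and avoids having to know that $\cofi{0}$ and $F_R$ commute. One small point in the converse of part~(1): ``distribute the remaining $|\J|-|S|$ occurrences to meet the multi-degrees'' should be fleshed out to show the top-up can actually be done so that the exponent sums hit $\J(x)$ exactly (e.g.\ concentrate the leftover exponents at one fixed $s_0\in S$), but this is routine and at the same level of detail as the paper's own construction.
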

\begin{proof}
 The recipe for defining $\cofi{0}$ of an $\FI$-module as mentioned in the introduction can be mimicked for pointed $\FI$-sets, thanks to the existence of zero morphisms. There is a functor $\pi^{*} \colon [\FB,\setz] \rarr [\FI,\setz]$ which ``extends by zero'' and it has a left adjoint, for which we again write $\cofi{0}$. Now we declare $Y^{\J}_{\bul} := \cofi{0}(\mon_{\bul}^{\J})$,\footnote{Let us resolve the abuse of notation here: we take the pointed $\sh$-set $\mon_{\bul}^{\J}$ in Lemma \ref{sh-on-monomial}, consider its underlying pointed $\FI$-set (via the usual covariant inclusion $\FI \emb \sh$, not $\nu \colon \FI^{\opp} \emb \sh$ of Lemma \ref{sh-on-monomial}), and finally apply $\cofi{0}$ to it.} and observe that it can be described as 
\begin{align*}
 Y^{\J}_{S} = 
 \left\{
 \prod_{(x,s) \in \B \times S} (x,s)^{\alpha(x,s)} :
\begin{array}{l}
 \sum_{s \in S} \alpha(x,s) = \J(x) \text{ for each } x \in \B \,, \\
 \quad\quad\textbf{and} \\
 \text{for every $s \in S$, there exists } x \in \B \\
 \text{such that } \alpha(x,s) > 0\,.
\end{array}
 \!\!\right\} \sqcup \{0\} \, .
\end{align*}
To prove (1), first assume $Y^{\J}_{S} \neq \{0\}$. Then it contains a monomial \ds{\prod_{(x,s) \in \B \times S} (x,s)^{\alpha(x,s)}} which has to satisfy 
\begin{align*}
 |\J| = \sum_{x \in \B} \J(x) = \sum_{x \in \B} \sum_{s \in S} \alpha(x,s) = \sum_{s \in S}\sum_{x \in \B} \alpha(x,s) \geq \sum_{s \in S} 1 = |S| \, .
\end{align*}
Conversely, assume $|\J| \geq |S|$. Then letting $J_{x}$ to be a finite set of size $\J(x)$ for each $x \in \B$, there exists an injection 
\begin{align*}
 \lambda \colon S \emb \bigsqcup_{x \in \B} J_{x} \, .
\end{align*}
Now defining 
\begin{align*}
 \alpha_{0} \colon \B \times S &\rarr \{0,1,\dots\}
 \\
 (x,s) &\mapsto 
\begin{cases}
 1 & \text{if $\lambda(s) \in J_{x}$,} \\
 0 & \text{otherwise,}
\end{cases}
\end{align*}
we see that the monomial 
\begin{align*}
 m_{0} := \prod_{(x,s) \in \B \times S}(x,s)^{\alpha_{0}(x,s)}
\end{align*}
belongs to $Y_{S}^{\J}$ and hence $Y_{S}^{\J} \neq \empt$.

To prove (2), we first note that the ``extend by zero'' functors from $\FB$-objects to $\FI$-objects commute with the forgetful functor $\lMod{R} \rarr \setz$, therefore the corresponding left adjoints $\cofi{0}$ and $F_{R}$ also commute. In particular, we have 
\begin{align*}
 \cofi{0}(F_{R}(\mon^{\J}_{\bul})) \cong F_{R}(\cofi{0}(\mon^{\J}_{\bul})) = F_{R}(Y^{\J}_{\bul})
\end{align*}
as $\FB$-modules. We conclude by Lemma \ref{sh-on-monomial}, \cite[Theorem 4.1.5]{cef}, and \cite[Theorem 4.1.7]{cef}.
\end{proof}

\begin{lem} \label{lem-inv}
 Let $R$ be a commutative ring, $E$ a free $R$-module with a finite basis, and $\J \colon \B \rarr \nn$ be a multi-degree whose total degree is $|\J| \geq 1$. Then the assignment
\begin{align*}
 S \mapsto \orb^{\J}_{S} := \mon^{\J}_{S} / \sym{S} 
\end{align*}
defines a functor $\orb^{\J}_{\bul} \colon \FI^{\opp} \rarr \setz$ such that 
\begin{birki}
 \item $\inv{\J}{}{E} \cong F_{R}(\orb^{\J}_{\bul})$.
 \item Given a proper injection $\iota \colon T-\{t_{0}\} \emb T$ of finite sets, the transition map
\begin{align*}
 \orb^{\J}_{\iota} \colon \orb^{\J}_{T} \rarr \orb^{\J}_{T-\{t_{0}\}}
\end{align*}
 is always surjective, and injective if and only if $|T| > |\J|$.
\end{birki}
\end{lem}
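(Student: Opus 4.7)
The plan is to describe $\mon^{\J}_S$ and $\orb^{\J}_S$ combinatorially and derive every claim from that description. Encode each monomial in $\mon^{\J}_{S}$ by the function $\alpha \colon \B \times S \rarr \nn$ with marginal sums $\sum_{s \in S}\alpha(x,s) = \J(x)$ for every $x \in \B$, so that a $\sym{S}$-orbit is the same data as an unordered multiset of $|S|$ vectors in $\nn^{\B}$ summing to $\J$ (with zero vectors allowed). Under this bookkeeping, the transition $\mon^{\J}_{T} \rarr \mon^{\J}_{S}$ induced by an injection $\iota \colon S \emb T$ sends a monomial supported in $\iota(S)$ to its reindexed version and kills everything else; at the multiset level this becomes ``delete $|T|-|S|$ zero vectors when possible, otherwise send to the basepoint $0$''. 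Promoting $\orb^{\J}_{\bullet}$ to a functor on $\FI^{\opp}$ thus reduces to showing that $\iota^{*}$ descends from monomials to orbits, i.e.\ that $X := (\sym{T}\cdot m) \cap \mon^{\J}_{\iota(S)}$ is either empty or a single $\sym{\iota(S)}$-orbit; this is achieved by extending the restriction of any $\sigma \in \sym{T}$ with $\sigma\cdot m_{0} \in X$ from $\text{supp}_{T}(m_{0}) \subseteq \iota(S)$ to a bijection $\iota(S) \rarr \iota(S)$. Functoriality is then automatic for the manifestly associative ``delete zeros'' operation.

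For part (1), the orbit sums $\{\sum_{m' \in \sym{S}\cdot m}m' : [m] \in \orb^{\J}_{S}\setminus\{0\}\}$ form a free $R$-basis for $\inv{\J}{S}{E}$, yielding an $R$-module isomorphism $\inv{\J}{S}{E} \cong F_{R}(\orb^{\J}_{S})$ in every degree. To promote this to an isomorphism of $\FI^{\opp}$-modules, we verify that the $\Sym^{\J}$-transition sends the orbit sum of $[m]_{\sym{T}}$ to the orbit sum of $\orb^{\J}_{\iota}([m]_{\sym{T}})$: the image equals $\sum_{m' \in X}\iota^{*}(m')$, the descent claim identifies $X = \sym{\iota(S)}\cdot m_{0}$ for any $m_{0} \in X$, and the bijection $\iota \colon S \cong \iota(S)$ converts this into $\sym{S}\cdot \iota^{*}(m_{0})$, which is exactly the prescribed orbit sum.

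For part (2), surjectivity is immediate: any multiset of size $|S|$ summing to $\J$ lifts, by appending a single zero vector, to a multiset of size $|T| = |S|+1$ that maps back to it. For injectivity, note that two distinct nonzero orbits $M_{1}, M_{2}$ of size $|T|$ mapping to the same nonzero $N$ of size $|S|$ would each have to equal $N \sqcup \{0\}$, forcing $M_{1} = M_{2}$; so any collision can only happen at the basepoint. Such a collision occurs exactly when there exists a multiset of size $|T|$ summing to $\J$ with no zero vectors, i.e.\ when $\J$ can be written as a sum of $|T|$ nonzero vectors in $\nn^{\B}$. An elementary splitting argument (distribute the total degree $|\J|$ among $|T|$ parts so that each is at least one) shows this is possible if and only if $|T| \leq |\J|$. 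Hence $\orb^{\J}_{\iota}$ is injective precisely when $|T| > |\J|$. The chief subtlety of the whole proof is the orbit descent of $\iota^{*}$ in the first step; once that is in place, both (1) and (2) read off directly from the multiset combinatorics.
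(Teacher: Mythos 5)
Your proof is correct and takes essentially the same route as the paper's: the orbit-sum basis of $\inv{\J}{S}{E}$, the case-split transition map (restrict when a representative is supported in the image, else send to the basepoint), surjectivity by appending a zero coordinate, and injectivity failing exactly when a zero-free orbit exists, controlled by comparing $|T|$ with $|\J|$. Your multiset reformulation is a clean packaging of the same combinatorics, and you are somewhat more careful than the paper in explicitly verifying the descent of $\iota^{*}$ to orbits, which the paper dispatches with ``it is straightforward to check that $\orb^{\J}_{\bul}$ is a functor.''
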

\begin{proof}
We have proved in Lemma \ref{sh-on-monomial} that 
\begin{align*}
 \mon^{\J}_{S} - \{0\} = \left\{
 \prod_{(x,s) \in \B \times S} (x,s)^{\alpha(x,s)} :
 \sum_{s \in S} \alpha(x,s) = \J(x) \text{ for each } x \in \B
 \right\}
\end{align*}
forms a basis of $\Sym^{\J}(E^{\oplus S})$ as an $R$-module. The $\sym{S}$ action
\begin{align*}
 \sigma \cdot \prod_{(x,s) \in \B \times S} (x,s)^{\alpha(x,s)} = \prod_{(x,s) \in \B \times S} (x,\sigma s)^{\alpha(x,s)}\, ,
\end{align*}
makes $\Sym^{\J}(E^{\oplus S})$ a permutation $R\sym{S}$-module defined on the $\sym{S}$-set $\mon^{\J}_{S} -\{0\}$. Thus by \cite[Lemma 3.2.1]{neusel-smith}, the set of orbit sums 
\begin{align*}
  \left\{
 \sum_{m \in \oo}m \,\,:\,\, \oo \subseteq \mon^{\J}_{S} -\{0\} \text{ an $\sym{S}$-orbit}
 \right\}
\end{align*}
form an $R$-basis of $\inv{\J}{S}{E} = \left(\Sym^{\J}(E^{\oplus S})\right)^{\sym{S}}$. In other words, this shows that the $R$-linear map defined by 
\begin{align*}
 \vphi_{S} \colon F_{R}(\orb^{\J}_{S}) &\rarr \inv{\J}{S}{E} 
 \\
 \oo &\mapsto \sum_{m \in \oo} m
\end{align*}
is an isomorphism. Moreover for an injection $\iota \colon S \emb T$, we define $\orb^{\J}_{\iota} \colon \orb^{\J}_{T} \rarr \orb^{\J}_{S}$ as follows: given an $\sym{T}$-orbit $\oo \subseteq \mon^{\J}_{T}-\{0\}$, we declare $\orb^{\J}_{\iota}(\oo) := 0$ if for \textbf{every} monomial 
\begin{align*}
 \prod_{(x,t) \in \B \times T} (x,t)^{\alpha(x,t)} \in \oo
\end{align*}
there exists $t \in T - \iota(S)$ and $x \in \B$ such that $\alpha(x,t) > 0$. If, on the other hand, $\oo$ contains a monomial of the form 
\begin{align*}
 \prod_{(x,s) \in \B \times S} (x,\iota(s))^{\alpha_{0}(x,\iota(s))} \, ,
\end{align*}
we declare $\orb^{\J}_{\iota}(\oo)$ to be the $\sym{S}$-orbit of the monomial 
\begin{align*}
 \prod_{(x, s) \in \B \times S} (x,s)^{\alpha_{0}(x,s)} \in \mon^{\J}_{S} \, .
\end{align*}
We see here that if $\orb^{\J}_{\iota}(\oo) \neq 0$, the $\sym{T}$-orbit $\oo$ can be reconstructed from the $\sym{S}$-orbit $\orb^{\J}_{\iota}(\oo)$.  It is straightforward to check that $\orb^{\J}_{\bul} \colon \FI^{\opp} \rarr \setz$ is a functor. Moreover for every injection $\iota \colon S \emb T$, the diagram 
\begin{align*}
 \xymatrix{
 F_{R}(\orb^{\J}_{T}) \ar[d]_{F_{R}(\orb^{\J}_{\iota})} \ar[r]^{\vphi_{T}} 
 & \inv{\J}{T}{E} \ar[d]^{R[\B \times \iota]}
 \\
 F_{R}(\orb^{\J}_{S}) \ar[r]^{\vphi_{S}} & \inv{\J}{S}{E}
 }
\end{align*}
commutes, and (1) follows. The surjectivity claim of (2) is also evident from the above description. 

For the forward direction of the injectivity claim in (2), assume $|T| \leq |\J|$ so that letting $J_{x}$ to be a finite set of size $\J(x)$ for each $x \in \B$, there exists an injection 
\begin{align*}
 \lambda \colon T \emb \bigsqcup_{x \in \B} J_{x} \, .
\end{align*}
Now defining 
$ \alpha (x,t) :=
\begin{cases}
 1 & \text{if $\lambda(t) \in J_{x}$,} \\
 0 & \text{otherwise,}
\end{cases}
$ we see that the monomial 
\begin{align*}
 m := \prod_{(x,t) \in \B \times T}(x,t)^{\alpha(x,t)}
\end{align*}
belongs to $\mon^{\J}_{T}$ with the property that for every $t \in T$ there exists $x \in \B $ (namely the unique $x$ with $\lambda(t) \in J_{x}$) such that $\alpha(x,t) > 0$. Therefore every monomial in the $\sym{T}$-orbit, say $\oo$, of $m$ also has this property and therefore $\orb^{\J}_{\iota}(\oo) = 0 = \orb^{\J}_{\iota}(0)$, hence $\orb^{\J}_{\iota}$ is not injective. For the backward direction of the injectivity claim in (2), assume $\orb^{\J}_{\iota}$ is not injective. Then there must be an $\sym{T}$-orbit $\oo \subseteq \mon^{\J}_{T} - \{0\}$ such that $\orb^{\J}_{\iota}(\oo) = 0$. Let us fix 
\begin{align*}
 m = \prod_{(x,t) \in \B \times T} (x,t)^{\alpha(x,t)} \in \oo \, ,
\end{align*}
so there should be an $x_{0} \in \B$ such that $\alpha(x_{0},t_{0}) \geq 1$. And moreover given another $t_{1} \in T$ we can pick $\sigma \in \sym{T}$ with $\sigma t_{1} = t_{0}$, and the monomial
\begin{align*}
 \sigma \cdot m = \prod_{(x,t) \in \B \times T} (x,\sigma(t))^{\alpha(x,t)} =  \prod_{(x,t) \in \B \times T} (x,t)^{\alpha(x,\sigma^{-1}t)} \in \oo 
\end{align*}
should also have a positive exponent with $t_{0}$, that is, there should be an $x_{1} \in \B$ such that $0 < \alpha(x_{1},\sigma^{-1}t_{0}) = \alpha(x_{1},t_{1})$. It follows that we have 
$
 \sum_{x \in \B} \alpha(x,t_{1}) \geq 1 \
$, and this holds for every $t_{1} \in T$. As a result,
\begin{align*}
 |\J| = \sum_{x \in \B} \J(x) = \sum_{x \in \B} \sum_{t \in T} \alpha(x,t) = \sum_{t \in T}\sum_{x \in \B} \alpha(x,t) \geq \sum_{t \in T} 1 = |T| \, .
\end{align*}
\end{proof}


\begin{cor} \label{about-inv-check}
 Let $R$ be a commutative ring, $E$ a free $R$-module with a finite basis $\B$, and $\J \colon \B \rarr \nn$ be a multi-degree whose total degree is $|\J| \geq 1$. Then the $\FI$-module is $\inv{\J}{}{E}^{\vee}$ satisfies the following:
\begin{birki}
  \item $\weak(\inv{\J}{}{E})^{\vee}) = 0$.
  \vspace{0.1in}
  \item For each $i \geq 0$, we have $t_{i}(\inv{\J}{}{E}^{\vee}) = |\J| + i$.
 \vspace{0.1in}
 \item For each $j \geq 0$, we have 
 $
 h^{j}(\inv{\J}{}{E}^{\vee}) = 
\begin{cases}
 |\J| - 1 & \text{if $j=1$,}
 \\
 -1 & \text{otherwise.}
\end{cases}
 $
 \vspace{0.1in} 
 \item There exists an $\FB$-module $W$ concentrated in degree $0$ and an $\FI$-module $T$ with $\deg(T) = |\J|-1$ such that there is a short exact sequence 
\begin{align*}
 0 \rarr \inv{\J}{}{E}^{\vee} \rarr \induce(W) \rarr T \rarr 0 \
\end{align*}
of $\FI$-modules defined over $R$.
\end{birki}
\end{cor}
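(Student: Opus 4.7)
The plan is to first establish part (4), from which (1) and (3) follow formally, and then derive (2) using the machinery of earlier sections. For (4), I will exploit the stabilization structure of the orbit sets $\orb^{\J}_\bullet$ furnished by Lemma \ref{lem-inv}(2): the surjections $\orb^{\J}_T \twoheadrightarrow \orb^{\J}_S$ (for $S \emb T$) are bijections whenever $|S| \geq |\J|$, so all these pointed sets canonically identify with a common finite pointed set $U$. For each finite $S$, any choice of ambient $T \supseteq S$ with $|T| \geq |\J|$ yields a surjection $F_R(U) \cong \inv{\J}{T}{E} \twoheadrightarrow \inv{\J}{S}{E}$, and this surjection is independent of the choice of $T$ since enlarging $T$ changes nothing once we are in the stable range. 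Dualizing gives natural injections $\inv{\J}{S}{E}^{\vee} \emb F_R(U)^{\vee}$ that assemble into an $\FI$-module injection $\inv{\J}{}{E}^{\vee} \emb \induce(W)$, where $W$ is concentrated in degree $0$ with $W_0 := F_R(U)^{\vee}$ (so $\induce(W)$ is the constant $\FI$-module at $W_0$). The cokernel $T$ vanishes in degrees $\geq |\J|$ by stability, and is nonzero in degree $|\J|-1$ because Lemma \ref{lem-inv}(2) asserts that the surjection $\orb^{\J}_{|\J|} \twoheadrightarrow \orb^{\J}_{|\J|-1}$ is not injective, forcing a strict rank inequality; hence $\deg(T) = |\J|-1$.

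For (1), since $\induce(W)$ has stable degree $\deg(W) = 0$ and $\weak$ is monotone under subobjects by \cite[Proposition 2.9]{cmnr-range}, we obtain $\weak(\inv{\J}{}{E}^{\vee}) \leq 0$; equality follows because $\inv{\J}{}{E}^{\vee}$ is not torsion, its evaluations being nonzero for $|S| \geq |\J|$. For (3), apply $\locoh{\star}$ to the short exact sequence of (4): since $\induce(W)$ is $\cofi{0}$-acyclic, all its local cohomologies vanish by Theorem \ref{characterize-H0-acyclic}, while $T$ is torsion with $\locoh{0}(T) = T$ and $\locoh{j}(T) = 0$ for $j \geq 1$. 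The long exact sequence yields $\locoh{0}(\inv{\J}{}{E}^{\vee}) = 0$, $\locoh{1}(\inv{\J}{}{E}^{\vee}) \cong T$ of degree $|\J|-1$, and $\locoh{j}(\inv{\J}{}{E}^{\vee}) = 0$ for $j \geq 2$.

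For (2), parts (1) and (3) combined with Theorem \ref{pass-to-cmnr} show that the triple $(\inv{\J}{}{E}^{\vee}, |\J|-1, 0)$ satisfies Hypothesis \ref{FI-hyp}. Corollary \ref{t0-t1-ranges} (case $0 \leq g \leq \ceil{c/2}$) gives the upper bounds $\tgen \leq |\J|$ and $\trel \leq |\J|+1$. For the matching lower bound $\tgen \geq |\J|$, I will observe that $\sym{S}$ acts trivially on each $\inv{\J}{S}{E}^{\vee}$ (as it is the dual of an invariant subspace), so generation by the degree-$(|\J|-1)$ part would require the embedding $\inv{\J}{|\J|-1}{E}^{\vee} \emb \inv{\J}{|\J|}{E}^{\vee}$ to be surjective, contradicting the strict rank gap. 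Corollary \ref{t0-vs-t1} then forces $\trel = |\J|+1$, since $\tgen = |\J| > 0 = \weak$ rules out $\tgen \geq \trel$. Combining with Theorem \ref{regularity-from-FI-hyp} yields $\reg(\inv{\J}{}{E}^{\vee}) = |\J|$, and Theorem \ref{gan-chain} gives that the sequence $t_i - i$ is weakly increasing with supremum $\reg = |\J|$; since it already attains $|\J|$ at $i=1$, it is constant, yielding $t_i = |\J|+i$ for all $i \geq 1$ (the $i=0$ case is just $\tgen = |\J|$).

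The main subtlety is the canonicity claim in (4) that the stabilization surjection $F_R(U) \twoheadrightarrow \inv{\J}{S}{E}$ is independent of the choice of ambient $T$; this independence is what allows the pointwise injections to assemble into an $\FI$-module morphism, and it reduces to the fact that all transition maps between orbit sets of size $\geq |\J|$ are bijections. Once (4) is established, everything else is a formal consequence of the machinery developed in earlier sections.
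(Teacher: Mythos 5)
Your proposal is correct and takes essentially the same approach as the paper: your short exact sequence in (4) is exactly the natural map $V \to \shift{V}{|\J|}$ (the shifted module being the constant module $\induce(W)$ once the transition maps of $\orb^{\J}_{\bullet}$ stabilize, by Lemma \ref{lem-inv}(2)), and the remaining parts use the same tools, namely Theorem \ref{pass-to-cmnr}, Corollary \ref{t0-vs-t1}, and Gan's chain lemma (Theorem \ref{gan-chain}). The only difference is organizational: you prove (4) first and read off (1), (3) from the long exact sequence in local cohomology, whereas the paper deduces (4) last, and your pointwise hand-construction of the embedding $V \hookrightarrow \induce(W)$ is an equivalent but more laborious version of the paper's shift-functor packaging.
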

\begin{proof}
 We shall dualize the relevant parts of Lemma \ref{lem-inv}. Let us shortly write 
\begin{align*}
 V := \inv{\J}{}{E}^{\vee} \, .
\end{align*}
The first claim of part (2) in Lemma \ref{lem-inv} says that the transition maps of the $\ters{\FI}$-module $\inv{\J}{}{E}$ are surjective, hence the transition maps of the $\FI$-module $V$ are injective, that is, $h^{0}(V) = -1$.
 
By Lemma \ref{lem-inv} part (2), the transition map $V_{n} \rarr V_{n+1}$ of the $\FI$-module $V$ is an isomorphism once $n \geq |\J|$. As a result, we have $\weak(V) = 0$, $\tgen(V) \leq |\J|$, and every transition map of the shifted $\FI$-module 
$\shift{V}{|\J|}$
is an isomorphism. Thus there exists an $\FB$-module $W$ concentrated in degree $0$ such that 
\begin{align*}
 \shift{V}{|\J|} \cong \induce(W) \, .
\end{align*}
Since this is an $\cofi{0}$-acyclic $\FI$-module generated in degrees $\leq 0$ by Theorem \ref{characterize-H0-acyclic}, Theorem \ref{pass-to-cmnr} yields that $(V,\,\, |\J|-1,\,\,0)$ satisfies Hypothesis \ref{FI-hyp}. Moreover by Lemma \ref{lem-inv} the transition map of $\orb^{\J}_{\bul}$ in degree $|\J|-1$ sits in an exact sequence 
\begin{align*}
 0 \rarr K \rarr F_{R}(\orb^{\J}_{|\J|}) \rarr F_{R}(\orb^{\J}_{|\J|-1}) \rarr 0 \, .
\end{align*}
of $R$-modules with $K \neq 0$ which has to split. Thus the transition map of $V$ in degree $|\J|-1$ sits in a split exact sequence
\begin{align*}
 0 \rarr V_{|\J|-1} \rarr V_{|\J|} \rarr K^{\vee} \rarr 0
\end{align*}
with $K^{\vee} \neq 0$.
As a result, $\tgen(V) = |\J|$ and $\shift{V}{s}$ cannot be $\cofi{0}$-acyclic for $s < |\J|$, that is, $(V,\,\, s-1,\,\,0)$ does \textbf{not} satisfy Hypothesis \ref{FI-hyp}. Thus by Theorem \ref{pass-to-cmnr} we have $\local(V) = |\J| - 1$. By Theorem \ref{structure-complex}, we have $h^{j}(V) = -1$ for $j \geq 2$ and hence 
\begin{align*}
 h^{1}(V) = \local(V) = |\J| - 1\, .
\end{align*}
Applying \cite[Corollary 4.15]{ramos-coh} we get 
\begin{align*}
 t_{i}(V) - i \leq \max\{h^{j}(V) + j : h^{j}(V) \neq - 1\} = |\J|
\end{align*}
for every $i \geq 1$. On the other hand, by Corollary \ref{t0-vs-t1} and Corollary \ref{t1-not-0} we have 
\begin{align*}
 |\J|  = \tgen(V) \leq \max\{0,\trel(V)-1\} = \trel(V) - 1 \, ,
\end{align*}
so $\trel(V) - 1 = |\J|$. We get $t_{i}(V) - i = |\J|$ for $i \geq 2$ as well by Theorem \ref{gan-chain}.

Finally, we note that because $V$ is torsion-free, the natural map $V \rarr \shift{V}{|\J|}$ is injective whose cokernel $T$ is torsion because the transition maps of $V$ are eventually isomorphisms. Thus we can apply \cite[Theorem 2.10]{cmnr-range} to the complex 
\begin{align*}
 0 \rarr V \rarr \shift{V}{|\J|} \rarr 0
\end{align*}
to deduce that $\locoh{1}(V) \cong T$ and hence $\deg(T) = h^{1}(V) = |\J|-1$.
\end{proof}

%
%
%

\begin{prop} \label{about-tens-check}
 Let $R$ be a commutative ring, $E$ a free $R$-module with a finite basis $\B$, and $\I,\J \colon \B \rarr \nn$ be multi-degrees. Then the $\FI$-module 
\begin{align*}
 U(\I,\J) := \left( \Sym^{\I}(E^{\oplus \bul}) \otimes_{R} \inv{\J}{}{E} \right)^{\vee}
\end{align*}
is presented in finite degrees and satisfies the following:
\begin{birki}
 \item $\weak(U(\I,\J)) = |\I|$.
 \vspace{0.1in}
 \item For each $j \geq 0$, we have 
 $
 h^{j}(U(\I,\J)) = 
\begin{cases}
 |\J| - 1 & \text{if $j=1$,}
 \\
 -1 & \text{otherwise.}
\end{cases}
 $
\end{birki}
\end{prop}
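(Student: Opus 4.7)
The plan is to dualize the short exact sequence of Corollary \ref{about-inv-check}(4) after tensoring with $\Sym^{\I}(E^{\oplus \bul})$, then read off the invariants of $U(\I,\J)$ from the resulting long exact sequence of local cohomology.

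First I would unpack $U(\I,\J)$ as a pointwise tensor of $\FI$-modules. Since both $\Sym^{\I}(E^{\oplus S})$ and $\inv{\J}{S}{E}$ are finite rank free $R$-modules at every finite set $S$, dualizing converts the pointwise tensor product of $\ters{\FI}$-modules into the pointwise tensor product of the dual $\FI$-modules; thus $U(\I,\J) \cong W^{\I} \otimes_{R} V$, where $W^{\I} := \Sym^{\I}(E^{\oplus \bul})^{\vee}$ and $V := \inv{\J}{}{E}^{\vee}$. Corollary \ref{about-sym-check} identifies the $\sh$-module $\Sym^{\I}(E^{\oplus \bul})$ with $\induce(F_{R}(Y^{\I}_{\bul}))$; a direct comparison of monomial and dual-monomial bases then yields an isomorphism $W^{\I} \cong \induce(F_{R}(Y^{\I}_{\bul}))$, where the latter is regarded as an $\FI$-module through the \emph{covariant} embedding $\FI \hookrightarrow \sh$. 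Consequently $W^{\I}$ is $\cofi{0}$-acyclic by Theorem \ref{characterize-H0-acyclic} and has $\weak(W^{\I}) = |\I|$, since $F_{R}(Y^{\I}_{\bul})$ is a nonzero $\FB$-module of degree $|\I|$.

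Next I would apply $W^{\I} \otimes_{R} (-)$ termwise to the short exact sequence $0 \to V \to \induce(W) \to T \to 0$ from Corollary \ref{about-inv-check}(4). Termwise freeness of $W^{\I}_S$ over $R$ preserves exactness, producing
\begin{align*}
 0 \to U(\I,\J) \to A \to B \to 0 \, ,
\end{align*}
where $A := W^{\I} \otimes_{R} \induce(W)$ is a direct sum of $\mathrm{rank}_{R}(W_{0})$ copies of $W^{\I}$, hence still $\cofi{0}$-acyclic, and $B := W^{\I} \otimes_{R} T$ is supported in degrees $\leq |\J|-1$, hence torsion. The long exact sequence of local cohomology collapses since $\locoh{j}(A) = 0$ for all $j$, giving $\locoh{0}(U(\I,\J)) = 0$ and $\locoh{j}(U(\I,\J)) \cong \locoh{j-1}(B)$ for every $j \geq 1$. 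Now $B$ is bounded, hence presented in finite degrees with $\weak(B) = -1$, so Theorem \ref{structure-complex} applied to $(B, \deg(B), -1)$ produces a length-zero complex, forcing $\locoh{j}(B) = 0$ for every $j \geq 1$. Therefore $\locoh{1}(U(\I,\J)) \cong B$ while $\locoh{j}(U(\I,\J)) = 0$ for $j \neq 1$, and the value $h^{1}(U(\I,\J)) = |\J|-1$ then follows from $T_{|\J|-1} \neq 0$ together with the nonvanishing of the finite rank free $R$-module $W^{\I}_{|\J|-1}$.

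Finally, to extract (1) I would propagate the stable degree through the short exact sequence using \cite[Proposition 3.3]{cmnr-range}. The two inequalities $\weak(U(\I,\J)) \leq \max\{\weak(A), \weak(B)\}$ and $\weak(A) \leq \max\{\weak(U(\I,\J)), \weak(B)\}$, combined with $\weak(A) = |\I|$ and $\weak(B) = -1$, pin $\weak(U(\I,\J))$ down to exactly $|\I|$. The ``presented in finite degrees'' assertion follows from the thickness of this subcategory (\cite[Proposition 2.9]{cmnr-range}, part~(5)) applied to the short exact sequence, as both $A$ and $B$ lie in it. The hard part will be the careful bookkeeping: verifying the covariant identification $W^{\I} \cong \induce(F_{R}(Y^{\I}_{\bul}))$ (rather than one obtained through $\nu$) and confirming that $\deg(B)$ actually attains the claimed value $|\J|-1$ rather than dropping lower in degenerate ranges of $(\I,\J)$.
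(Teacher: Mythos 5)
Your approach tracks the paper's argument essentially step for step: dualize, reduce to $W^{\I} \otimes_R V$, tensor the short exact sequence from Corollary \ref{about-inv-check}(4), identify the middle term as semi-induced, read off local cohomology from the vanishing $\locoh{j}(A)=0$, and propagate the stable degree through the short exact sequence. The cosmetic differences from the paper's proof: the paper writes $W^{\I}\cong\induce(F_R(Y^{\I}_\bul)^\vee)$ while you write $\induce(F_R(Y^{\I}_\bul))$, which agree because pointwise-free permutation $\FB$-modules are self-dual; and the paper cites the structure complex of \cite[Theorem 2.10]{cmnr-range} for the two-term complex $0\to U\to A\to 0$ where you run the long exact sequence of $\locoh{}$ and appeal to the vanishing of higher local cohomology of the torsion module $B$ --- these are the same argument in different dress.

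The concern you flag at the end about $\deg(B)$ ``dropping lower in degenerate ranges'' is not idle bookkeeping: it is a genuine defect in the Proposition as stated. When $\I\neq 0$ and $|\J|=1$, the torsion module $T$ of Corollary \ref{about-inv-check}(4) is concentrated in degree $0$, while $W^{\I}_0=\Sym^{\I}(E^{\oplus\emptyset})^\vee=0$, so $B=W^{\I}\otimes_R T=0$. Then $U(\I,\J)\cong W^{\I}\otimes_R\induce(W)$ is semi-induced and $h^1(U(\I,\J))=-1$, not $|\J|-1=0$. The paper's own proof commits the same oversight, asserting $\deg(\wt{T})=|\J|-1$ without noting that $W^{\I}$ may already vanish in the degrees where $T$ lives. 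So the equality in part (2) should really be an inequality $h^1(U(\I,\J))\leq|\J|-1$, or one must impose a nondegeneracy hypothesis such as $\I=0$ or $|\J|\geq 2$. The flaw is harmless downstream --- Theorem \ref{about-coinv-check} only consumes $h^0(U(\cdot,\cdot))=-1$, the $\weak$ bound, and the fact that $U$ is presented in finite degrees --- but you are right that a careful writeup must resolve it rather than merely flag it.
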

\begin{proof}
 Taking the $R$-dual distributes over a tensor product of finitely generated free $R$-modules \cite{dual-tensor}. Thus by Corollary \ref{about-sym-check} and Lemma \ref{lem-inv} we have isomorphisms
\begin{align*}
 U(\I,\J) 
 &\cong \Sym^{\I}(E^{\oplus \bul})^{\vee} \otimes_{R} \inv{\J}{}{E}^{\vee} 
 \\
 &\cong \induce(F_{R}(Y^{\I}_{\bul}))^{\vee} \otimes_{R} 
 \inv{\J}{}{E}^{\vee} 
 \\
 &\cong \induce(F_{R}(Y^{\I}_{\bul})^{\vee}) \otimes_{R} 
 \inv{\J}{}{E}^{\vee}
\end{align*}
of $\FI$-modules defined over $R$, where the commuting of the functors $\induce$ and $(-)^{\vee}$ in the last isomorphism follows from the description \cite[Definition 2.2.2]{cef} (the same observation is used in \cite[proof of Lemma 2.5]{miller-wilson-PConf-hyper}). Being $R$-free pointwise, the functor 
\begin{align*}
 \induce(F_{R}(Y^{\I}_{\bul})^{\vee}) \otimes_{R} - \colon [\FI,\lMod{R}] \rarr [\FI,\lMod{R}]
\end{align*}
is exact. Now applying it to the short exact sequence in part (4) of Corollary \ref{about-inv-check} yields a short exact sequence 
\begin{align*}
 0 \rarr U(\I,\J) \rarr 
 \induce(F_{R}(Y^{\I}_{\bul})^{\vee}) \otimes_{R} \induce(W) \rarr \wt{T} \rarr 0
\end{align*}
where $\deg(W) = 0$ and $\deg(\wt{T}) = |\J|-1$. We observe by the description in \cite[Definition 2.2.2]{cef} that given a finite set $S$, we have 
\begin{align*}
 \left(\induce(F_{R}(Y^{\I}_{\bul})^{\vee}) \otimes_{R} \induce(W)\right)_{S}
 &= \induce(F_{R}(Y^{\I}_{\bul})^{\vee})_{S} \otimes_{R} \induce(W)_{S}
 \\
 &= \bigoplus_{T \subseteq S} F_{R}(Y^{\I}_{T})^{\vee} \otimes_{R} \bigoplus_{T \subseteq S} W_{T}
 \\
 &= \bigoplus_{T \subseteq S} F_{R}(Y^{\I}_{T})^{\vee} \otimes_{R} W_{0}
 \\
 &\cong \induce(F_{R}(Y^{\I}_{\bul})^{\vee} \otimes_{R} W_{0})_{S} \, ,
\end{align*}
hence the middle term of the above exact sequence is an $\cofi{0}$-acyclic $\FI$-module generated in degrees $\leq |\J|$ by Corollary \ref{about-sym-check}. Now we can conclude $U(\I,\J)$ is presented in finite degrees via \cite[Theorem 2.3]{cmnr-range}. Moreover, the complex  
\begin{align*}
 I^{\star} : 0 \rarr U(\I,\J) \rarr \induce(F_{R}(Y^{\J}_{\bul})^{\vee}) \otimes_{R} \induce(W) \rarr 0
\end{align*}
satisfies the hypotheses of \cite[Theorem 2.10]{cmnr-range} and hence 
\begin{align*}
 \locoh{j}(U(\I,\J)) = \co^{j}(I^{\star}) =
\begin{cases}
 \wt{T} & \text{if $j = 1$,}
 \\
 0 & \text{otherwise.}
\end{cases}
\end{align*}
Moreover by \cite[Proposition 2.9]{cmnr-range} we have 
\begin{align*}
 |\I| = \max\{\weak(U(\I,\J)),-1\} 
\end{align*}
and hence $\weak(U(\I,\J)) = |\I|$.
\end{proof}

\begin{thm} \label{about-coinv-check}
 Let $R$ be a commutative ring, $E$ a free $R$-module with a finite basis $\B$, and  $\J \colon \B \rarr \nn$ be a multi-degree whose total degree is $|\J| \geq 1$. Then the $\FI$-module $\coinv{\J}{}{E}^{\vee}$ is presented in finite degrees and satisfies the following:
\begin{birki}
 \item $\weak(\coinv{\J}{}{E}^{\vee}) \leq |\J|$.
 \item For each $j \geq 0$, we have 
 $
 h^{j}(\coinv{\J}{}{E}^{\vee}) \leq 
\begin{cases}
 2|\J| - 2j + 2 & \text{if $2 \leq j \leq  |\J|+1$,}
 \\
 -1 & \text{otherwise.}
\end{cases}
 $
\end{birki}
\end{thm}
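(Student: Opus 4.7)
The plan is to dualize the defining exact sequence (\ref{coinv-seq}) and read off invariants of $V := \coinv{\J}{}{E}^{\vee}$ by combining Corollary \ref{about-sym-check}, Proposition \ref{about-tens-check}, and the structure theorem Theorem \ref{structure-complex}. Let me shorten $A := \Sym^{\J}(E^{\oplus \bul})^{\vee}$ and $B := \bigoplus_{0 \neq \I \leq \J} U(\J-\I,\I)$. Writing $\phi \colon B' \rarr A'$ for the multiplication map in (\ref{coinv-seq}) with $A' := \Sym^{\J}(E^{\oplus \bul})$ and letting $K'$ denote the image of $\phi$, we have the factorization $B' \twoheadrightarrow K' \hookrightarrow A'$ together with the short exact sequence $0 \rarr K' \rarr A' \rarr \coinv{\J}{}{E} \rarr 0$. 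Taking pointwise $R$-duals (which is left exact), the former yields $K'^{\vee} \hookrightarrow B$ and the latter yields $0 \rarr V \rarr A \rarr K'^{\vee}$; composing, I would obtain an exact sequence
\[ 0 \rarr V \rarr A \rarr B \]
of $\FI$-modules.

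Next, I would catalog what is known about $A$ and $B$. By Corollary \ref{about-sym-check}, together with the natural isomorphism $\induce(-)^{\vee} \cong \induce((-)^{\vee})$ coming from \cite[Definition 2.2.2]{cef}, the $\FI$-module $A$ is $\cofi{0}$-acyclic and generated in degrees $\leq |\J|$; hence by Theorem \ref{characterize-H0-acyclic} we have $\locoh{j}(A) = 0$ for every $j \geq 0$ and $\weak(A) \leq |\J|$. By Proposition \ref{about-tens-check}, $B$ is presented in finite degrees with $\locoh{0}(B) = 0$, $\deg(\locoh{1}(B)) \leq |\J|-1$, and $\locoh{j}(B) = 0$ for $j \geq 2$.

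Letting $K$ denote the image of $A \rarr B$, I would split the above into the two short exact sequences $0 \rarr V \rarr A \rarr K \rarr 0$ and $0 \rarr K \rarr B \rarr C \rarr 0$, where $C$ is the cokernel. The long exact sequence of the first, combined with the vanishing of $\locoh{j}(A)$, gives $\locoh{0}(V) = 0$ and $\locoh{j+1}(V) \cong \locoh{j}(K)$ for all $j \geq 0$. Since $K \hookrightarrow B$, we have $\locoh{0}(K) \hookrightarrow \locoh{0}(B) = 0$, forcing $\locoh{1}(V) = 0$; this yields $h^{0}(V) = h^{1}(V) = -1$. The closure of $\FI$-modules presented in finite degrees under kernels (via \cite[Theorem 2.3, part (1)]{cmnr-range}) will ensure $V$ is presented in finite degrees, and then $\weak(V) \leq \weak(A) \leq |\J|$ by \cite[Proposition 2.9]{cmnr-range}.

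With these in hand, I would invoke Theorem \ref{structure-complex} applied to $V$ with $g := |\J|$ and $c := \local(V)$: the complex it produces has length $|\J|+1$, so $\locoh{j}(V) = 0$ for $j \geq |\J|+2$ automatically, while the last bullet of that theorem delivers $h^{j}(V) \leq 2|\J| - 2j + 2$ for $2 \leq j \leq |\J|+1$. The main subtlety will be arranging the dualization carefully so that the exact sequence $0 \rarr V \rarr A \rarr B$ is available without imposing any pointwise projectivity hypothesis on the middle term of (\ref{coinv-seq}); once in place, the invariants fall out cleanly from the two short exact sequences and the structure theorem.
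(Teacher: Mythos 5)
Your argument is correct and follows essentially the same route as the paper's proof: dualize (\ref{coinv-seq}) to get $0 \rarr V \rarr A \rarr B$, read $h^0(V) = h^1(V) = -1$ off the local cohomology exact sequences using the vanishing for $A$ and $h^0(B)$, and feed $\weak(V) \leq |\J|$ into Theorem \ref{structure-complex} for the higher $h^j$. Your closing worry about pointwise projectivity is unfounded---your own argument via the image $K'$ already shows that left exactness of $\Hom_R(-,R)$ suffices to produce the needed exact sequence.
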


\begin{proof} Because dualizing is left exact, with the notation of Proposition \ref{about-tens-check}, from (\ref{coinv-seq}) we get an exact sequence
\begin{align*}
 0 \rarr
 \coinv{\J}{}{E}^{\vee}
 \rarr
 \Sym^{\J}(E^{\oplus \bul})^{\vee}
 \rarr
 \bigoplus_{0 \neq \I \leq \J} U(\J-\I,\J) 
\end{align*}
of $\FI$-modules. Here (1) follows from \cite[Proposition 2.9]{cmnr-range} and Corollary \ref{about-sym-check}. For (2), by \cite[proof of Proposition 3.3, page 11]{cmnr-range}, we have 
\begin{align*}
 h^{0}(\coinv{\J}{}{E}^{\vee}) &\leq 
 h^{0}(\Sym^{\J}(E^{\oplus \bul})^{\vee}) = -1 \text{ and}
 \\
 h^{1}(\coinv{\J}{}{E}^{\vee}) &\leq \max\!
 \bigg(
 \{h^{1}(\Sym^{\J}(E^{\oplus \bul})^{\vee})\}
 \cup
 \{h^{0}(U(\J-\I,\J)) : 0 \neq \I \leq \J\}
 \bigg) 
 \\ &\leq -1 \, ,
\end{align*}
using Proposition \ref{about-inv-check} and Proposition \ref{about-tens-check}. The rest of (2) follows from (1), \cite[Theorem 2.3]{cmnr-range}, Theorem \ref{structure-complex}, and Theorem \ref{pass-to-cmnr}.
\end{proof}

\begin{proof}[Proof of \textbf{\emph{Theorem \ref{main-coinv}}}]
 By Theorem \ref{about-coinv-check} and Theorem \ref{pass-to-cmnr}, the triple 
\begin{align*}
 \mathlarger{
 \left(
 \coinv{\J}{}{E}^{\vee},\, 2|\J|-2,\,|\J|
 \right)
 }
\end{align*}
satisfies Hypothesis \ref{FI-hyp}. We get the desired stable ranges by Theorem \ref{main-ranges}.
\end{proof}


\subsection{Ordered configuration spaces}
In this section we shall prove Theorem \ref{main-config}. We first state a result that transforms stable ranges for a manifold $\manif$ with punctures into those for $\manif$ itself. We closely follow Miller--Wilson's treatment \cite{miller-wilson-PConf-hyper}.

\begin{thm}\label{reduce-PConf-to-punctured}
 Let $\manif$ be a connected manifold of dimension $\geq 2$, $\ab$ be an abelian group, and $(\weak_{k} : k \geq -1)$ be a weakly increasing sequence of integers with $\weak_{-1} = -1$ such that the $\FI$-module
\begin{align*}
 \co^{k}(\confix{}{\manif-Q};\ab) \text{ is generated in degrees $\leq \weak_{k}$}
\end{align*}
for every nonempty finite subset $Q \subseteq \manif$ and $k \geq 0$. Then for every $k \geq 1$, the $\FI$-module $\co^{k}(\confix{}{\manif};\ab)$ is identically zero if $\weak_{k} \leq 0$, and otherwise satisfies the following:
\begin{birki}
 \item $\weak(\co^{k}(\confix{}{\manif}; \ab)) \leq \weak_{k}$.
 \vspace{0.1cm}
 \item $\tgen(\co^{k}(\confix{}{\manif}; \ab)) \leq 2\weak_{k}$.
 \item $h^{j}(\co^{k}(\confix{}{\manif}; \ab)) \leq 
\begin{cases}
 2\weak_{k}-2 & \text{if $j=0$,} \\
 \max\{-1,\,2\weak_{k}-4\} & \text{if $j=1$,} \\ 
 \max\{-1,\,2\weak_{k}-2j+2\} & \text{if $j \geq 2$.}
\end{cases}
$
\vspace{0.1cm}
 \item $\local(\co^{k}(\confix{}{\manif}; \ab)) \leq 2\weak_{k}-2$.
\vspace{0.1cm}
 \item $\reg(\co^{k}(\confix{}{\manif}; \ab)) \leq 2\weak_{k}$.
\end{birki}
\end{thm}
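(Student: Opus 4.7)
The strategy is to realize $\co^{k}(\confix{}{\manif};\ab)$ as the zeroth homology of an $\FI$-chain complex of $\cofi{0}$-acyclic modules with controlled generation degrees, and then to invoke Theorem \ref{hyper-to-usual}.

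First, I would invoke the puncture resolution of Miller--Wilson \cite[Section 3]{miller-wilson-PConf-hyper}: because $\manif$ has dimension $\geq 2$, every punctured manifold $\manif - Q$ (with $Q$ a nonempty finite subset) is non-compact, so as noted in the remark preceding this theorem the $\FI$-module $\co^{j}(\confix{}{\manif - Q};\ab)$ extends to an $\sh$-module and is in particular $\cofi{0}$-acyclic. One assembles these building blocks into an $\FI$-chain complex $\chain_{\star}$, supported in non-negative homological degrees, with $\co_{0}(\chain_{\star}) \cong \co^{k}(\confix{}{\manif};\ab)$ and $\co_{j}(\chain_{\star}) = 0$ for $j \geq 1$, and in which each $\chain_{m}$ is a $\cofi{0}$-acyclic $\FI$-module built from the shifts of $\co^{k-m}(\confix{}{\manif - Q};\ab)$ as $Q$ ranges over nonempty finite subsets of $\manif$.

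Second, the hypothesis together with the weak monotonicity of $(\weak_{j})$ gives $\tgen(\chain_{m}) \leq \weak_{k-m} \leq \weak_{k}$ for every $m \geq 0$, and hence $\tbol_{m}(\chain_{\star}) = \deg(\cofi{0}(\chain_{m})) \leq \weak_{k}$ for every $m \geq 0$. When $\weak_{k} \leq 0$, every $\FI$-module $\co^{k-m}(\confix{}{\manif - Q};\ab)$ appearing in $\chain_{m}$ (for which $k - m \geq 1$) is generated in degrees $\leq 0$ but vanishes at the empty set (because $\confix{0}{\manif - Q}$ is a single point and the cohomological degree is positive), hence is zero; so $\chain_{\star} = 0$ and $\co^{k}(\confix{}{\manif};\ab) = 0$.

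Third, when $\weak_{k} \geq 1$, I would apply Theorem \ref{hyper-to-usual} to $\chain_{\star}$ in homological degree $0$ with $\theta_{0} = \theta_{1} = \weak_{k}$, so that $N_{0} \leq \weak_{k}$. Conclusions (1), (2), and (4) of the present theorem then transcribe directly from parts (1), (2), and (4) of Theorem \ref{hyper-to-usual}; conclusion (3) combines part (3) of Theorem \ref{hyper-to-usual} with (1); and conclusion (5) follows since $\max\{2\weak(\co_{0}(\chain_{\star})),\, 2N_{0} - 2\} \leq 2\weak_{k}$. The main obstacle is the construction of the puncture resolution itself, in particular showing that the $\sh$-structure on cohomology of configuration spaces of non-compact manifolds assembles into an acyclic resolution of $\co^{k}(\confix{}{\manif};\ab)$; this technical input is the content of \cite[Section 3]{miller-wilson-PConf-hyper}, while the rest of the argument is routine bookkeeping with the $\FI$-hyperhomology machinery developed above.
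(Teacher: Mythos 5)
Your sketch has the right ingredients (the puncture resolution of Miller--Wilson plus Theorem~\ref{hyper-to-usual}), but the indexing of the chain complex is wrong, and fixing the indexing is not cosmetic --- it is the whole point of the argument. You place $\co^{k}(\confix{}{\manif};\ab)$ in homological degree $0$ of a non-negatively graded complex $\chain_{\star}$ and claim each term is $\cofi{0}$-acyclic with $\tgen(\chain_{m}) \leq \weak_{k}$, whence $\tbol_{0}(\chain_{\star}) \leq \weak_{k}$. But for any chain complex supported on non-negative degrees, the hyperhomology spectral sequence $E^{2}_{p,q} = \cofi{p}(\co_{q}(\chain_{\star})) \Rightarrow \cofibol{p+q}(\chain_{\star})$ forces $\cofibol{0}(\chain_{\star}) \cong \cofi{0}(\co_{0}(\chain_{\star}))$, so $\tbol_{0}(\chain_{\star}) = \tgen(\co_{0}(\chain_{\star}))$ always. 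Thus your bounds would prove $\tgen(\co^{k}(\confix{}{\manif};\ab)) \leq \weak_{k}$, strictly stronger than the stated $2\weak_{k}$, and this is \emph{false}: for $\manif = \mathbb{S}^{d}$ with $d$ even and $k = d-1$, Example~\ref{ex-sphere-deep} computes $\tgen = 4$ while $\weak_{d-1} = 2$. So a complex with the features you describe cannot exist.

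Your additional assertion that $\co_{j}(\chain_{\star}) = 0$ for $j \geq 1$ makes this worse: it would make $\chain_{\star}$ a $\cofi{0}$-acyclic resolution of $\co^{k}(\confix{}{\manif};\ab)$, so that $\tbol_{m} = t_{m}(\co^{k}(\confix{}{\manif};\ab))$ for all $m$, and in particular $\tbol_{1} = \trel(\co^{k}(\confix{}{\manif};\ab))$, which for $\mathbb{S}^{d}$ equals $5$, not $\leq 2$. In the paper's proof the Miller--Wilson complex $C_{*}^{\leq k+1}$ is \emph{not} a resolution and has $\co^{k}(\confix{}{\manif};\ab) = \co_{1}(C_{*}^{\leq k+1})$ sitting in homological degree $1$, and Theorem~\ref{hyper-to-usual} is applied in degree $1$ with $\theta_{1} = \weak_{k}$ and $\theta_{2} = \weak_{k-1}$. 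The factor of $2$ in the generation bound is genuinely lost when passing from $\tbol_{1}$ to $\tgen(\co_{1})$ through the hyperhomology machinery, and the examples show it must be lost. Your bookkeeping at homological degree $0$ short-circuits this, which is why it reaches a conclusion that is too strong to be true.
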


\begin{proof}
In \cite[pages 7,8]{miller-wilson-PConf-hyper} it is shown that letting $x_{0}, x_{1}, x_{2}, \dots$ be distinct points in $\manif$ and writing 
\begin{align*}
 \tbol_{q}^{N}(s) := \tgen\!\left(
 \co^{N-s-q}(
 \confix{}{\manif - \{x_{0}, \dots, x_{s}\}}; \ab)
 \right)
\end{align*}
for every $q,N,s$, there is a chain complex $C_{*}^{\leq k+1}$ of $\FI$-modules such that
\begin{align*}
 \co_{1}(C_{*}^{\leq k+1}) &= \co^{k}(\confix{}{\manif}; \ab)\, ,
 \\
 \tbol_{1}(C_{*}^{\leq k+1}) &\leq 
 \max\{\tbol_{1}^{k+1}(0), \tbol_{1}^{k+1}(1)\}  ,
 \\
 \tbol_{2}(C_{*}^{\leq k+1}) &\leq 
 \max\{\tbol_{2}^{k+1}(0), \tbol_{2}^{k+1}(1), \tbol_{2}^{k+1}(2)\} \, .
\end{align*}
Invoking the hypothesis we get $\tbol_{1}(C_{*}^{\leq k+1}) \leq \weak_{k}$ and $\tbol_{2}(C_{*}^{\leq k+1}) \leq \weak_{k-1}$. Now parts (1),(2) of Theorem \ref{hyper-to-usual} applied to $\co_{1}(C_{*}^{\leq k+1})$ yield (1) and (2) here.

If $\weak_{k} \leq 0$, the $\FI$-module $\co_{1}(C_{*}^{\leq k+1})$ being generated in degrees $\leq 0$ forces it to vanish because
\begin{align*}
 \left(\co_{1}(C_{*}^{\leq k+1})\right)_{0} = \co^{k}(\confix{0}{\manif}; \ab) = \co^{k}(pt; \ab) = 0 
\end{align*}
as we are assuming $k \geq 1$. Assuming $\weak_{k} \geq 1$, parts (3), (4), (5) of Theorem \ref{hyper-to-usual} applied to $\co_{1}(C_{*}^{\leq k+1})$ yield (3), (4), (5) here.
\end{proof}

Before dealing with configuration spaces of punctured (and more generally non-compact) manifolds, we analyze the easier to understand $\ters{\FI}$-space $\manif^{\bul}$ defined as $S \mapsto \manif^{S}$ with the product topology.
 
\begin{prop} \label{high-conn-slope}
 Let $X$ be $u$-connected space with the homotopy type of a CW-complex with $u \geq 0$, and let $k \geq 0$ be a cohomological degree. Then 
\begin{align*}
 \tgen(\co^{k}(X^{\bul};\ab)) \leq \floor*{\frac{k}{u+1}}
\end{align*}
for every abelian group $\ab$.
\end{prop}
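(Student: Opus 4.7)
The plan is to replace $X$ by a homotopy-equivalent CW-complex with a single $0$-cell $e_*$ and no cells in dimensions $1, \dots, u$, which is standard given the $u$-connectivity and the CW hypothesis. Writing $D_* := C_*(X;\zz)$ for the cellular chain complex, this choice forces $D_0 = \zz \cdot e_*$, $D_1 = \cdots = D_u = 0$, and in particular $\partial(D_{u+1}) \subseteq D_u = 0$. Consequently the subcomplex $\wt{D}_* := D_{\geq u+1} \subseteq D_*$ splits off the summand $\zz \cdot e_*$ to yield a chain-complex decomposition $D_* = \zz \cdot e_* \oplus \wt{D}_*$, with $\wt{D}_*$ concentrated in degrees $\geq u+1$.

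The cellular K\"unneth theorem identifies $C_*(X^S;\zz) \cong D_*^{\otimes S}$ canonically as chain complexes, and distributing the tensor product over the direct sum yields an isomorphism
\[
C_*(X^S;\zz) \;\cong\; \bigoplus_{T' \subseteq S} \wt{D}_*^{\otimes T'}
\]
of chain complexes. Dualizing via $\Hom(-,\ab)$ (the index set is finite, so sums commute with $\Hom$) and passing to cohomology gives
\[
H^k(X^S;\ab) \;\cong\; \bigoplus_{T' \subseteq S} H^k\!\big( \Hom(\wt{D}_*^{\otimes T'},\,\ab) \big) \, .
\]
Since $\wt{D}_*^{\otimes T'}$ vanishes below degree $(u+1)|T'|$, only the summands with $|T'| \leq \floor*{k/(u+1)} =: n$ contribute.

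To match this decomposition with the $\FI$-module structure, one checks that for $T \subseteq S$ the cellular projection $\pi \colon X^S \to X^T$ induces on chains the projection from $\bigoplus_{T' \subseteq S} \wt{D}_*^{\otimes T'}$ onto the summands with $T' \subseteq T$: the local degree of $\pi$ on a product cell $\prod_s e_{\alpha_s}$ equals $1$ if $e_{\alpha_s} = e_*$ for every $s \notin T$, and vanishes otherwise. Dually, $\pi^{*} \colon H^k(X^T;\ab) \to H^k(X^S;\ab)$ is the inclusion of the summands indexed by $T' \subseteq T$. Writing an arbitrary class $v \in H^k(X^S;\ab)$ in its canonical decomposition $v = \sum_{|T'| \leq n} v_{T'}$, each piece $v_{T'}$ admits a lift to the $T'$-summand of $H^k(X^{T'};\ab)$ that pulls back along $X^S \to X^{T'}$ to $v_{T'}$. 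Thus $v$ lies in the image of
\[
\bigoplus_{\substack{T' \subseteq S \\ |T'| \leq n}} H^k(X^{T'};\ab) \rarr H^k(X^S;\ab) \, ,
\]
which is precisely the assertion $\tgen(\co^k(X^\bullet;\ab)) \leq n$.

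The main technical point is the chain-complex direct-sum decomposition $D_* = \zz \cdot e_* \oplus \wt{D}_*$: one crucially needs $D_1 = \cdots = D_u = 0$ in order for $\wt{D}_*$ to be closed under $\partial$, and this is exactly where the $u$-connectivity hypothesis enters. The rest is a routine consequence of cellular K\"unneth together with the explicit cellular form of the pullback maps on $X^\bullet$.
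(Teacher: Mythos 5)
Your proof is correct, and it takes a genuinely more direct route than the paper's. The paper works at the cochain level: after the same CW reduction and (a version of) the K\"unneth identification $\cell{\star}(X^{\bul}) \simeq \cell{\star}(X)^{\otimes\bul}$, it shows that each term $V^{k}$ of the resulting $\FI$-cochain complex is $\cofi{0}$-acyclic (being an $\sh$-module in the sense of \cite[Definition 4.2.5, Theorem 4.1.5]{cef}) and generated in degrees $\leq \lfloor k/(u+1)\rfloor$; it then invokes \cite[Propositions 3.1, 3.3]{cmnr-range} to propagate this to a bound on $\weak(\co^{k}(X^{\bul};\ab))$, and finally uses the $\sh$-extension of $X^{\bul}$ to upgrade $\weak$ to $\tgen$. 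Your approach short-circuits all of this: the chain-level splitting $D_* = \zz\, e_* \oplus \wt{D}_*$ (available exactly because of the $u$-connectivity) gives an honest direct-sum decomposition $H^k(X^S;\ab) \cong \bigoplus_{T'\subseteq S}H^k(\Hom(\wt{D}_*^{\otimes T'},\ab))$ of the cohomology itself, compatible with the $\FI$-transition maps (correctly identified via the degree computation for the cellular projection $X^S\to X^T$), from which the generation statement for $\tgen$ falls out directly by explicit lifting of the summands. What the paper's route buys is that the weak-degree / local-degree machinery is already being used elsewhere and the $\sh$-extension input is reused in Theorem \ref{non-compact-generation}; what yours buys is self-containedness, avoiding the $\cofi{0}$-acyclicity criterion and the $\weak$-to-$\tgen$ upgrade. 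One small caveat worth flagging for a write-up: the identification $\cell{\star}(X^S)\cong D_*^{\otimes S}$ over an unordered set $S$ carries Koszul signs in the $\FB$-action, but this does not affect your argument since inclusions $T\subseteq S$ suffice to exhibit generators.
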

\begin{proof}
 Note that if $X$ is homotopy equivalent to $Y$, then the $\ters{\FI}$-space $X^{\bul}$ is homotopy equivalent to $Y^{\bul}$,\footnote{Unlike $\confix{}{X}$ and $\confix{}{Y}$.} so they have the same cohomology. Thus by \cite[page 58, second theorem]{fomenko-fuchs} we may assume that $X$ \textbf{is} a CW-complex with only one 0-cell and without any $j$-cells for $1\leq j \leq u$. Since every power of $X$ has a CW structure induced from that of $X$ with the transition maps being cellular, $X^{\bul}$ becomes a functor from $\FI^{\opp}$ to the category of CW-complexes. It may therefore be post-composed with the cellular chains functor $\cell{\star}$, yielding a chain complex $\cell{\star}(X^{\bul})$ of $\ters{\FI}$-modules, which evaluates to a chain complex of free abelian groups at any finite set. Arguing as in the fourth paragraph of the proof of \cite[Lemma 4.1]{cefn}, by the Eilenberg--Zilber theorem there is a quasi-isomorphism 
\begin{align*}
 \cell{\star}(X^{\bul}) \simeq \left(\cell{\star}(X)\right)^{\otimes \bul}
\end{align*}
 of chain complexes of $\ters{\FI}$-modules where the $(-)^{\otimes \bul}$ functor is as described in \cite[Remark 4.2.6]{cef}. Therefore the $\FI$-module $\co^{k}(X^{\bul};\ab)$ is isomorphic to the $k$-th cohomology group of the cochain complex
\begin{align*}
 \Hom_{\zz}\!\left(
 \left(\cell{\star}(X)\right)^{\otimes \bul},\, \ab
 \right) 
\end{align*}
of $\FI$-modules.
 Here, the $k$-th cochain $\FI$-module evaluated at a finite set of size $n$ is an abelian group of the form	
\begin{align*}
 \Hom_{\zz}\!\left(
 \left(\cell{\star}(X)\right)^{\otimes \bul},\, \ab
 \right)^{(k)}_{n} \cong \bigoplus _{k_{1} + \cdots + k_{n} = k} 
 \Hom_{\zz}\! \left(
 \cell{k_{1}}(X) \otimes \cdots \otimes \cell{k_{n}}(X),\, \ab
 \right) \, .
\end{align*}
Let us momentarily fix one of the summands above and write $J := \{1 \leq j \leq n : k_{j} = 0 \}$ for the set of zero indices. Because $\cell{j}(X) = 0$ for $1 \leq j \leq u$ we have 
\begin{align*}
 k = \sum_{j \in \{1,\dots,n\} - J} \!\!k_{j} \,\,\,\geq \sum_{j \in \{1,\dots,n\} - J} \!\!(u+1) = (n-|J|)(u+1) \, .
\end{align*}
Therefore if $n > \floor*{\frac{k}{u+1}}$, then $n(u+1) > k$ and $J$ has to be nonempty, that is, a zero index has to appear. As this is so for each summand and $\cell{0}(X) = \zz$, every summand lies in the image of a transition map of the $\FI$-module
\begin{align*}
 V^{k} := \Hom_{\zz}\!\left(
 \left(\cell{\star}(X)\right)^{\otimes \bul},\, \ab
 \right)^{(k)}
\end{align*}
induced by some injection $\{1,\dots,n-1\} \emb \{1,\dots,n\}$. This means that the $\FI$-module $V^{k}$ is generated in degrees $\leq \floor*{\frac{k}{u+1}}$. It is also $\cofi{0}$-acyclic by \cite[Definition 4.2.5 and Theorem 4.1.5]{cef}, hence is presented in finite degrees. Since \begin{align*}
 \co^{k}(X^{\bul};\ab) \cong \coker (V^{k-1} \rarr \ker (V^{k} \rarr V^{k+1})) \, ,
\end{align*}
we conclude by \cite[Propositions 3.1 and 3.3]{cmnr-range} that
$\weak(\co^{k}(X^{\bul};\ab)) \leq \floor*{\frac{k}{u+1}}$. 
Finally, note that $X^{\bul}$ extends to an $\sh$-space \cite[Remark 6.1.3]{cef} so 
\begin{align*}
 \tgen(\co^{k}(X^{\bul};\ab)) = \weak(\co^{k}(X^{\bul};\ab)) 
 \leq \floor*{\frac{k}{u+1}} \, .
\end{align*}
\end{proof}

We are now ready to prove the necessary input to Theorem \ref{reduce-PConf-to-punctured}.

\begin{thm} \label{non-compact-generation}
Let $\manif$ be a \textbf{non-compact} $u$-connected $d$-manifold with $u \geq 0$, $d \geq 2$, and let $k \geq 0$ be a cohomological degree. Write
\begin{align*}
 k = q_{k}(d-1) + r_{k},\, \quad 0 \leq r_{k} \leq d-2
\end{align*}
 via Euclidean division so that $q_{k} = \floor*{\frac{k}{d-1}}$, and set 
\begin{align*}
\weak_{k} :=
\begin{dcases*}
 \floor*{\frac{k}{u+1}} & \text{if $u+1 < d/2$,} \\
  2q_{k} + 1  & \text{if $d/2 \leq u+1 \leq r_{k}$,} \\
  2q_{k}  & \text{if $u+1 \geq \max\{d/2,r_{k}+1\}$.}
\end{dcases*}
\end{align*}
In case $d=2$, $k \geq 1$ and $\manif \neq \mathbb{S}^{2}-C$ for some closed subset $C \subseteq \mathbb{S}^{2}$, we reset $\weak_{k} := 2k-1$. Then for every abelian group $\ab$, the $\FI$-module $\co^{k}(\confix{}{\manif}; \ab))$ is generated in degrees $ \leq \weak_{k}$.
\end{thm}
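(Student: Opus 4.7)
The plan is to run the Totaro spectral sequence \cite{totaro-config} (as reformulated in the $\FI$-module language by Church--Ellenberg--Farb \cite[Section 4.2]{cef}) attached to the open inclusion of $\ters{\FI}$-spaces $\confix{}{\manif} \emb \manif^{\bul}$. Because $\manif$ is non-compact, the relevant $E_2$-page $E_2^{p,q}$ is a direct sum, indexed by partition-like ``collision data'' of total local weight $q/(d-1)$ clusters, of $\FI$-modules of the form $\induce_{\FB}^{\FI}\!\big(\co^{p}(\manif^{r};\ab) \otimes L\big)$, where $L$ is supported in the degree $s$ equal to the number of points participating in the clusters, and $r + s$ is the $\FB$-degree at which the induction is performed. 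Crucially, the local cohomology classes contribute only in cohomological degrees $q = j(d-1)$ for $j \in \nn$; and each cluster of size $i+1$ contributes $2,3,\dots$ to $s$ according to its size but only $d-1,2(d-1),\dots$ to $q$, so that along any collision type we have $s \leq 2j$.

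With this set up, I would bound the generation degree of each $E_2^{p,q}$-summand by Proposition \ref{high-conn-slope}: since $\manif$ is $u$-connected, $\tgen(\co^{p}(\manif^{\bullet};\ab)) \leq \floor*{p/(u+1)}$, and induction from $\FB$ only raises the generation degree by $s$. Using $p+q=k$, $q=j(d-1)$, and $s \leq 2j$, I would get that each $E_2^{p,q}$-summand is generated in degrees at most
\begin{align*}
  f(j) \,:=\, \Big\lfloor \tfrac{k - j(d-1)}{u+1} \Big\rfloor + 2j, \qquad 0 \leq j \leq q_{k}.
\end{align*}
Since generation degree passes to subquotients (\cite[Proposition 2.9]{cmnr-range}) and the $E_\infty$-page filters $\co^{k}(\confix{}{\manif};\ab)$, the theorem reduces to verifying that $\max_{0 \leq j \leq q_{k}} f(j) = \weak_{k}$. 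Case analysis on the sign of the discrete derivative $f(j+1)-f(j) = 2 - \big(\floor{(k-j(d-1))/(u+1)} - \floor{(k-(j+1)(d-1))/(u+1)}\big)$ splits into the three listed regimes: if $u+1 < d/2$ each extra cluster is costly and the maximum is at $j=0$; if $d/2 \leq u+1 \leq r_{k}$ the maximum is at $j = q_{k}-1$ (giving $2q_{k}+1$ after accounting for the remainder); and if $u+1 \geq \max\{d/2,\,r_{k}+1\}$ the maximum is at $j=q_{k}$ (giving $2q_{k}$).

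For the exceptional case $d=2$ (where $\manif$ is not a complement of a closed subset of $\mathbb{S}^{2}$), the Totaro analysis breaks down because of the nontriviality of $\pi_{1}(\manif)$-torsion in the local coefficient systems and the weaker connectivity of $\manif^{\bullet}$ is immaterial (the above would only give $\leq 2k$). Here I would invoke the refined ``puncture resolution'' argument of Miller--Wilson \cite[Corollary 3.36]{miller-wilson-PConf-hyper}, whose conclusion is precisely that an extra collision in such a surface can be traded at the cost of only one additional $\FI$-degree, yielding $\weak_{k} \leq 2k-1$. The main obstacle in executing the plan will be establishing the $E_2$-page structure with general abelian coefficients $\ab$ (the CEF statements are often written over $\qq$); this is overcome by running Totaro's argument integrally, using that the non-compactness of $\manif$ forces the Serre spectral sequence of $\confix{}{\manif} \emb \manif^{\bul}$ to have classes in a single fiber-degree per collision type, so there is no extension or torsion obstruction to the bidegree bookkeeping needed above.
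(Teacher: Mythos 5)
Your spectral-sequence setup for the case $u\geq 1$ matches the paper's (same Totaro/CEF $E_2$-page, same multiplicative bounds via Proposition~\ref{high-conn-slope}, same optimization of $f(j)$), so that part of the plan is sound. But there are two genuine gaps. First, you assert that ``generation degree passes to subquotients'' and cite~\cite[Proposition 2.9]{cmnr-range}; that is false for $\tgen$ (a submodule can have strictly larger $\tgen$ --- the paper's own $\mathbf{V}(g)\subseteq\mathbf{I}(g)$ is the canonical counterexample). What passes to subquotients is the stable degree $\weak$. The spectral sequence thus gives $\weak(\co^k(\confix{}{\manif};\ab))\leq\weak_k$, and to turn this into a $\tgen$ bound one still needs $\tgen=\weak$. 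The paper gets this, at the very start, from the non-compactness of $\manif$: $\co^k(\confix{}{\manif};\ab)$ extends to an $\sh$-module by \cite[Proposition 6.4.2]{cef} and~\cite[Section 3.1]{miller-wilson-secondary-stab}, hence is $\cofi{0}$-acyclic, hence $\tgen=\weak$ by Corollary~\ref{t0-vs-t1}. You invoke non-compactness only to informally justify the integral $E_2$-page, not for this essential step.

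Second, you run the Totaro argument uniformly for all $u\geq 0$ and flag only ``$d=2$, not a sphere complement'' as exceptional. But the integral/multiplicative description of $E_2^{\star,\star}$ you rely on requires $\manif$ orientable (the local system on the fiber over the fat diagonal has a sign twist otherwise). Orientability is automatic for $u\geq 1$ --- which is exactly why the paper restricts its Totaro argument to that case --- but for $u=0$ the manifold can be non-orientable in any dimension $d\geq 2$, not just $d=2$. The paper sidesteps this entirely for $u=0$ by bounding $\tgen(\co_k(\confix{}{\manif};\zz))$ with the Miller--Wilson homological results (\cite[Corollary 2.6]{miller-wilson-PConf-hyper},~\cite[Corollary 3.36]{miller-wilson-secondary-stab}) and transferring to cohomology via the universal coefficient theorem and~\cite[Lemma 2.5]{miller-wilson-PConf-hyper}. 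Your proposal needs to do something similar (or justify a twisted Totaro $E_2$-page integrally) for $u=0$. Also a small slip: when $d/2\leq u+1\leq r_k$, the maximum of $f$ is attained at $j=q_k$, not $j=q_k-1$; $f$ is strictly increasing in that regime and $\lfloor f(q_k)\rfloor = 2q_k+1$ because $1\leq r_k/(u+1)<2$.
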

\begin{proof}
The $\FI$-module $\co^{k}(\confix{}{\manif}; \ab)$ can be extended to an $\sh$-module by \cite[Proposition 6.4.2]{cef} and \cite[Section 3.1]{miller-wilson-secondary-stab}. Thus by \cite[Theorem 4.1.5]{cef}, there exists an $\FB$-module $W$ such that
\begin{align*}
 \co^{k}(\confix{}{\manif}; \ab) \cong \induce(W)\, ,
\end{align*}
which is $\cofi{0}$-acyclic by \cite[Lemma 2.3]{ce-homology}. Therefore 
\begin{align*}
 \tgen(\co^{k}(\confix{}{\manif}; \ab)) = \weak(\co^{k}(\confix{}{\manif}; \ab))
\end{align*}
by Corollary \ref{t0-vs-t1}. First we assume $u=0$ so that 
\begin{align*}
\weak_{k} = 
\begin{dcases*}
  k & \text{if $d \geq 3$,} \\
  2k-1  & \text{if $k \geq 1$, $d=2$ and $\manif \neq \mathbb{S}^{2} - C$,} \\
  2k & \text{otherwise.}
\end{dcases*}
\end{align*}
Setting $\HH_{j} := \co_{j}(\confix{}{\manif};\zz)$ for the $j$-th \textbf{homology} group (which is also an $\sh^{\opp} = \sh$-module as the $\sh$-action on $\confix{}{\manif}$ can be realized on the space level up to homotopy), by the universal coefficient theorem there is a short exact sequence 
\begin{align*}
 0 \rarr \Ext^{1}_{\zz}(\HH_{k-1}, \ab) 
 \rarr \co^{k}(\confix{}{\manif}; \ab)
 \rarr \Hom_{\zz}(\HH_{k}, \ab) \rarr 0
\end{align*}
of $\sh$-modules. Here $\tgen(\HH_{k}) \leq \weak_{k}$ by \cite[Corollary 2.6]{miller-wilson-PConf-hyper} and \cite[Corollary 3.36]{miller-wilson-secondary-stab}, hence the desired conclusion follows by \cite[Lemma 2.5]{miller-wilson-PConf-hyper}.

Next, we assume $u \geq 1$. Then $\manif$ is orientable and we may refer to \cite[Theorem 1]{totaro-config} for analyzing the Leray spectral sequence
\begin{align*}
 E_{2}^{p,q}(\ab) \imp \co^{p+q}(\confix{}{\manif};\ab)
\end{align*}
 of $\FI$-modules associated to the inclusion $\confix{}{\manif} \emb \manif^{\bul}$ of $\ters{\FI}$-spaces. As alluded to in \cite[proof of Theorem 6.2.1]{cef}, the second page $E_{2}^{\star,\star}(\zz)$ is generated as a bigraded $\FI$-algebra by  
\begin{align*}
 E_{2}^{\star,0}(\zz) \cong \co^{\star}(\manif^{\bul};\zz) 
 \quad \text{and} \quad
 E_{2}^{0,d-1}(\zz) \cong \co^{d-1}(\confix{}{\rr^{d}};\zz) \, .
\end{align*}
In a similar vein, the second page $E_{2}^{\star,\star}(\ab)$ is generated as a bigraded module over $E_{2}^{\star,\star}(\zz)$ by
\begin{align*}
 E_{2}^{\star,0}(\ab) \cong \co^{\star}(\manif^{\bul};\ab) 
 \quad \text{and} \quad
 E_{2}^{0,d-1}(\ab) \cong \co^{d-1}(\confix{}{\rr^{d}};\ab) \, .
\end{align*}
Thus if $d-1$ does not divide $b$ then $E_{2}^{a,b}(\ab) = 0$, and for every $a,s \geq 0$ the $\FI$-module $E_{2}^{a,s(d-1)}(\ab)$ receives a surjection from 
\begin{align*}
 &\bigoplus_{a_{1} + \cdots + a_{r} = a} 
 \left(
 E_{2}^{a_{1},0}(\zz) 
 \otimes \cdots \otimes E_{2}^{a_{r-1},0}(\zz)
 \otimes E_{2}^{a_{r},0}(\ab)
 \right)
 \otimes
 (E_{2}^{0,d-1}(\zz))^{\otimes s} 
\\
 \oplus
 &\bigoplus_{a_{1} + \cdots + a_{r} = a} 
 \left(
 E_{2}^{a_{1},0}(\zz) 
 \otimes \cdots \otimes
 E_{2}^{a_{r},0}(\zz)
 \right)
 \otimes
 (E_{2}^{0,d-1}(\zz))^{\otimes s-1} \otimes E_{2}^{0,d-1}(\ab) \, .
\end{align*}
Here $\tgen(E_{2}^{0,d-1}(\ab)) \leq 2$ and $\tgen(E_{2}^{a,0}(\ab)) \leq \floor*{\frac{a}{u+1}}$ by Proposition \ref{high-conn-slope}, noting that $\manif$ has the homotopy type of a CW-complex \cite[Corollary 5.2.4]{fri-pic-cellular}. Since \cite[Proposition 2.3.6]{cef} holds for $\FI$-modules presented in finite degrees as well, we have 
\begin{align*}
\tgen(E_{2}^{a,s(d-1)}(\ab)) 
 &\leq 
 \max\left\{
 \sum_{j} \floor*{\frac{a_{j}}{u+1}}
 : \sum_{j} a_{j} = a
 \right\} \,+\, 2s
 \\ 
 &\leq \floor*{\frac{a}{u+1}} + 2s \, .
\end{align*}
Now by \cite[Proposition 2.9, part (4)]{cmnr-range} and \cite[Proposition 4.1, part (1)]{cmnr-range}, we get
\begin{align*}
 \weak(\co^{k}(\confix{}{\manif}; \ab))
 &\leq \max\{\tgen(E_{2}^{a,b}(\ab)) : a+b = k,\, a,b \geq 0\} 
 \\
 &\leq \max\left\{
 \floor*{\frac{a}{u+1}} + 2s : a+s(d-1)=k,\, a,s \geq 0
 \right\}
 \\
 &\leq \max\left\{
 \floor*{\frac{k-s(d-1)}{u+1}} + 2s : 0 \leq s \leq q_{k} 
 \right\} \, .
\end{align*}
 Recalling that $k,d,u$ are fixed, let us write 
\begin{align*}
 f(s) &:= \frac{k-s(d-1)}{u+1} + 2s 
 = \frac{s(2u-d+3) + k}{u+1} \, ,
\end{align*}
so that
\begin{align*}
 \weak(\co^{k}(\confix{}{\manif}; \ab)) \leq \max\{
 \floor*{f(s)} : 0 \leq s \leq q_{k} 
 \} 
 = \floor*{\max\{f(s) : 0 \leq s \leq q_{k} \}} \, .
\end{align*}
We see that $f$ is strictly increasing if $2u+3 > d$ which is equivalent to $u+1 \geq d/2$. It is non-increasing if $2u+3 \leq d$, which is equivalent to $u+1 < d/2$. Thus
\begin{align*}
 \max \{
 f(s) : 0 \leq s \leq q_{k}
 \}
 &= 
\begin{cases}
 f(0) & \text{if $u+1 < d/2$},
 \\
 f(q_{k}) & \text{if $u+1 \geq d/2$,}
\end{cases}
\\
&=
\begin{dcases*}
 \frac{k}{u+1} & \text{if $u+1 < d/2$},
 \vspace{0.02in}
 \\
 \frac{r_{k}}{u+1} + 2q_{k} & \text{if $u+1 \geq d/2$.}
\end{dcases*}
\end{align*}
Here note that in the case $u + 1 \geq d/2$, we have 
\begin{align*}
 0\leq \frac{r_{k}}{u+1} \leq \frac{d-2}{u+1} \leq \frac{2d-4}{d} = 2 - \frac{4}{d} < 2
\end{align*}
and so 
\begin{align*}
 \floor*{\frac{r_{k}}{u+1}} = 
\begin{cases}
 1 & \text{if $u+1 \leq r_{k}$,} \\
 0 & \text{if $u \geq r_{k}$.}
\end{cases}
\end{align*}
Thus $\floor*{\max \{f(s) : 0 \leq s \leq q_{k}\}} = \weak_{k}$ and we are done.
\end{proof}

\begin{proof}[Proof of \textbf{\emph{Theorem \ref{main-config}}}]
For every nonempty finite subset $Q \subseteq \manif$, the $d$-manifold $\manif - Q$ is non-compact, and by \cite[Lemma 3.3 and the paragraph before Remark 2.2]{ggg-homotopy-fiber} $\manif-Q$ is also $u$-connected (this is where we use the $u \leq d-2$ assumption). Therefore we can apply Theorem \ref{reduce-PConf-to-punctured} to $\manif$ with the prescribed sequence $(\weak_{k} : k \geq 0)$ in Theorem \ref{non-compact-generation}, which is weakly increasing and satisfies $\weak_{k} \geq 1$ for $k \geq d-1$, so that by Theorem \ref{pass-to-cmnr} the triple 
\begin{align*}
 \mathlarger{
 (\co^{k}(\confix{}{\manif};\ab),\,2\weak_{k}-2,\,\weak_{k})
 }
\end{align*}
satisfies Hypothesis \ref{FI-hyp}. We conclude by applying Theorem \ref{main-ranges}.
\end{proof}

\begin{rem}[$u$-acyclic instead of $u$-connected]
Our proof shows that the conclusion of Theorem \ref{main-config} holds more generally for the class 
\begin{align*}
 \mathcal{C}_{u} := \left\{
 \manif : \weak\!\left(
 \co^{k}\!\big(
 (\manif - Q)^{\bul};\ab
 \big)
 \right) \leq \floor*{\frac{k}{u+1}} \text{
\begin{tabular}{l}
 for every $k \geq 0$ and\\
 nonempty finite subset $Q \subseteq \manif$
\end{tabular}
}\!\!\right\}
\end{align*}
of manifolds, and that $u$-connected manifolds belong to $\mathcal{C}_{u}$. It is likely that one can show $u$-acyclic manifolds belong to $\mathcal{C}_{u}$, by generalizing Proposition \ref{high-conn-slope} to $u$-acyclic spaces via a multiple version of the K\"unneth formula. The vanishing $\wt{\co}_{j}(\manif;\ab) = 0$ for $j \leq u$ with $\ab$-coefficients might already suffice for $\manif \in \mathcal{C}_{u}$.
\end{rem}

\begin{ex} \label{ex-sphere-deep}
 Fix $d \geq 2$. For every nonempty finite subset $Q \subseteq \mathbb{S}^{d}$, the non-compact $d$-manifold $\mathbb{S}^{d}-Q$ is $(d-2)$-connected; consequently by Theorem \ref{non-compact-generation} the $\FI$-module
\begin{align*}
 \co^{d-1}(\confix{}{\mathbb{S}^{d}-Q};\ab) \text{ is generated in degrees $\leq 2$} \, .
\end{align*}
Therefore by Theorem \ref{reduce-PConf-to-punctured}, the $\FI$-module $V := \co^{d-1}(\confix{}{\mathbb{S}^{d}};\qq)$ satisfies 
\begin{align*}
 \tgen(V) \leq 4 \,,\,\,\,
 \trel(V) \leq 5  \,,\,\,\,
 \local(V) \leq 2  \,,\,\,\,
 \weak(V) \leq 2  \,.\,\,\,
\end{align*}
Now assume $d$ is even. Then the dimension sequence of $V$ is
\begin{align*}
 \dim_{\qq} V_{n} = 
\begin{dcases*}
 0 & \text{if $n \leq 2$,}
 \\
 \frac{n(n-3)}{2} & \text{if $n \geq 3$,}
\end{dcases*}
\end{align*}
where the $n \leq 2$ case follows from the homotopy equivalence $\mathbb{S}^{d} \simeq \confix{2}{\mathbb{S}^{d}}$ via inserting antipodes and the $n \geq 3$ case was mentioned in Example \ref{ex-sphere}. Thus by \cite[Proposition 2.14]{cmnr-range}, we actually have 
\begin{align*}
 \local(V) = \weak(V) = 2  \,.\,\,\,
\end{align*}
We also see that $V_{3} = 0$, so the nonzero $\FI$-module $V$ cannot be generated in degrees $\leq 3$, hence $\tgen(V) = 4$ and Corollary \ref{t0-vs-t1} yields $\trel(V) = 5$. Due to dimension reasons the additive structure decomposition in Definition \ref{stable-ranges-defn} can only be satisfied with the $\qq$-vector spaces 
\begin{align*}
 \ab_{r} = 
\begin{cases}
 \qq & \text{if $r=2$,}
 \\
 0 & \text{otherwise.}
\end{cases}
\end{align*}
in the range $n \geq 3$, which is sharp. Finally, by Specht stability we know that there are constants $a,b,c_{1},c_{2} \in \nn$ (in characteristic zero Specht modules are simple, hence their multiplicities are non-negative) such that in the range $n \geq 4$,
\begin{align*}
 [V_{n}] = a[\specht{\qq}{n}] + b[\specht{\qq}{n-1,1}] 
 + c_{1}[\specht{\qq}{n-2,2}] + c_{2}[\specht{\qq}{n-2,1,1}]
\end{align*}
in the Grothendieck group of finite dimensional $\qq\sym{n}$-modules. Taking  dimensions on both sides and using the hook length formula \cite[Theorem 20.1]{james-sym-book}, we deduce $a = b = c_{2} = 0$ and $c_{1} = 1$. In other words, we have 
\begin{align*}
 V_{n} \cong 
\begin{cases}
 0 & \text{if $n < 4$,}
 \\
 \specht{\qq}{n-2,2} & \text{if $n \geq 4$,}
\end{cases}
\end{align*}
as a $\qq\sym{n}$-module. Thus the range $n \geq 4$ for Specht stability is also sharp. Let us also exhibit the character polynomial that computes $V$ in the range $n \geq 3$: it is denoted $q_{(2)}$ in \cite[I.2]{garsia-goupil} and hence by \cite[Corollary I.1]{garsia-goupil} it is equal to 
\begin{align*}
 \darr\!\left(\frac{1}{2}(\cyc_{1}-1)^{2} + \frac{1}{2}(2\cyc_{2}-1)\right) \,&=\,\, 
 \darr\!\left(\frac{\cyc_{1}^{2}}{2} - \cyc_{1} + \cyc_{2} \right)
 \\
 &= \frac{\cyc_{1}(\cyc_{1}-1)}{2} - \cyc_{1} + \cyc_{2} 
 = \frac{\cyc_{1}(\cyc_{1}-3)}{2} + \cyc_{2} \\
 &= \binom{\cyc_{1}-3}{2} + \cyc_{2} + 2(\cyc_{1}-3)\, ,
\end{align*}
where $\darr$ is the \emph{umbral operator} \cite[I.4]{garsia-goupil}.
\end{ex}
\subsection{Congruence subgroups}
In this section we prove Theorem \ref{main-cong}. We first recast a result of Djament in our notation.

\begin{thm}[{\cite{djament-congruence-stab}}] \label{djament-2k-result}
 Let $I$ be a proper ideal in a ring $R$. Then for every homological degree $k \geq 0$, we have 
\begin{birki}
 \item $\weak(\co_{k}(\GL_{\bul}(R,I);\zz)) \leq 2k$.
 \vspace{0.03in}
 \item If $I \neq I^{2}$, then $\weak(\co_{k}(\GL_{\bul}(R,I);\zz)) = 2k$.
\end{birki}
\end{thm}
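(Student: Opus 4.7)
The plan is to reduce this theorem to a direct invocation of Djament's Th\'{e}or\`{e}me 2 in \cite{djament-congruence-stab}, since the heavy lifting has already been done. Djament's result is phrased in terms of the \emph{weak polynomial degree} of an $\FI$-module in the sense of \cite[D\'efinition 2.22]{djament-vespa-weakly}: for a proper ideal $I$ in a ring $R$, the $\FI$-module $\co_{k}(\GL_{\bul}(R,I);\zz)$ is weakly polynomial of degree $\leq 2k$, with this bound being sharp when $I \neq I^{2}$. Our Theorem \ref{djament-2k-result} is asserting the same bounds but phrased using the stable degree $\weak$, so the main task is simply to bridge the two languages.

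The bridge is provided by Proposition \ref{relate-to-faible}, which we have already established: for any $\FI$-module $V$ and $g \geq -1$, the inequality $\weak(V) \leq g$ holds if and only if $V$ is weakly polynomial of degree $\leq g$ in the sense of \cite{djament-vespa-weakly}. Taking $V := \co_{k}(\GL_{\bul}(R,I);\zz)$ and $g := 2k$, part (1) of the theorem follows immediately from the upper bound in Djament's Th\'{e}or\`{e}me 2.

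For part (2), assume $I \neq I^{2}$. The upper bound from part (1) gives $\weak(V) \leq 2k$. For the reverse inequality, I would argue by contrapositive: if $\weak(V) \leq 2k - 1$, then Proposition \ref{relate-to-faible} would force $V$ to be weakly polynomial of degree $\leq 2k-1$, contradicting the sharpness statement in Djament's theorem. Hence $\weak(V) = 2k$.

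Since Proposition \ref{relate-to-faible} is already in hand, there is no real obstacle here; the only subtlety is double-checking that Djament's sharpness assertion in \cite{djament-congruence-stab} is indeed a lower bound on the weak polynomial degree (and not merely a statement that the module is nonzero in some specific degree). Modulo that bookkeeping, the argument is a one-line translation.
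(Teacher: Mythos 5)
Your approach matches the paper's proof: both parts are obtained by applying Proposition \ref{relate-to-faible} to translate Djament's ``weakly polynomial of degree $\leq 2k$'' (and its sharpness when $I \neq I^{2}$) into the statement $\weak = 2k$. The only differences are bookkeeping ones: the paper invokes \cite[Corollaire 2.42]{djament-congruence-stab} with $e=1$ rather than Th\'eor\`eme 2, and it explicitly records the observation that the weak polynomial degree does not depend on the automorphism groups of the domain category, which is needed to transport Djament's result (phrased for a slightly different functor category) to $\FI$-modules before Proposition \ref{relate-to-faible} can be applied. The ``double-checking'' you flag at the end is resolved exactly as you hope: Corollaire 2.42 does assert that the module is \emph{not} weakly polynomial of degree $\leq 2k-1$ when $I \neq I^2$, which is precisely the lower bound your contrapositive needs.
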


 We note that the sharpness of part (2) in Theorem \ref{djament-2k-result} yields a generalization of \cite[Theorem D]{cmnr-range} in $\zz$-coefficients:

\begin{thm} \label{nonvan-spb}
 Let $I$ be a proper ideal in a ring $R$ such that $\sr(R) \leq s$ and $I \neq I^{2}$. Then writing $\pbc_{\bul}(R,I)$ for the $\FI$-simplicial complex of mod-$I$ split partial bases defined in \emph{\cite[Definition 7.5, Definition 7.8]{cmnr-range}}, for every $k \geq 1$ there exist integers $q_{k},n_{k}$ such that the following hold:
\begin{birki}
 \item $\max\!\left\{0,\,\,k - \floor*{\frac{s+1}{2}}\right\} \leq q_{k} \leq k-1$.
 \vspace{0.1cm}
 \item $2k \leq n_{k} \leq 2q_{k} + s + 1$.
 \vspace{0.1cm}
 \item  $\wt{\co}_{q_{k}}\!\left(
 \pbc_{n_{k}}(R,I);\zz
 \right) \neq 0$.
 \vspace{0.1cm}
 \item $\floor*{\frac{n_{k} - s - 2}{2}} < \min\!\left\{ a \geq 0:
 \wt{\co}_{a}\!\left(
 \pbc_{n_{k}}(R,I);\zz
 \right) \neq 0
 \right\} \leq q_{k}$.
\end{birki}
\end{thm}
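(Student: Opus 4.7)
The plan is to adapt the argument of \cite[Theorem D]{cmnr-range}, which is established for $R = \zz/p^{\ell}$, $I=(p)$, $\ab = \kk_p$ via the explicit dimension formula $\dim_{\kk_p}\!\co_k(\GL_n(\zz/p^\ell,p);\kk_p) = \binom{n^2+k-1}{k}$, and to replace that explicit computation with the general sharpness result of Djament (Theorem \ref{djament-2k-result}(2)), which is valid whenever $I \neq I^2$ and with $\zz$-coefficients. The philosophy is that the non-triviality of the top polynomial part of $V_k := \co_k(\GL_\bullet(R,I);\zz)$ must be witnessed by a non-vanishing entry of $\wt{\co}_\star(\pbc_\bullet(R,I);\zz)$ in a specific bidegree, coming from a spectral sequence argument.

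First I would combine \cite[Proposition 5.4]{cmnr-range}, which bounds $\tbol_m(\chain_\star(\GL_\bullet(R,I);\zz))$ linearly in $m$ and $s$, with Theorem \ref{hyper-to-usual} to obtain finite bounds on $\tgen(V_k), \trel(V_k), \local(V_k)$; together with Djament's equality $\weak(V_k) = 2k$, Theorem \ref{pass-to-cmnr} then places $(V_k, c_k, 2k)$ inside Hypothesis \ref{FI-hyp} for some $c_k$ controlled by $s$ and $k$. Next I would invoke the $\pbc$-based resolution from \cite[Section 7]{cmnr-range}, which yields a hyperhomology-type spectral sequence whose $E^1$-page has entries of the form $\wt{\co}_q(\pbc_n(R,I);\zz)$ tensored with homology of smaller congruence subgroups, and whose abutment recovers $V_k$. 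The Charney-type connectivity bound for $\pbc_n(R,I)$ in \cite[Theorem 7.9]{cmnr-range} gives a vanishing line of slope $1/2$, namely that $\wt{\co}_a(\pbc_n(R,I);\zz) = 0$ for $a \leq \lfloor (n-s-2)/2 \rfloor$. If every $\wt{\co}_a(\pbc_n(R,I);\zz)$ vanished throughout the region specified by (1)--(2), the spectral sequence would force $\weak(V_k) < 2k$, contradicting Djament. This produces at least one pair $(q_k, n_k)$ satisfying (3), and property (4) follows from the Charney vanishing line combined with the upper bound $q_k \leq k-1$.

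To certify the numerical constraints (1) and (2), I would track bidegrees through the spectral sequence: the upper bound $q_k \leq k-1$ comes from the fact that a contribution to total homological degree $k$ with $q \geq k$ would have to be killed by a differential mapping into positions incompatible with the connectivity bound; the bound $n_k \leq 2q_k + s + 1$ is dual, saying that outside of this region the Charney line wipes out the relevant entries; and the lower bound $n_k \geq 2k$ reflects that the polynomial degree $2k$ part of $V_k$ is only visible at entries of $\pbc$ with $n \geq 2k$. The main obstacle will be the last paragraph of bookkeeping: I need to rule out the possibility that the non-vanishing pair detected by Djament falls just outside the asserted intervals, which amounts to a careful induction on $k$ showing that the first non-vanishing of $\wt{\co}_{q}(\pbc_n(R,I);\zz)$ at polynomial degree $2k$ cannot be pushed below the Charney vanishing floor nor above the diagonal dictated by the sr-connectivity.
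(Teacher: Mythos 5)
Your core strategy is the right one and matches the paper's: combine Djament's sharpness $\weak(\co_{k}(\GL_{\bul}(R,I);\zz)) = 2k$ with the $\pbc$-based bound on $\FI$-hyperhomology, and conclude by pigeonhole that some $\wt{\co}_{q}(\pbc_{n};\zz)$ must be nonzero in the advertised range. But several of the details you propose to fill in are off in ways that matter. The spectral sequence you want is the equivariant homology spectral sequence $E^{2}_{p,q} = \co_{p}\!\big(\GL_{\bul}(R,I);\wt{\co}_{q}(\pbc_{\bul}(R,I);\zz)\big)$ abutting to $\wt{\co}_{p+q}^{\GL_{\bul}(R,I)}(\pbc_{\bul}(R,I);\zz)$; its abutment is not $V_{k}$, and the link to $V_{k}$ is indirect, via the identification $\tbol_{k}(\chain_{\star}(\GL_{\bul}(R,I);\zz)) = \deg\!\big(\wt{\co}_{k-1}^{\GL_{\bul}(R,I)}(\pbc_{\bul};\zz)\big)$ from \cite[Theorem C]{cmnr-range} together with $\weak(\co_{k}(\chain_{\star})) \leq \tbol_{k}(\chain_{\star})$ from \cite[Theorem 5.1]{cmnr-range}.

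The upper bound $q_{k}\leq k-1$ has nothing to do with differentials being killed by a connectivity line: it is just the first-quadrant constraint on bidegrees $p+q = k-1$ with $p\geq 0$, so that $\deg$ of the abutment in total degree $k-1$ is bounded by $\max\{\deg(\wt{\co}_{q}(\pbc_{\bul};\zz)) : 0\leq q\leq k-1\}$. Once you have the chain $2k = \weak(V_{k}) \leq \tbol_{k} = \deg(\wt{\co}_{k-1}^{\GL}(\pbc)) \leq \max_{0\leq q\leq k-1}\deg(\wt{\co}_{q}(\pbc))$ together with $\deg(\wt{\co}_{q}(\pbc_{\bul};\zz)) \leq 2q+s+1$, the pair $(q_{k},n_{k})$ is produced directly, with (1) and (2) falling out automatically (in particular $q_{k}\geq k-\lfloor(s+1)/2\rfloor$ follows from $2k \leq n_{k} \leq 2q_{k}+s+1$). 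There is no danger of the detected pair ``falling just outside'' the intervals, so the induction on $k$ you worry about in your last paragraph is a phantom. Likewise, your opening paragraph about placing $V_{k}$ in Hypothesis \ref{FI-hyp} is not used for this theorem. Finally, (4) is the Charney-type connectivity that $\pbc_{n_{k}}(R,I)$ is $\lfloor(n_{k}-s-2)/2\rfloor$-acyclic, which you did identify.
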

\begin{proof}
By Theorem \ref{djament-2k-result}, \cite[Theorem 5.1, part (1)]{cmnr-range}, \cite[Theorem C]{cmnr-range},  and the equivariant homology spectral sequence
\begin{align*}
 E^{2}_{p,q} = \co_{p}\!\left(
 \GL_{\bul}(R,I);\, \wt{\co}_{q}(\pbc_{\bul}(R,I);\zz)
 \right) \imp \wt{\co}_{p+q}^{\GL_{\bul}(R,I)}(\pbc_{\bul}(R,I);\zz) \, ,
\end{align*}
we obtain
\begin{align*}
 2k = \weak(\co_{k}(\GL_{\bul}(R,I);\zz)) &\leq 
 \tbol_{k}\!\left(
 \chain_{\star}(\GL_{\bul}(R,I);\zz)
 \right) = \deg\!\left(
 \wt{\co}_{k-1}^{\GL_{\bul}(R,I)}(\pbc_{\bul}(R,I);\zz)
 \right)
 \\
 &\leq \max\!\left\{\deg\!\left(
 \wt{\co}_{q}(\pbc_{\bul}(R,I);\zz)
 \right) : 0 \leq q \leq k-1
 \right\}
\end{align*}
where each $\deg\!\left(
 \wt{\co}_{q}(\pbc_{\bul}(R,I);\zz)
 \right) \leq 2q + s + 1$ as noted in \cite[Section 7.2, proof of Theorem C]{cmnr-range}. Therefore there exists $0 \leq q_{k} \leq k-1$ such that setting 
\begin{align*}
 n_{k} := \deg\!\left(
 \wt{\co}_{q_{k}}(\pbc_{\bul}(R,I);\zz)
 \right) \, ,
\end{align*}
we have $2k \leq n_{k} \leq 2q_{k} + s + 1$ and $\wt{\co}_{q_{k}}(\pbc_{n_{k}}(R,I);\zz) \neq 0$. Finally, note that $\pbc_{n_{k}}(R,I)$ is $\floor*{\frac{n_{k} - s - 2}{2}}$-acyclic as explained in \cite[proof of Proposition 5.4]{cmnr-range}.
\end{proof}

\begin{cor}
 Let $R,I$ satisfy the hypotheses of Theorem \ref{nonvan-spb} with $s=1$. Then for every $k \geq 1$ we have
\begin{align*}
 \min\!\left\{ a \geq 0:
 \wt{\co}_{a}\!\left(
 \pbc_{2k}(R,I);\zz
 \right) \neq 0
 \right\} = k-1  \, .
\end{align*}
\end{cor}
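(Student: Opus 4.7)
The plan is to apply Theorem \ref{nonvan-spb} with $s=1$ and extract the equality by squeezing the four inequalities in parts (1)--(4). No new machinery is needed; the corollary is essentially a numerical consequence of the sharpness of the bounds when $s=1$.

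First I would read off the constraint on $q_k$. Part (1) gives $\max\{0,\, k - \lfloor (s+1)/2\rfloor\} \leq q_k \leq k-1$, and with $s=1$ we have $\lfloor 1\rfloor = 1$, so for every $k \geq 1$ this forces $k-1 \leq q_k \leq k-1$, hence $q_k = k-1$. Plugging this into part (2) gives $2k \leq n_k \leq 2(k-1) + 1 + 1 = 2k$, so $n_k = 2k$.

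Next I would invoke parts (3) and (4) with these pinned values. Part (3) says
\[
\wt{\co}_{k-1}\!\left(\pbc_{2k}(R,I);\zz\right) = \wt{\co}_{q_k}\!\left(\pbc_{n_k}(R,I);\zz\right) \neq 0,
\]
which yields the upper bound $\min\{a\geq 0 : \wt{\co}_a(\pbc_{2k}(R,I);\zz)\neq 0\} \leq k-1$. Part (4), using $\lfloor (n_k - s - 2)/2\rfloor = \lfloor (2k-3)/2\rfloor = k-2$, yields the strict lower bound $k-2 < \min\{a \geq 0 : \wt{\co}_a(\pbc_{2k}(R,I);\zz)\neq 0\}$, i.e.\ $\min \geq k-1$. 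Combining the two inequalities gives equality, as claimed.

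There is no real obstacle here; the only thing to be a bit careful about is the edge case $k=1$, where $\lfloor (2k-3)/2\rfloor = -1$, but part (4) still gives $\min \geq 0$, which matches the trivial fact that $\wt{\co}_0(\pbc_2(R,I);\zz) \neq 0$ (consistent with $k-1 = 0$).
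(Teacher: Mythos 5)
Your proof is correct and follows essentially the same route as the paper's: pin $q_k = k-1$ from part (1), then $n_k = 2k$ from part (2), then read off the equality from the sandwich in part (4). The only cosmetic difference is that you invoke part (3) separately for the upper bound, whereas the paper's argument uses the upper bound $\min \leq q_k$ already built into part (4); and your caveat about $k=1$ is a non-issue since $\lfloor(2k-3)/2\rfloor = k-2$ holds there too ($\lfloor -1/2\rfloor = -1 = k-2$).
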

\begin{proof}
 Part (1) of Theorem \ref{nonvan-spb} forces $q_{k} = k-1$, and part (2) of Theorem \ref{nonvan-spb} forces $n_{k} = 2k$. Noting that here $\floor*{\frac{n_{k} - s - 2}{2}} = k-2$, the equality follows from part (4) of Theorem \ref{nonvan-spb}.
\end{proof}

\begin{cor}
 Let $R,I$ satisfy the hypotheses of Theorem \ref{nonvan-spb} with $s=2$. Then for every $k \geq 1$, either we have
\begin{align*}
 \min\!\left\{ a \geq 0:
 \wt{\co}_{a}\!\left(
 \pbc_{2k}(R,I);\zz
 \right) \neq 0
 \right\} = k-1 \, ,
\end{align*}
or we have
\begin{align*}
 \min\!\left\{ a \geq 0:
 \wt{\co}_{a}\!\left(
 \pbc_{2k+1}(R,I);\zz
 \right) \neq 0
 \right\} = k-1 \, .
\end{align*}
\end{cor}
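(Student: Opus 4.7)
The plan is to apply Theorem \ref{nonvan-spb} directly with $s=2$ and then run through a short case analysis on the value of $n_{k}$.

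First I would specialize the four conclusions of Theorem \ref{nonvan-spb} to $s=2$. Part (1) gives $\max\!\left\{0,\,k-\floor*{\tfrac{3}{2}}\right\} \leq q_{k} \leq k-1$, i.e.\ $\max\{0,k-1\} \leq q_{k} \leq k-1$, so for every $k \geq 1$ this pins down $q_{k} = k-1$. Part (2) then reads $2k \leq n_{k} \leq 2(k-1)+3 = 2k+1$, so $n_{k} \in \{2k,\,2k+1\}$. Part (4) now says
\begin{align*}
\floor*{\tfrac{n_{k}-4}{2}} \,<\, \min\!\left\{a \geq 0 : \wt{\co}_{a}(\pbc_{n_{k}}(R,I);\zz) \neq 0\right\} \,\leq\, k-1 \, .
\end{align*}

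The key observation is that in both possible values of $n_{k}$, the left hand side equals $k-2$. Indeed, for $n_{k}=2k$ we have $\floor*{\tfrac{2k-4}{2}} = k-2$, and for $n_{k}=2k+1$ we have $\floor*{\tfrac{2k-3}{2}} = k-2$ as well since $2k-3$ is odd. Hence the strict inequality becomes $k-2 < \min\{\cdots\} \leq k-1$, which forces
\begin{align*}
 \min\!\left\{a \geq 0 : \wt{\co}_{a}(\pbc_{n_{k}}(R,I);\zz) \neq 0\right\} \,=\, k-1 \, .
\end{align*}
Together with $n_{k} \in \{2k,\,2k+1\}$, this is precisely the stated dichotomy.

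I do not anticipate a genuine obstacle here; the argument is purely arithmetic, extracting consequences from Theorem \ref{nonvan-spb}. The only mild subtlety is checking that $\floor*{(2k+1-4)/2}$ equals $k-2$ rather than $k-1$, which is what makes the $n_{k}=2k+1$ case work out to the same conclusion; this is why both alternatives in the disjunction can indeed be reached.
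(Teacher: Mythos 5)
Your argument is correct and matches the paper's proof essentially verbatim: the paper likewise pins down $q_{k}=k-1$ from part (1), derives $n_{k}\in\{2k,2k+1\}$ from part (2), observes that $\floor*{(n_{k}-s-2)/2}=k-2$ in both cases, and concludes from part (4). The only difference is that you spell out the floor computation for the odd value $n_{k}=2k+1$, which the paper leaves implicit.
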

\begin{proof}
 Part (1) of Theorem \ref{nonvan-spb} forces $q_{k} = k-1$, and part (2) of Theorem \ref{nonvan-spb} forces $n_{k}  \in \{2k,\,2k+1\}$. Noting that here $\floor*{\frac{n_{k} - s - 2}{2}} = k-2$, the claim follows from part (4) of Theorem \ref{nonvan-spb}.
\end{proof}

\begin{proof}[Proof of \textbf{\emph{Theorem \ref{djament-2k-result}}}]
Noting that the degree of a weakly polynomial functor does not depend on the automorphism groups of the domain category in the setup of \cite{djament-congruence-stab}, by taking $e=1$ in \cite[Corollaire 2.42]{djament-congruence-stab}, $\co_{k}(\GL_{\bul}(R,I);\zz)$ is weakly polynomial of degree $\leq 2k$ in the sense of \cite[D\'efinition 2.22]{djament-vespa-weakly}. Now (1) follows from Proposition \ref{relate-to-faible}. If $I \neq I^{2}$, \cite[Corollaire 2.42]{djament-congruence-stab} also shows that $\co_{k}(\GL_{\bul}(R,I);\zz)$ is \textbf{not} weakly polynomial of degree $\leq 2k-1$, hence (2) follows.  
\end{proof}

We are now ready to obtain all the promised stable ranges for the homology groups of the congruence $\FI$-group $\GL_{\bul}(R,I)$.

\begin{thm} \label{congruence-nitpicks}
 Let $I$ be a proper ideal in a ring $R$ with $\sr(R) \leq s$. Then for every homological degree $k \geq 0$ and abelian group $\ab$, we have
\begin{birki}
 \item $\weak(\co_{k}(\GL_{\bul}(R,I);\ab)) \leq 2k$.
 \vspace{0.1cm}
 \item $\tgen(\co_{k}(\GL_{\bul}(R,I);\ab)) \leq 
\begin{cases}
  0 & \text{if $k=0$,}
  \\
  s+1 & \text{if $k = 1$,}
  \\
  2s+5 & \text{if $k=2$,}
  \\
  4k+2s-2 & \text{if $k \geq 3$.}
\end{cases}$
\vspace{0.1cm}
 \item $\trel(\co_{k}(\GL_{\bul}(R,I);\ab)) \leq 
\begin{cases}
  -1 & \text{if $k=0$,}
  \\
  s+3 & \text{if $k = 1$,}
  \\
  2s+6 & \text{if $k=2$,}  
  \\
  4k+2s-1 & \text{if $k \geq 3$.}
\end{cases}
$
\vspace{0.1cm}
\item $
  h^{j}(\co_{k}(\GL_{\bul}(R,I);\ab)) \leq 
\begin{cases}
 -1 & \text{if $k=0$,} \\
 2s+3 & \text{if $k=1$ and $j=0$,} \\
 4k+2s & \text{if $k \geq 2$ and $j=0$,} \\
 4k+2s-2 & \text{if $k \geq 1$ and $j=1$,} \\ 
 \max\{-1,\,4k-2j+2\} & \text{if $k \geq 1$ and $j \geq 2$.}
\end{cases}
$
\vspace{0.1cm}
\item $\reg(\co_{k}(\GL_{\bul}(R,I);\ab)) \leq 
\begin{cases}
  -2 & \text{if $k=0$,}
  \\
  2s+3 & \text{if $k = 1$,}
  \\
  4k+2s & \text{if $k \geq 2$.}
\end{cases}
$
\end{birki}
\end{thm}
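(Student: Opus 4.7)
The plan is to combine three ingredients: the $\FI$-hyperhomology bounds of Church--Miller--Nagpal--Reinhold \cite[Proposition 5.4]{cmnr-range} applied to the chain complex $\chain_{\star}(\GL_{\bul}(R,I);\ab)$, Djament's refinement of the stable degree (Theorem \ref{djament-2k-result}), and the conversion machinery of Theorem \ref{hyper-ranges}, Theorem \ref{hyper-to-usual}, Theorem \ref{structure-complex} and \cite[Corollary 4.15]{ramos-coh}. I would first assemble the $\tbol$-bounds for the chain complex, feed them into the strictly increasing case of Theorem \ref{hyper-ranges} to get the inductive-description coordinates, then substitute Djament's sharper $\weak \leq 2k$ estimate to trim the derived invariants, and finally optimize separately for the low homological degrees $k=0,1,2$.

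First I would extend Djament's theorem from $\zz$-coefficients to an arbitrary abelian group $\ab$. Applying the universal coefficient sequence
\begin{align*}
0 \rarr \co_{k}(\GL_{\bul}(R,I);\zz) \otimes_{\zz} \ab \rarr \co_{k}(\GL_{\bul}(R,I);\ab) \rarr \Tor_{1}^{\zz}(\co_{k-1}(\GL_{\bul}(R,I);\zz),\ab) \rarr 0
\end{align*}
of $\FI$-modules and using that the stable degree is preserved under tensoring with a fixed abelian group and under extensions \cite[Proposition 3.2, Proposition 3.3]{cmnr-range}, the bound $\weak(\co_{k}(\GL_{\bul}(R,I);\ab)) \leq 2k$ follows from the $\zz$-case. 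This establishes (1).

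Next, for $k \geq 3$ I would plug the strictly increasing sequence $\theta_{k} := 2k+s-1$ from \cite[Proposition 5.4]{cmnr-range} into the last case of Theorem \ref{hyper-ranges}: the inequality $2\theta_{k}+1 \geq \theta_{k+1}$ holds for all $k \geq 1$, so the $k \geq 1$ branch of that theorem yields $\tgen \leq 2\theta_{k} = 4k+2s-2$, $\trel \leq 2\theta_{k}+1 = 4k+2s-1$, and $\local \leq 2\theta_{k+1}-2 = 4k+2s$, giving (2) and (3) and the $h^{0}$ bound of (4). Combining the $\local$-bound with the refined $\weak \leq 2k$ of (1) via Theorem \ref{pass-to-cmnr}, the triple $(\co_{k}(\GL_{\bul}(R,I);\ab),\, 4k+2s,\, 2k)$ satisfies Hypothesis \ref{FI-hyp}; Theorem \ref{structure-complex} then gives $h^{j} \leq 2\cdot 2k - 2j + 2 = 4k-2j+2$ for $j \geq 2$, and the $h^{1}$ bound follows directly from Theorem \ref{hyper-to-usual}(3), completing (4). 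For (5), \cite[Corollary 4.15]{ramos-coh} gives $\reg \leq \max\{h^{j}(V)+j\} \leq 4k+2s$.

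Finally, I would handle the low homological degrees. For $k=0$ the $\FI$-module $\co_{0}(\GL_{\bul}(R,I);\ab) = \ab$ is constant, and every claimed bound is immediate. For $k=1,2$ I would rerun the Theorem \ref{hyper-ranges} argument with the small-$k$ values of $\tbol$, then substitute (1) into Theorem \ref{pass-to-cmnr} to propagate the $\weak \leq 2k$ improvement through Theorem \ref{structure-complex}; the sharper $\tgen$-bound for $k=1$ is already in \cite[Theorem D$^{\prime}$]{ce-homology} and survives the conversion, while the sharpness improvement at $k=2$ in the local degree and regularity is an arithmetic consequence of Theorem \ref{improved-regularity} replacing the older $\reg \leq \tgen + \trel - 1$ estimate used in \cite[Theorem 11]{gan-li-linear}. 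The main obstacle is the $k=1$ case: the hyperhomology framework alone is too lossy to reach $\tgen \leq s+1$, so one must carry the classical $\GL$-stability input through the conversion and verify it is not degraded by the Djament substitution. Verifying that the strictly increasing condition $\theta_{m+1} > \theta_{m}$ and the inequality $2\theta_{k}+1 \geq \theta_{k+1}$ required by Theorem \ref{hyper-ranges} are compatible with the small-$k$ refinements is the bookkeeping piece that deserves the most care.
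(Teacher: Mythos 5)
Your overall architecture matches the paper: Djament's refinement gives $\weak \leq 2k$ via the universal coefficient sequence, the hyperhomology bound $\tbol_{k} \leq 2k+s-1$ from \cite[Proposition 5.4]{cmnr-range} feeds the machinery, and one must optimize separately at low homological degrees. For $k \geq 3$ your appeal to Theorem~\ref{hyper-ranges} is essentially a repackaging of the paper's direct spectral sequence computation, so that portion is fine.

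The low-degree handling, however, contains a real misconception. You claim the hyperhomology framework ``alone is too lossy to reach $\tgen \leq s+1$'' at $k=1$ and that classical $\GL$-stability from \cite[Theorem D']{ce-homology} must be carried in separately. This is not so: the hyperhomology spectral sequence $E^{2}_{p,q}=\cofi{p}(\co_{q}(\chain_{\star}))\Rightarrow \cofibol{p+q}(\chain_{\star})$ bounds $t_{0}(\co_{k})$ by $\max\{\tbol_{k}\}\cup\{t_{p}(\co_{q}):p+q=k+1,\,q\geq 1\}$, and because $\co_{0}(\GL_{\bul}(R,I);\ab)=\ab$ is constant, hence $\cofi{0}$-acyclic, the $q=0$ terms vanish. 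At $k=1$ the remaining index set is empty and one reads off $\tgen(\co_{1}) \leq \tbol_{1} \leq s+1$ with no external stability input. The packaged Theorem~\ref{hyper-ranges} only gives $\tgen \leq 2\theta_{1} = 2s+2$ because it does not exploit the acyclicity of $\co_{0}$; you need the raw spectral sequence, not a different tool. Similarly, your description of the $k=2$ improvement is off target: $\local$ and $\reg$ at $k=2$ are not sharpened beyond the generic $4k+2s=2s+8$; the gain is in $\tgen \leq 2s+5$ and $\trel \leq 2s+6$, which come from feeding the sharpened $\reg(\co_{1}) \leq 2s+3$ (obtained from Theorem~\ref{improved-regularity} with $(a,b)=(s+1,s+3)$) back into the $t_{p}(\co_{q})$ terms of the same spectral sequence. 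Without correcting these two points, your rerun of Theorem~\ref{hyper-ranges} at small $k$ would stop at the weaker $2s+2$ and $2s+6$ rather than land on $s+1$ and $2s+5$.
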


\begin{proof}
For $k=0$, the $\FI$-module $\co_{0}(\GL_{\bul}(R,I);\ab)$ is constant at $\ab$, hence is $\cofi{0}$-acyclic and is generated in degrees $ \leq 0$, so the claims follow from Theorem \ref{characterize-H0-acyclic}. We shall now establish the bounds for a general $k \geq 1$, before improving them for the lower degrees $k=1,2$. Noting that $\sr(R) \leq s$ is equivalent to $R$ satisfying Bass's condition SR$_{s+1}$, \cite[Proposition 5.4]{cmnr-range} says that
\begin{align*}
 \tbol_{k}(\chain_{\star}(\GL_{\bul}(R,I); \ab)) \leq 2k+s-1 \, .
\end{align*}
Thus by Theorem \ref{hyper-to-usual} the $\FI$-module $\co_{k}(\GL_{\bul}(R,I);\ab)$ satisfies the following: 
\begin{itemize}
 \item$\weak(\co_{k}(\GL_{\bul}(R,I);\ab)) \leq 2k+s-1$. In fact this bound can be improved to
\begin{align*}
 \weak(\co_{k}(\GL_{\bul}(R,I);\ab)) \leq 2k
\end{align*}
as we now explain. In case $\ab = \zz$, this is part (1) of Theorem \ref{djament-2k-result}. In general, let us write $\HH_{k} := \co_{k}(\GL_{\bul}(R,I);\zz)$ so that we have established $\weak(\HH_{k}) \leq 2k$ for every $k \geq 0$, and by the universal coefficient theorem and its naturality we have a short exact sequence 
\begin{align*}
 0 \rarr \HH_{k} \otimes_{\zz} \ab \rarr
 \co_{k}(\GL_{\bul}(R,I);\ab) \rarr \Tor_{1}^{\zz}(\HH_{k-1},\,\ab)
 \rarr 0
\end{align*}
of $\FI$-modules. Consequently we have 
\begin{align*}
 \weak(\co_{k}(\GL_{\bul}(R,I);\ab)) = \max\!
 \left\{
 \weak(\HH_{k} \otimes_{\zz} \ab),\,
 \weak(\Tor_{1}^{\zz}(\HH_{k-1},\,\ab))
 \right\}
\end{align*}
by \cite[Proposition 2.9, part (5)]{cmnr-range}. Let us pick free abelian groups $\F,\F' \neq 0$ such that there is an exact sequence  $0 \rarr \F' \rarr \F \rarr \ab \rarr 0$. From here we get 
\begin{align*}
 \HH_{k} \otimes_{\zz} \ab &\cong \coker(\HH_{k} \otimes_{\zz} \F' \rarr \HH_{k} \otimes_{\zz} \F)\,, \text{ so}
 \\
 \weak(\HH_{k} \otimes_{\zz} \ab) &\leq \weak(\HH_{k} \otimes_{\zz} \F) = \weak(\HH_{k}) \leq 2k
\end{align*}
by \cite[Proposition 3.3, part (2)]{cmnr-range} and
\begin{align*}
 \Tor_{1}^{\zz}(\HH_{k-1},\,\ab) &\cong \ker(\HH_{k-1} \otimes_{\zz} \F' \rarr \HH_{k-1} \otimes_{\zz} \F)\,, \text{ so}
 \\
 \weak(\Tor_{1}^{\zz}(\HH_{k-1},\,\ab)) &\leq
 \weak(\HH_{k-1} \otimes_{\zz} \F') = \weak(\HH_{k-1}) \leq 
\begin{cases}
 2(k-1) & \text{if $k \geq 1$,}
 \\
 -1 & \text{if $k = 0$,}
\end{cases}
\end{align*}
by \cite[Proposition 3.3, part (1)]{cmnr-range}.
 \vspace{0.1cm}
 \item $\tgen(\co_{k}(\GL_{\bul}(R,I);\ab)) \leq 4k+2s-2$.
 \vspace{0.1cm}
 \item $
  h^{j}(\co_{k}(\GL_{\bul}(R,I);\ab)) \leq 
\begin{cases}
 4k+2s & \text{if $j=0$,} \\
 4k+2s-2 & \text{if $j=1$,} \\ 
 \max\{-1,\,4k-2j+2\} & \text{if $j \geq 2$.}
\end{cases}
$
\vspace{0.1cm}
 \item $\reg(\co_{k}(\GL_{\bul}(R,I);\ab)) \leq 4k+2s$.
\end{itemize}

Shortly writing $\chain_{\star} := \chain_{\star}(\GL_{\bul}(R,I); \ab)$,
as alluded to in \cite[proof of Lemma 5.3]{cmnr-range}, by the hyperhomology spectral sequence $E^{2}_{p,q} = \cofi{p}(\co_{q}(\chain_{\star})) \imp \cofibol{p+q}(\chain_{\star})$ we have
\begin{align*}
 \tgen(\co_{k}(\chain_{\star})) &\leq
 \max\big( 
 \{\tbol_{k}(\chain_{\star})\} \cup \{t_{p}(\co_{q}(\chain_{\star})) : p+q = k+1,\,0 < q < k\}
 \big) \, ,
\\
 &\leq \max\left(
\begin{array}{l}
  \{2k+s-1\} \,\,\cup \\ 
  \{p+4q+2s : p+q = k+1,\,0 < q < k\}
\end{array}
 \right)
 \\
 &\leq \max\left(
\begin{array}{l}
  \{2k+s-1\} \,\,\cup \\ 
  \{k+3q+2s+1 : 0 < q < k\}
\end{array}
 \right)
 \\ 
 &\leq 
\begin{cases}
 s+1 & \text{if $k=1$,}
 \\
 4k+2s-2 & \text{if $k \geq 2$,}
\end{cases}
\end{align*}
and
\begin{align*} 
 \trel(\co_{k}(\chain_{\star})) &\leq
 \max\big( 
 \{\tbol_{k+1}(\chain_{\star})\} \cup \{t_{p}(\co_{q}(\chain_{\star})) : p+q = k+2,\,0 < q < k\}
 \big) 
 \\
 &\leq \max\left(
\begin{array}{l}
  \{2(k+1)+s-1\} \,\,\cup \\ 
  \{p+4q+2s : p+q = k+2,\,0 < q<k\}
\end{array}
 \right)
 \\
 &\leq \max\left(
\begin{array}{l}
  \{2k+s+1\} \,\,\cup \\ 
  \{k+3q+2s+2 : 0 < q < k\}
\end{array}
 \right)
 \\
 &\leq 
\begin{cases}
 s+3 & \text{if $k=1$,}
 \\
 4k+2s-1 & \text{if $k \geq 2$.}
\end{cases}
\end{align*}

For $k=1$, by Theorem \ref{improved-regularity} and \cite[Proposition 3.1, part (2)]{cmnr-range}, we get
\begin{align*}
\reg(\co_{1}(\chain_{\star})) \leq 2s+3 \quad \text{and} \quad
 \local(\co_{1}(\chain_{\star})) \leq 2s+3 \, .
\end{align*}


For $k=2$, we have
\begin{align*}
 \tgen(\co_{2}(\chain_{\star})) &\leq
 \max\!\left\{
 \tbol_{2}(\chain_{\star}),\, 
 t_{2}(\co_{1}(\chain_{\star}))
 \right\} \leq 2s+5 \, ,
\\
 \trel(\co_{2}(\chain_{\star})) &\leq
 \max\!\left\{ 
 \tbol_{3}(\chain_{\star}),\, t_{3}(\co_{1}(\chain_{\star})) 
 \right\} \leq 2s+6 \, .
\end{align*}
\end{proof}

\begin{proof}[Proof of \textbf{\emph{Theorem \ref{main-cong}}}]
Combine Theorem \ref{congruence-nitpicks}, Corollary \ref{cor-inductive}, and Theorem \ref{main-ranges}, noting that by Theorem \ref{pass-to-cmnr}, the triple
\begin{align*}
\begin{cases}
  \left(\co_{0}(\GL_{\bul}(R,I);\ab),\, -1,\, 0 \right)
  & \text{if $k = 0$,}
  \\
  \left(\co_{1}(\GL_{\bul}(R,I);\ab),\, 2s+3,\, 2 \right)
  & \text{if $k = 1$,}
  \\
  \left(\co_{k}(\GL_{\bul}(R,I);\ab),\, 4k+2s,\, 2k \right)
  & \text{if $k \geq 2$,}
\end{cases}
\end{align*}
satisfies Hypothesis \ref{FI-hyp}.
\end{proof}

\bibliographystyle{hamsalpha}
\bibliography{stable-boy}

\providecommand{\bysame}{\leavevmode\hbox to3em{\hrulefill}\thinspace}
\providecommand{\MR}{\relax\ifhmode\unskip\space\fi MR }
\providecommand{\MRhref}[2]{%
  \href{http://www.ams.org/mathscinet-getitem?mr=#1}{#2}
}
\providecommand{\href}[2]{#2}
\begin{thebibliography}{CMNR18}

\bibitem[AR67]{adams-rieffel}
William~W. Adams and Marc~A. Rieffel, \emph{Adjoint functors and derived
  functors with an application to the cohomology of semigroups}, Journal of
  Algebra \textbf{7} (1967), 25--34.

\bibitem[Bah18]{bahran-linear}
Cihan Bahran, \emph{An improvement in the linear stable ranges for ordered
  configuration spaces}, 2018, \mbox{arXiv:1801.03302v1}.

\bibitem[{Bas}64]{bass-k-theory}
Hyman {Bass}, \emph{{\(K\)-theory and stable algebra}}, {Publications
  Math\'ematiques de l'I.H.\'E.S.} \textbf{22} (1964), 489--544.

\bibitem[Ber20]{bergeron-GL}
Fran\c{c}ois Bergeron, \emph{{{${(\GL_{k} \times \sym{n})}$}-Modules of
  Multivariate Diagonal Harmonics}}, 2020, \mbox{arXiv:2003.07402v1}.

\bibitem[Bor53]{borel-fibres}
Armand Borel, \emph{Sur la cohomologie des espaces fibr{\'e}s principaux et des
  espaces homog{\`e}nes de groupes de {Lie} compacts}, Annals of Mathematics.
  Second Series \textbf{57} (1953), 115--207 (French).

\bibitem[BP12]{bergeron-ratelle}
Fran\c{c}ois {Bergeron} and Louis-Fran\c{c}ois {Pr\'eville-Ratelle},
  \emph{{Higher trivariate diagonal harmonics via generalized Tamari posets}},
  {Journal of Combinatorics} \textbf{3} (2012), no.~3, 317--341.

\bibitem[BS20]{burklund-senger}
Robert Burklund and Andrew Senger, \emph{On the high-dimensional geography
  problem}, 2020, \mbox{arXiv:2007.05127v1}.

\bibitem[Cas16]{casto-fig}
Kevin Casto, \emph{{$\text{\textbf{\emph{FI}}}_{G}$-modules, orbit
  configuration spaces, and complex reflection groups}}, 2016,
  \mbox{arXiv:1608.06317v1}.

\bibitem[CE17]{ce-homology}
Thomas Church and Jordan~S. Ellenberg, \emph{{Homology of
  {\textbf{\emph{FI}}}-modules}}, Geometry \& Topology \textbf{21} (2017),
  no.~4, 2373--2418. \MR{3654111}

\bibitem[CEF15]{cef}
Thomas Church, Jordan~S. Ellenberg, and Benson Farb,
  \emph{{\textbf{\emph{FI}}-modules and stability for representations of
  symmetric groups}}, Duke Mathematical Journal \textbf{164} (2015), no.~9,
  1833--1910. \MR{3357185}

\bibitem[CEFN14]{cefn}
Thomas Church, Jordan~S. Ellenberg, Benson Farb, and Rohit Nagpal,
  \emph{{\textbf{\emph{FI}}-modules over {N}oetherian rings}}, Geometry \&
  Topology \textbf{18} (2014), no.~5, 2951--2984. \MR{3285226}

\bibitem[CF13]{church-farb-rep-stab}
Thomas {Church} and Benson {Farb}, \emph{{Representation theory and homological
  stability}}, {Advances in Mathematics} \textbf{245} (2013), 250--314.

\bibitem[{Che}55]{chevalley-coinv}
Claude {Chevalley}, \emph{{Invariants of finite groups generated by
  reflections}}, {American Journal of Mathematics} \textbf{77} (1955),
  778--782.

\bibitem[Chu12]{church-config}
Thomas Church, \emph{Homological stability for configuration spaces of
  manifolds}, Inventiones Mathematicae \textbf{188} (2012), no.~2, 465--504.
  \MR{2909770}

\bibitem[CMNR18]{cmnr-range}
Thomas Church, Jeremy Miller, Rohit Nagpal, and Jens Reinhold, \emph{Linear and
  quadratic ranges in representation stability}, Advances in Mathematics
  \textbf{333} (2018), 1--40. \MR{3818071}

\bibitem[CS16]{chen-specter}
Weiyan Chen and Joel Specter, \emph{Character polynomials for cohomology of
  configuration spaces and flag varieties}, 2016, \mbox{Personal
  communication}.

\bibitem[Dja17]{djament-congruence-stab}
Aur\'{e}lien Djament, \emph{On stable homology of congruence groups}, 2017,
  \mbox{arXiv:1707.07944v2}.

\bibitem[DV19]{djament-vespa-weakly}
Aur\'elien {Djament} and Christine {Vespa}, \emph{{Foncteurs faiblement
  polynomiaux}}, {IMRN. International Mathematics Research Notices}
  \textbf{2019} (2019), no.~2, 321--391 (French).

\bibitem[DW98]{dwyer-wilkerson-elementary}
William~G. Dwyer and Clarence~W. Wilkerson, \emph{The elementary geometric
  structure of compact {Lie} groups}, Bulletin of the London Mathematical
  Society \textbf{30} (1998), no.~4, 337--364.

\bibitem[Ele]{dual-tensor}
Georges Elencwajg, \emph{Duals and tensor products}, MathOverflow,
  https://mathoverflow.net/q/56375 (version: 2013-10-01).

\bibitem[FF16]{fomenko-fuchs}
Anatoly {Fomenko} and Dmitry {Fuchs}, \emph{{Homotopical topology. Translated
  from the Russian. 2nd edition}}, 2nd edition ed., vol. 273, Cham: Springer,
  2016.

\bibitem[FJW17]{fulman-rolland-wilson}
Jason {Fulman}, Rita {Jim\'enez Rolland}, and Jennifer C.~H. {Wilson},
  \emph{{Generating functions and statistics on spaces of maximal tori in
  classical Lie groups}}, {The New York Journal of Mathematics} \textbf{23}
  (2017), 165--191.

\bibitem[FP90]{fri-pic-cellular}
Rudolf Fritsch and Renzo~A. Piccinini, \emph{Cellular structures in topology},
  Cambridge Studies in Advanced Mathematics, vol.~19, Cambridge University
  Press, Cambridge, 1990. \MR{1074175}

\bibitem[FZ00]{fe-zi-sphere}
Eva~Maria Feichtner and G\"unter~M. Ziegler, \emph{The integral cohomology
  algebras of ordered configuration spaces of spheres}, Documenta Mathematica
  \textbf{5} (2000), 115--139. \MR{1752611}

\bibitem[{Gab}62]{gabriel-abelian}
Pierre {Gabriel}, \emph{{Des cat\'egories ab\'eliennes}}, {Bulletin de la
  Soci\'et\'e Math\'ematique de France} \textbf{90} (1962), 323--448 (French).

\bibitem[{Gan}16]{gan-shift-seq}
Wee~Liang {Gan}, \emph{{A long exact sequence for homology of
  \textbf{\emph{FI}}-modules}}, {The New York Journal of Mathematics}
  \textbf{22} (2016), 1487--1502.

\bibitem[GG09]{garsia-goupil}
Adriano~M. {Garsia} and Alain {Goupil}, \emph{{Character polynomials, their
  \(q\)-analogs and the Kronecker product.}}, {The Electronic Journal of
  Combinatorics} \textbf{16} (2009), no.~2, research paper r19, 40.

\bibitem[GGG17]{ggg-homotopy-fiber}
Marek {Golasi\'nski}, Daciberg~Lima {Gon\c{c}alves}, and John {Guaschi},
  \emph{{On the homotopy fibre of the inclusion map \(F_n(X)\hookrightarrow
  \prod _1^nX\) for some orbit spaces \(X\)}}, {Bolet\'{\i}n de la Sociedad
  Matem\'atica Mexicana. Third Series} \textbf{23} (2017), no.~1, 457--485.

\bibitem[GL17]{gan-li-central}
Wee~Liang {Gan} and Liping {Li}, \emph{{On central stability}}, {Bulletin of
  the London Mathematical Society} \textbf{49} (2017), no.~3, 449--462.

\bibitem[GL19]{gan-li-linear}
\bysame, \emph{{Linear stable range for homology of congruence subgroups via
  \textbf{\emph{FI}}-modules}}, {Selecta Mathematica. New Series} \textbf{25}
  (2019), no.~4, 11, Id/No 55.

\bibitem[Guy21]{guyot-ZX}
Luc Guyot, \emph{The stable rank of $\mathbb{Z}[x]$ is $3$}, 2021,
  \mbox{arXiv:2111.02965v1}.

\bibitem[{Hai}94]{haiman-conjectures}
Mark {Haiman}, \emph{{Conjectures on the quotient ring by diagonal
  invariants}}, {Journal of Algebraic Combinatorics} \textbf{3} (1994), no.~1,
  17--76.

\bibitem[{Hai}02]{haiman-vanishing}
\bysame, \emph{{Vanishing theorems and character formulas for the Hilbert
  scheme of points in the plane}}, {Inventiones Mathematicae} \textbf{149}
  (2002), no.~2, 371--407.

\bibitem[Har17]{harman-poly}
Nate Harman, \emph{{Virtual {S}pecht stability for \textbf{\emph{FI}}-modules
  in positive characteristic}}, Journal of Algebra \textbf{488} (2017), 29--41.
  \MR{3680911}

\bibitem[{Hem}11]{hemmer-pieri}
David~J. {Hemmer}, \emph{{Stable decompositions for some symmetric group
  characters arising in braid group cohomology.}}, {Journal of Combinatorial
  Theory. Series A} \textbf{118} (2011), no.~3, 1136--1139.

\bibitem[HR17]{hersh-reiner}
Patricia Hersh and Victor Reiner, \emph{Representation stability for cohomology
  of configuration spaces in {$\Bbb R^d$}}, International Mathematics Research
  Notices (2017), no.~5, 1433--1486, with an appendix written jointly with
  Steven Sam. \MR{3658170}

\bibitem[Jam78]{james-sym-book}
Gordon~D. James, \emph{The representation theory of the symmetric groups},
  Lecture Notes in Mathematics, vol. 682, Springer, Berlin, 1978. \MR{513828}

\bibitem[{Li}16]{li-FIG-invariants}
Liping {Li}, \emph{{Upper bounds of homological invariants of
  $\text{\textbf{\emph{FI}}}_{G}$-modules}}, {Archiv der Mathematik}
  \textbf{107} (2016), no.~3, 201--211.

\bibitem[LR18]{li-ramos}
Liping Li and Eric Ramos, \emph{{Depth and the local cohomology of
  $\text{\textbf{\emph{FI}}}_{G}$-modules}}, Advances in Mathematics
  \textbf{329} (2018), 704--741. \MR{3783426}

\bibitem[LT09]{lehrer-taylor-book}
Gustav~I. Lehrer and Donald~E. Taylor, \emph{Unitary reflection groups.},
  Australian Mathematical Society Lecture Series, vol.~20, Cambridge: Cambridge
  University Press, 2009.

\bibitem[Mat89]{matsumura}
Hideyuki Matsumura, \emph{Commutative ring theory}, second ed., Cambridge
  Studies in Advanced Mathematics, vol.~8, Cambridge University Press,
  Cambridge, 1989, Translated from Japanese by M. Reid. \MR{1011461}

\bibitem[MW19]{miller-wilson-secondary-stab}
Jeremy {Miller} and Jennifer C.~H. {Wilson}, \emph{{Higher-order representation
  stability and ordered configuration spaces of manifolds}}, {Geometry \&
  Topology} \textbf{23} (2019), no.~5, 2519--2591.

\bibitem[MW20]{miller-wilson-PConf-hyper}
\bysame, \emph{{\( \mathbf{FI} \)-hyperhomology and ordered configuration
  spaces}}, {Proceedings of the American Mathematical Society} \textbf{148}
  (2020), no.~3, 993--1002.

\bibitem[Nag15]{nagpal-thesis}
Rohit Nagpal, \emph{{{$\textbf{\emph{FI}}$}-modules and the cohomology of
  modular representations of symmetric groups}}, 2015,
  \mbox{arXiv:1505.04294v1}.

\bibitem[NPPS21]{narayanan-char-poly}
Sridhar~P. Narayanan, Digjoy Paul, Amritanshu Prasad, and Shraddha Srivastava,
  \emph{Character polynomials and the restriction problem}, Algebraic
  Combinatorics \textbf{4} (2021), no.~4, 703--722.

\bibitem[NS02]{neusel-smith}
Mara~D. {Neusel} and Larry {Smith}, \emph{{Invariant theory of finite groups}},
  vol.~94, Providence, RI: American Mathematical Society (AMS), 2002.

\bibitem[NSS18]{nss-regularity}
Rohit {Nagpal}, Steven~V. {Sam}, and Andrew {Snowden}, \emph{{Regularity of
  \(\mathbf {FI}\)-modules and local cohomology}}, {Proceedings of the American
  Mathematical Society} \textbf{146} (2018), no.~10, 4117--4126.

\bibitem[{Put}15]{putman-congruence}
Andrew {Putman}, \emph{{Stability in the homology of congruence subgroups}},
  {Inventiones Mathematicae} \textbf{202} (2015), no.~3, 987--1027.

\bibitem[PWG19]{patzt-wgordon}
Peter Patzt and John~D. Wiltshire-Gordon, \emph{{On the tails of
  {\textbf{\emph{FI}}}-modules}}, 2019, \mbox{arXiv:1909.09729v1}.

\bibitem[Ram17]{ramos-coh}
Eric Ramos, \emph{{On the degree-wise coherence of
  {\textbf{\emph{FI}$_{G}$}}-modules}}, New York Journal of Mathematics
  \textbf{23} (2017), 873--895. \MR{3690235}

\bibitem[Ram18]{ramos-fig}
\bysame, \emph{{Homological invariants of {\textbf{\emph{FI}}}-modules and
  {\textbf{\emph{FI}$_{G}$}}-modules}}, {Journal of Algebra} \textbf{502}
  (2018), 163--195.

\bibitem[{Reu}93]{reut-book}
Christophe {Reutenauer}, \emph{{Free Lie algebras}}, vol.~7, Oxford: Clarendon
  Press, 1993.

\bibitem[Tot96]{totaro-config}
Burt Totaro, \emph{Configuration spaces of algebraic varieties}, Topology
  \textbf{35} (1996), no.~4, 1057--1067. \MR{1404924}

\bibitem[Vas71]{vaserstein-st-rank-top}
Leonid~N. Vaserstein, \emph{Stable rank of rings and dimensionality of
  topological spaces}, Functional Analysis and its Applications \textbf{5}
  (1971), 102--110.

\bibitem[VS78]{vas-sus}
Leonid~N. Vaserstein and Andrei~A. Suslin, \emph{Serre's problem on projective
  modules over polynomial rings, and algebraic {K}-theory}, Mathematics of the
  USSR. Izvestiya \textbf{10} (1978), 937--1001 (English).

\bibitem[{Wal}62]{wall-highly-conn}
Charles Terence~Clegg {Wall}, \emph{{Classification of (n-1)-connected
  2n-manifolds}}, {Annals of Mathematics. Second Series} \textbf{75} (1962),
  163--189.

\bibitem[Web16]{webb-rep-book}
Peter~J. Webb, \emph{A course in finite group representation theory}, Cambridge
  Studies in Advanced Mathematics, vol. 161, Cambridge University Press,
  Cambridge, 2016. \MR{3617363}

\bibitem[Wei94]{weibel-hom-alg}
Charles~A. Weibel, \emph{An introduction to homological algebra}, Cambridge
  Studies in Advanced Mathematics, vol.~38, Cambridge University Press,
  Cambridge, 1994. \MR{1269324}

\bibitem[Wil06]{wilf-gfology}
Herbert~S. Wilf, \emph{Generatingfunctionology}, 3rd ed., Wellesley, MA: A K
  Peters, 2006.

\end{thebibliography}

\end{document}